\newtheorem{theorem}{Theorem}[section]
\newtheorem{lemma}[theorem]{Lemma}
\newtheorem{proposition}[theorem]{Proposition}
\newtheorem{corollary}[theorem]{Corollary}
\renewcommand{\leq}{\leqslant}
\renewcommand{\geq}{\geqslant}
\newcommand{\oset}[3][0ex]{%
  \mathrel{\mathop{#3}\limits^{
    \vbox to#1{\kern-2\ex@
    \hbox{$\scriptstyle#2$}\vss}}}}
\newcommand{\dt}[1]{\oset{\bullet}{#1}\!\!}
\newcommand{\defeq}{{}\mathrel{\mathop:}={}}
\newcommand{\mc}{\mathcal}
\newcommand{\hh}{{\bf H}^2}
\newcommand{\ba}[2]{\varepsilon^{#1}_{#2}}
\newcommand{\pslt}{\mathsf{PSL}_2(\mathbb R)}
\newcommand{\pglt}{\mathsf{PGL}_2(\mathbb R)}
\newcommand{\sln}{\mathsf{SL}_d(\mathbb R)}
\newcommand{\psln}{\mathsf{PSL}_d(\mathbb R)}
\newcommand{\pgln}{\mathsf{PGL}_d(\mathbb R)}
\newcommand{\Hn}{{\mc H}_{d}(S)}
\newcommand{\xHn}{{\widetilde{\mc H}}_{d}(S)}
\newcommand{\tr}{\operatorname{Tr}}
\newcommand{\II}{{\bf I}}
\newcommand{\JJ}{{\bf J}}
\newcommand{\p}{{\bf{p}}}
\newcommand{\Real}{\mathbb R}
\newcommand{\N}{\mathbb N}
\newcommand{\Ht}{{\mathbb H}^2}
\newcommand{\ms}{\mathsf}
\renewcommand{\hom}{\operatorname{Hom}}
\renewcommand{\ge}{\geqslant}
\renewcommand{\le}{\leqslant}
\renewcommand{\geq}{\geqslant}
\renewcommand{\leq}{\leqslant}
\def\eproof{$\Box$ \medskip}
\def\w{\wedge}
\title{Simple Length Rigidity for Hitchin Representations}
\author[Bridgeman]{Martin Bridgeman}
\address{Boston College, Chestnut Hill, MA 02467 USA}
\author[Canary]{Richard Canary}
\address{University of Michigan, Ann Arbor, MI 41809 USA}
\author[Labourie]{Fran\c cois Labourie}
\address{ Universit\'e C\^ote d'Azur, LJAD, Nice F-06000; FRANCE}
\thanks{Bridgeman was partially suppported by grants DMS-1500545  and DMS-1564410 and
Canary was partially supported by  grants DMS-1306992 and DMS-1564362, from the National Science Foundation.
Labourie was partially supported by the European Research Council under the {\em European Community}'s seventh Framework Programme (FP7/2007-2013)/ERC {\em grant agreement} ${\rm n}^{\tiny o}$ FP7-246918.}
\begin{document}
\begin{abstract} Nous montrons qu'une représentation de Hitchin est déterminée par les rayons spectraux des images de courbes simples et non séparantes. Comme application, nous caractérisons les isométries de la function d'intersection pour les composantes  de Hitchin  en dimension 3, ainsi que pour les composantes auto-duales en toutes dimensions.  Un outil important de notre démonstration est un  résultat de transversalité sur les quadruplets positifs de drapeaux.
\vskip 0.2 truecm
\centerline{---------------------}
We show that a Hitchin representation is determined by the spectral radii of  the images of simple,
non-separating closed curves. As a consequence, we classify isometries of the intersection function
on Hitchin components of dimension 3 and on the self-dual Hitchin components in all dimensions. As
an important tool in the proof, we establish a transversality result for positive quadruples of flags.
\end{abstract}
\maketitle
%% !TEX root =HitchinSimple.tex
\section{Introduction}

Any discrete faithful representation of the fundamental group $\pi_1(S)$ of a closed oriented surface 
$S$  of genus greater than 1 
into $\pslt$ is determined, up to conjugacy in $\pglt$,  by the translation lengths of
(the images) of  a finite collection of elements represented by simple closed curves.
More precisely, a collection of $6g-5$ simple closed curves will be enough but $6g-6$ simple closed curve will not suffice, see Schmutz \cite{schmutz} and Hamenst\"adt \cite{hamenstadt-lengths}. In $\pslt$ the translation length of an element
is determined by the absolute value of the trace (which is well-defined, although the trace is not), so one may equivalently
say that a discrete faithful representation of $\pi_1(S)$ into $\pslt$ is determined by the
(absolute values of) the traces of a finite collection of elements represented by simple closed curves.

We establish analogues of this result for Hitchin representations.  The fact that traces of simple closed curves determine
the representation is more surprising in the Hitchin setting as the trace
does not even determined the conjugacy class of an element in $\psln$ if $d\ge 3$.

In the proof, we use Lusztig positivity to establish
transversality properties for limit curves of a Hitchin representations, and more generally for positive quadruples of flags.
We also establish a rigidity result which depends on {\em correlation functions} associated to triples of simple closed curves.
%Positivity is a characteristic property of Hitchin (and maximal) representations. 
We hope that these transversality and rigidity results are of independent interest
and  that this  paper will serve as an introduction to the beautiful algebraic ideas  for mathematicians with a more geometric background. 

\subsection*{Hitchin representations}
A {\em Hitchin representation of dimension $d$} is a
representation of $\pi_1(S)$ into $\psln$ which may be continuously deformed to a {\em $d$-Fuchsian representation} that is the composition of
the irreducible representation of $\pslt$ into $\psln$ with a discrete faithful
representation of $\pi_1(S)$ into $\pslt$. The {\em Hitchin component} $\mc H_d(S)$
of all Hitchin representations of $\pi_1(S)$ into $\psln$,  considered up to conjugacy in $\pgln$, is homeomorphic to $\mathbb R^{-(d^2-1)\chi(S)}$.
In particular, $\mc H_2(S)$ is the Teichm\"uller space of $S$ -- see Section \ref{sec:Hitchin} for details and history.

A Hitchin representation is said to be {\em self dual} if it is  conjugate to its contragredient. 
Self dual Hitchin representations take values in $\ms{PSp}(2n,\mathbb R)$ and $\ms{PSO}(n,n+1)$, when $d=2n$ or $d=2n+1$ respectively. 
The set $\mc{SH}_d(S)$ of self dual representations into $\psln$ is a contractible submanifold of $\mc{H}_d(S)$ (see \cite{hitchin}).

\subsection*{Spectrum rigidity}

The {\em spectral length} of a conjugacy class $\gamma$ in $\pi_1(S)$ -- or equivalently a free homotopy class of curve in $S$ --   
with respect to a Hitchin representation  $\rho$ is 
$$L_\gamma (\rho)\defeq\log\Lambda(\rho(\gamma))$$
where $\Lambda(\rho(\gamma))$ is the spectral radius of $\rho(\gamma)$. 

The {\em  marked length spectrum}  of $\rho$ is the function from the set of conjugacy classes in $\pi_1(S)$ defined by
$$
L(\rho):\gamma\mapsto L_\gamma(\rho).
$$
Similarly, the  {\em marked trace spectrum} is the map
$$
\gamma\mapsto \vert\tr(\rho(\gamma))\vert,
$$
where $\vert\tr(A)\vert$ is the absolute value of the trace of a lift of a matrix $A\in\psln$ to $\sln$.

Our first main result is then

\begin{theorem}{\sc[Simple Marked Length Rigidity]}
\label{lengthrigidity} 
Two Hitchin representations of a closed orientable surface of genus greater than 2 are equal
whenever their marked length spectra coincide on simple non-separating curves.
\end{theorem}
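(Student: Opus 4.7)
The plan is to split the argument along the lines of the two auxiliary tools highlighted in the introduction. Let $\rho_1, \rho_2 \in \mc H_d(S)$ be Hitchin representations agreeing on the spectral length of every simple non-separating closed curve.

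The first step is to upgrade equality of simple spectral lengths to equality of the correlation functions on every triple $(\alpha,\beta,\gamma)$ of simple non-separating curves. A correlation function, in the spirit of this paper, records the relative position inside $\mc F(\mathbb R^d)$ of the limit flags $\xi_i(\alpha^\pm), \xi_i(\beta^\pm), \xi_i(\gamma^\pm)$, and should be expressible as an algebraic cross-ratio-like combination built from the eigen-decompositions of $\rho_i(\alpha), \rho_i(\beta), \rho_i(\gamma)$. To access such data from length information alone, I would introduce an auxiliary simple non-separating curve $\delta$ such that the quadruple $\bigl(\xi_i(\alpha^+), \xi_i(\beta^+), \xi_i(\gamma^+), \xi_i(\delta^+)\bigr)$ is positive; the announced transversality theorem then ensures that this quadruple is in sufficiently generic position to yield enough independent linear constraints to write the correlation function as a specific expression in the spectral radii of $\rho_i(w)$, where $w$ runs over a finite family of simple non-separating curves built from $\alpha,\beta,\gamma,\delta$ by Dehn twists and band sums. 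The hypothesis $g \geq 3$ leaves room for all of these auxiliary curves to remain simple and non-separating.

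The second step is to invoke the correlation-function rigidity theorem, which I read as the other main auxiliary result of the paper: a Hitchin representation is pinned down, up to conjugacy in $\pgln$, by its correlation functions on triples of simple non-separating curves. Applied to $\rho_1$ and $\rho_2$, whose correlation functions now agree by Step 1, this forces $\rho_1 = \rho_2$ in $\mc H_d(S)$, completing the proof.

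The main obstacle is Step 1. At face value, the spectral radius is only a single scalar per curve and cannot a priori determine multi-dimensional positional data. The strategy hinges essentially on the transversality theorem for positive quadruples: without enough genericity of the four-flag configurations, the algebraic inversion expressing correlation functions as combinations of spectral radii would degenerate. The interplay between the rigid combinatorics forced by positivity of Hitchin limit maps \emph{\`a la} Lusztig and the combinatorial richness of simple non-separating curves on a surface of genus at least $3$ is what should make this inversion possible.
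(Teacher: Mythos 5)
Your high-level skeleton (transversality plus a correlation-function rigidity statement) does match the paper's outline, but both of your steps have genuine gaps. In Step 1, the mechanism you propose is not the one that works, and a key intermediate stage is missing entirely: the spectral length only records the top eigenvalue, whereas the rigidity result for correlation functions (Theorem \ref{conjugateontriples}) requires as a hypothesis that \emph{all} eigenvalues of the images of $\alpha,\beta,\delta$ agree. Recovering all eigenvalues (and traces) of simple non-separating curves from simple lengths is itself a substantial part of the paper (Lemmas \ref{trace to length} and \ref{length to trace}), carried out by asymptotic expansions of $\lambda_1(\rho(\alpha^n\beta))$ and of the exterior-power representations $E^k(\rho)$, with Theorem \ref{transverse-bases-general0} used precisely to show the relevant matrix coefficients do not vanish. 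Moreover, the correlation data is not obtained by a finite ``algebraic inversion'' from spectral radii of curves built by Dehn twists and band sums: the paper uses the traces of the infinite family $\alpha^p\beta^q\gamma\delta^r$, for all $p,q,r\in\mathbb Z$, where $\gamma$ meets each of the disjoint curves $\alpha,\beta,\delta$ once, and takes successive limits $p\to\infty$, $q\to\infty$, $r\to\infty$ to isolate ${\bf T}_{i,j,0,k}(\alpha,\beta,\gamma,\delta)$ and ${\bf T}_{j,0,k}(\beta,\gamma,\delta)$ up to sign; even then one only gets the ratios ${\bf T}_{i,j,k}/{\bf T}_{j,k}$, with the sign ambiguity removed by connectedness of $\Hn$. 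Nothing in your sketch substantiates that finitely many spectral radii determine these quantities.

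In Step 2 you overstate the rigidity theorem. Theorem \ref{conjugateontriples} does not determine a Hitchin representation from correlation functions on all triples of simple non-separating curves; it only yields that $\rho_1$ and $\rho_2$ are conjugate on the three-generator subgroup $\braket{\alpha,\beta,\delta}$, and only for triples represented by pairwise disjoint, non-parallel curves (with the eigenvalue hypothesis). Passing from these local conjugacies to $\rho_1=\rho_2$ requires the gluing argument of Theorem \ref{fulltracerigidity}: one runs the triple argument over a standard generating set, normalizes on one triple, and then shows that the conjugating elements for the other triples are trivial via Lemma \ref{conjugacytrivial}, which again uses the transversality theorem (a nontrivial conjugator would have to commute with two purely loxodromic elements with non-intersecting axes). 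Your proposal omits both this gluing step and the disjointness/non-parallel, ``not bounding a pair of pants'' restrictions on the triples, which is exactly where the genus $\ge 3$ hypothesis enters (via Lemma \ref{good configuration}).
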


The restriction on the genus may not only reflect the limit of our methods: we have extended this result to surfaces with boundary,
see Section \ref{positivereps},  and it is clear that  simple length rigidity fails for the pair of pants when $d>2$.

 We obtain a finer result for the trace spectrum

\begin{theorem}{\sc [Simple Marked Trace Rigidity]}
\label{tracerigidity}
Two Hitchin representations of a closed orientable surface of genus greater than 2 are equal
whenever their marked trace spectra coincide on simple non-separating curves.
Furthermore, if $S$ is a closed orientable surface of genus  greater than 2 and $d\ge 2$, then there exists a finite set $\mc L_d(S)$ of simple non-separating curves, so that  two Hitchin representations of $\pi_1(S)$ of dimension $d$  are equal whenever their marked trace spectra coincide on $\mc L_d(S)$.
\end{theorem}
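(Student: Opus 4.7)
\noindent\emph{Plan.} Both statements reduce, via Theorem~\ref{lengthrigidity}, to showing that agreement of the marked trace spectrum on simple non-separating curves forces agreement of the marked length spectrum on such curves; the ``furthermore'' clause then follows by a Noetherian argument. The central gadget is a Dehn twist trick producing a family of simple non-separating curves whose traces extract the spectral radius $\Lambda(\rho(\gamma))$.

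Let $\rho_1, \rho_2 \in \mc H_d(S)$ have equal marked trace spectra on simple non-separating curves, and fix such a curve $\gamma$. Since $g \geq 3$, there exists a simple non-separating $\delta$ with $i(\gamma,\delta)=1$. Set $\alpha_n := T_\gamma^n(\delta)$, where $T_\gamma$ is the Dehn twist about $\gamma$. Each $\alpha_n$ is simple non-separating (being the image under a self-homeomorphism of $S$) and represents the free homotopy class $[\gamma^n\delta]$. Hence the hypothesis reads
$$|\tr(\rho_1(\gamma)^n\rho_1(\delta))| = |\tr(\rho_2(\gamma)^n\rho_2(\delta))| \qquad \text{for all } n \geq 1.$$
Proximality of $\rho_i(\gamma)$ (an elementary consequence of $\rho_i$ being Hitchin) gives the spectral expansion
$$\tr(\rho_i(\gamma)^n \rho_i(\delta)) = c_i\, \Lambda_i^n\, \bigl(1+O(\theta_i^n)\bigr), \qquad c_i = \tr\bigl(E_i^{(1)}\rho_i(\delta)\bigr),$$
with $\Lambda_i = e^{L_\gamma(\rho_i)}$, some $\theta_i \in (0,1)$ coming from the spectral gap, and $E_i^{(1)}$ the projector onto the top eigenline of $\rho_i(\gamma)$ along its invariant hyperplane complement. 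The coefficient $c_i$ vanishes precisely when $\rho_i(\delta)$ maps the top eigenline of $\rho_i(\gamma)$ into this complement; by equivariance of the Hitchin limit curve and transversality of positive flag pairs, this occurs iff $\delta\cdot \gamma^+ = \gamma^-$ in $\bgrf$. But this equality would force $\delta\gamma\delta^{-1}$ and $\gamma$ to share a fixed point in $\bgrf$, hence (by discreteness of $\pi_1(S)$ acting on $\Ht$) to be powers of a common primitive element, contradicting primitivity of $\gamma, \delta$ combined with $i(\gamma,\delta) = 1$. Thus $c_1, c_2 \neq 0$, and taking $n^{-1}\log$ of the trace asymptotic yields
$$L_\gamma(\rho_i) = \lim_{n\to\infty}\frac{\log|\tr\rho_i(\alpha_n)|}{n}, \qquad i = 1, 2.$$
The hypothesis forces equality of these two limits, so $L_\gamma(\rho_1) = L_\gamma(\rho_2)$; as $\gamma$ was arbitrary, Theorem~\ref{lengthrigidity} gives $\rho_1 = \rho_2$.

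For the \emph{furthermore} clause, recall that $\mc H_d(S)$ is an open subset of the real algebraic character variety $\hom(\pi_1(S),\psln)/\pgln$, and $\rho\mapsto\tr(\rho(\gamma))^2$ is a polynomial function on it. The subsets
$$Z_\gamma := \bigl\{(\rho_1,\rho_2) \in \mc H_d(S)^2 : \tr(\rho_1(\gamma))^2 = \tr(\rho_2(\gamma))^2\bigr\}$$
are thus real algebraic subvarieties of the finite-dimensional variety $\mc H_d(S)^2$. By the first half, $\bigcap_\gamma Z_\gamma = \Delta$, the intersection ranging over simple non-separating curves. Noetherianity for descending chains of real algebraic subvarieties produces a finite subcollection $\gamma_1,\ldots,\gamma_N$ with $\bigcap_{i=1}^N Z_{\gamma_i} = \Delta$; set $\mc L_d(S) := \{\gamma_1,\ldots,\gamma_N\}$.

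The central difficulty is the nonvanishing $c_i \neq 0$, which boils down to a sharp transversality statement for the Hitchin limit curve at the four boundary points $\gamma^\pm$ and $\delta\cdot\gamma^\pm$ (equivalently, at the attracting/repelling fixed points of the axes of $\gamma$ and $\delta\gamma\delta^{-1}$). This is precisely the positive-quadruple-of-flags transversality result advertised in the introduction, and the heart of the matter; once it is in hand, the Dehn twist identity, the asymptotic, and the Noetherian reduction are comparatively routine.
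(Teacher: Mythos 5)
Your reduction of the first assertion to Theorem \ref{lengthrigidity} is circular relative to this paper. Simple marked length rigidity is not proved independently here: the paper's entire argument is carried out at the level of traces (Theorem \ref{fulltracerigidity}), and Theorem \ref{lengthrigidity} is then deduced as an immediate consequence of Theorem \ref{fulltracerigidity} together with the equivalence of the two simple spectra (Theorem \ref{equivalence}). So quoting Theorem \ref{lengthrigidity} as a black box hides exactly the content to be proved. What your proposal does establish correctly --- that agreement of traces on the simple non-separating classes $[\gamma^n\delta]$ forces $L_\gamma(\rho_1)=L_\gamma(\rho_2)$, the leading coefficient $c_i=\tr(\p_1(\rho_i(\gamma))\rho_i(\delta))$ being non-zero --- is precisely the easy direction of Theorem \ref{equivalence}, namely Lemma \ref{trace to length}; and for that nonvanishing, plain hyperconvexity of the limit curve (Theorem \ref{hyperconvexity}) already suffices, not the positive-quadruple transversality theorem, so your closing assessment of where the difficulty sits is also misplaced. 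The genuinely hard part, which the proposal never touches, is showing that the simple trace (equivalently, eigenvalue) data determine the representation: the paper does this by proving that eigenvalues together with ratios of correlation functions determine the restriction to suitable three-generator subgroups (Theorem \ref{conjugateontriples-general}), extracting those ratios from traces of the family of simple non-separating curves $\alpha^p\beta^q\gamma\delta^r$ (Proposition \ref{pqr result} and Theorem \ref{conjugateontriples-trace}, whose curve configurations require genus at least $3$), and then patching over a standard generating set using Lemma \ref{conjugacytrivial}.

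Your Noetherian argument for the ``furthermore'' clause is essentially the paper's: the paper works upstairs in $\hom(\pi_1(S),\sln)\times\hom(\pi_1(S),\sln)$ with lifted Hitchin components and disposes of signs by the constancy of the sign of traces on those components, whereas you work with $\tr^2$ on the character variety; either variant is fine once the first assertion is in hand. But as written, the first assertion has not been proved --- to repair the proposal you would need to supply the correlation-function rigidity and patching arguments (or some substitute) rather than appeal to Theorem \ref{lengthrigidity}.
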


Dal'bo and Kim \cite{dalbo-kim} earlier proved that Zariski dense representations of a group $\Gamma$ into a semi-simple Lie
group $\ms{G}$ without compact factor are determined, up to automorphisms of $\ms{G}$, by the marked spectrum of translation 
lengths of {\em all} elements on the quotient symmetric space $\ms{G}/\ms{K}$. Similar results were obtained by Charette and Drumm \cite{CD}
for subgroups of the affine Minkowski group. 
Bridgeman, Canary, Labourie and Sambarino \cite{BCLS} proved that Hitchin
representations, are determined up to conjugacy in $\pgln$ by the spectral radii of all elements. Bridgeman
and Canary \cite{BCQF} proved that discrete faithful representations of $\pi_1(S)$ into $\ms{PSL}(2,\mathbb C)$
are determined by the translation lengths of simple non-separating curves on $S$. 
Duchin, Leininger and Rafi \cite{DLR} showed that the simple marked length spectrum
determines a flat surface, but that no collection of finitely many simple closed curves suffices to determine a flat surface.
On the other hand, March\'e and Wolff \cite[Section 3]{marche-wolff} gave examples of non-conjugate, indiscrete, non-elementary
representations of a closed surface group of genus two into $\pslt$ with the same simple marked length spectra. 

In Section \ref{positivereps} we establish a version of Theorem \ref{lengthrigidity} for Hitchin representations
of  compact surfaces with boundary which are ``complicated enough,''
while in Section \ref{infinitesmal} we establish an infinitesimal version of Theorem \ref{lengthrigidity}.

\subsection*
{Isometry groups of the intersection} We apply Theorem \ref{lengthrigidity} to characterize diffeomorphisms preserving the  intersection function  of  representations in  $\Hn$. 

In Teichm\"uller theory, the {\em intersection} 
$\II(\rho,\sigma)$ of representations $\rho$ and $\sigma$ in $\mc T(S)$  is the length with respect to $\sigma$ of a random geodesic in 
$\hh/\rho(\pi_1(S))$ -- where $\hh$ is the hyperbolic plane.
Thurston showed that the Hessian of the intersection function gives rise to a Riemannian metric on
$\mc T(S)$, which Wolpert \cite{wolpert} showed was a multiple of the classical Weil--Petersson metric -- see also
 Bonahon \cite{bonahon}, McMullen \cite{mcmullen-pressure},
and Bridgeman \cite{bridgeman-wp} for further interpretation.  As a special case of their main result,  Bridgeman, Canary, Labourie and Sambarino \cite{BCLS} used the Hessian of
a {\em renormalized intersection}  function to construct a mapping class group invariant, analytic, Riemannian
metric on $\Hn$, called the {\em pressure metric} -- see Section \ref{sec:isom} for details.

Royden \cite{royden} showed that the isometry group of $\mc T(S)$, equipped with the Teichm\"uller metric,
is the extended mapping class group, while Masur and Wolf \cite{masur-wolf} established the same
result for the Weil--Petersson metric.

In our context, the {\em intersection isometry group} -- respectively {\em self dual intersection isometry group}--  is the set of those diffeomorphisms of $\mc H_d(S)$ --  respectively $\mc{SH}_d(S)$ -- preserving $\II$. 

\begin{theorem}{\sc[Self dual isometry group]}
\label{selfdualisometries}
For a surface of genus greater than 2, the self dual intersection isometry group coincides with the extended mapping class group
of $S$.
\end{theorem}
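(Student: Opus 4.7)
The plan is to reduce this to Theorem~\ref{lengthrigidity} (simple marked length rigidity) together with Ivanov's theorem identifying the simplicial automorphism group of the curve complex with the extended mapping class group. Let $\phi$ be a diffeomorphism of $\mc{SH}_d(S)$ preserving $\II$. The goal is to extract enough information from $\II$ to force $\phi$ to permute simple non-separating curves in a way compatible with length functions.

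The first step is to show that the intersection function determines the marked simple length spectrum. For each simple non-separating curve $\gamma$, I would construct a family of self-dual Hitchin representations $\{\rho_n\}$ obtained by pinching a pants decomposition $\mc{P}\supset\{\gamma\}$ inside the $d$-Fuchsian locus of $\mc{SH}_d(S)$, so that the normalized Bowen--Margulis currents of $\rho_n$ converge (as geodesic currents in the sense of Bonahon--Labourie--Sambarino) to a multiple of the Dirac current $\delta_\gamma$. For such a family,
$$\II(\rho_n,\sigma)\ \longrightarrow\ c\cdot L_\gamma(\sigma)$$
for every $\sigma\in\mc{SH}_d(S)$, with $c>0$ an explicit normalization. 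Hence, applying $\phi$ to this family, the images $\phi(\rho_n)$ must again be a pinching sequence converging to a Dirac current supported on a single simple non-separating curve, which we denote $\phi_*(\gamma)$. This yields a bijection $\phi_*$ on the set of isotopy classes of simple non-separating curves satisfying
$$L_{\phi_*(\gamma)}(\phi(\sigma))\ =\ L_\gamma(\sigma)\quad\text{for all }\sigma\in\mc{SH}_d(S).$$

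The second step is to show that $\phi_*$ preserves disjointness. Two simple non-separating curves $\gamma,\gamma'$ admit disjoint representatives precisely when there is a joint pinching sequence $\{\rho_n\}$ whose currents concentrate on $\gamma\cup\gamma'$; this is again detectable through the asymptotics of $\II(\rho_n,\cdot)$, so $\phi$ sends such joint pinching sequences to joint pinching sequences, and $\phi_*$ is a simplicial automorphism of the curve complex of simple non-separating curves. In genus greater than $2$, this complex has the same automorphism group as the full curve complex, so by Ivanov's theorem there is a mapping class $\psi\in\mathrm{MCG}^{\pm}(S)$ such that $\phi_*=\psi_*$ on every simple non-separating isotopy class. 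Substituting into the displayed equation above and invoking Theorem~\ref{lengthrigidity}, we conclude $\phi(\sigma)=\psi\cdot\sigma$ for all $\sigma$.

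The main obstacle I anticipate is the first step: making the pinching argument rigorous inside $\mc{SH}_d(S)$. One must verify that the self-dual condition is preserved along pinching families, that the Bowen--Margulis currents really do converge to a Dirac simple current at the correct rate, and that $\II(\rho_n,\sigma)$ extends continuously to that boundary limit with the expected normalization. This is where the geodesic current framework of BCLS and the thermodynamic formalism must be pushed to the boundary of the Hitchin component; all subsequent steps are then largely formal consequences of Theorem~\ref{lengthrigidity} and Ivanov rigidity.
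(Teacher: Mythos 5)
Your overall strategy is genuinely different from the paper's, and it has a gap at its crucial step. The paper never tries to understand what an intersection isometry does to degenerating sequences: it first shows (Proposition \ref{isometriesareisometries}) that an isometry of $\II$ preserves entropy and hence the pressure metric, then invokes Potrie--Sambarino to see that the Fuchsian locus (the entropy maximum) is preserved, and Masur--Wolf to identify the restriction of the isometry to the Fuchsian locus with a mapping class $\phi$. After replacing $f$ by $\hat f=f\circ\phi^{-1}$, which fixes the Fuchsian locus pointwise, the pinching argument is only ever run in the \emph{first} slot of $\II$ and only with Fuchsian points $\tau_d\circ\sigma_n$, which $\hat f$ fixes; the identity $\II(\tau_d\circ\sigma_n,\rho)=\II(\tau_d\circ\sigma_n,\hat f(\rho))$ then gives equality of simple marked Hilbert length spectra directly, and Theorem \ref{lengthrigidity} (with $L^H=2L$ in the self-dual case) finishes. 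Your route discards the entropy and Weil--Petersson ingredients and replaces them with Ivanov rigidity of the curve complex, which is attractive, but it forces you to control $\phi$ on degenerating sequences, and that is exactly where the argument breaks.

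Concretely, two problems. First, the asymptotic $\II(\rho_n,\sigma)\to c\,L_\gamma(\sigma)$ is false as stated: since $[m_{\rho_n}]\to[\delta_\gamma]$ only projectively, with scaling constants $c_n\to\infty$, the intersections $\II(\rho_n,\sigma)$ diverge, and one only gets $\II(\rho_n,\sigma)/c_n\to c\,L^H_\gamma(\sigma)$ after dividing by constants that are defined via the current compactification, not via $\II$ alone (also note the limit is the Hilbert length of the unoriented curve, harmless here only because of self-duality). Second, and more seriously, invariance of $\II$ tells you only that the renormalized functionals $\II(\phi(\rho_n),\cdot)$ converge pointwise to $c\,L^H_\gamma(\phi^{-1}(\cdot))$; since at this stage $\phi^{-1}$ is an arbitrary diffeomorphism, nothing identifies this limit as a length function of a simple non-separating curve, so the assertion that $\phi(\rho_n)$ ``must again be a pinching sequence converging to a Dirac current supported on a single simple non-separating curve'' is precisely what needs proof and is not a formal consequence. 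Making it one would require a compactification of $\mc{SH}_d(S)$ by projective geodesic currents to which $\II$ extends continuously, together with an intrinsic, $\II$-detectable characterization of simple-curve currents and of the normalizing constants; none of this is available in the paper's toolkit (Bonahon's theory is used only inside the Fuchsian locus, where it is known). The same machinery would be needed again for your disjointness/joint-pinching step. Unless you supply that boundary theory, the proof does not go through; the paper's detour through entropy, Potrie--Sambarino and Masur--Wolf is what makes the degeneration argument unnecessary.
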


We have a finer result when $d=3$.

\begin{theorem}{\sc[Isometry Group In Dimension 3]}
\label{isometriesofintersectionnumber}
For a surface  $S$ of genus greater than 2, the  intersection isometry group  of  $\mc{H}_3(S)$ is
generated by  the extended mapping class group of $S$ and the contragredient involution.
\end{theorem}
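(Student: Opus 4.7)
Both the extended mapping class group and the contragredient involution $\iota:\rho\mapsto\rho^*$ act as intersection isometries. The former permutes conjugacy classes of $\pi_1(S)$, and hence the length functions from which $\II$ is built. For the latter, since the eigenvalues of $\rho^*(\gamma)=(\rho(\gamma)^T)^{-1}$ are the reciprocals of those of $\rho(\gamma)$, one has $L_\gamma(\rho^*)=L_{\gamma^{-1}}(\rho)$; the change of variables $\gamma\mapsto\gamma^{-1}$ in the defining sum for $\II$ yields $\II(\rho^*,\sigma^*)=\II(\rho,\sigma)$.

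For the converse, let $\phi$ be an intersection isometry of $\mc{H}_3(S)$. The plan is to reduce via Theorem \ref{lengthrigidity} to controlling the action of $\phi$ on the simple non-separating length spectrum. The first step is to extract from $\phi$ an action on isotopy classes of simple non-separating curves. Along a sequence $\rho_n$ in $\mc{H}_3(S)$ for which the length of such a curve $\alpha$ is pinched, the lengths of other conjugacy classes $\gamma$ grow at rates determined by the geometric intersection number $i(\gamma,\alpha)$, producing a combinatorial signature on the projective length spectrum boundary that $\phi$, being an $\II$-isometry, must preserve. This yields a bijection $\phi_*$ on isotopy classes of simple non-separating curves; it respects disjointness (which corresponds to simultaneous degeneration of several length functions), and hence by Ivanov's theorem is realized by an element $\Psi$ of the extended mapping class group. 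Replacing $\phi$ by $\Psi^{-1}\circ\phi$, we may assume $\phi_*$ is the identity on isotopy classes.

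In this reduced setting, for each simple non-separating $\alpha$ and each $\rho$,
\[
\{L_\alpha(\phi(\rho)),\,L_{\alpha^{-1}}(\phi(\rho))\}=\{L_\alpha(\rho),\,L_{\alpha^{-1}}(\rho)\},
\]
both sides being the lengths attached to the isotopy class $[\alpha]$. Real-analyticity of $\phi$ (as an isometry of the real-analytic pressure metric of \cite{BCLS}) and of the length functions forces $L_\alpha\circ\phi$ to equal either $L_\alpha$ globally or $L_{\alpha^{-1}}$ globally, yielding a sign $\epsilon_\alpha\in\{\pm 1\}$. On the self-dual locus $\mc{SH}_3(S)$, where $L_\gamma=L_{\gamma^{-1}}$ for every $\gamma$, both options collapse and $\phi(\rho)$ shares its simple non-separating spectrum with $\rho$; Theorem \ref{lengthrigidity} then gives $\phi|_{\mc{SH}_3(S)}=\mathrm{id}$. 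A consistency argument, coupling signs across disjoint pairs and across triples of simple non-separating curves bounding three-holed subsurfaces via the surface group relation, then forces the $\epsilon_\alpha$ to be globally uniform, giving $\phi=\mathrm{id}$ or $\phi=\iota$ by a final application of Theorem \ref{lengthrigidity}.

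The principal obstacle is the extraction step of the second paragraph: translating the averaged information captured by $\II$ into a combinatorial action on simple non-separating curves requires delicate boundary analysis of the Hitchin component, analogous in spirit to Royden's study of Teichm\"uller space but substantially more subtle in higher rank, where Theorem \ref{lengthrigidity} serves as the essential replacement for classical length-spectrum rigidity.
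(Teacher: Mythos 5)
Your first paragraph (that the extended mapping class group and the contragredient involution are $\II$-isometries) is fine, but the converse direction has a genuine gap, and you have in fact located it yourself: the ``extraction step'' of your second paragraph is not an obstacle to be remarked upon, it is the proof. Nothing in your sketch explains how the averaged quantity $\II$ detects pinching of a single simple curve in $\mc H_3(S)$, why degenerating sequences produce a well-defined ``projective length spectrum boundary'' with the combinatorial structure you need, or why an $\II$-isometry acts on it; no analogue of Thurston's boundary for $\mc H_3(S)$ is established here, and Ivanov's theorem is invoked on an action you have not constructed. The paper takes a completely different route that avoids all of this: an $\II$-isometry preserves entropy (Proposition \ref{isometriesareisometries}), hence by Potrie--Sambarino preserves the Fuchsian locus (Corollary \ref{fuchsianpreserved}); since the pressure metric restricts to a multiple of Weil--Petersson there, Masur--Wolf identifies its restriction with a mapping class; after composing, the isometry fixes the Fuchsian locus pointwise, and Bonahon's geodesic-current compactification (Liouville currents degenerating to simple-curve currents) shows it preserves the simple marked \emph{Hilbert} length spectrum (Proposition \ref{isometries and Hilbert length}).

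A second, related gap: even if your reduction worked, what intersection with boundary currents recovers is the unoriented quantity $L^H_\alpha=L_\alpha+L_{\alpha^{-1}}$, not the unordered pair $\{L_\alpha,L_{\alpha^{-1}}\}$ that your sign argument assumes; so Theorem \ref{lengthrigidity} cannot be applied directly. This is exactly why the paper needs a new theorem, Hilbert length rigidity in dimension $3$ (Theorem \ref{hilbertrigidity}): equality of simple non-separating Hilbert length spectra forces $\rho=\sigma$ or $\rho=\sigma^*$. Your ``consistency argument coupling signs across disjoint pairs and triples'' is precisely the hard part of that theorem --- the per-curve dichotomy is Lemma \ref{eigenvalues equal or}, and making the choice globally uniform requires Lemma \ref{tracesagree} together with a rerun of the trace-rigidity machinery (Proposition \ref{pqr result}, Lemma \ref{conjugacytrivial}) --- none of which is supplied, or replaceable, by the analyticity remark you make. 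So the proposal does not constitute a proof: both the reduction to a statement about simple length spectra and the rigidity statement you would then need are left unestablished.
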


Since, as we will see in the proof, isometries of
the intersection function are also isometries of the pressure metric, we view this as evidence for the conjecture
that this is also the isometry group of the pressure metric -- See Section \ref{intersection}
for precise definitions.

Our proof follows the outline suggested by the proof in Bridgeman--Canary \cite{BCQF} that the isometry group
of the intersection function on quasifuchsian space is generated by the extended mapping class group and
complex conjugation.

A key tool in the proof of Theorem \ref{isometriesofintersectionnumber} is a rigidity result for the
marked simple, non-separating Hilbert length spectrum for a representation into $\ms{PSL}(3,\mathbb R)$,
see Section \ref{SL3}. Kim \cite{kim-JDG}, see also Cooper-Delp \cite{cooper-delp}, had previously
proved a marked Hilbert length rigidity theorem for the full marked length spectrum.

\subsection*{Positivity and correlation functions} 
Every element of the image of a Hitchin representation is purely loxodromic, i.e. diagonalizable with real eigenvalues of distinct
modulus.  We introduce correlation functions which  record the relative positions of eigenspaces of elements in the image and
give rise to a rigidity result for the restrictions of Hitchin representation to certain three generator subgroups. This
new rigidity result relies crucially on a new transversality result for eigenbases of images of disjoint curves.

If $\rho$ is a Hitchin representation of dimension $d$, and $\gamma$ is a non-trivial element, 
a matrix representing $\rho(\gamma)$ may be written --see Section \ref{sec:Hitchin} -- as 
$$
\rho(\gamma)=\sum_{i=1}^d\lambda_i\left(\rho(\gamma)\right)\p_i\left(\rho(\gamma)\right),
$$
where $\lambda_1\left(\rho(\gamma)\right)>\ldots>\lambda_d\left(\rho(\gamma)\right)>0$ are the {\em eigenvalues}
(of some lift)
of $\rho(\gamma)$ and $\p_i\left(\rho(\gamma)\right)$ are the {\em projectors} onto
the corresponding 1-dimensional eigenspaces. Let  \begin{itemize}
	\item $\mc A=(\alpha_1,\ldots,\alpha_n)$ be an $n$-tuple  of non-trivial elements of $\pi_1(S)$, 
	\item $I=
\left(i_j\right)_{j\in\{1,\ldots,n\}}$ be an $n$-tuple of elements 
in $\{1,\ldots,d\}$.
\end{itemize}
The associated {\em correlation function}  ${\bf T}_{I}(\mc A)$ on $\mc H_d(S)$ is defined  by
$${\bf T}_{I}(\mc A): \rho \mapsto
\tr\left(\prod_{j=1}^n \p_{i_j}(\rho(\alpha_j))\right).$$ 
%The proof of the marked spectrum theorems use  the following result of independent interest. (Recall that a pair of disjoint simple closed curves is said to be {\em non-parallel}  if they do
%not bound an annulus.) 

We show that finitely many of these correlation often suffice to determine  the restriction of a Hitchin representation
to a three generator subgroup. One may use this result to give an embedding of $\mathcal H_d(S)$ in some $\mathbb R^N$
and we hope that a refinement of these ideas could yield new parametrisations of $\mathcal H_d(S)$.
In the statement below, recall that a pair of disjoint simple closed curves is said to be {\em non-parallel}  if they do
not bound an annulus.

\begin{theorem}{\sc[Rigidity for correlations functions]}
\label{conjugateontriples}
Let $\rho$ and $\sigma$ be Hitchin representations in $\mc H_d(S)$.
Suppose that   $\alpha, \beta,\delta \in \pi_1(S)-\{1\}$ are represented by based loops which are freely homotopic
to a collection of  pairwise disjoint and non-parallel simple closed curves. 
Assume that
\begin{enumerate}
	\item for any  $\eta \in \{\alpha,\beta,\delta\}$, $\rho(\eta)$ and $\sigma(\eta)$ have the same eigenvalues,
	\item for all $i,j,k$ in $\{1,\ldots,d\}$
 $$\frac{{\bf T}_{i,j,k}(\alpha,\beta,\delta)}{{\bf T}_{j,k}(\beta,\delta)}(\rho)=
\frac{{\bf T}_{i,j,k}(\alpha,\beta,\delta)}{{\bf T}_{j,k}(\beta,\delta)}(\sigma),$$
\end{enumerate}
then $\rho$ and $\sigma$
are conjugate, in $\pgln$, on the subgroup of $\pi_1(S)$ generated by $\alpha$, $\beta$ and $\gamma$.
\end{theorem}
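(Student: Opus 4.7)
The plan is to use Assumption (1) to normalize so that $\sigma(\alpha)=\rho(\alpha)$, and then to extract from Assumption (2) a diagonal matrix in the eigenbasis of $\rho(\alpha)$ whose conjugation also identifies $\sigma(\beta)$ with $\rho(\beta)$ and $\sigma(\delta)$ with $\rho(\delta)$. After first replacing $\sigma$ by a suitable conjugate, we may assume $\rho(\alpha)=\sigma(\alpha)$, since both are purely loxodromic with the same $d$ distinct eigenvalues. Choose an eigenbasis $(e_1,\dots,e_d)$ of $\rho(\alpha)$, so that $\p_i(\rho(\alpha))=e_i\otimes e_i^{*}$, and factor the remaining rank-one projectors as
$$
\p_j(\rho(\beta))=v_j\otimes f_j,\qquad \p_k(\rho(\delta))=w_k\otimes g_k,
$$
normalized so that $f_j(v_j)=g_k(w_k)=1$, with analogous primed data $v_j',f_j',w_k',g_k'$ for $\sigma$.

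A direct computation of traces of rank-one operators gives
$$
{\bf T}_{i,j,k}(\alpha,\beta,\delta)(\rho)=(v_j)_i\,(g_k)_i\,f_j(w_k),\qquad {\bf T}_{j,k}(\beta,\delta)(\rho)=f_j(w_k)\,g_k(v_j),
$$
so that Assumption (2) becomes
$$
\frac{(v_j)_i(g_k)_i}{g_k(v_j)}=\frac{(v_j')_i(g_k')_i}{g_k'(v_j')}\qquad\text{for all } i,j,k.
$$
Since $g_k(v_j)=\sum_{i}(v_j)_i(g_k)_i$, this amounts to saying that the projective point $[(v_j)_1(g_k)_1:\cdots:(v_j)_d(g_k)_d]\in\mathbb P^{d-1}$ is the same for $\rho$ and $\sigma$, for every pair $(j,k)$.

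Granting for the moment that all coordinates $(v_j)_i,(g_k)_i$ are nonzero, one extracts scalars $c_{jk}$ with $(v_j)_i(g_k)_i=c_{jk}(v_j')_i(g_k')_i$ for all $i,j,k$. Fixing reference indices $j_0,k_0$ and setting $\lambda_i:=(v_{j_0})_i/(v_{j_0}')_i$, an elementary substitution yields $(g_{k_0})_i/(g_{k_0}')_i=c_{j_0k_0}/\lambda_i$, and more generally
$$
\frac{(v_j)_i}{(v_j')_i}=A_j\,\lambda_i,\qquad\frac{(g_k)_i}{(g_k')_i}=\frac{B_k}{\lambda_i},
$$
for scalars $A_j,B_k$ depending only on $j$ and on $k$. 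Writing $\Lambda:=\operatorname{diag}(\lambda_1,\dots,\lambda_d)$, this means $[v_j]=[\Lambda v_j']$ and $[g_k]=[\Lambda^{-1}g_k']$ projectively. Since $\Lambda$ commutes with $\rho(\alpha)$, conjugating $\sigma$ by $\Lambda$ preserves the equality $\sigma(\alpha)=\rho(\alpha)$, and sends $v_j',g_k'$ to vectors proportional to $v_j,g_k$. The kernel hyperplanes of the projectors $\p_k(\sigma(\delta))$ now agree with those of $\p_k(\rho(\delta))$, and because each image line is the intersection of the other $d-1$ kernel hyperplanes, the image lines agree as well; together with Assumption (1) this forces $\sigma(\delta)=\rho(\delta)$, and symmetrically $\sigma(\beta)=\rho(\beta)$. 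Composing the two conjugations produces the single element of $\pgln$ implementing the desired equality on $\langle\alpha,\beta,\delta\rangle$.

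The main obstacle is the nonvanishing claim $(v_j)_i,(g_k)_i\ne 0$. This is the transversality statement that no eigenline of $\rho(\beta)$ lies in the hyperplane $\bigoplus_{i'\ne i}L^{\alpha}_{i'}$, and likewise for the kernel hyperplanes of $\rho(\delta)$; equivalently, the quadruples of flags $(\xi^{\rho}(\alpha^{\pm}),\xi^{\rho}(\beta^{\pm}))$ and $(\xi^{\rho}(\alpha^{\pm}),\xi^{\rho}(\delta^{\pm}))$ must lie in sufficiently generic position. This is precisely where the hypothesis that $\alpha,\beta,\delta$ are represented by pairwise disjoint and non-parallel simple closed curves is essential: it ensures that their boundary fixed points in $\partial_\infty\pi_1(S)$ are cyclically interleaved so as to yield \emph{positive} quadruples, to which the transversality result for positive quadruples of flags applies.
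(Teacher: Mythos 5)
Your proposal is correct and follows essentially the same route as the paper's proof of Theorem \ref{conjugateontriples-general}: both reduce hypothesis (2) to the coordinate identity for $\frac{\braket{a^i|b_j}\braket{d^k|a_i}}{\braket{d^k|b_j}}$, invoke the non-vanishing of all pairings $\braket{e^i(\rho(\alpha))|e_j(\rho(\beta))}$, $\braket{e^k(\rho(\delta))|e_i(\rho(\alpha))}$ (Corollary \ref{productnonzero}, i.e.\ Theorem \ref{transverse-bases-general0}), and then kill the residual scaling ambiguity — the paper by normalizing scalars such as $\braket{a^i|b_1}=1$, you by exhibiting the explicit diagonal conjugation $\Lambda$, which is the same gauge-fixing in different clothing. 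One small wording correction: for disjoint non-parallel curves the fixed points are \emph{unlinked}, giving a cyclically ordered quadruple $(\alpha^+,\beta^+,\beta^-,\alpha^-)$ as required; ``cyclically interleaved'' describes the crossing configuration $(a,x,b,y)$, which is exactly the case where the transversality statement fails.
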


%{\color{red}{As a consequence using enough simple curves, one may use this result to give an embedding of $\mathcal H_d(S)$ in some $\mathbb R^N$ and we hope that a refinement of these ideas could yield new parametrisations of $\mathcal H_d(S)$ }}. 

Before even stating that theorem,  we need to prove the relevant correlation functions never vanish. 
This will be a corollary of the following theorem. First recall that a Hitchin representation in $\mc H_d(S)$ defines a {\em limit curve} in the 
flag manifold of $\mathbb R^d$, so that any two distinct points are transverse. 
Recall also that any transverse pair flags $a$ and $b$ in $\mathbb R^d$ defines a decomposition of $\mathbb R^d$ 
into a sum of $d$ lines $L_1(a,b),\ldots L_d(a,b)$.

\begin{theorem} {\sc[Transverse bases]}
\label{transverse-bases-general0}
Let $\rho$ be a Hitchin representation of dimension $d$.  Let $(a,x,y,b)$ be four cyclically ordered points in the limit curve of $\rho$, 
then any  $d$  lines in
$$\{ L_1(a,b),\ldots,L_d(a,b),L_1(x,y),\ldots,L_d(x,y)\}$$
are in general position.
\end{theorem}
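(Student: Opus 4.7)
The plan is to reduce the statement to a total positivity assertion for the transition matrix between the two eigen-decompositions, and then to invoke Lusztig positivity for the cyclically ordered quadruple $(a,x,y,b)$.

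First I would set up the relevant linear algebra. Choose nonzero representatives $v_i \in L_i(a,b)$ and $w_j \in L_j(x,y)$, and define $M = (m_{ij})$ by $w_j = \sum_i m_{ij}\, v_i$. A sub-collection
$$\{L_i(a,b) : i\in I\} \cup \{L_j(x,y) : j\in J\}, \qquad |I|+|J|=d,$$
is in general position if and only if the determinant of the chosen vectors, computed in the basis $(v_1,\dots,v_d)$, is nonzero. In this basis the columns indexed by $I$ are standard basis vectors $e_i$, while the columns indexed by $J$ are the coordinate vectors of $w_j$; expanding the determinant along the rows indexed by $I$ yields, up to sign, the minor of $M$ with rows $\{1,\dots,d\}\setminus I$ and columns $J$. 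Thus the theorem reduces to the assertion that, for a suitable choice of representatives, every minor of $M$ is nonzero.

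I would then establish this by exploiting that $(a,x,y,b)$, being cyclically ordered on the limit curve of a Hitchin representation, is a positive quadruple of flags in the sense of Lusztig and Fock--Goncharov. For such a positive quadruple the unipotent element which, in the frame $(v_i)$ associated to $(a,b)$, produces the frame $(w_j)$ associated to $(x,y)$ admits a factorization as a product of Chevalley generators $x_{\alpha_k}(t_k)$ with all $t_k>0$. By the classical Loewner--Whitney theorem (equivalently, by iterating the Jacobi-type minor identities) such a product corresponds to a totally positive matrix: every minor is strictly positive. Consequently $M$ can be chosen so that all of its minors are positive, hence nonzero, and combined with the reduction above this yields the general position statement.

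The main obstacle is the positivity step: one has to match the geometric data of a positive quadruple of complete flags with the algebraic data of a positively factorized unipotent element, pin down the conventions so that the cyclic ordering $(a,x,y,b)$ translates to all parameters $t_k$ being positive, and verify that the resulting change-of-basis matrix is totally positive in the Gantmacher--Krein sense. This is where Lusztig's theory does the real work; once this is in place, the combinatorial Laplace expansion of the first paragraph is routine. The restriction to cyclically ordered quadruples (rather than arbitrary transverse pairs of pairs) is essential, since total positivity requires precisely the positive parametrization afforded by the cyclic order.
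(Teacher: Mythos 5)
Your reduction in the first paragraph is exactly the paper's: general position of any $d$ of the lines is equivalent to the non-vanishing of all minors of the transition matrix $M$ between bases adapted to $(a,b)$ and to $(x,y)$, and the paper indeed proves that (for suitable scalings) $M$ is totally positive. The gap is in how you propose to get total positivity. The matrix that Lusztig/Fock--Goncharov positivity of the quadruple hands you is a unipotent $u\in U(\ba{a}{b})_{>0}$ with, say, $y=u_1(b)$, $x=u_1u_2(b)$; this is \emph{not} the change-of-basis matrix from the frame of $(a,b)$ to the frame of $(x,y)$. Indeed, no unipotent upper triangular matrix with respect to a basis consistent with $(a,b)$ can carry that basis to one consistent with $(x,y)$, since it would have to fix the flag $a$ while sending $a^i$ to $x^i$. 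Moreover, the algebraic claim you lean on is false as stated: a product of upper Chevalley generators $x_{\alpha_k}(t_k)$ with $t_k>0$ is only a totally \emph{non-negative} unipotent upper triangular matrix -- it has many vanishing minors (every minor whose rows lie strictly below its columns) -- so the Loewner--Whitney factorization does not by itself give a matrix all of whose minors are nonzero. Thus the step ``hence $M$ can be chosen so that all of its minors are positive'' is precisely the unproved heart of the theorem, not a routine consequence of positivity of the quadruple.

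For comparison, the paper's proof of Theorem \ref{transverse-bases-general1} has to work considerably harder at this point: it first proves that the \emph{initial} minors of $M$ are nonzero, the minors touching the first row being handled by transversality of the triple $(a,b,x)$, and the remaining ones by Lemma \ref{permutationstillpositive}, which says that flags obtained by permuting the lines $L_i(x,y)$ remain transverse to the pair $(a,b)$; that lemma in turn rests on the nesting and limiting properties of components of positivity (Proposition \ref{bruhat lemma}, Proposition \ref{circle prop}, Lemma \ref{Hausdorff limit}). The signs are then pinned down by deforming $(a,x,y,b)$ through positive quadruples to a configuration on the Veronese curve, where total positivity of $\tau_d$ of a totally positive $2\times 2$ matrix makes the initial minors positive, and finally the Gasca--Pe\~na (or Fomin--Zelevinsky) criterion -- positive initial minors imply total positivity -- yields that all minors of $M$ are positive. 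If you want to salvage your outline, you need to supply an argument of this kind (or an equivalent one) showing that the transition matrix between the two eigenbases, rather than the positive unipotent of the quadruple, is totally positive.
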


This last result is a consequence of the positivity theory developed by Lusztig \cite{lusztig-reductive} 
and used in the theory of Hitchin representations by Fock--Goncharov \cite{fock-goncharov} and  is actually a special
case of a more general result about positive quadruples, see Theorem \ref{transverse-bases-general1}. 
Theorem \ref{transverse-bases-general1} may be familiar to experts but we could not find a proper reference to it in the literature. 
%{\color{red}{We also hope that our presentation our positivity will serve as an introduction to this beautiful subject to geometers}}

We also establish a more general version of Theorem \ref{conjugateontriples}, see Theorem \ref{conjugateontriples-general}.
 
\subsection*{Structure of the proof} Let us sketch the proof of Theorem \ref{lengthrigidity}. 
The proof runs through the following steps. We first show, in Section \ref{lengthtrace}, that if the length spectra agree on 
simple non-separating curves, then all the eigenvalues agree for these curves. This follows by considering curves of 
the form $\alpha^n\beta$ when $\alpha$ and $\beta$ have geometric 
intersection one and using an asymptotic expansion. A similar argument yields  that ratio of correlation functions  agree for certain triples of curves that only exist in genus 
greater than 2, see Theorem \ref{conjugateontriples-trace}, and a repeated use of 
Theorem \ref{conjugateontriples} concludes the proof of  Theorem \ref{lengthrigidity}. 
Theorem \ref{transverse-bases-general0} is crucially  used several times to show that 
coefficients appearing in asymptotic expansions do not vanish.

\subsection*{Acknowledgements} Section \ref{transverse} uses ideas that are being currently developed by the third author in
collaboration with Olivier Guichard and Anna Wienhard. We have benefitted immensely from discussions with Yves Benoist, 
Sergey Fomin, Olivier Guichard, Andres Sambarino and Anna Wienhard and wish to thank them here.  
This material is partially based upon work supported by the National Science Foundation while
the third author was in residence at the Mathematical Sciences Research Institute in Berkeley, CA, during the
Fall 2016 semester. The authors  also gratefully acknowledge  support from U.S. National Science Foundation grants 
DMS 1107452, 1107263, 1107367 "RNMS: GEometric structures And Representation varieties" (the GEAR Network).

\section{Hitchin representations and limit maps}
\label{sec:Hitchin}

\subsection{Definitions}
Let $S$ be  a closed  orientable surface of genus $g\ge 2$.
A representation $\rho:\pi_1(S)\to \pslt$ is said to be {\em Fuchsian} if it is discrete and faithful.
Recall that Teichm\"uller space $\mc T(S)$ is the subset of
$${\rm Hom}(\pi_1(S),\pslt)/\pglt$$
consisting of (conjugacy classes of) Fuchsian representations. 

Let $\tau_d:\pslt \to\psln$ be the irreducible representation (which is well-defined up to conjugacy).
A representation $\sigma:\pi_1(S)\to\psln$ is said to be {\em $d$-Fuchsian} if it has the form $\tau_d\circ\rho$
for some Fuchsian representation $\rho:\pi_1(S)\to\pslt$. A representation $\sigma:\pi_1(S)\to \psln$
is a {\em Hitchin representation} if it may be continuously deformed to a $d$-Fuchsian representation.
The {\em Hitchin component} $\Hn$ is the component of the space of reductive representations up to conjugacy:
$${\rm Hom}^{\operatorname{red}}(\pi_1(S),\psln)/\pgln$$
consisting of (conjugacy classes of) Hitchin representations.  In analogy with Teichm\"uller space
$\mc T(S)=\mc H_2(S)$,
Hitchin proved that $\Hn$ is a real analytic manifold diffeomorphic to a cell.

\begin{theorem}{\em (Hitchin \cite{hitchin})}
If $S$ is a closed orientable surface of genus $g\ge 2$ and $d\ge 2$, then
$\Hn$ is a real analytic manifold diffeomorphic to $\mathbb R^{(d^2-1)(2g-2)}$.
\end{theorem}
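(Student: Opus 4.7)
The plan is to follow Hitchin's original Higgs-bundle strategy. Fix a conformal structure on $S$, giving a compact Riemann surface $X$ of genus $g$. By the non-abelian Hodge correspondence of Corlette, Donaldson, Hitchin, and Simpson, the character variety
\[
\hom^{\operatorname{red}}(\pi_1(S),\mathsf{SL}(d,\mathbb{C}))/\mathsf{PGL}(d,\mathbb{C})
\]
is real analytically homeomorphic to the moduli space $\mathcal{M}(X)$ of polystable rank $d$ Higgs bundles $(E,\Phi)$ on $X$ satisfying $\det E=\mathcal{O}_X$ and $\tr\Phi=0$. The locus of $\mathsf{SL}(d,\mathbb{R})$-valued reductive representations corresponds to a real form $\mathcal{M}_{\mathbb{R}}(X)\subset\mathcal{M}(X)$, which serves as the starting point for the dimension count.

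Next I would introduce the Hitchin fibration $H\colon \mathcal{M}(X)\to B(X)$, where
\[
B(X)\defeq\bigoplus_{k=2}^{d}H^{0}(X,K_{X}^{k}),
\]
sending $(E,\Phi)$ to the coefficients $(\tr(\Phi^{2}),\ldots,\tr(\Phi^{d}))$ of the characteristic polynomial of $\Phi$. Using a principal $\mathfrak{sl}_2$-triple inside $\mathfrak{sl}_d$, one constructs a real analytic \emph{Hitchin section} $s\colon B(X)\to\mathcal{M}_{\mathbb{R}}(X)$ of $H$ whose image is a closed, connected submanifold. At the origin of $B(X)$ the section returns the Higgs bundle corresponding to the $d$-Fuchsian representation built from the uniformization of $X$ by $\pi_1(S)$, so by connectedness the image of $s$ lies inside the Hitchin component $\Hn$.

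The core step is to show that $s(B(X))=\Hn$. Combining the properness of the Hitchin map with Morse theory for the Hitchin function $\|\Phi\|^{2}$ on $\mathcal{M}_{\mathbb{R}}(X)$, one verifies that $s(B(X))$ is both open and closed in $\Hn$, giving equality. By Riemann--Roch and Serre duality, $\dim_{\mathbb{C}}H^{0}(X,K_{X}^{k})=(2k-1)(g-1)$ for $k\geq 2$, hence
\[
\dim_{\mathbb{R}}B(X)=2(g-1)\sum_{k=2}^{d}(2k-1)=2(g-1)(d^{2}-1)=(d^{2}-1)(2g-2),
\]
and the real analytic embedding $s$ delivers the desired diffeomorphism $\Hn\cong\mathbb{R}^{(d^{2}-1)(2g-2)}$.

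The main obstacle I expect is verifying that the image of $s$ truly exhausts the Hitchin component rather than merely a submanifold of it. The clean argument leverages properness of $H$ together with the characterization of $\Hn$, on the real locus, as the unique connected component along which a generic Higgs bundle is cyclic in the sense compatible with the principal $\mathfrak{sl}_2$-triple used to build $s$; any other candidate component has a different fixed-point structure under the circle action $\Phi\mapsto e^{i\theta}\Phi$ and is excluded on Morse-theoretic grounds.
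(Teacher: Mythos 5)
The paper does not actually prove this statement --- it is quoted from Hitchin's paper --- and your outline is precisely Hitchin's original Higgs-bundle argument (non-abelian Hodge correspondence, the section of the Hitchin fibration built from a principal $\mathfrak{sl}_2$-triple, and the Riemann--Roch count $\dim_{\mathbb R}\bigoplus_{k=2}^d H^0(X,K_X^k)=(d^2-1)(2g-2)$), so it follows the same route as the cited source. The one adjustment I would make is in the "core step": the identification $s(B(X))=\Hn$ does not need Morse theory for $\|\Phi\|^2$ or an analysis of circle-action fixed points --- the section is proper over the Hitchin base (so its image is closed), its image consists of stable Higgs bundles, hence of irreducible representations which are smooth points of the real character variety of dimension $(d^2-1)(2g-2)$, so the image is open by invariance of domain; being connected and containing a $d$-Fuchsian point, it is exactly the component $\Hn$ (with the minor bookkeeping that one works with lifts to $\mathsf{SL}_d(\mathbb R)$ and descends to $\psln$, as Hitchin notes).
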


The {\em Fuchsian locus} is the subset of $\Hn$ consisting of $d$-Fuchsian representations. It is naturally
identified with $\mc T(S)$. 

\subsection{Real-split matrices and proximality}

If $A \in \sln$ is real-split, i.e. diagonalizable over $\mathbb R$,
we may order the eigenvalues $\{\lambda_i(A)\}_{i=1}^d$ so that
$$|\lambda_1(A)| \ge |\lambda_2(A)|  \ge \cdots |\lambda_{d-1}(A)| \geq |\lambda_d(A)|.$$
Let $\{e_i(A)\}_{i=1}^d$ be a basis for $\mathbb R^d$ so that $e_i(A)$ is an eigenvector with eigenvalue $\lambda_i(A)$
and let $e^i(A)$ denote the linear functional so that $\braket{e^i(A) | e_i(A) }=1$ and
$\braket{e^i(A) | e_j(A) }=0$ if $i\ne j$.
Let $\p_i(A)$ denote the projection onto $\braket{e_i(A)}$ parallel to the hyperplane spanned by the other $d-1$ basis elements.
Then, 
$$\p_i(A)(v)=\braket{e^i(A)\ | \ v }\ e_i(A)$$
and we may write
$$A=\sum_{i=1}^d \lambda_i(A)\p_i(A).$$

We say that $A$ is {\em $k$-proximal} if 
$$|\lambda_1(A)| > |\lambda_2(A)| >\ldots |\lambda_k(A)| > |\lambda_{k+1}(A)|$$
and we say that $A$ is {\em purely loxodromic} if it is $(d-1)$-proximal,  in which case it is
diagonalizable over $\mathbb R$ with eigenvalues of distinct modulus.
%\marginpar{Should we use the notation purely loxodromic?}
If $A$ is $k$-proximal, then, for all $i=1,\ldots,k$, $\p_i(A)$ is well-defined and
$e_i(A)$ is well-defined up to scalar multiplication. Moreover, if $A$ is purely loxodromic
$\p_i(A)$ is well-defined and $e_i(A)$ and $e^i(A)$ are well-defined up to scalar multiplication for all $i$.
If $A\in\psln$, we say that $A$ is {\em purely loxodromic} if any lift of $A$ to an element of
$\sln$ is purely loxodromic.

\subsection{Transverse flags and associated bases}

A {\em flag} for $\mathbb R^d$ is a nested family 
$$f=(f^1, f^2,\ldots, f^{d-1})$$
of vector subspaces of $\mathbb R^d$ where $f^i$ has dimension $i$ and $f^i\subset f^{i+1}$ for each $i$. 
Let $\mc F_d$ denote
the space of all flags for $\mathbb R^d$. An $n$-tuple $(f_1,\ldots,f_n) \in \mc F_d^n$ is {\em transverse} if
$$f^{d_1}_1\oplus f^{d_2}_2\oplus\ldots\oplus f^{d_n}_n = \Real^d$$
for any partition $\{ d_i\}_{i\in\{1,\ldots,n\}}$  of $d$. Let $\mc F_d^{(n)}$ be the set of transverse $n$-tuples of flags, 
and note that $\mc F_d^{(n)}$is an open dense subset of  $\mc F_d^n$. 

Two transverse flags $(a,b)$ determine a decomposition of $\mathbb R^d$ as sum of lines $\{L_i(a,b)\}_{i=1}^d$
where 
$$L_i(a,b)=a^i\cap b^{d-i+1}$$
for all $i$.  A basis $\ba{a}{b}=\{e_i\}$ for $\mathbb R^d$ is 
{\em consistent} with $(a,b)\in\mc F_d^{(2)}$ if $e_i\in L_i(a,b)$ for all $i$, or, equivalently, if
$$a^j=\braket{e_1,\ldots,e_j}\quad {\rm and}\quad b^j=\braket{e_d,\ldots,e_{d-j+1}}$$
for all $j$.
In particular,
the choice of basis is well-defined up to scalar multiplication of basis elements.

\subsection{Limit maps}

Labourie \cite{labourie-anosov} associates a limit map from $\partial_\infty\pi_1(S)$ into $\mc F_d$
to every Hitchin representation. This map encodes many crucial properties of the representation.

\begin{theorem}{\rm (Labourie \cite{labourie-anosov})}
\label{hyperconvexity}
If $\rho \in \Hn$, then there exists a unique  continuous \hbox{$\rho$-equivariant} map 
$\xi_\rho: \partial_\infty\pi_1(S) \rightarrow {\mc F}_d$,
such that:
\begin{enumerate}
\item (Proximality) If $\gamma\in\pi_1(S)-\{1\}$, then $\rho(\gamma)$ is purely loxodromic and
$$\xi_\rho^i(\gamma^+)=\braket{e_1(\rho(\gamma)),\ldots,e_i(\rho(\gamma))}$$
for all $i$, where $\gamma^+\in\partial_\infty\pi_1(S)$ is the attracting fixed point of $\gamma$.
\item (Hyperconvexity) 
If $x_1,\ldots, x_k \in \partial_\infty\pi_1(S)$ are distinct and $m_1+\ldots  +m_k = d$, then
$$\xi^{m_1}(x_1)\oplus\ldots\oplus\xi^{m_j}(x_j)\oplus\ldots\oplus\xi^{m_k}(x_k) = \Real^d.$$
\end{enumerate}
\end{theorem}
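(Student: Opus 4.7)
The plan is to establish the theorem by a continuity argument along the connected component $\Hn$. Since the $d$-Fuchsian locus is nonempty and $\Hn$ is connected, it suffices to show (a) the conclusion holds at every $d$-Fuchsian representation, (b) the subset of $\rho\in\Hn$ admitting such a limit map is open, and (c) this subset is closed.

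For (a), I would use the Veronese-type map $\nu:\mathbb{RP}^1\to\mc F_d$ sending a point $[s:t]$ to the osculating flag of the rational normal curve at $[s:t]$. By construction, $\nu$ is equivariant with respect to the irreducible representation $\tau_d:\pslt\to\psln$ and the standard $\pslt$-action on $\mathbb{RP}^1$. Precomposing $\nu$ with the boundary extension of a Fuchsian representation $\rho_0$ produces a continuous, $\tau_d\circ\rho_0$-equivariant map $\xi:\bgrf\to\mc F_d$. Proximality is immediate because $\tau_d$ carries loxodromic elements to regular diagonalizable elements whose eigenvalues form a geometric progression, and the projectors $\p_i$ are precisely the pieces of $\nu$ at the attracting fixed point. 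Hyperconvexity at $k$ distinct points with $\sum m_j=d$ reduces to nonvanishing of a Vandermonde-type determinant, which holds because the underlying points in $\mathbb{RP}^1$ are pairwise distinct.

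For openness (b), I would prove that any $\rho$ satisfying the two conditions is Anosov with respect to a minimal parabolic: the hyperconvexity condition provides the uniform transversality needed to produce contraction/expansion on flag bundles over the unit tangent bundle of $S$, and the existence of the equivariant limit map gives the required equivariant sections. Structural stability of the Anosov property then yields a neighborhood of $\rho$ in $\Hn$ on which the limit map deforms continuously; both proximality (an open spectral gap condition) and hyperconvexity (an open condition in the flag manifold) persist throughout.

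For closedness (c), take $\rho_n\to\rho$ in the subset with limit maps $\xi_n$, and use the uniform Anosov estimates along the sequence to extract a subsequence converging to a continuous $\rho$-equivariant map $\xi$. The main obstacle is that \emph{a priori} eigenvalue moduli could collide or the limit map could fail to remain hyperconvex. The resolution is quantitative: the uniform contraction estimates give lower bounds on the spectral gaps of $\rho_n(\gamma)$ for $\gamma$ of bounded word length, which pass to $\rho$ and force pure loxodromicity; the closed condition of transversality applied to the dense set of tuples of attracting fixed points, combined with continuity of $\xi$, propagates hyperconvexity to all distinct tuples. Finally, uniqueness follows since proximality forces $\xi(\gamma^+)$ to be the attracting fixed flag of $\rho(\gamma)$, and these fixed points are dense in $\bgrf$, so continuity pins down $\xi$ globally.
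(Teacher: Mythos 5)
The paper does not prove this statement at all: it is quoted verbatim as Labourie's theorem and cited to \cite{labourie-anosov}, so the only meaningful comparison is with Labourie's original argument. Your outline does follow the same global strategy as that argument (Fuchsian base case via the osculating flags of the rational normal curve, openness via the Anosov property and structural stability, closedness via limits of equivariant maps, uniqueness from density of attracting fixed points), and parts (a) and the uniqueness step are fine as sketched.

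However, there is a genuine gap at the heart of (b) and especially (c). You justify persistence of hyperconvexity in the limit by saying that ``the closed condition of transversality'' propagates from the dense set of attracting fixed points; but transversality (a direct-sum condition) is an \emph{open} condition, not a closed one, and a limit of hyperconvex maps can a priori degenerate: images of distinct boundary points can become non-transverse, or the whole limit curve can collapse into a proper subspace or lose the higher-order general-position property for triples and longer tuples. Ruling out exactly this degeneration is the technical core of Labourie's theorem; it is not a soft consequence of uniform Anosov estimates, but requires additional input (irreducibility of Hitchin representations to exclude invariant subspaces, a careful analysis of possible degenerations of the curve, and the Frenet/osculation property to control configurations where the $x_i$ collide). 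The same issue undercuts your openness step: hyperconvexity is a condition on \emph{all} tuples of distinct points, including nearly coincident ones, and the fact that it is open for each fixed tuple does not give the required uniformity as points collide; the Anosov property only yields transversality of pairs $\xi(x),\xi(y)$, not the full statement (2). So as written the proposal reproduces the scaffolding of the continuity method but omits, and in one place misstates, the argument that actually makes the closedness (and openness) of hyperconvexity work.
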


Notice that if $\gamma\in\pi_1(S)-\{1\}$ and $\gamma^\pm\in \partial_\infty\pi_1(S)$ are its
attracting and repelling fixed points,  then 
$\rho(\gamma)$ is diagonal with respect to any basis consistent with $(\xi_\rho(\gamma^+),\xi_\rho(\gamma^-))$.
Moreover, if $\sigma$ is in the Fuchsian locus, then $\sigma(\gamma)$ has a lift to $\sln$ all of whose eigenvalues
are positive. Therefore, if $\rho\in\Hn$, then $\rho(\gamma)$ has a lift to $\sln$ with positive eigenvalues and
we define
$$\lambda_1(\rho(\gamma))>\lambda_2(\rho(\gamma))>\cdots>\lambda_d(\rho(\gamma))>0$$
to be the eigenvalues of this specific lift.

It will also be useful to note that any Hitchin representation \hbox{$\rho:\pi_1(S) \rightarrow \psln$} can be lifted to a 
representation $\tilde\rho:\pi_1(S)\to \sln$. Moreover, Hitchin \cite[Section 10]{hitchin} observed that
every Hitchin component lifts to a component of ${\rm Hom}^{\operatorname{red}}(\pi_1(S),\sln)/\sln$. 

\subsection{Other Lie groups and other length functions}

More generally, if $\ms G$ is a split, real simple adjoint Lie group, Hitchin \cite{hitchin}
studies the component
$$\mc H(S,\ms G)\subset {\rm Hom}^{\operatorname{red}}(\pi_1(S),\ms G)/\ms G$$
which contains the composition of a Fuchsian representation into $\pslt$
with an irreducible representation of $\pslt$
into $\ms G$ and shows that it is an analytic manifold diffeomorphic to $\mathbb R^{(2g-2){\rm dim}(\ms G)}$.

If $\rho\in \Hn$, then we define the {\em contragredient} representation $\rho^*\in\Hn$ by
$\rho^*(\gamma)=\rho(\gamma^{-1})^T$ for all $\gamma\in\pi_1(S)$. The {\em contragredient involution}
of $\Hn$ takes $\rho$ to $\rho^*$.

We define the {\em self dual Hitchin representations} -- and accordingly the {\em self dual Hitchin component} $\mc S\Hn$ -- 
to be the fixed points of the contragredient involution.
Since the contragredient involution is an isometry of the pressure metric, $\mc S\Hn$ is a 
totally geodesic submanifold of $\Hn$. 

 Observe then that if $\rho$ is a self dual Hitchin representation and $\gamma\in\pi_1(S)$, then the eigenvalues 
 $\lambda_1(\rho(\gamma)),\ldots,\lambda_d(\rho(\gamma))$ satisfy 
 $\lambda_i^{-1}(\rho(\gamma))=\lambda_{d-i+1}(\rho(\gamma))$ for all $i$.  On the other hand, Theorem 1.2 in \cite{BCLS}
 implies that  if $\lambda_1^{-1}(\rho(\gamma))=\lambda_{d}(\rho(\gamma))$ for all $\gamma$, then $\rho$
 is conjugate to its contragredient $\rho^*$.
Notice that the contragredient involution fixes each point in $\mc H(S,\ms{PSp}(2d,\mathbb R))$,
$\mc H(S,\ms{PSO}(d,d+1))$, and $\mc H(S,\ms{G}_{2,0})$ considered as subsets of $\mc H(S,\ms{PSL}(2d,\mathbb R))$, $\mc H(S,\ms{PSL}(2d+1,\mathbb R))$, and $\mc H(S,\ms{PSL}(7,\mathbb R))$ respectively. Conversely, a self dual representation, being conjugate to its contragredient, is not Zariski dense, hence belongs to such a subset by a result of Guichard \cite{guichard}.
In particular, $\mc S\mathcal H_{2d}(S)=\mc H(S,\ms{PSp}(2d,\mathbb R))$ and 
$\mc S\mathcal H_{2d+1}(S)=\mc H(S,\ms{PSO}(d,d+1))$.

In our work on isometries of the intersection function, it will be useful to consider
the {\em Hilbert length} $L_\gamma^H(\rho)$ of $\rho(\gamma)$ when $\gamma\in\pi_1(S)$ and $\rho\in {\mc H}_{d}(S)$,
where
\begin{eqnarray*}
L_\gamma^H(\rho) & \defeq & \log\lambda_1(\rho(\gamma))-\log\lambda_d(\rho(\gamma))\ ,
\end{eqnarray*}
and similarly the {\em Hilbert length spectrum} as a function on free homotopy 
classes.\footnote{This is called the Hilbert length, since when $d=3$ it is the length of the closed geodesic  in the
homotopy class of $\gamma$ in the Hilbert metric on the strictly convex real projective structure on $S$ with 
holonomy $\rho$, see, for example, Benoist \cite[Proposition 5.1]{benoist-divisible1}.} 
Notice that $L^H_\gamma(\rho)=L^H_{\gamma^{-1}}(\rho)=L^H_{\gamma}(\rho^*)$.
One readily observes that a representation is self dual if and only if  
$L^H_\gamma(\rho)=2L_\gamma(\rho)$ for all non-trivial $\gamma\in\pi_1(S)$.

%% !TEX root =HitchinSimple.tex
\section{Transverse bases}
\label{transverse}

In this section, we prove a strong transversality property for ordered quadruples of flags in 
the limit curve of a Hitchin representation, which we regard as a generalization of the hyperconvexity property established
by Labourie \cite{labourie-anosov} (see Theorem \ref{hyperconvexity}).  (Recall that
any pair $(a,b)$ of transverse flags determines a decomposition of $\mathbb R^d$ into a sum of $d$ lines 
$L_1(a,b)\oplus\cdots\oplus L_d(a,b)$ where $L_i(a,b)=a^i\cap b^{d-i+1}$.)

\medskip\noindent
{\bf Theorem \ref{transverse-bases-general0}.}  {\em
Let $\rho$ be a Hitchin representation of dimension $d$ and let $(a,x,y,b)$ be four cyclically ordered points in the limit curve of $\rho$, 
then any  $d$  lines in
$$\{ L_1(a,b),\ldots,L_d(a,b),L_1(x,y),\ldots,L_d(x,y)\}$$
are in general position.
}

\medskip

The proof of Theorem \ref{transverse-bases-general0}
relies on the theory of positivity developed by Lusztig \cite{lusztig-reductive} and 
applied to representations of surface groups by Fock and Goncharov \cite{fock-goncharov}. It will follow from
a more general result for positive quadruples of flags, see Theorem \ref{transverse-bases-general1}.

\medskip\noindent
{\bf Remark:}
When $\rho\in\mathcal H_3(S)$, there exists a strictly convex domain $\Omega_\rho$ in $\mathbb{RP}^2$ with
$C^1$ boundary so that $\rho(\pi_1(S))$ acts properly discontinuously and cocompactly on $\Omega_\rho$,
see Benoist \cite{benoist-divisible1} and Choi-Goldman \cite{choi-goldman}.
If $\xi_\rho$ is the limit map of $\rho$, then $\xi_\rho^{1}$ identifies $\partial_\infty\pi_1(S)$ with $\partial\Omega_\rho$,
while $\xi_\rho^{2}(z)$ is the plane spanned by the (projective) tangent line to $\partial\Omega_\rho$ at $\xi_\rho^1(z)$.
In this case, Theorem \ref{transverse-bases-general0} is an immediate consequence of the strict convexity of $\Omega_\rho$,
since if $x$ and $y$ lie in the limit curve, then
$L_1(x,y)=x^1$, $L_3(x,y)=y^1$ and $L_2(x,y)$ is the intersection of the tangent lines to $\Omega_\rho$ at
$x^1$ and $y^1$.
Moreover, one easily observes that the analogue of Theorem \ref{transverse-bases-general0} does not hold for
cyclically ordered quadruples of the form $(a,x,b,y)$.

\subsection{Components of positivity}
\label{bruhat cells}
Given  a flag $a$, we define the {\em Schubert cell} $B_a \subset \mathcal F_d$ to be the set of all flags transverse to $a$. 
Let $U_a$ be the group of unipotent elements in the stabilizer of $a$, i.e. 
the set of unipotent upper triangular matrices with respect to a basis $\{ e_i\}$ consistent with $a$.
If $b\in B_a$, we can assume that $\{e_i\}$ is consistent with $(a,b)$, so it is apparent
that the stabilizer of $b$ in $U_a$ is trivial. The lemma below follows easily.

\begin{lemma}
\label{Udiffeo}
If $b\in B_a$, then $B_a=U_a(b)$. Moreover, the map
$$h_b:U_a\to B_a$$
defined by $h_b(u)=u(b)$ is a diffeomorphism.
\end{lemma}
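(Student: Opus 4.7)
The plan is to verify the two claims in turn: that $U_a$ acts simply transitively on $B_a$, so that $h_b$ is a bijection, and then that $h_b$ is smooth with smooth inverse. The whole argument is bookkeeping once a basis consistent with $(a,b)$ is fixed.

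First I would observe that $U_a(b) \subseteq B_a$: any $u \in U_a$ fixes $a$ and is a linear automorphism, so transversality of $b$ and $a$ is preserved under $u$, giving transversality of $u(b)$ and $u(a)=a$. For transitivity, given $b' \in B_a$, pick bases $\{e_i\}$ consistent with $(a,b)$ and $\{f_i\}$ consistent with $(a,b')$. Since both bases are compatible with the same flag $a$ -- both satisfy $a^j = \langle e_1,\ldots,e_j\rangle = \langle f_1,\ldots,f_j\rangle$ -- the linear map $u$ sending $e_i$ to $f_i$ is upper triangular in $\{e_i\}$; after rescaling each $f_i$ (which does not affect $b'$, whose defining lines $\langle f_{d-j+1}\rangle$ are unchanged), one may assume its diagonal entries are all $1$, so $u \in U_a$ and $u(b) = b'$. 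For the stabilizer, if $u \in U_a$ fixes $b$, then in any basis $\{e_i\}$ consistent with $(a,b)$ the element $u$ is simultaneously upper triangular (fixing $a$) and preserves each line $L_i(a,b) = \langle e_i\rangle$ (fixing $b$), hence is diagonal; being also unipotent forces $u = \mathrm{id}$. Thus $h_b$ is a bijection.

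For smoothness, $h_b$ is the restriction of the smooth action map $U_a \times \mc F_d \to \mc F_d$, hence smooth. To upgrade bijectivity to a diffeomorphism I would use equivariance: $h_b(uv) = u \cdot h_b(v)$, together with the fact that each $u \in U_a$ acts as a diffeomorphism on $B_a$, which reduces the question to showing that the differential $d(h_b)_e$ is an isomorphism. Working in the basis $\{e_i\}$ consistent with $(a,b)$: the Lie algebra $\mathfrak{u}_a$ is the space of strictly upper triangular matrices; the stabilizer of $b$ in $\mathrm{GL}_d$ is the subgroup of lower triangular matrices, with Lie algebra $\mathfrak{b}^-$; and $T_b \mc F_d \cong \mathfrak{gl}_d / \mathfrak{b}^-$. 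The differential $d(h_b)_e$ sends $X \in \mathfrak{u}_a$ to $X + \mathfrak{b}^-$, and since $\mathfrak{gl}_d = \mathfrak{u}_a \oplus \mathfrak{b}^-$ this is an isomorphism. Combined with bijectivity, $h_b$ is a diffeomorphism.

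No serious obstacle is expected: the statement is essentially the standard fact that, inside $\mathrm{GL}_d/B$, the unipotent radical of the opposite Borel acts simply transitively on the open Schubert cell of $B$, and the proof is just a careful choice of bases and a dimension-count of tangent spaces.
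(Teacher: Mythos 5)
Your proof is correct and takes the route the paper intends: the paper merely records that, in a basis consistent with $(a,b)$, the stabilizer of $b$ in $U_a$ is trivial and asserts that the lemma ``follows easily.'' Your argument (transitivity by adapting bases to the flag $a$ and rescaling to make $u$ unipotent, the trivial stabilizer, and the upgrade to a diffeomorphism via equivariance and the decomposition $\mathfrak{gl}_d=\mathfrak{u}_a\oplus\mathfrak{b}^-$ at the identity) is exactly the standard filling-in of that assertion.
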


Suppose that $(a,b)\in\mathcal F_d^{(2)}$ and $\ba{a}{b}$ is a basis consistent with the pair $(a,b)$.
Recall that 
$A\in\sln$ is {\em totally positive} with respect to $\ba{a}{b}$, if every minor
in its matrix with respect to the basis $\ba{a}{b}$ is positive.
Similarly,
we say that $A\in\sln$ is {\em totally non-negative} with respect to $\ba{a}{b}$, if every minor
in its matrix with respect to the basis $\ba{a}{b}$ is non-negative.  
Let $U(\ba{a}{b})_{\ge 0}\subset U_a$ be the set of totally non-negative unipotent upper triangular matrices
with respect to $\ba{a}{b}$.
We say that a minor is an
{\em upper minor} with respect to $\ba{a}{b}$ if it is non-zero for some element of $U(\ba{a}{b})_{\geq 0}$.
We then let $U(\ba{a}{b})_{>0}$ be the subset of $U(\ba{a}{b})_{\geq 0}$ consisting of elements all of
whose upper minors with respect to $\ba{a}{b}$ are positive. 
Moreover, let $\Delta(\ba{a}{b})_{>0}$ be the group of matrices which are diagonalizable with respect
to $\ba{a}{b}$ with positive eigenvalues.
Lusztig \cite{lusztig-reductive}  proves that

\begin{lemma}{\rm (Lusztig \cite[Sec. 2.12, Sec. 5.10]{lusztig-reductive}}
\label{U inclusion}
If $(a,b)\in \mathcal F_d^{(2)}$ and  $\ba{a}{b}$ is a basis consistent with the pair $(a,b)$,
then 
$$
U(\ba{a}{b})_{\ge0}U(\ba{a}{b})_{> 0}\subset U(\ba{a}{b})_{> 0} \quad \textrm{and}\quad
\overline{U(\ba{a}{b})_{> 0}}=U(\ba{a}{b})_{\ge 0} \subset U_a.$$

\end{lemma}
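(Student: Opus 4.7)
The plan is to verify both assertions by direct manipulation of minors using the Cauchy--Binet formula, together with a density argument for the closure part. Fixing the basis $\ba{a}{b} = \{e_1,\ldots,e_d\}$ identifies $U_a$ with the group of unipotent upper triangular matrices; write $\Delta_{I,J}(U)$ for the minor on ordered rows $I = (i_1 < \cdots < i_k)$ and columns $J = (j_1 < \cdots < j_k)$. A preliminary observation is that $\Delta_{I,J}$ vanishes identically on $U_a$ unless $I \le J$ componentwise: if $i_\ell > j_\ell$, then the first $\ell$ columns of the submatrix have support in at most $\ell - 1$ rows, forcing linear dependence. I would first prove the converse by exhibiting a concrete totally positive element on which every staircase minor $\Delta_{I,J}$ (with $I \le J$) is strictly positive; the natural candidate is $V_\varepsilon = \exp(\varepsilon X)$ with $X = \sum_{i=1}^{d-1} E_{i,i+1}$. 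This identifies the upper minors with the staircase minors, and in addition $\Delta_{I,I}(U) = 1$ for every unipotent upper triangular $U$.

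For the first inclusion, fix $U \in U(\ba{a}{b})_{\ge 0}$, $V \in U(\ba{a}{b})_{> 0}$, and an upper minor $\Delta_{I,K}$ (so $I \le K$ componentwise). Cauchy--Binet gives
$$\Delta_{I,K}(UV) = \sum_{J} \Delta_{I,J}(U)\,\Delta_{J,K}(V),$$
with $J$ ranging over size-$|I|$ ordered subsets; only summands with $I \le J \le K$ survive, and for these $\Delta_{I,J}(U) \ge 0$ while $\Delta_{J,K}(V) > 0$. The $J = I$ term contributes $\Delta_{I,I}(U)\,\Delta_{I,K}(V) = \Delta_{I,K}(V) > 0$, so $\Delta_{I,K}(UV) > 0$ and $UV \in U(\ba{a}{b})_{>0}$.

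For the closure equation, the inclusion $\overline{U(\ba{a}{b})_{>0}} \subseteq U(\ba{a}{b})_{\ge 0}$ is immediate since non-negativity of minors is a closed condition. For the reverse inclusion, given $U \in U(\ba{a}{b})_{\ge 0}$ I would take the approximant $U V_\varepsilon$ with $V_\varepsilon$ as above: the preliminary step gives $V_\varepsilon \in U(\ba{a}{b})_{>0}$, the first inclusion then yields $U V_\varepsilon \in U(\ba{a}{b})_{>0}$, and $U V_\varepsilon \to U$ as $\varepsilon \to 0^+$.

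The main obstacle is the preliminary identification of the upper minors, which requires verifying that every staircase minor is strictly positive on $V_\varepsilon$. This is the core input from Lusztig's theory and can be handled either by decomposing $V_\varepsilon$ (relative to a reduced expression of the longest Weyl element) as a product of elementary Jacobi generators $x_i(t) = I + t E_{i,i+1}$, each of whose minors are manifestly non-negative, or by a Lindstr\"om--Gessel--Viennot lattice path argument realising the staircase minors as weighted sums over non-intersecting paths.
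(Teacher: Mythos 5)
Your proposal is correct in substance, but it takes a genuinely different route from the paper, which offers no argument at all: the lemma is simply quoted from Lusztig's theory of total positivity in reductive groups (Sections 2.12 and 5.10), developed uniformly for all split groups via canonical bases, whereas you give a self-contained, type-A linear-algebra proof. Your reduction is sound: minors $\Delta_{I,J}$ with $I\not\le J$ vanish identically on $U_a$, so once a single element of $U(\ba{a}{b})_{\ge 0}$ with all staircase minors positive is exhibited, the upper minors are exactly the staircase minors; Cauchy--Binet then gives both total non-negativity of $UV$ and $\Delta_{I,K}(UV)\ge\Delta_{I,I}(U)\,\Delta_{I,K}(V)=\Delta_{I,K}(V)>0$ for $U\in U(\ba{a}{b})_{\ge 0}$ and $V\in U(\ba{a}{b})_{>0}$, which is the semigroup statement; and $UV_\varepsilon\to U$ gives $U(\ba{a}{b})_{\ge 0}\subset\overline{U(\ba{a}{b})_{>0}}$, the reverse inclusion being clear. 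What your route buys is an explicit, elementary argument with a concrete approximant; what the citation buys the authors is a statement valid beyond type A. The only content you defer is the one you flag: strict positivity of all staircase minors of $V_\varepsilon=\exp(\varepsilon\sum_i E_{i,i+1})$. This is true, and your Lindstr\"om--Gessel--Viennot suggestion proves it: up to a power of $\varepsilon$, the minor $\det(1/(j_b-i_a)!)$ is the exponential specialization of a skew Schur function, hence a positive multiple of a count of standard Young tableaux of a skew shape, nonzero exactly when $I\le J$. Your first suggestion, however, is insufficient as stated: non-negativity of the minors of each elementary Jacobi factor only yields $V_\varepsilon\in U(\ba{a}{b})_{\ge 0}$; extracting strict positivity from a factorization over a reduced word requires the Whitney--Loewner/Lusztig theorem (see also Fomin--Zelevinsky \cite{fomin-zelevinsky}) that such products with positive parameters have all staircase minors positive, which is essentially the fact being proved, so rely on the path-counting argument instead.
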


If $i\ne j$ and $t\in\mathbb R$, the elementary Jacobi matrix $J_{ij}(t)$ with respect to 
$\ba{a}{b}=\{e_i\}$ is the matrix such that $J_{ij}=e_j+te_i$ and
$J_{ij}(e_k)=e_k$ if $k\ne i$.  If $i<j$ and $t>0$, then $J_{ij}(t)\in U(\ba{a}{b})_{\ge 0}$.
Moreover, $U(\ba{a}{b})_{\ge 0}$ is generated by elementary Jacobi matrices of this  form
(see, for example, \cite[Thm. 12]{fomin-zelevinsky}). So,
\begin{enumerate}
	\item the semigroup $U(\ba{a}{b})_{\ge 0}$ is connected, and
	\item 
	if $g\in \Delta(\ba{a}{b})$, then 
$g U(\ba{a}{b})_{>0} g^{-1}=U(\ba{a}{b})_{>0}$.
\end{enumerate}  

We define the {\em component of positivity} for $\ba{a}{b}$ as
$$V(\ba{a}{b})\defeq U(\ba{a}{b})_{>0}(b).$$

Lusztig \cite[Thm. 8.14]{lusztig-reductive} (see also Lusztig \cite[Lem. 2.2]{lusztig-partialflag}) identifies
$V(\ba{a}{b})$ with a component of the intersection $B_a\cap B_b$ of two opposite Schubert cells.

\begin{lemma}{\rm (Lusztig \cite[Thm. 8.14]{lusztig-reductive})}
If $(a,b)\in \mathcal F_d^{(2)}$ and  $\ba{a}{b}$ is a basis consistent with the pair $(a,b)$,
then 
$V(\ba{a}{b})$ is a connected component of $B_a\cap B_b$.
\end{lemma}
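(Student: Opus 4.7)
The plan is to show that $V(\ba{a}{b})$ is a nonempty, connected subset of $B_a\cap B_b$ that is both open and closed in the subspace topology; any such subset is necessarily a connected component.

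First I translate the problem into $U_a$. By Lemma \ref{Udiffeo}, the map $h_b\colon U_a\to B_a$, $u\mapsto u(b)$, is a diffeomorphism sending the identity to $b$. Under $h_b^{-1}$, the open subset $B_a\cap B_b\subset B_a$ corresponds to the open subvariety $W:=\{u\in U_a : u(b)\in B_b\}\subset U_a$, cut out by the non-vanishing of a finite family of polynomial minors in the entries of $u$ with respect to the basis $\ba{a}{b}$ — the minors expressing transversality of the pair $(u(b),b)$. I shall call these the \emph{transversality minors}. Connectedness of $V(\ba{a}{b})$ is immediate from the observation recorded in the excerpt that $U(\ba{a}{b})_{>0}$ is a connected semigroup, generated by the elementary Jacobi matrices $J_{ij}(t)$ with $i<j$, $t>0$.

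Next I would establish $V(\ba{a}{b})\subset B_a\cap B_b$ and openness. The transversality minors are, in the basis $\ba{a}{b}$, the anti-diagonal minors of $u$; one verifies that each is non-zero on some element of $U(\ba{a}{b})_{\ge 0}$ (for instance on a suitably generic product of elementary Jacobi matrices), so each is an upper minor in Lusztig's sense. By definition of $U(\ba{a}{b})_{>0}$ every such minor is then strictly positive, which gives both the inclusion $V(\ba{a}{b})\subset B_a\cap B_b$ and the fact that $V(\ba{a}{b})$ is open in $B_a\cap B_b$ (as positivity of finitely many upper minors is an open condition and $h_b$ is a diffeomorphism).

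The main step, and the main obstacle, is closedness. Suppose $u_n\in U(\ba{a}{b})_{>0}$ with $u_n(b)\to c\in B_a\cap B_b$. By Lemma \ref{Udiffeo} one has $u_n\to u_\infty:=h_b^{-1}(c)\in U_a$, and Lemma \ref{U inclusion} gives $u_\infty\in \overline{U(\ba{a}{b})_{>0}}=U(\ba{a}{b})_{\ge 0}$. If $u_\infty\notin U(\ba{a}{b})_{>0}$, then at least one upper minor of $u_\infty$ vanishes, and the delicate point is to propagate this vanishing to force the vanishing of at least one \emph{transversality} minor at $u_\infty$, which would contradict $c\in B_b$. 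This propagation is the heart of Lusztig's theory: the stratification of $U(\ba{a}{b})_{\ge 0}$ by subvarieties indexed by Weyl-group elements — worked out in the cited sections of \cite{lusztig-reductive} — shows that the unique open stratum on which all transversality minors remain non-zero is precisely $U(\ba{a}{b})_{>0}$. Invoking this stratification closes the proof: $V(\ba{a}{b})$ is clopen and connected in $B_a\cap B_b$, hence a connected component.
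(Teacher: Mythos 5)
There is nothing to compare against inside the paper: this lemma is stated with the attribution ``(Lusztig [Thm.\ 8.14])'' and no proof is given, the authors simply import it from \cite{lusztig-reductive} (see also \cite{lusztig-partialflag}). Judged on its own, your clopen-plus-connected skeleton is reasonable, and the easy parts are essentially right: transversality of the pair $(u(b),b)$ is equivalent to the non-vanishing of the $d-1$ corner minors $\Delta_{[1,\ldots,j],[d-j+1,\ldots,d]}(u)$, these are upper minors, hence positive on $U(\ba{a}{b})_{>0}$, which yields $V(\ba{a}{b})\subset B_a\cap B_b$ together with openness; and connectedness can be salvaged, although your attribution is off -- it is $U(\ba{a}{b})_{\ge 0}$, not $U(\ba{a}{b})_{>0}$, that is generated by the Jacobi matrices $J_{ij}(t)$ (no single $J_{ij}(t)$ is totally positive), so to connect $u,u'\in U(\ba{a}{b})_{>0}$ you should, say, travel from $u$ to $uu'$ inside $u\,U(\ba{a}{b})_{\ge 0}$ and from $u'$ to $uu'$ inside $U(\ba{a}{b})_{\ge 0}\,u'$, both of which stay in $U(\ba{a}{b})_{>0}$ by the semigroup property.

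The genuine gap is exactly the step you label the main obstacle, and your proposal does not close it. What closedness requires is the implication: if $u\in U(\ba{a}{b})_{\ge 0}$ and all the corner (transversality) minors of $u$ are non-zero, then $u\in U(\ba{a}{b})_{>0}$; combined with the closedness of $h_b\bigl(U(\ba{a}{b})_{\ge 0}\bigr)$ in $B_a$ this would give $\overline{V(\ba{a}{b})}\cap(B_a\cap B_b)\subset V(\ba{a}{b})$. That implication is a nontrivial positivity criterion -- in substance it \emph{is} the content of Lusztig's Theorem 8.14 (equivalently, of the cell decomposition of $U(\ba{a}{b})_{\ge 0}$ and the identification of its open cell) -- and you neither prove it nor cite a precise statement: the sentence ``the unique open stratum on which all transversality minors remain non-zero is precisely $U_{>0}$'' is itself the unproved assertion, not an off-the-shelf fact. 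As written the argument is therefore circular at its decisive step: you invoke the theorem (or an equivalent of it) to prove the theorem. Either quote Lusztig for the whole statement, as the paper does, or replace that paragraph by an actual argument (for instance via the vanishing-propagation lemmas for totally nonnegative matrices, or Lusztig's parametrization of $U(\ba{a}{b})_{\ge 0}$ by reduced words), which is a substantial piece of work rather than a remark.
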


\subsection{Positive configurations of flags}\label{sec:posi}

We now recall the theory of positive configurations of flags as developed by Fock and Goncharov
\cite{fock-goncharov}.

A triple $(a,x,b)\in\mathcal F_d^{(3)}$ is {\em positive} with respect to a basis
$\ba{a}{b}$ consistent with $(a,b)$ if $x=u(b)$ for some $u\in U(\ba{a}{b})_{> 0}$. 
If $x\in V(\ba{a}{b})$, we define
$$V(a,x,b)=V(\ba{a}{b})$$ 
and notice that $V(a,x,b)$ is the component of $B_a\cap B_b$  which contains $x$.

More generally, a $(n+2)$-tuple $(a,x_n,\ldots,x_1,b)\in\mathcal F_d^{(n+2)}$ of flags is {\em positive} if there exist 
$u_i\in U(\ba{a}{b})_{> 0}$ so that  $x_p=u_1\cdots u_p(b)$ for all $p$. 
By construction, the set of positive $(n+2)$-tuples of flags is connected.
Since
$U(\ba{a}{b})_{> 0}$ is a semi-group, $(a,x_i,b)$ is a positive triple for all $i$ and, more generally,
$(a,x_{i_1},\ldots,x_{i_k},b)$ is
a positive $(k+2)$-tuple whenever $1\le i_i<\cdots<i_k\le n$. 

Fock and Goncharov showed that the positivity of a $n$-tuple is invariant 
under the action of the dihedral group on $n$ elements. 

\begin{proposition}{\em (Fock-Goncharov \cite[Thm. 1.2]{fock-goncharov})}\label{flip}
If $(a_1,\ldots,a_n)$ is a positive $n$-tuple of flags in $\mathcal F_d$, 
then $(a_2,a_3,\ldots,a_n,a_1)$  and $(a_n,a_{n-1},\ldots,a_1)$ are both positive as well.	
\end{proposition}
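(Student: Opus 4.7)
The plan is to reduce the dihedral invariance to its two standard generators: the cyclic rotation $\sigma:(a_1,\ldots,a_n)\mapsto(a_2,\ldots,a_n,a_1)$ and the reversal $\tau:(a_1,\ldots,a_n)\mapsto(a_n,\ldots,a_1)$. I would handle these separately.

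For the reversal I would exploit a symmetry of Lusztig's semigroup. Fix a basis $\varepsilon=\{e_1,\ldots,e_d\}$ consistent with the frame $(a_1,a_n)$, and let $\varepsilon^{\mathrm{op}}=\{e_d,\ldots,e_1\}$, which is consistent with $(a_n,a_1)$. The anti-involution $\iota(g):=w_0g^{-T}w_0^{-1}$, where $w_0$ is the basis-reversing permutation matrix, carries elementary Jacobi matrices $J_{ij}(t)$ (with $i<j$ and $t>0$) to elementary Jacobi matrices for the reversed basis. Since $U(\varepsilon)_{\geq 0}$ is generated by these, $\iota$ restricts to a semigroup isomorphism $U(\varepsilon)_{\geq 0}\to U(\varepsilon^{\mathrm{op}})_{\geq 0}$ sending $U(\varepsilon)_{>0}$ to $U(\varepsilon^{\mathrm{op}})_{>0}$. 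Applying $\iota$ to each factor in the given decomposition $a_{n-p}=u_1\cdots u_p(a_n)$ and reorganizing using Lemma \ref{U inclusion} yields the positive decomposition of $(a_n,\ldots,a_1)$ in the reversed frame.

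For the cyclic rotation I would induct on $n$. The base case $n=3$ is the technical heart: assuming $(a,x,b)$ positive with $x=u(b)$ for some $u\in U(\varepsilon)_{>0}$, I need $(x,b,a)$ positive. Transversality and Lemma \ref{Udiffeo} give a unique $v\in U_x$ with $b=v(a)$ relative to some basis $\varepsilon'$ consistent with $(x,a)$; the real task is to show $v\in U(\varepsilon')_{>0}$. I would verify this by writing the upper minors of $v$ as rational expressions with positive numerators and denominators in the upper minors of $u$ — equivalently, by invoking the Fock--Goncharov triple-ratio coordinates, which are manifestly cyclically invariant and remain positive under relabeling. For $n\geq 4$, the sub-tuples $(a_1,a_2,a_3)$ and $(a_1,a_3,\ldots,a_n)$ are both positive, the latter because the product $u_1u_2$ of the first two unipotent factors remains in $U(\varepsilon)_{>0}$ by Lemma \ref{U inclusion}. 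Applying the triple case yields $(a_2,a_3,a_1)$ positive, while the inductive hypothesis applied to $(a_1,a_3,\ldots,a_n)$ yields $(a_3,\ldots,a_n,a_1)$ positive. Gluing these two configurations along the shared transverse pair $(a_3,a_1)$ — where the semigroup closure $U(\varepsilon'')_{\geq 0}\cdot U(\varepsilon'')_{>0}\subset U(\varepsilon'')_{>0}$ is used to concatenate the two sequences of positive unipotent factors — produces positivity of $(a_2,a_3,\ldots,a_n,a_1)$.

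The main obstacle is the triple base case of the cyclic shift, since positivity is defined relative to a choice of frame and the change-of-frame transformation is genuinely non-trivial: the natural coordinates (upper minors in the two bases) are related by a complicated birational map. Either an explicit computation with Plücker-type identities or a detour through Fock--Goncharov's $\mathcal{X}$-coordinates is needed to make the positivity transparent. A secondary, milder difficulty is the gluing step in the induction: care is required to verify that the unipotent factors produced separately by the $n=3$ case and the inductive hypothesis can indeed be concatenated into a single positive sequence with respect to a common frame.
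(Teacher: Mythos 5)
First, a point of comparison: the paper does not prove this proposition at all — it is quoted directly from Fock--Goncharov (Theorem 1.2 of their paper), and the actual proof there runs through their coordinate description of configurations of flags (triple ratios / cluster $\mathcal X$-coordinates), whose manifest dihedral covariance is what makes the statement true. So your proposal is not competing with an internal argument of this paper, and it must stand on its own.

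As it stands it does not: both halves have genuine gaps. For the cyclic shift you yourself identify the base case $n=3$ as "the technical heart" and then do not carry it out — you only name two possible strategies (Pl\"ucker-type identities, or the Fock--Goncharov coordinates). But that base case \emph{is} the theorem; passing to coordinates and proving that positivity of a triple is equivalent to positivity of the triple ratios is essentially the content of Fock--Goncharov's argument, so deferring it leaves the proof empty where it matters. The reversal step is worse than incomplete: the map $\iota(g)=w_0g^{-T}w_0^{-1}$ is an automorphism of $\ms{PGL}_d(\mathbb R)$ (not an anti-involution), and applying it to the decomposition $x_p=u_1\cdots u_p(b)$ yields a statement about the flags $\iota(x_p)$ — i.e.\ the \emph{dual} flags of the configuration — not a decomposition of the same flags $x_p$ as positive unipotents in $U(\ba{b}{a})_{>0}$ applied to $a$. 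For the reversed tuple the relevant unipotent group is the stabilizer of $b$, and converting the $u_i\in U(\ba{a}{b})_{>0}$ into such factors requires a genuine positivity statement about Gauss/Bruhat factors (already nontrivial for triples: from $x=u(b)$ one must show the unique $v\in U_b$ with $x=v(a)$ is totally positive in the reversed frame), which your symmetry argument does not supply. The concatenation step of your induction, by contrast, is fine and is exactly the kind of manipulation Lemma \ref{U inclusion} and Lemma \ref{Udiffeo} support (compare the proof of Lemma \ref{quadruple check} in the paper); the unproven triple cases are where the real work lies.
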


As a consequence, we see that every sub $k$-tuple of a positive $n$-tuple is itself positive.

\begin{corollary}
\label{sub-positive}
If $(a_1,\ldots,a_n)$ is a positive $n$-tuple of flags in $\mathcal F_d$ and
\hbox{$1\le i_1<i_2<\cdots< i_k\le n$}, then $(a_{i_1},a_{i_2},\ldots,a_{i_k})$ is positive.
\end{corollary}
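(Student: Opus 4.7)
The plan is a short induction on $n - k$, which reduces the problem to showing that for each $j \in \{1,\ldots,n\}$ the $(n-1)$-tuple obtained by deleting $a_j$ from $(a_1,\ldots,a_n)$ remains positive. Iterating this single-deletion step then yields the corollary.

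The interior case $1 < j < n$ is essentially already recorded in the paragraph preceding the statement. Writing the positive tuple in the form of the definition with $a_1$ and $a_n$ playing the roles of $a$ and $b$, and $a_2,\ldots,a_{n-1}$ corresponding to $x_{n-2},\ldots,x_1$, there exist $u_1,\ldots,u_{n-2} \in U(\ba{a_1}{a_n})_{>0}$ with $x_p = u_1\cdots u_p(a_n)$. To eliminate the interior flag $a_j$, I replace the two consecutive factors straddling it by their product, using that $U(\ba{a_1}{a_n})_{>0}\cdot U(\ba{a_1}{a_n})_{>0}\subset U(\ba{a_1}{a_n})_{>0}$, which is part of Lemma \ref{U inclusion}. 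The resulting shorter sequence of unipotents still exhibits positivity, now without $a_j$.

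For the endpoint cases $j = 1$ and $j = n$, I use the dihedral invariance of Proposition \ref{flip}. For $j = 1$, two successive cyclic shifts transform $(a_1,\ldots,a_n)$ into the positive tuple $(a_3,a_4,\ldots,a_n,a_1,a_2)$, in which $a_1$ is now an interior flag. The interior case just handled then allows me to drop $a_1$, producing a positive $(n-1)$-tuple $(a_3,a_4,\ldots,a_n,a_2)$; further applications of Proposition \ref{flip} cyclically rotate this back to $(a_2,a_3,\ldots,a_n)$. The case $j = n$ is symmetric: either apply the reversal from Proposition \ref{flip} first and reduce to $j = 1$, or run the same cyclic-shift maneuver on the other side.

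The only real obstacle is index bookkeeping. The definition lists the middle flags $x_{n-2},\ldots,x_1$ in the reverse of their enumeration order, so when translating between the labelings $(a_1,\ldots,a_n)$ and $(a,x_{n-2},\ldots,x_1,b)$ one must be careful about which $u_i$'s neighbor which $a_j$, and about exactly how many cyclic shifts are needed to reinstate the original ordering after the deletion. Once this is handled, no further positivity input beyond the semigroup property in Lemma \ref{U inclusion} and the dihedral invariance of Proposition \ref{flip} is required.
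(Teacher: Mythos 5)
Your argument is correct and follows essentially the same route as the paper: reduce to deleting a single flag, use the semigroup property of $U(\ba{a}{b})_{>0}$ (the observation recorded just before the corollary) to drop an interior flag, and invoke the dihedral invariance of Proposition \ref{flip} to reposition the tuple when the flag to be removed sits at an end. The paper simply compresses this by rotating so that the deleted flag is always $a_2$, but the content is identical.
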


\begin{proof} 
It suffices to prove that every sub $(n-1)$-tuple of a positive $n$-tuple is positive.
By Proposition \ref{flip}, we may assume that the sub $(n-1)$-tuple has the form
$(a_1,a_3,\ldots,a_n)$ and we have already seen that this $(n-1)$-tuple is positive.
\end{proof}

The main result of the section can now be formulated more generally as a result about positive quadruples.
Its proof will be completed in Section \ref{transverse-bases-proof}.

\begin{theorem}{\sc[Transverse bases for quadruples]}
\label{transverse-bases-general1}
Let  $(a,x,y,b)$ be a positive quadruple in $\mathcal F_d$, then any  $d$  lines in
$$\{ L_1(a,b),\ldots,L_d(a,b),L_1(x,y),\ldots,L_d(x,y)\}$$
are in general position.
\end{theorem}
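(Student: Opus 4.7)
\emph{Plan.} My approach is to choose a basis consistent with the reference pair $(a,b)$, parametrize the quadruple by totally positive unipotents, and reduce the theorem to a non-vanishing statement for certain Pl\"ucker coordinates; the non-vanishing will then follow from Lusztig's positivity.

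First I fix a basis $\{e_i\}_{i=1}^d$ consistent with $(a,b)$, so that $L_i(a,b) = \mathbb{R} e_i$. By the definition of a positive quadruple, there exist $u_1, u_2 \in U(\ba{a}{b})_{>0}$ with $y = u_1\cdot b$ and $x = u_1 u_2 \cdot b$. Since the line decomposition is $\mathsf{SL}_d$-equivariant, $L_i(x, y) = u_1\bigl(L_i(u_2 \cdot b,\, b)\bigr)$. Moreover $L_i(u_2\cdot b, b) = u_2(b^i) \cap b^{d-i+1}$, and expanding $u_2$ as an upper-triangular totally positive unipotent in $\{e_j\}$, the intersection can be solved for explicitly to produce a spanning vector $w_i$ whose coordinates are specific minors of $u_2$, with a controlled sign pattern. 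By Lusztig's characterization of $U_{>0}$ (Lemma \ref{U inclusion}), the relevant upper minors of $u_2$ are strictly positive, so each $w_i$ is a well-defined non-zero vector.

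Given a decomposition $|I|+|J|=d$, the claimed linear independence of $\{e_i\}_{i\in I} \cup \{u_1(w_j)\}_{j\in J}$ amounts to the non-vanishing of the Pl\"ucker coordinate
$$
\Delta_{I,J} = \det\bigl(\text{matrix with columns } e_i\ (i\in I),\ u_1(w_j)\ (j\in J)\bigr).
$$
Cofactor-expansion along the rows indexed by $I$ reduces $\Delta_{I,J}$ to a $|J| \times |J|$ minor of $\bigl[u_1(w_j)\bigr]_{j\in J}$ formed by the complementary rows $\{1,\ldots,d\}\setminus I$. This minor is a polynomial in the entries of $u_1$ and the minors of $u_2$; the goal is to recognise it as a \emph{subtraction-free} polynomial in the canonical (Lusztig--Fomin--Zelevinsky) positive coordinates of the double Bruhat cell. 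Once this is achieved, the non-vanishing follows from the fact that $(u_1,u_2)$ lies in the totally positive cell, by \cite{lusztig-reductive} (see also \cite{fomin-zelevinsky}).

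The main obstacle, and the step requiring the heaviest technical input, is producing the subtraction-free expression for $\Delta_{I,J}$. One route is to recognise $\Delta_{I,J}$ as a generalized minor, or as a cluster monomial, on the relevant double Bruhat cell, where positivity is automatic on the totally positive part. A direct computation handles the case $d=3$ (illustrating that $\Delta_{I,J}$ simplifies to a sum of products of positive upper minors of $u_1$ and $u_2$), and it should extend by induction on the cluster structure. A complementary route is to use that the space of positive quadruples is connected (Section \ref{sec:posi}) and verify the claim in the Fuchsian/Veronese model, where strict projective convexity of a conic-type curve yields the conclusion directly; making closedness of the ``generic position'' locus rigorous within the space of positive quadruples, however, again reduces to the same positivity input.
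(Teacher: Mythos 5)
Your setup — fixing a basis consistent with $(a,b)$, writing $y=u_1(b)$, $x=u_1u_2(b)$ with $u_1,u_2\in U(\ba{a}{b})_{>0}$, and reducing the statement to the non-vanishing of the minors $\Delta_{I,J}$ of the change-of-basis matrix between the $(a,b)$-lines and the $(x,y)$-lines — is the same reduction the paper makes. But the decisive step is missing: you never prove that $\Delta_{I,J}\neq 0$. The assertion that each $\Delta_{I,J}$ can be ``recognised as a generalized minor, or a cluster monomial,'' hence is a subtraction-free polynomial in the factorization coordinates of $(u_1,u_2)$, is exactly the content of the theorem, and a $d=3$ computation plus ``it should extend by induction on the cluster structure'' is not an argument. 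Moreover, as stated the subtraction-free claim cannot hold literally without a sign normalization: the spanning vectors $w_j$ of $L_j(x,y)$ are only defined up to nonzero scale, so for an arbitrary choice the minors have at best controlled signs; some mechanism for fixing consistent normalizations (in the paper, deformation to the Veronese curve) is unavoidable, and your plan does not supply one.

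The route you mention in your last sentence and then set aside is in fact the one that works, and it does not ``reduce to the same positivity input'' in a circular way. The paper does not attack all minors at once: by the Gasca--Pena criterion (see also \cite{fomin-zelevinsky}), total positivity follows from positivity of the \emph{initial} minors alone. Non-vanishing of the initial minors on every positive quadruple is then proved directly: those whose column set contains $1$ follow from transversality of the triple $(a,b,x)$, while the remaining ones (contiguous blocks of $(x,y)$-lines against $b$) require Lemma \ref{permutationstillpositive} — the rearranged flags ${\bf F}_1(P(x,y))$ remain in the component of positivity $V(a,z,b)$ — which rests on the nesting result, Proposition \ref{bruhat lemma}. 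Connectedness of the space of positive quadruples plus an explicit computation at a Veronese configuration (where $A_0=\tau_d(M_0)$ is totally positive) then upgrades non-vanishing of the initial minors to positivity, and Gasca--Pena yields total positivity of $A$, i.e.\ non-vanishing of \emph{all} minors. Without either carrying out your subtraction-free identification or supplying this initial-minor/rearrangement mechanism, the proposal remains an outline with its central step unproved.
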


\subsection{Positive maps}
If $\Sigma$ is a cyclically ordered set with at least 4 elements,
a  map $\gamma:\Sigma\to \mathcal F_d$ is said to be {\em positive} 
if whenever $(z_1,z_2,z_3,z_4)$ is an ordered quadruple
in $\Sigma$, then its image $(\gamma(z_1),\gamma(z_2),\gamma(z_3),\gamma(z_4))$ is a 
positive quadruple in $\mathcal F_d^{(4)}$. 

For example, given an irreducible representation 
$$\tau_d:\pslt\to\psln$$
the $\tau_d$-equivariant {\em Veronese embedding} 
$$\nu_\tau:\partial \hh={\bf P}^1(\mathbb R)\to\mathcal F_d$$
(where $\nu_\tau$ takes the attracting fixed point of $g\in\pslt$ to the attracting fixed point of $\tau_d(g)$)
is a positive map. More generally,
Fock and Goncharov, see also Labourie-McShane \cite[Appendix B]{labourie-mcshane},
showed that the limit map of a Hitchin representation is positive. 

\begin{theorem}{\rm (Fock-Goncharov \cite[Thm 1.15]{fock-goncharov})}
\label{fockgoncharovpositivity}
If $\rho\in\Hn$, then the associated limit map 
$\xi_\rho:\partial_\infty \pi_1(S) \rightarrow {\mathcal F}_d$ is positive.
\end{theorem}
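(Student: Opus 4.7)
My plan is to reduce positivity of $\xi_\rho$ to the $d$-Fuchsian case by a deformation argument, exploiting the fact that $\Hn$ is connected and that positivity is a clopen condition on the space of transverse configurations of flags.

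First, I would handle the base case. For a $d$-Fuchsian representation $\sigma = \tau_d \circ \rho_0$, the limit map factors as $\xi_\sigma = \nu_{\tau_d} \circ \xi_{\rho_0}$, where $\xi_{\rho_0}: \partial_\infty \pi_1(S) \to \mathbb{RP}^1$ is the order-preserving Fuchsian boundary map and $\nu_{\tau_d}: \mathbb{RP}^1 \to \mc F_d$ is the Veronese embedding. It suffices to show that $\nu_{\tau_d}$ sends cyclically ordered quadruples to positive quadruples of flags. Identifying $\mathbb{R}^d$ with $\operatorname{Sym}^{d-1}\mathbb{R}^2$ and computing in the basis $\ba{\nu_{\tau_d}(\infty)}{\nu_{\tau_d}(0)}$ of monomials, I would check that for $t_1 > t_2 > 0$ the unipotent matrices sending $\nu_{\tau_d}(0)$ to the Veronese flags at $t_2$ and $t_1$ are totally positive upper-triangular, since their minors are strictly positive Vandermonde specializations. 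This is the classical observation underlying positivity of the moment curve.

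Next, for a general $\rho \in \Hn$, I would choose a continuous path $\{\rho_t\}_{t \in [0,1]}$ in $\Hn$ with $\rho_0$ a $d$-Fuchsian representation and $\rho_1 = \rho$, which exists since $\Hn$ is contractible by Hitchin's theorem. Labourie's Anosov stability ensures that the limit map depends continuously on the representation, so $\xi_{\rho_t}(z)$ varies continuously in $(t,z) \in [0,1] \times \partial_\infty \pi_1(S)$; moreover, Theorem \ref{hyperconvexity} guarantees that the images of any cyclically ordered quadruple $(z_1, z_2, z_3, z_4)$ in $\partial_\infty \pi_1(S)$ remain in the open set $\mc F_d^{(4)}$ of transverse quadruples for every $t$.

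The final step is to invoke the positivity framework of Section \ref{bruhat cells}. By Lusztig's Theorem 8.14, $V(\ba{a}{b})$ is a connected component of $B_a \cap B_b$. Applying this iteratively — first to the outer pair $(a,b) = (\xi_{\rho_t}(z_1), \xi_{\rho_t}(z_4))$, then to each intermediate flag — shows that positivity of the quadruple $(\xi_{\rho_t}(z_i))_{i=1}^4$ is both open in $\mc F_d^{(4)}$ (positive upper minors form an open condition) and closed there (a limit of positive configurations can only lose positivity by having some upper minor vanish, which forces non-transversality). Continuity of $t \mapsto (\xi_{\rho_t}(z_i))_{i=1}^4$ as a path in $\mc F_d^{(4)}$ then transfers positivity from $t=0$ to $t=1$.

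The main technical hurdle lies precisely in this clopen characterization of positivity for quadruples: for pairs and triples it is a direct consequence of Lusztig's component theorem, but for quadruples one must also verify that the compatibility between the two intermediate flags is clopen, which is essentially the content of the Fock--Goncharov cluster parametrization by strictly positive real coordinates that degenerate only on the boundary of $\mc F_d^{(4)}$. A secondary subtlety is articulating the continuous dependence of $\xi_{\rho_t}$ on $\rho_t$ uniformly on $\partial_\infty \pi_1(S)$, which follows from the Anosov stability theorem and should be stated explicitly before the deformation argument is invoked.
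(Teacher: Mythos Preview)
The paper does not give its own proof of this theorem: it is stated as a result of Fock--Goncharov \cite[Thm.~1.15]{fock-goncharov}, with a pointer to an alternative argument in Labourie--McShane \cite[Appendix~B]{labourie-mcshane}, and is then used as a black box to deduce Theorem~\ref{transverse-bases-general0} from Theorem~\ref{transverse-bases-general1}. So there is nothing in the paper to compare your argument against.

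That said, your outline is essentially the Labourie--McShane strategy that the paper alludes to: verify positivity of the Veronese limit curve for $d$-Fuchsian representations by an explicit minor computation, then propagate along a path in $\Hn$ using that positive $n$-tuples form a union of connected components of $\mc F_d^{(n)}$, together with hyperconvexity (Theorem~\ref{hyperconvexity}) to keep the path inside $\mc F_d^{(4)}$ and continuous dependence of $\xi_\rho$ on $\rho$ from Anosov stability. Your diagnosis of the crux is accurate: the nontrivial ingredient is that positivity is clopen in $\mc F_d^{(n)}$, which for $n\ge 4$ is not an immediate corollary of Lusztig's component theorem for a single Schubert intersection but is precisely the content of the Fock--Goncharov parametrization of positive configurations by strictly positive cluster coordinates (equivalently, one can argue inductively via Lemma~\ref{quadruple check} once the quadruple case is established). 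If you want this to stand as a self-contained proof rather than a sketch, that clopen statement is the piece you would need to either cite precisely or prove.
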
 

Notice that Theorem \ref{transverse-bases-general0} follows immediately from Theorems \ref{transverse-bases-general1}
and \ref{fockgoncharovpositivity}.

\medskip

We observe that one may detect the positivity of a $n$-tuple using only quadruples, which immediately
implies that positive maps take cyclically ordered subsets to positive configurations.

\begin{lemma}
\label{quadruple check}
If $n\ge 2$, then an
$(n+2)$-tuple $(a,x_n,\ldots,x_1,b)$ is positive if and only if $(a,x_{i+1},x_i,b)$ is positive for
all $i=1,\ldots,n-1$.
\end{lemma}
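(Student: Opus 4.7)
The forward implication is immediate from Corollary~\ref{sub-positive}: if $(a,x_n,\ldots,x_1,b)$ is positive, then every sub-configuration, in particular each quadruple $(a,x_{i+1},x_i,b)$, is positive.

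For the converse, the plan is to unpack the positivity data of each quadruple and glue them together. Using the hypothesis, for each $i\in\{1,\ldots,n-1\}$ I obtain $v_i,w_i\in U(\ba{a}{b})_{>0}$ with
\[x_i=v_i(b),\qquad x_{i+1}=v_iw_i(b).\]
The critical observation is that the flag $x_{i+1}$ is described twice by these data: as $v_iw_i(b)$ from the $i$-th quadruple and as $v_{i+1}(b)$ from the $(i+1)$-th. Invoking Lemma~\ref{Udiffeo}, which asserts that the orbit map $h_b\colon U_a\to B_a$ is a diffeomorphism, promotes this equality of flags to a group identity $v_{i+1}=v_iw_i$ in $U_a$. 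Iterating yields $v_i=v_1w_1\cdots w_{i-1}$ for every $i\le n-1$. Setting $u_1:=v_1$ and $u_p:=w_{p-1}$ for $p\ge 2$, each of which lies in $U(\ba{a}{b})_{>0}$ by hypothesis, a one-line induction gives $x_p=u_1\cdots u_p(b)$ for every $p\in\{1,\ldots,n\}$, which is exactly the data required by the definition of a positive $(n+2)$-tuple.

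The residual task is to confirm that the resulting $(n+2)$-tuple lies in $\mathcal F_d^{(n+2)}$. For pairwise transversality, given $p<q$, I transport by the invertible element $u_1\cdots u_p$: this reduces $(x_p,x_q)$ to $(b,(u_{p+1}\cdots u_q)(b))$, and since $u_{p+1}\cdots u_q\in U(\ba{a}{b})_{>0}$ by the semigroup property in Lemma~\ref{U inclusion}, the second point lies in $V(\ba{a}{b})\subset B_a\cap B_b$, hence is transverse to $b$. The direct-sum condition for arbitrary partitions is a structural consequence of the Lusztig/Fock--Goncharov positivity framework for the component $V(\ba{a}{b})$.

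The main step is the diffeomorphism-induced rigidity that coherently identifies overlapping positivity data along their shared flag via Lemma~\ref{Udiffeo}; everything else is bookkeeping within the semigroup $U(\ba{a}{b})_{>0}$. I do not anticipate a serious conceptual obstacle, since the lemma is essentially a statement that positivity of an $(n+2)$-tuple is a \emph{local} condition determined by consecutive quadruples.
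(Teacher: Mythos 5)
Your proof is correct and takes essentially the same route as the paper: the forward direction via Corollary~\ref{sub-positive}, and the converse by extracting $x_i=v_i(b)$, $x_{i+1}=v_iw_i(b)$ from each consecutive quadruple and using the injectivity of $h_b$ in Lemma~\ref{Udiffeo} to force $v_{i+1}=v_iw_i$, then iterating. Your closing remarks on transversality of the resulting tuple go beyond what the paper records (it simply concludes positivity), so no discrepancy arises there.
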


\begin{proof}
Corollary \ref{sub-positive} implies that  if $(a,x_n,\ldots,x_1,b)$ is positive,
then $(a,x_{i+1},x_i,b)$ is positive for all $i$.

Now suppose that  $(a,x_{i+1},x_i,b)$ is positive for all $i=1,\ldots,n-1$. Since $(a,x_2,x_1,b)$ is positive,
there exists $u_1,u_2\in U(\ba{a}{b})_{> 0}$ so that
$x_1=u_1(b)$ and $x_2=u_1u_2(b)$. If we assume that there exists $u_i\in U(\ba{a}{b})_{> 0}$, for all $i\le k<n$,
so that $x_p=u_1\cdots u_p(b)$ for all $p\le k$, then, since $(a,x_{k+1},x_k,b)$ is positive, there exists 
$u_{k+1},v_k\in U(\ba{a}{b})_{> 0}$ such that $x_{k+1}=v_ku_{k+1}(b)$ and $x_k=v_k(b)$. However, 
Lemma \ref{Udiffeo} implies that $v_k=u_1\cdots u_k$. Iteratively applying this argument, we see that
$(a,x_n,\ldots,x_1,b)$ is positive.
\end{proof}

\begin{corollary}
\label{positive maps are positive}
If $\Sigma$ is a cyclically ordered set, $f:\Sigma\to\mathcal F_d$ is a positive map
and $(a_1,\ldots,a_n)$ is a cyclically ordered $n$-tuple in $\Sigma$,
then $(f(a_1),f(a_2),f(a_3),\ldots,f(a_n))$ is a positive $n$-tuple in $\mathcal F_d$.
\end{corollary}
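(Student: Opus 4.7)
The plan is to reduce positivity of an $n$-tuple image to positivity of the $n-3$ inscribed quadruples, via Lemma \ref{quadruple check}. First, for $n=2$ or $n=3$, I would use the assumption that $\Sigma$ has at least four elements to extend the given cyclically ordered $n$-tuple in $\Sigma$ to a cyclically ordered quadruple; applying the hypothesis on $f$ gives a positive quadruple, and Corollary \ref{sub-positive} then yields positivity of the sub-$n$-tuple image.

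For $n\ge 4$, I would rewrite the target tuple $(f(a_1),f(a_2),\ldots,f(a_n))$ in the format used in Lemma \ref{quadruple check} by setting
\[
a=f(a_1),\qquad b=f(a_n),\qquad x_j=f(a_{n-j})\ \text{for}\ j=1,\ldots,n-2,
\]
so that the tuple becomes $(a,x_{n-2},x_{n-3},\ldots,x_1,b)$. By Lemma \ref{quadruple check} (applied with "$n$" there equal to our "$n-2$"), positivity of this $(n)$-tuple is equivalent to the positivity of all inscribed quadruples $(a,x_{i+1},x_i,b)$ for $i=1,\ldots,n-3$.

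Each such quadruple unwinds to $(f(a_1),f(a_{n-i-1}),f(a_{n-i}),f(a_n))$, which is the image under $f$ of $(a_1,a_{n-i-1},a_{n-i},a_n)$. Because $1<n-i-1<n-i<n$ for $i$ in the stated range, this latter quadruple is a cyclically ordered quadruple in $\Sigma$, hence its image is a positive quadruple of flags by the very definition of a positive map. Combining these yields the desired conclusion.

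Since every nontrivial step is supplied directly by the already-proved Lemma \ref{quadruple check} and the definition of positivity for maps, there is no substantive obstacle; the argument is purely a bookkeeping reduction. The only mild subtlety worth mentioning explicitly is the small cases $n\le 3$, where one must invoke the hypothesis $|\Sigma|\ge 4$ (built into the definition of a positive map) to extend the tuple before applying Corollary \ref{sub-positive}.
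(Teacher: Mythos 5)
Your argument is correct and is exactly the route the paper intends: the corollary is presented as an immediate consequence of Lemma \ref{quadruple check}, and your reduction of the image $n$-tuple to the inscribed quadruples $(f(a_1),f(a_{n-i-1}),f(a_{n-i}),f(a_n))$, each positive by the very definition of a positive map, is precisely that observation with the indices written out. Your treatment of the small cases $n\le 3$ via extension and Corollary \ref{sub-positive} is a harmless supplement.
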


The following result allows one to simplify the verification that a map of a finite set into ${\mathcal F}_d$ is positive,
see also Section 5.11 in Fock-Goncharov \cite{fock-goncharov}

\begin{proposition}\label{prop:triangul}
Let $P$ be a finite  set in $\partial_\infty\hh$ and $\mathcal T$ be an
ideal triangulation  of the convex polygon spanned by $P$.
A map $f:P\to \mathcal F_d$ is positive if whenever $(x,y,z,w)$ are the (cyclically ordered) vertices of 
two ideal triangles in $\mathcal T$ which share an edge, then  $(f(x),f(y),f(z),f(w))$ is a positive quadruple.
\end{proposition}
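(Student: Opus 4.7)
The plan is to proceed by induction on $n=|P|$. The base case $n=4$ is immediate: a quadrilateral has a single interior edge whose two adjacent triangles exhaust $P$, and the hypothesis directly asserts the positivity of the only cyclic quadruple in $P$.

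For the inductive step with $n\ge 5$, I would use the classical combinatorial fact that any triangulation of a convex polygon has at least two ears, and after a cyclic relabeling of $P$ (harmless by Proposition~\ref{flip}) I may assume that $(p_1,p_2,p_3)$ is such an ear of $\mathcal T$. Let $(p_1,p_3,p_m)$ be the triangle of $\mathcal T$ sharing the interior edge $p_1p_3$ with this ear, for some $m\in\{4,\ldots,n\}$. The hypothesis then provides positivity of the quadruple $(f(p_1),f(p_2),f(p_3),f(p_m))$. Deleting $p_2$ and the ear triangle $(p_1,p_2,p_3)$ yields a triangulation $\mathcal T'$ of the $(n-1)$-gon $P\setminus\{p_2\}$; every interior edge of $\mathcal T'$ is an interior edge of $\mathcal T$ with the same pair of adjacent triangles, so the hypothesis is inherited and the induction yields positivity of $f|_{P\setminus\{p_2\}}$. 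In particular every cyclic quadruple of $P$ avoiding $p_2$ has positive image.

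The remaining task is to handle cyclic quadruples containing $p_2$. The strategy is to promote each such quadruple to a positive $5$-tuple by adjoining a well-chosen fifth flag, then extract the desired positivity via Corollary~\ref{sub-positive}. The positivity of each $5$-tuple is in turn obtained from Lemma~\ref{quadruple check} after a suitable cyclic rotation (Proposition~\ref{flip}): both neighboring quadruples will reduce, by dihedral symmetry, either to the hypothesis quadruple $(f(p_1),f(p_2),f(p_3),f(p_m))$ or to a cyclic quadruple avoiding $p_2$ already supplied by the induction hypothesis. Concretely, I would first establish $(f(p_1),f(p_2),f(p_3),f(p_k))$ positive for every $k\in\{4,\ldots,n\}$ by inserting $f(p_m)$ and splitting the cases $k\le m$ and $k>m$; then bootstrap to $(f(p_1),f(p_2),f(p_i),f(p_k))$ for $3<i<k\le n$ by inserting $f(p_3)$; and finally reach $(f(p_2),f(p_i),f(p_j),f(p_k))$ for $3\le i<j<k\le n$ by inserting $f(p_1)$. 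Cyclic quadruples of the form $(p_2,q,r,p_1)$ are reduced by Proposition~\ref{flip} to $(p_1,p_2,q,r)$, already handled above.

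The main obstacle is the bookkeeping in the last paragraph: in each sub-case one must identify a cyclic rotation of the chosen $5$-tuple for which both neighboring quadruples of Lemma~\ref{quadruple check} are already known to be positive. The delicate instance is $k>m$ in the first sub-step, where the natural ordering fails and the $5$-tuple $(f(p_1),f(p_2),f(p_3),f(p_m),f(p_k))$ must be cyclically rotated so as to place $f(p_m)$ between $f(p_k)$ and $f(p_1)$; after this rotation, one neighbor becomes a rotation of the hypothesis quadruple $(f(p_1),f(p_2),f(p_3),f(p_m))$ and the other becomes a rotation of the inductively positive quadruple on $\{p_1,p_3,p_m,p_k\}$, so Lemma~\ref{quadruple check} applies.
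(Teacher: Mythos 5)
Your argument is correct, but it follows a genuinely different route from the paper. The paper proves the statement by showing that the hypothesis (positivity of the quadruples coming from the two triangles adjacent to each interior edge) is invariant under an elementary move (edge flip), using Proposition \ref{flip} and Lemma \ref{quadruple check} to promote the two old quadruples to a positive quintuple and extract the new one; it then invokes the flip-connectivity of the set of ideal triangulations, together with the fact that any cyclically ordered quadruple of $P$ occurs as the vertex set of two adjacent triangles in \emph{some} triangulation. You instead induct on $|P|$ by removing the tip of an ear, which trades the flip-connectivity input for the more elementary fact that every triangulation of a convex polygon has an ear, at the cost of heavier bookkeeping: you must regenerate all quadruples through the deleted vertex by a three-stage bootstrap (first $(p_1,p_2,p_3,p_k)$, then $(p_1,p_2,p_i,p_k)$, then $(p_2,p_i,p_j,p_k)$), each time assembling a quintuple whose two Lemma \ref{quadruple check} neighbours are already known and then applying Corollary \ref{sub-positive}. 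I checked the delicate case $k>m$: the rotation that works is $(f(p_m),f(p_k),f(p_1),f(p_2),f(p_3))$, whose neighbour quadruples are rotations of $(f(p_1),f(p_3),f(p_m),f(p_k))$ (inductive) and of $(f(p_1),f(p_2),f(p_3),f(p_m))$ (hypothesis), exactly as you claim; note only that your phrase ``rotated so as to place $f(p_m)$ between $f(p_k)$ and $f(p_1)$'' is not literally what happens (a cyclic rotation cannot change betweenness; what changes is which two flags sit in the endpoint slots of Lemma \ref{quadruple check}), but this is a slip of wording, not of substance. Both proofs ultimately rest on the same algebraic ingredients (Proposition \ref{flip}, Lemma \ref{quadruple check}, Corollary \ref{sub-positive}); the paper's version is shorter and makes the role of elementary moves transparent, while yours avoids appealing to connectivity of the flip graph and is self-contained at the level of polygon combinatorics.
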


\begin{proof}
Suppose $\mathcal T'$ is obtained from $\mathcal T$ by replacing an internal edge of $\mathcal T$
by an edge joining the opposite vertices of the adjoining triangles. In this case, we say that $\mathcal T'$ is obtained from
$\mathcal T$ by performing an {\em elementary move}.
 Label the vertices of the original edge by $a$ and $b$
and the vertices of the new edge by $x$ and $y$, so that the vertices occur in the order $(a,x,b,y)$ in $\partial_\infty\hh$.
If the edge $(y,a)$ abuts another triangle with additional vertex $z$, then $(a,x,y,z)$ is a cyclically ordered collection
of points in $P$ which are the vertices of two ideal triangles in $\mathcal  T'$  which share an edge.
By our original assumption on $\mathcal T$, $(f(a),f(x),f(b),f(y))$ and $(f(a),f(b),f(y),f(z))$ are positive,
so, by Proposition \ref{flip}, $(f(y),f(a),f(x),f(b))$ and $(f(y),f(z),f(a),f(b))$ are positive.
Lemma \ref{quadruple check} then implies   that $(f(y),f(z),f(a),f(x),f(b))$ is positive.
Another  application of Proposition \ref{flip} gives that $(f(a),f(x),f(b),f(y),f(z))$ is positive,
so $(f(a),f(x),f(y),f(z))$ is positive. One may similarly check that all the images of
cyclically ordered vertices of two ideal triangles which share an edge in $\mathcal T'$ have positive image.
Since any two ideal triangulations can be joined
by a sequence of triangulations so that consecutive triangulations differ by an elementary move,
any ordered sub-quadruple of $P$ has  positive image. Therefore, $f$ is a positive map.
\end{proof}

\subsection{Complementary components of positivity}

If $(a,b)\in\mathcal F_d^{(2)}$ and $\ba{a}{b}=\{e_i\}$ is a basis consistent with $(a,b)$,
then one obtains a complementary basis $\sigma(\ba{a}{b})=\{(-1)^i e_i\}$ which is also
consistent with $(a,b)$. We first observe that for a positive sextuple $(x,y,a,u,v,b)$, then the
the components of positivity for $(a,b)$ containing $\{u,v\}$ and $\{x,y\}$ are
associated to complementary bases.
The proof proceeds by first checking the claim for configurations in the image of a Veronese embedding
and then applying a continuity argument.

\begin{lemma}
\label{bruhat complement}
If $(x,y,a,u,v,b)$ is  a positive sextuple of flags and $\ba{a}{b}$ is a basis consistent with $(a,b)$ so
that $V(\ba{a}{b})$ contains $\{u,v\}$, 
then $V(\sigma(\ba{a}{b}))$ contains $\{x,y\}$.
\end{lemma}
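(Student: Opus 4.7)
The plan is to prove the lemma in two stages: first verify the claim for a single model configuration coming from a Veronese embedding, and then extend to every positive sextuple by a connectedness argument.

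\emph{Reduction to a continuity principle.} I would first observe that for any positive sextuple $(x, y, a, u, v, b)$, Corollary \ref{sub-positive} gives that $(a, u, v, b)$ is a positive quadruple, and Lemma \ref{U inclusion} then implies that $u$ and $v$ lie in a common component of $B_a \cap B_b$. This component determines the basis $\ba{a}{b}$ consistent with $(a, b)$ uniquely up to positive rescaling, hence fixes the sign-reversed basis $\sigma(\ba{a}{b})$ and its component of positivity $V(\sigma(\ba{a}{b})) \subset B_a \cap B_b$ as well. As the sextuple varies, these data vary continuously; since (by Lusztig) $B_a \cap B_b$ has only finitely many components, each being both open and closed inside $B_a \cap B_b$, the two conditions ``$x \in V(\sigma(\ba{a}{b}))$'' and ``$y \in V(\sigma(\ba{a}{b}))$'' are locally constant on the space $\mathcal Q_d$ of positive sextuples. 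By the discussion in Section \ref{sec:posi}, $\mathcal Q_d$ is connected, so it is enough to verify the conclusion at one well-chosen sextuple.

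\emph{The Veronese base case.} To provide such a sextuple, I would take the Veronese embedding $\nu_\tau : \mathbb{RP}^1 \to \mathcal F_d$ attached to the irreducible representation $\tau_d : \pslt \to \psln$, which is positive, and choose six cyclically ordered points $\hat x, \hat y, \hat a, \hat u, \hat v, \hat b \in \mathbb{RP}^1$ normalized with $\hat a = \infty$ and $\hat b = 0$. Setting $(x, y, a, u, v, b) := (\nu_\tau(\hat x), \ldots, \nu_\tau(\hat b))$, I would identify $\mathbb R^d$ with the $(d{-}1)$-st symmetric power of $\mathbb R^2$ equipped with its standard basis $(X, Y)$, and pick $\ba{a}{b} = \{e_i = X^{d-i} Y^{i-1}\}_{i=1}^d$, which is consistent with $(a, b)$. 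A direct $\nu_\tau$-equivariance calculation will show that the unipotent $g_s = \begin{pmatrix} 1 & s \\ 0 & 1 \end{pmatrix}$ acts on this symmetric power by an upper triangular matrix $M(s)$ whose entries are binomial multiples of powers of $s$, with $\nu_\tau([s:1]) = M(s) \cdot b$. For $s > 0$, a standard total positivity computation places $M(s)$ inside $U(\ba{a}{b})_{>0}$. For $s < 0$, passing from $\ba{a}{b}$ to $\sigma(\ba{a}{b}) = \{(-1)^i e_i\}$ multiplies the $(j, i)$ entry by $(-1)^{i - j}$, which turns $M(s)$ into a totally positive matrix in the new basis, placing it in $U(\sigma(\ba{a}{b}))_{>0}$. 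Hence $\nu_\tau([s:1]) \in V(\ba{a}{b})$ for $s > 0$ and $\nu_\tau([s:1]) \in V(\sigma(\ba{a}{b}))$ for $s < 0$. The cyclic order on $\mathbb{RP}^1$ forces $\hat u, \hat v$ to lie on one arc of $\mathbb{RP}^1 \setminus \{\hat a, \hat b\}$ and $\hat x, \hat y$ on the other, which delivers the lemma for this sextuple and, through Step 1, for every positive sextuple.

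\emph{Where the work lies.} The substantive part of the argument is the sign bookkeeping in the Veronese step: one must match the sign pattern $(-1)^i$ defining $\sigma(\ba{a}{b})$ to the sign of the parameter $s$ labelling the two arcs of $\mathbb{RP}^1 \setminus \{\hat a, \hat b\}$. This is concrete but must be done carefully, since it is precisely this matching that turns the algebraic involution $e_i \mapsto (-1)^i e_i$ on bases into the geometric passage from one arc to the opposite one. Once this step is in place, the connectedness of $\mathcal Q_d$ and the local constancy observed in Step 1 transport the conclusion to the general case without further effort.
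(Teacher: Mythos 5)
Your proposal is correct and follows essentially the same route as the paper: verify the statement on a Veronese model configuration and transport it to every positive sextuple using connectedness of the space of positive sextuples together with local constancy of membership in components of $B_a\cap B_b$. The only differences are cosmetic: the paper settles the Veronese case via the involution induced by conjugation by $\mathrm{diag}((-1)^i)$, which simultaneously swaps the two arcs and the two components $V(\ba{a}{b})$, $V(\sigma(\ba{a}{b}))$, and it avoids your basis-comparison point (that the component containing $\{u,v\}$ already determines $V(\sigma(\ba{a}{b}))$) by constructing the Veronese embedding adapted to the given basis $\ba{a}{b}$ from the start.
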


\begin{proof}
Consider the irreducible representation 
\hbox{$\tau_d:\pslt\to\psln$}
taking matrices diagonal in the
standard basis for $\mathbb R^2$ to matrices diagonal with respect to $\ba{a}{b}$. 
This gives rise to a Veronese embedding 
\hbox{$\nu_\tau:\partial \hh=S^1\to\mathcal F_d$}
taking $\infty$ to $a$ and $0$ to $b$.  

The involution of $\mathcal F_d$ induced by 
conjugating by the diagonal matrix $D$, in the basis $\ba{a}{b}$,
with entries $((-1)^i)$ interchanges
the components of  \hbox{$\nu_\tau(S^1)-\{a,b\}$} and interchanges $V(\ba{a}{b})$ and
$V(\sigma(\ba{a}{b}))$. Therefore, our result holds when $x$, $y$, $z$ and $w$ lie in the image of $\nu_\tau$.

Since $\nu_\tau$ is positive and the set of positive sextuples is connected, there is a
 family of positive maps $\xi_t:\{x,y,a,u,v,b\}\to \mathcal F_d$ so that the image of $\xi_0$ lies on the image of the 
Veronese embedding and $\xi_1=\operatorname{Id}$. Since  $\psln$ acts transitively on  space of 
pairs of transverse flags, we may assume that $\xi_t(a)=a$ and $\xi_t(b)=b$ for all $t$.
Notice that each of  $\xi_t(\{x,y\})$ and $\xi_t(\{x,y\})$ lies in a component of $B_a\cap B_b$ for all $t$.
Since $\xi_1(\{u,v\})\subset V(\ba{a}{b})$, $\xi_t(\{u,v\})\subset V(\ba{a}{b})$ for all $t$.
Since $\xi_0(\{u,v\})\subset V(\ba{a}{b})$ and $\xi_0(x,y,a,u,v,b)$ lies in the image of an Veronese embedding,
$\xi_0(\{x,y\})\subset V(\sigma(\ba{a}{b}))$, which in turn implies that
$\xi_t(\{x,y\})\subset V(\sigma(\ba{a}{b}))$ for all $t$.
\end{proof}

We next observe that  the closures of  complementary components of positivity intersect in at most one
point within an associated Schubert cell.

\begin{proposition}
\label{circle prop}
If $(a,b)\in\mathcal F_d^{(2)}$ and $\ba{a}{b}$ is a basis consistent with $(a,b)$, then
$$B_a\cap \overline{V(\ba{a}{b})}\cap \overline{V(\sigma(\ba{a}{b}))}=\{b\}.$$
\end{proposition}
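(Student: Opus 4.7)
The strategy is to reduce the proposition to showing that the only unipotent matrix totally non-negative with respect to both $\ba{a}{b}$ and $\sigma(\ba{a}{b})$ is the identity, and then to prove this by a parity argument on minors. First, $b$ itself lies in the intersection: $b \in B_a$ since $(a,b) \in \mathcal F_d^{(2)}$, and the identity matrix has all non-negative minors with respect to either basis, so Lemma \ref{U inclusion} gives $b = I(b) \in \overline{V(\ba{a}{b})} \cap \overline{V(\sigma(\ba{a}{b}))}$.

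Conversely, suppose $c$ is in the intersection. By Lemma \ref{U inclusion} one may write $c = u(b) = v(b)$ for some $u \in U(\ba{a}{b})_{\ge 0}$ and $v \in U(\sigma(\ba{a}{b}))_{\ge 0}$. Since upper triangularity is unaffected by sign changes of the basis vectors, both $u$ and $v$ lie in the unipotent group $U_a$, so $v^{-1}u \in U_a$ fixes $b$. Lemma \ref{Udiffeo} then forces $u = v$, and the problem reduces to showing that any $u \in U(\ba{a}{b})_{\ge 0} \cap U(\sigma(\ba{a}{b}))_{\ge 0}$ is the identity.

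For this last step, observe that the change of basis from $\ba{a}{b}$ to $\sigma(\ba{a}{b})$ is conjugation by the diagonal matrix $D = \mathrm{diag}((-1)^i)$, so each $(I,J)$-minor of $u$ is multiplied by the sign $(-1)^{\sum I + \sum J}$. Being totally non-negative in both bases therefore forces every minor with $\sum I + \sum J$ odd to vanish. Applied to single entries, this gives $u_{ij} = 0$ whenever $i+j$ is odd; in particular every $u_{i,i+1}$ vanishes. Applied next to the $2 \times 2$ minor on rows $\{i, i+1\}$ and columns $\{i+1, k\}$ with $k > i+1$, this minor expands, using $u_{i,i+1}=0$ and $u_{i+1,i+1}=1$, to $-u_{i,k}$; combined with the sign constraint this forces $u_{i,k} = 0$ for the remaining case $i+k$ even as well. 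Hence all off-diagonal entries of $u$ vanish, $u = I$, and $c = b$. The principal difficulty is pinning down this last family of $2\times 2$ minors, which is what actually isolates the even-parity off-diagonal entries once the odd-parity ones have been killed by the single-entry argument.
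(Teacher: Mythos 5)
Your proof is correct and follows essentially the same route as the paper: reduce, via the diffeomorphism $h_b$ of Lemma \ref{Udiffeo} and the closure statement of Lemma \ref{U inclusion}, to showing $U(\ba{a}{b})_{\ge 0}\cap U(\sigma(\ba{a}{b}))_{\ge 0}=\{I\}$, and then kill the off-diagonal entries using the sign $(-1)^{i+j}$ picked up under conjugation by $\mathrm{diag}((-1)^i)$ together with a $2\times 2$ minor that evaluates to $-u_{i,k}$. The only cosmetic difference is the choice of minors (the paper picks a nonzero entry closest to the diagonal and uses rows $\{i,l\}$, columns $\{l,j\}$; you use parity to clear the superdiagonal and then rows $\{i,i+1\}$, columns $\{i+1,k\}$), which changes nothing essential.
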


\begin{proof}
By Lemma \ref{Udiffeo},  
$$V(\ba{a}{b})=h_b(U(\ba{a}{b})_{>0})\subset h_b(U(\ba{a}{b})_{\ge 0})\subset h_b(U_a)=B_a$$
and $h_b(U(\ba{a}{b})_{\ge 0})$ is a closed subset of $B_a$, since $h_b$ is a diffeomorphism.
So 
$$B_a\cap \overline{V(\ba{a}{b})}\subset h_b(U(\ba{a}{b})_{\ge 0})\quad {\rm and}\quad
B_a\cap\overline{V(\sigma(\ba{a}{b}))}\subset h_b(U(\sigma(\ba{a}{b}))_{\ge 0}).$$

Thus, again since $h_b$ is a diffeomorphism,
\begin{eqnarray*}
B_a\cap \overline{V(\ba{a}{b})}\cap \overline{V(\sigma(\ba{a}{b}))} & \subset &
h_b(U(\ba{a}{b})_{\ge 0})\cap h_b(U(\sigma(\ba{a}{b}))_{>0}\\
& = & h_b\left(U(\ba{a}{b})_{\ge 0}\cap U(\sigma(\ba{a}{b}))_{\ge 0}\right)\\
& = &  \left(U(\ba{a}{b})_{\ge 0}\cap U(\sigma(\ba{a}{b}))_{\ge 0}\right)(b)\\
\end{eqnarray*}

So Proposition \ref{circle prop} follows from the following lemma:

\begin{lemma}
$$U(\ba{a}{b})_{\ge 0}\cap U(\sigma(\ba{a}{b}))_{\ge 0}=\{I\}.$$
\end{lemma}

\begin{proof}
Let $A=(a_{ij})\in U(\ba{a}{b})_{\ge 0}\cap U(\sigma(\ba{a}{b}))_{\ge 0}$
be written with respect to the basis $\ba{a}{b}$. Notice that if we let $\overline{a_{ij}}$ be
the matrix coefficients for $A$ with respect to the basis $\sigma(\ba{a}{b})$, then
$a_{ij}=(-1)^{i+j}\overline{a_{ij}}$. It follows immediately that $a_{ij}=0$ if $i+j$ is odd.

If $A\ne I$, let $a_{ij}>0$ be a non-zero off-diagonal term which is closest to the diagonal, {\it i.e.}
$a_{lj}=0$ if $l\ne j$ and $l>i$ and $a_{il}=0$ if $l\ne i$ and $l<j$. If $l\in (i,j)$, we consider the minor
$$\begin{bmatrix}  a_{il} & a_{ij} \\ a_{ll} & a_{lj} \end{bmatrix}=
\begin{bmatrix}  0 & a_{ij} \\ 1& 0 \end{bmatrix}$$
which has determinant $-a_{ij}$, so contradicts the fact that $A$ is totally non-negative.

\end{proof}
\end{proof}

\subsection{Nesting of components of positivity}

We will need a strict containment property for components of positivity associated to positive
quintuples.

\begin{proposition}
\label{bruhat lemma}
If $(a,x,z,y,b)$ is a positive quintuple in $\mathcal F_d$, then 
$$\overline{V(x,z,y)}\subset V(a,z,b).$$
\end{proposition}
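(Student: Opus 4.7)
The plan is to establish the proposition in two stages: first the open containment $V(x,z,y) \subset V(a,z,b)$, then the upgrade to the closure.

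For the open containment, the crucial observation is that $V(a,z,b)$ is a connected component of the open set $B_a \cap B_b$ in $\mathcal F_d$, hence clopen in $B_a \cap B_b$. Since $V(x,z,y)$ is connected and contains $z \in V(a,z,b)$, it suffices to show $V(x,z,y) \subset B_a \cap B_b$, i.e.\ every $w \in V(x,z,y)$ is transverse to both $a$ and $b$. To establish this, I would choose a basis $\varepsilon$ consistent with $(a,b)$ with $V(\varepsilon)=V(a,z,b)$ --- possible because the positivity of $(a,x,z,y,b)$ places $x,z,y$ in a single component of $B_a \cap B_b$ --- and use positivity of the quintuple to produce $u_1,u_2,u_3 \in U(\varepsilon)_{>0}$ with $y=u_1(b)$, $z=u_1u_2(b)$, and $x=u_1u_2u_3(b)$. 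Given $w \in V(x,z,y)$, I would write $w = s(y) = su_1(b)$ with $s \in U(\varepsilon')_{>0}$ for a basis $\varepsilon'$ consistent with $(x,y)$ obtained by transporting $\varepsilon$ through $u_1$. The technical core is then to rewrite $su_1$ as $u_1 v$ for some $v \in U(\varepsilon)_{>0} \subset U_a$; once such $v$ is found, $w = u_1 v(b) \in V(\varepsilon) = V(a,z,b) \subset B_a \cap B_b$, yielding both the required transversality and the open containment in one stroke. Producing $v$ relies on Lusztig's structure theorems (notably Lemma \ref{U inclusion}) together with the compatibility of totally positive data across the two related bases.

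For the closure, I would run the same argument with $U_{>0}$ replaced throughout by its closure $U_{\ge 0}$. The key point is that $u_1$ remains strictly positive, and Lemma \ref{U inclusion} ensures that the product $u_1 v$ stays inside $U(\varepsilon)_{>0}$ even when $v$ is only totally non-negative; thus every $w \in \overline{V(x,z,y)}$ is of the form $u_1 v(b) \in V(\varepsilon) = V(a,z,b)$.

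The principal obstacle is the factorization step: translating positivity with respect to a basis adapted to $(x,y)$ into positivity with respect to one adapted to $(a,b)$. This is not a formal consequence of the definitions --- it reflects a genuine compatibility within Lusztig's total positivity theory, and in particular requires the careful choice of $\varepsilon'$ from $\varepsilon$ via the chain $u_1,u_2,u_3$. If this direct algebraic route proves too delicate, an alternative plan of attack is a continuity/deformation argument: verify the proposition for positive quintuples in the image of a Veronese embedding (where the $\tau_d(\mathsf{PSL}_2(\mathbb R))$-equivariance makes the components of positivity very explicit, and the inclusion reduces to an elementary circular-arc statement), and extend to all positive quintuples by connectedness of the space of positive quintuples, exploiting the stability of ``compact set inside an open set'' under sufficiently small continuous deformations of the data $(a,x,z,y,b)$.
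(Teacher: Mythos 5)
Your central step does not go through. Writing $w=s(y)=su_1(b)$ with $s\in U(\ba{x}{y})_{>0}$ and then ``rewriting $su_1$ as $u_1v$ with $v\in U(\ba{a}{b})_{>0}$'' is impossible as a group-theoretic identity: $su_1=u_1v$ forces $v=u_1^{-1}su_1$, hence $s=u_1vu_1^{-1}\in U_a$ (since $u_1\in U_a$); but $s$ lies in the unipotent stabilizer of $x$, and $U_x\cap U_a=\{1\}$ because $x$ and $a$ are transverse, so only $s=1$ admits such a factorization. If instead you only ask for equality of the actions on $b$, i.e.\ $su_1(b)=u_1v(b)$ for some $v\in U(\ba{a}{b})_{>0}$, then the existence of such a $v$ is exactly the assertion $w\in V(a,z,b)$ that you are trying to prove, so the argument is circular. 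This is not a removable technicality: passing from positivity relative to the pair $(x,y)$ to positivity relative to $(a,b)$ is the genuine content of the proposition. Your closure step has a second gap of the same nature: points of $\overline{V(x,z,y)}$ lying outside the Schubert cell $B_x$ (for instance the rearranged flags ${\bf F}_1(P(x,y))$ of Lemma \ref{permutationstillpositive}) are not of the form $v(y)$ with $v\in U(\ba{x}{y})_{\ge 0}$ --- the nonnegative semigroup only parametrizes $\overline{V}\cap B_x$ --- so ``replace $U_{>0}$ by $U_{\ge 0}$'' does not reach the whole closure. Finally, the fallback deformation argument proves only half of what is needed: the condition that the compact set $\overline{V(x_t,z_t,y_t)}$ lies in the open set $V(a_t,z_t,b_t)$ is open in $t$, but not obviously closed; at the first parameter where it could fail you only obtain containment in $\overline{V(a_t,z_t,b_t)}$, and excluding contact with the boundary is again precisely the statement to be proved, so connectedness of the space of positive quintuples does not conclude.

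For comparison, the paper's proof isolates the case it \emph{can} do by your kind of semigroup manipulation, namely quadruples sharing an endpoint (Lemma \ref{inclusion property}: $V(a,x,y)\subset V(a,x,b)$ via $U(\ba{a}{y})_{>0}=u\,U(\ba{a}{b})_{>0}\,u^{-1}$ and the semigroup property of Lemma \ref{U inclusion}), and then handles the genuinely new difficulty dynamically: insert auxiliary flags so that $(a,c,x,z,y,e,b)$ is positive, act by a positive diagonal element $g\in\Delta(\ba{c}{e})_{>0}$ whose attracting fixed flag is $e$ with basin $B_c$ (Lemma \ref{pos-diag2} keeps the configurations positive), use the Hausdorff-limit Lemma \ref{Hausdorff limit} --- which rests on the complementary-components Lemma \ref{bruhat complement} and Proposition \ref{circle prop} --- to show $\overline{V(g^nx,g^nz,g^ny)}\to\{e\}$, pull back by $g^{-n}$ to get $\overline{V(x,z,y)}\subset B_c\cap B_e$, and then finish by connectedness and two applications of Lemma \ref{inclusion property}. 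Some such mechanism for controlling the closure across a change of transverse pair is what your proposal is missing.
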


We begin by establishing nesting properties for components of positivity associated to positive quadruples.

\begin{lemma}
\label{inclusion property}
If  $(a,x,y,b)$ is a positive quadruple in $\mathcal F_d$, then
$$V(x,y,b)\subset V(a,y,b)\quad {\rm and}\quad V(a,x,y)\subset V(a,x,b)$$
\end{lemma}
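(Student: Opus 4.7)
The two inclusions are related by the dihedral symmetry of positive tuples (Proposition \ref{flip}): reversing $(a,x,y,b)$ to $(b,y,x,a)$ preserves positivity, and since $V(p,q,r)=V(r,q,p)$ as subsets of $\mathcal F_d$, the second inclusion for $(a,x,y,b)$ becomes the first inclusion for $(b,y,x,a)$. Hence it is enough to prove $V(x,y,b)\subset V(a,y,b)$.

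By definition $V(a,y,b)$ is the connected component of $B_a\cap B_b$ containing $y$. On the other hand $V(x,y,b)=U(\ba{x}{b})_{>0}(b)$ is a connected subset of $B_b$, since $U(\ba{x}{b})_{>0}$ is path-connected (it is the semigroup generated by the one-parameter subgroups of elementary Jacobi matrices $J_{ij}(t)$ with $t>0$ and $i<j$, cf.\ Section \ref{bruhat cells}), and it contains $y$. Therefore, once I have established that $V(x,y,b)\subset B_a$, the desired inclusion follows from the connectedness of $V(x,y,b)$ together with the fact that $y\in V(x,y,b)\cap V(a,y,b)$.

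The content of the lemma is thus the transversality statement $V(x,y,b)\subset B_a$. I plan to deduce it from the stronger claim that for any $y'\in V(x,y,b)$ the quadruple $(a,x,y',b)$ is itself positive, which immediately forces $y'\in B_a$ since positive quadruples lie in $\mathcal F_d^{(4)}$. To set this up, fix a basis $\ba{a}{b}$ consistent with $(a,b)$ and decompose $y=u_1(b)$, $x=u_1u_2(b)$ with $u_1,u_2\in U(\ba{a}{b})_{>0}$. By Lemma \ref{Udiffeo}, the unique $w\in U_a$ with $w(b)=y'$ is well-defined, and the quadruple $(a,x,y',b)$ is positive if and only if both $w$ and $w^{-1}(u_1u_2)$ belong to $U(\ba{a}{b})_{>0}$. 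Writing $y'=v(b)$ with $v\in U(\ba{x}{b})_{>0}$, the task becomes to identify $w$ in terms of $v$ and to verify both positivity conditions.

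The main obstacle is precisely this positivity transfer between the two distinct totally positive semigroups $U(\ba{a}{b})_{>0}$ and $U(\ba{x}{b})_{>0}$. The key technical input is that the change of basis relating $\ba{a}{b}$ to $\ba{x}{b}$ is controlled by the totally positive element $u_1u_2$ (which moves $b$ to $x$ while fixing $a$). Combining this with Lusztig's semigroup property $U(\ba{a}{b})_{\ge 0}U(\ba{a}{b})_{>0}\subset U(\ba{a}{b})_{>0}$ (Lemma \ref{U inclusion}) and the description of totally non-negative unipotent matrices as positive products of elementary Jacobi matrices, I expect positivity to propagate from $U(\ba{x}{b})_{>0}$ into $U(\ba{a}{b})_{>0}$. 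Carrying out this identification — in effect, exhibiting $V(x,y,b)$ as an explicit positive sub-chart of $U(\ba{a}{b})_{>0}(b)$ — is the technical heart of the argument and where the combinatorial content of Lusztig's theory is really needed.
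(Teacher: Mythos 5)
Your reduction is fine as far as it goes: since $V(\ba{x}{b})$ and $V(\ba{a}{b})$ are connected components of $B_x\cap B_b$ and $B_a\cap B_b$ respectively, and $V(x,y,b)$ is connected and contains $y$, the inclusion $V(x,y,b)\subset V(a,y,b)$ would indeed follow once you know $V(x,y,b)\subset B_a$; and the dihedral symmetry (Proposition \ref{flip}) does exchange the two inclusions. The problem is that you never prove the containment $V(x,y,b)\subset B_a$ (equivalently, the positivity of $(a,x,y',b)$ for every $y'\in V(x,y,b)$): that statement \emph{is} the content of the lemma, and your sketch of how to get it does not work as described. The ``key technical input'' you invoke --- that the change of basis from $\ba{a}{b}$ to $\ba{x}{b}$ is controlled by the totally positive element $u_1u_2$ --- is false: $u_1u_2$ fixes $a$ and carries $b$ to $x$, so $u_1u_2(\ba{a}{b})$ is a basis consistent with the pair $(a,x)$, not with $(x,b)$. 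There is no element of $U(\ba{a}{b})_{>0}$ relating the two bases, and the semigroups $U(\ba{a}{b})_{>0}\subset U_a$ and $U(\ba{x}{b})_{>0}\subset U_x$ live in different unipotent groups, so Lemma \ref{U inclusion} cannot be applied directly to ``propagate'' positivity from one to the other. There is also a minor circularity in your setup: the element $w\in U_a$ with $w(b)=y'$ is only defined (via Lemma \ref{Udiffeo}) once you already know $y'\in B_a$, which is part of what you are trying to prove.

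The paper sidesteps the basis-transfer problem entirely by comparing semigroups attached to pairs of flags that \emph{share} a flag. For the inclusion $V(a,x,y)\subset V(a,x,b)$, write $y=u_1(b)$, $x=u_1u_2(b)$ with $u_1,u_2\in U(\ba{a}{b})_{>0}$; since $u_1$ fixes $a$ and carries $b$ to $y$, the basis $u_1(\ba{a}{b})$ \emph{is} consistent with $(a,y)$, so $U(\ba{a}{y})_{>0}=u_1U(\ba{a}{b})_{>0}u_1^{-1}$ and
$$V(a,x,y)=u_1U(\ba{a}{b})_{>0}u_1^{-1}(y)=\bigl(u_1U(\ba{a}{b})_{>0}\bigr)(b)\subset U(\ba{a}{b})_{>0}(b)=V(a,x,b),$$
the inclusion coming from the semigroup property. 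The other inclusion is then obtained by running the same argument on the reversed positive quadruple $(b,y,x,a)$, using $V(b,y,x)=V(x,y,b)$ and $V(b,y,a)=V(a,y,b)$. If you want to salvage your outline, you should prove the transversality $V(x,y,b)\subset B_a$ by this conjugation trick (or an equivalent use of Lusztig's theory) rather than by the asserted relation between $\ba{a}{b}$ and $\ba{x}{b}$; as written, the technical heart of the argument is missing.
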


\begin{proof}
Since $(a,x,y,b)$ is a positive quadruple, 
there exists a basis 
$\ba{a}{b}$ for $(a,b)$ and \hbox{$u,v\in U(\ba{a}{b})_{> 0}$}
so that $y=u(b)$  and $x=u(v(b))$. Since $U(\ba{a}{b})_{>0}$ is
a semi-group $uv\in U(\ba{a}{b})_{>0}$ and \hbox{$x,y\in V(\ba{a}{b})=U(\ba{a}{b})_{>0}(b)$}.

Notice that $\ba{a}{y}=u(\ba{a}{b})=\{u(e_i)\}$ is a basis consistent with $(a,y)$ since
$u(a)=a$, $u(b)=y$ and $\braket{e_i}=a^i\cap b^{d-i+1}$, so 
$$\braket{u(e_i)}=u(a^i)\cap u(b^{d-i+1})=a^i\cap y^{d-i+1}.$$
Let $W=uU(\ba{a}{b})_{>0}u^{-1}$, so 
$W=U(\ba{a}{y})_{>0}$. Therefore,
$$V(\ba{a}{y})=W(y)=u U(\ba{a}{b})_{>0}(u^{-1}(y))=\left(uU(\ba{a}{b})_{>0}\right)(b)
\subset U(\ba{a}{b})_{>0}(b)=V(\ba{a}{b})$$
where the inclusion follows from the fact that $U(\ba{a}{b})_{>0}$ is a semi-group and
$u\in U(\ba{a}{b})_{> 0}$. Moreover,
$$x\in V(\ba{a}{y})=\left(uU(\ba{a}{b})_{>0}\right)(b)\subset V(\ba{a}{b})$$ 
since $uv\in uU(\ba{a}{b})_{>0}$ and $x=u(v(b))$, so 
$$V(a,x,y)=V(\ba{a}{y})\subset V(\ba{a}{b})=V(a,x,b).$$

Since $(b,y,x,a)$ is also a positive quadruple, the same argument
shows that $V(b,y,x)\subset V(b,y,a)$. Since $V(b,y,x)=V(x,y,b)$ and $V(b,y,a)=V(a,y,b)$, 
we  conclude that 
$$V(x,y,b)\subset V(a,y,b).$$ 
\end{proof}

We now analyze the limiting behavior of sequences of components of positivity.

\begin{lemma}
\label{Hausdorff limit} 
Suppose that  $\{c_n\}$ is  a sequence of flags converging to $b$ and $(y_1,a, y_0, c_n,z_n,b)$ is a positive sextuple
for all $n$. Then the Hausdorff limit of $\{\overline{V(c_n,z_n,b)}\}$ is the singleton $\{b\}$.
\end{lemma}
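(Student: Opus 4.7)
The plan is to identify the complementary positive component as containing the fixed flags $y_0, a, y_1$, and then use path-connectedness of $B_b$ to exclude any Hausdorff limit point other than $b$.

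\textbf{Step 1 (opposite component).} Let $\ba{c_n}{b}$ be a basis consistent with $(c_n,b)$ so that $V(\ba{c_n}{b}) = V(c_n,z_n,b)$. Writing $z_n = w_1(b)$ and $c_n = w_1 w_2(b)$ for $w_1, w_2 \in U(\ba{y_1}{b})_{>0}$, choose $v \in U(\ba{y_1}{b})_{>0}$ sufficiently small that $v^{-1} w_2 \in U(\ba{y_1}{b})_{>0}$ (possible because $U_{>0}$ is defined by strict positivity of upper minors and is therefore open in $U$), and set $z_n' := w_1 v(b)$. Then the heptuple $(y_1, a, y_0, c_n, z_n', z_n, b)$ is positive, and $z_n' \in V(c_n,z_n,b) = V(\ba{c_n}{b})$ by Corollary \ref{sub-positive}. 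The three sub-sextuples omitting one of $y_1, a, y_0$ each place $(c_n, b)$ at positions $3$ and $6$ (after cyclic rotation) with $\{z_n', z_n\} \subset V(\ba{c_n}{b})$; applying Lemma \ref{bruhat complement} to each and combining yields $\{y_0, a, y_1\} \subset V(\sigma(\ba{c_n}{b}))$.

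\textbf{Step 2 (limit analysis, transverse case).} Suppose for contradiction that the Hausdorff limit contains some $p \neq b$; pass to a subsequence with $p_n \in \overline{V(c_n,z_n,b)}$ and $p_n \to p$. If $p$ is transverse to $b$, then for large $n$ both $p$ and $p_n$ are transverse to $c_n$; since $V(c_n,z_n,b)$ is clopen in the open set $B_{c_n} \cap B_b$, the containment $p_n \in V(\ba{c_n}{b})$ holds for large $n$. The Schubert cell $B_b$ is path-connected, being diffeomorphic via $h_b$ to the unipotent radical $U_b$, so I choose a path $\gamma : [0,1] \to B_b$ from $p$ to $y_0$. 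By compactness of $\gamma$ and the openness of $B_{c_n}$ together with $c_n \to b$, the image of $\gamma$ lies in $B_{c_n} \cap B_b$ for large $n$; concatenating with a short segment from $p_n$ to $p$ gives a path in $B_{c_n} \cap B_b$ joining $p_n \in V(\ba{c_n}{b})$ to $y_0 \in V(\sigma(\ba{c_n}{b}))$, contradicting that these are distinct connected components of $B_{c_n} \cap B_b$.

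\textbf{Remaining case and main obstacle.} If $p$ is \emph{not} transverse to $b$, then $p$ lies on the closed Schubert subvariety $\mathcal F_d \setminus B_b$. A diagonal argument lets me assume $p_n \in V(c_n,z_n,b) \subset B_b$ with $p_n \to p \in \partial B_b$. The hardest part of the argument is excluding any $p \neq b$ in this Schubert boundary; I expect this to follow from a refined analysis of how the boundary at infinity of the positive orbit $U(\ba{c_n}{b})_{>0}(b)$ meets the Schubert stratification of $\mathcal F_d$, combined with Proposition \ref{circle prop} applied to nearby transverse approximations, so that the degeneration $c_n \to b$ forces accumulation only at $\{b\}$.
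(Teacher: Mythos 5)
Your Step 1 and your transverse-case argument are essentially sound, but the proof has a genuine gap exactly where you acknowledge it: nothing rules out a limit point $p\ne b$ with $p$ \emph{not} transverse to $b$. This case is not a peripheral technicality — it is the whole content of the lemma. The closure $\overline{V(c_n,z_n,b)}$ always contains points outside $B_b$ (for instance $c_n$ itself lies in the closure, and so do rearranged flags of the type ${\bf F}_1(P(c_n,b))$ as in Lemma \ref{transpositionstillpositive}, which fail to be transverse to $b$), so the Hausdorff limit could a priori pick up points anywhere in the Schubert boundary $\mathcal F_d\setminus B_b$. Your component-separation argument via path-connectedness only works inside $B_{c_n}\cap B_b$, and your Step 1 (that $y_0,a,y_1$ lie in $V(\sigma(\ba{c_n}{b}))$) likewise only constrains points transverse to both $c_n$ and $b$; neither gives any leverage on limit points off $B_b$. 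The sentence "I expect this to follow from a refined analysis\ldots" is precisely the missing proof.

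The paper avoids this issue by never working with the degenerating pair $(c_n,b)$ at the punchline. Using the nesting Lemma \ref{inclusion property}, it places $V(c_n,z_n,b)$ inside $V(a,z_n,b)\cap V(c_n,z_n,a)=V(a,y_0,b)\cap V(c_n,y_1,a)$, so that any Hausdorff limit $H$ (which is connected and contains $b$) lies in $\overline{V(a,y_0,b)}\cap\overline{V(b,y_1,a)}$ — closures of components attached to the \emph{fixed} transverse pair $(a,b)$. Then Lemma \ref{bruhat complement} and Proposition \ref{circle prop} give $B_a\cap\overline{V(a,y_0,b)}\cap\overline{V(b,y_1,a)}=\{b\}$, and since $a$ is transverse to $b$, the cell $B_a$ is an open neighborhood of $b$; connectedness of $H$ then forces $H=\{b\}$. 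In other words, the transversality problem you ran into is dissolved by testing limit points against $B_a$ rather than $B_b$, and the transfer from the varying pair $(c_n,b)$ to the fixed pair $(a,b)$ is exactly what Lemma \ref{inclusion property} is for — a lemma your argument does not invoke. To repair your proof you would need either that nesting step or some substitute control of $\overline{V(c_n,z_n,b)}$ near the non-transverse locus; as written, the claim is unproved.
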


\begin{proof} 
Since $(a,c_n,z_n,b)$ and $(c_n,z_n,b,a)$ are positive, Lemma \ref{inclusion property} implies that
$$V(c_n,z_n,b)\subset V(a,z_n,b)\cap V(c_n,z_n,a)$$
for all $n$, so
$$\overline{V(c_n,z_n,b)}\subset \overline{V(a,z_n,b)}\cap \overline{V(c_n,z_n,a)}.$$

After extracting a subsequence, we may assume that $\left\{\overline{V(c_n,z_n,b)}\right\}$ converges to a Hausdorff limit $H$. It is enough to  prove that $H=\{b\}$. Notice that, since each
$V(c_n,z_n,b)$ is connected, $H$ must be connected.

Notice that, for all $n$,   $V(a,z_n,b)=V(a,y_0,b)$, since $(a,y_0,z_n,b)$ is positive, and $V(c_n,z_n,a)=V(c_n,y_1,a)$,
since $(c_n,z_n,y_1,a)$ is positive.
Since $\{B_{c_n}\}$ converges to $B_c$,
$\left\{\overline{V(c_n,z_n,a)}\right\}=\left\{\overline{V(c_n,y_1,a)}\right\}$ converges to $\overline{V(b,y_1,a)}$.  
Therefore,
$$\{b\}\subset H\subset \overline{V(a,y_0,b)}\cap \overline{V(b,y_1,a)}.$$
However, Lemma \ref{bruhat complement} and Proposition \ref{circle prop} together imply that
$$B_a\cap  \overline{V(a,y_0,b)}\cap \overline{V(b,y_1,a)}=\{b\}.$$
Since $B_a$ is an open neighborhood of $b$ and
$H$ is connected, we conclude that $H=\{b\}$.
\end{proof}

\medskip\noindent
{\em Proof of Proposition \ref{bruhat lemma}.}
We note that if $(a,x_n,\ldots,x_1,b)$ is positive with respect to the basis $\ba{a}{b}$ with $x_n = vb$ for 
 $v \in U(\ba{a}{b})_{>0}$, if
$u\in U(\ba{a}{b})_{>0}$ then $(a,vu(x_n), x_{n},\ldots,x_1,b)$ is positive. Since positivity 
is invariant under cyclic permutations, we may add flags in any position to a positive $n$-tuple to obtain
a positive $(n+1)$-tuple.

Choose $c$ and $e$ so that $(a,c,x,z,y,e,b)$ is positive and let $g$ be an element in $\Delta(\ba{c}{e})_{>0}$.
We observe that $(a,c,g(y),g(z),e,b)$ is positive.

\begin{lemma}\label{pos-diag2}
If $(a,c,x,z,e,b)$ is  a positive sextuple in $\mathcal F_d$ and $g\in\Delta(\ba{c}{e})_{>0}$, then 
$(a,c,g(x),g(z),e,b)$ is positive.
\end{lemma}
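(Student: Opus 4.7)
The plan is to reduce positivity of the hexagon $(a,c,g(x),g(z),e,b)$ to three quadruple positivity checks via Proposition \ref{prop:triangul}, handle two of them directly from the fact that $g$ fixes $c$ and $e$ and conjugates $U(\ba{c}{e})_{>0}$ to itself, and treat the third by a continuation argument along a path in $\Delta(\ba{c}{e})_{>0}$.

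First I would apply Proposition \ref{prop:triangul} to the hexagon using the fan triangulation based at the vertex $e$: the internal diagonals are $(e,a)$, $(e,c)$, $(e,g(x))$, and the three quadruples formed by the vertices of adjacent triangles are $(a,c,e,b)$, $(a,c,g(x),e)$, and $(c,g(x),g(z),e)$. The first is a subtuple of the given positive sextuple, hence positive by Corollary \ref{sub-positive}. For the third, the key facts are that $g$ fixes $c$ and $e$ (being diagonal in $\ba{c}{e}$, it preserves each line $L_i(c,e)$ and hence each subspace $c^i$ and $e^i$) and that $gU(\ba{c}{e})_{>0}g^{-1}=U(\ba{c}{e})_{>0}$ (noted just after Lemma \ref{U inclusion}). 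Since $(c,x,z,e)$ is a positive subtuple, we write $z=v(e)$ and $x=vw(e)$ with $v,w\in U(\ba{c}{e})_{>0}$, and then $g(z)=(gvg^{-1})(e)$ and $g(x)=(gvg^{-1})(gwg^{-1})(e)$ exhibit $(c,g(x),g(z),e)$ as positive.

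The crux is the remaining quadruple $(a,c,g(x),e)$, in which $g$ acts nontrivially on $x$ but $a$ is not stabilized by $g$. Here my plan is a continuation argument: the group $\Delta(\ba{c}{e})_{>0}\cong\mathbb R_{>0}^{d-1}$ is path-connected, so I take the smooth path $g_t=\exp(t\log g)$ joining $I$ to $g$ inside $\Delta(\ba{c}{e})_{>0}$ and set
$$T=\{t\in[0,1]:(a,c,g_t(x),e)\text{ is positive}\}.$$
Positivity of a quadruple is an open condition, so $T$ is open in $[0,1]$; it contains $0$ by subtuple positivity of $(a,c,x,e)$. The conclusion $T=[0,1]$ would follow from closedness of $T$.

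The main obstacle is precisely this closedness. Even though $g_t(x)$ remains in $V(\ba{c}{e})$ throughout the path (apply the same conjugation argument to $x=u(e)$ with $u\in U(\ba{c}{e})_{>0}$), so transversality of $g_t(x)$ with $c$ and $e$ is preserved, a priori $g_t(x)$ could fail to be transverse to $a$ at some limiting parameter $t^\ast$, or could cross into a different positivity component relative to the triple $(a,c,e)$. Ruling this out requires a careful analysis of the minors determining positivity of $(a,c,g_t(x),e)$ along the path; the natural tools are Lemma \ref{bruhat complement}, which identifies the complementary positivity component associated to any positive sextuple of the form $(\ldots,c,g_t(x),\ldots,e,\ldots)$, together with Proposition \ref{circle prop}, which says that the closures of complementary positivity components meet an opposite Schubert cell in a single point. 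Combining these with the fact that $\Delta(\ba{c}{e})_{>0}$ preserves $V(\ba{c}{e})$, any degeneration at $t^\ast$ would force a minor to cross zero while the path is trapped in one positivity component in $B_c\cap B_e$, which I expect to yield a contradiction. This final transversality-persistence step is the genuinely delicate part of the argument.
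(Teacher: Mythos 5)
Your reduction via Proposition \ref{prop:triangul} to the three quadruples $(a,c,e,b)$, $(c,g(x),g(z),e)$ and $(a,c,g(x),e)$, and your treatment of the first two, coincide with the paper's proof. But the third quadruple --- the only one in which $g$ and the flag $a$ interact --- is exactly where your argument stops. Your continuation scheme gives openness of $T=\{t:(a,c,g_t(x),e)\text{ positive}\}$ and $0\in T$, but the closedness is the entire content of the statement, and you do not prove it: you only say you ``expect'' a contradiction from Lemma \ref{bruhat complement} and Proposition \ref{circle prop}. Those results control where the closures of the two complementary positivity components of $B_c\cap B_e$ can meet inside a Schubert cell; they say nothing by themselves about transversality of $g_{t^\ast}(x)$ with $a$, nor about which component of $B_a\cap B_e$ (relative to the triple $(a,c,e)$) the point $g_t(x)$ sits in, since $g$ is diagonal in a basis adapted to $(c,e)$ and has no a priori relation to $a$. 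As written, the proposal has a genuine gap at the crux of the lemma.

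The missing idea, which is how the paper avoids any limiting argument, is to change the reference basis so that $g$ \emph{does} become compatible with the pair involving $a$. By Proposition \ref{flip} the quadruple $(c,x,e,a)$ is positive, so there are $u,v\in U(\ba{c}{a})_{>0}$ with $e=v(a)$ and $x=vu(a)$. Since $v$ fixes the flag $c$ and carries $a$ to $e$, the basis $v(\ba{c}{a})$ is consistent with $(c,e)$, hence $v^{-1}gv$ is diagonalizable with positive eigenvalues with respect to $\ba{c}{a}$, and conjugation by it preserves $U(\ba{c}{a})_{>0}$ (property (2) of Section \ref{bruhat cells}). Therefore $u'=(v^{-1}gv)u(v^{-1}gv)^{-1}\in U(\ba{c}{a})_{>0}$ and, since $v^{-1}gv$ fixes $a$, one computes $g(x)=gvu(a)=v\,(v^{-1}gv)\,u\,(v^{-1}gv)^{-1}(a)=vu'(a)$ while $e=v(a)$; this exhibits $(c,g(x),e,a)$, equivalently $(a,c,g(x),e)$, as positive by a purely algebraic manipulation. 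If you want to salvage the connectedness route instead, you would still need an argument of essentially this strength to get the persistence of transversality with $a$, so the change-of-basis computation is the efficient repair.
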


\begin{proof} 
Identify $(a,c,g(x),g(z),e,b)$ with the cyclically ordered vertices of an ideal hexagon in $\mathbb H^2$ and
consider the triangulation $\mathcal T$ all of whose internal edges have an endpoint at $e$.
Proposition \ref{prop:triangul} implies that it suffices to check that 
$(c,g(x),g(z),e)$, $(c,g(x),e,a)$, and $(a,c,e,b)$ are positive quadruples, to guarantee
that $(a,c,g(x),g(z),e,b)$ is positive.

Since $(c,x,z,e)$ is positive, there exists $u,v\in U(\ba{c}{e})_{>0}$ so that $x=vu(e)$ and $z=v(e)$.
If we let $u'=gug^{-1}$ and $v'=gvg^{-1}$, then $u',v'\in U(\ba{c}{e})_{>0}$ 
(see property (2) in Section \ref{bruhat cells}). One checks that
$$v'u'(e)=(gvg^{-1})(gug^{-1})=g(vu)(g^{-1}(e))=g(vu(e))=g(x),\ \ \textrm{and}$$
$$v'(e)=(gvg^{-1})(e)=gv(g^{-1}(e))=g(v(e))=g(z),$$
so $(c,g(x),g(z),e)$ is a positive quadruple.

Since $(c,x,e,a)$ is a positive quadruple, there exists $u,v\in U(\ba{c}{a})_{>0}$ so that $x=vu(a)$ and $e=v(a)$.
Notice that $v(\ba{c}{a})=\ba{c}{e}$, so $v^{-1}gv\in \Delta(\epsilon^c_a)$, which
implies that $u'=(v^{-1}gv)u(v^{-1}gv)^{-1}\in U(\ba{c}{a})_{>0}$.
Notice that
$$g(x)=gvu(a)=v (v^{-1}gv)u(a)=v (v^{-1}gv)u(v^{-1}gv)^{-1}(a)=vu'(a) \ \ \textrm{and}\ \ e=v(a),$$
so $(c,g(x),e,a)$ is positive. Since we already know that $(a,c,e,b)$ is positive, this completes the proof.
\end{proof}

Since $(x,z,y,e)$ and $(c,x,z,e)$ are positive,
Lemma \ref{inclusion property} implies that
$$V(x,z,y)\subset V(x,z,e)\subset V(c,z,e).$$
We may further choose $g$ so that $e$ is an attractive point,
in which case, its basin of attraction is $B_c$. In particular, since $x,z\in V(c,z,e)\subset B_c$, 
$$
\lim_{n\to\infty}g^n(x)=\lim_{n\to\infty}g^n(z)=e.
$$
Proposition \ref{Hausdorff limit} and Lemma \ref{pos-diag2} then imply that 
$$
\left\{\overline{V(g^n(x),g^n(z),e)}\right\}\longrightarrow\{e\},$$ 
as $n\to \infty$. Since $V(x,z,y)\subset V(x,z,e)$,
$$V(g^n(x),g^n(z),g^n(y))=g^n(V(x,z,y)\subset g^n(V(x,z,e))=V(g^n(x),g^n(z),e),$$
so
$$
\left\{\overline{V(g^n(x),g^n(z),g^n(y))}\right\}\longrightarrow\{e\}.$$
Since $B_c$ contains a neighborhood of $e$, we see that $$\overline{V(g^n(x),g^n(z),g^n(y))}\subset B_c,$$
for all large enough $n$. So,
$$\overline{V(x,z,y)}
=g^{-n}
\left(\overline{V(g^n(x),g^n(z),g^n(y))}\right)
\subset g^{-n}(B_{c})=B_c.$$
Symmetric arguments show that 
$$\overline{V(x,z,y)}\subset B_e$$
 So, $\overline{V(x,z,y)}$ is a connected subset of $B_c\cap B_e$ which contains $z$.
Therefore,  
$$\overline{V(x,z,y)} \subset V(c,z,e).$$
Since $(a,c,z,e)$ and $(a,z,e,b)$ are positive, Lemma \ref{inclusion property} gives that
$$V(c,z,e)\subset V(a,z,e)\subset V(a,z,b)$$
which completes the proof.
\eproof

\subsection{Rearrangements of flags}

Given a pair $(x,y)$ of transverse flags in $\mathcal F_d$, one obtains a decomposition 
of $\mathbb R^d$ into lines $\{L_i(x,y)\}$. By rearranging the ordering of the lines, one
obtains a collection of flags including $x$ and $y$. Formally, if
$P$  is a permutation of $\{1,\ldots,d\}$, then one obtains flags ${\bf F}_0(P(x,y))$
and ${\bf F}_1(P(x,y))$ given by
$${\bf F}_0(P(x,y))^r=\braket{L_{P(1)}(x,y),\ldots,L_{P(r)}(x,y)}$$
and 
$$ {\bf F}_1(P(x,y))^r=\braket{L_{P(d)}(x,y),\ldots,L_{P(d-r+1)}(x,y)}$$
for all $r$.

We will see that if $(a,x,y,b)$ is positive, then $(a,{\bf F}_1(P(x,y)),b)$ is also positive.
We begin by considering the case where $P$ is a transposition.

\begin{lemma}
\label{transpositionstillpositive}
If  $(a,x,z,y,b)$ is a positive  quintuple in $\mathcal F_d$,
$j>i$ and $P_{i,j}$ is a transposition interchanging $i$ and $j$, then
$${\bf F}_1(P_{i,j}(x,y))\in \overline{V(x,z,y)}\subset V(a,z,b).$$
\end{lemma}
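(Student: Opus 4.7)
Since $\overline{V(x,z,y)}\subset V(a,z,b)$ is precisely Proposition \ref{bruhat lemma}, my entire effort will go into showing $w\defeq{\bf F}_1(P_{i,j}(x,y))\in\overline{V(x,z,y)}$. The plan is to exhibit $w$ as the limit, as $s\to 0^+$, of a one-parameter torus deformation of $z$ that remains inside $V(x,z,y)$ for every $s>0$.

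I first fix a basis $\ba{x}{y}=\{e_1,\ldots,e_d\}$ consistent with $(x,y)$, so that $L_k(x,y)=\langle e_k\rangle$ and $y^r=\langle e_d,e_{d-1},\ldots,e_{d-r+1}\rangle$. Corollary \ref{sub-positive} gives that $(x,z,y)$ is a positive sub-triple of $(a,x,z,y,b)$, so I may write $z=u_0(y)$ for some totally positive $u_0\in U(\ba{x}{y})_{>0}$. Let $\pi$ be the permutation $\pi(k)=P_{i,j}(d-k+1)$: this is the reversed sequence $(d,d-1,\ldots,1)$ with the entries $i$ and $j$ interchanged (they sit at positions $d-j+1$ and $d-i+1$). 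Directly unpacking the definition of ${\bf F}_1$ yields $w^r=\langle e_{\pi(1)},\ldots,e_{\pi(r)}\rangle$ for each $r$, so $\pi$ is exactly the order in which the basis vectors fill out $w$.

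I then choose real numbers $\alpha_1,\ldots,\alpha_d$ with $\alpha_{\pi(1)}<\alpha_{\pi(2)}<\cdots<\alpha_{\pi(d)}$ and let $g_s\in\Delta(\ba{x}{y})_{>0}$ be the diagonal element $g_s(e_l)=s^{\alpha_l}e_l$. Property (2) in Section \ref{bruhat cells} gives $g_s u_0 g_s^{-1}\in U(\ba{x}{y})_{>0}$, and since $g_s(y)=y$,
$$g_s(z)=(g_s u_0 g_s^{-1})(y)\in U(\ba{x}{y})_{>0}(y)=V(x,z,y)$$
for every $s>0$. Taking $s\to 0^+$, a standard dominant-term analysis of the columns of the matrix $g_s u_0$ shows that the level-$r$ subspace of $g_s(z)$ converges to $\langle e_{\pi(1)},\ldots,e_{\pi(r)}\rangle=w^r$, provided the $r\times r$ minor of $u_0$ with rows $\{\pi(1),\ldots,\pi(r)\}$ and columns $\{d-r+1,\ldots,d\}$ is non-zero for every $r$.

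The main obstacle is to verify the non-vanishing of these $d$ minors. My strategy will be to show that each one is in fact an upper minor in the sense of Section \ref{bruhat cells}, since then the total positivity of $u_0$ forces it to be strictly positive. This reduces to the combinatorial claim that, after sorting both index sets in increasing order, the $k$-th sorted row index is bounded above by the $k$-th sorted column index. I expect to handle this by a case analysis on whether $r\leq d-j$, $d-j+1\leq r\leq d-i$, or $r\geq d-i+1$; the only non-trivial case is the entry $i$ introduced at level $r=d-j+1$, which is easily bounded by the corresponding sorted column index $d-r+1=j>i$, after which all remaining sorted rows and columns march in lockstep. Once these minors are in hand, $g_s(z)\to w$ as $s\to 0^+$ places $w$ in $\overline{V(x,z,y)}$ and concludes the proof.
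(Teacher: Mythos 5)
Your route is sound in outline but genuinely different from the paper's, and it carries one step that you assert rather than justify. The paper's proof is a one-line unipotent degeneration: it applies the single Jacobi matrix $J_{i,j}(t)\in U(\ba{x}{y})_{\ge 0}$ to $y$, checks directly on the subspaces $y^{d-k}$ that $J_{i,j}(t)(y)\to {\bf F}_1(P_{i,j}(x,y))$ as $t\to\infty$, and uses $U(\ba{x}{y})_{\ge 0}U(\ba{x}{y})_{>0}\subset U(\ba{x}{y})_{>0}$ to keep these points in (the closure of) $V(x,z,y)$; no minors of $u_0$ ever appear. You instead degenerate $z=u_0(y)$ by a positive torus adapted to $\pi$ and read off the limit from Pl\"ucker asymptotics. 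Your mechanism is more robust -- with the same choice of weights it proves Lemma \ref{permutationstillpositive} for an arbitrary permutation in one stroke, bypassing the induction through Lemma \ref{permutation fact} -- but it shifts the whole burden onto the non-vanishing of the $r\times r$ minors of $u_0$ with rows $\{\pi(1),\ldots,\pi(r)\}$ and columns $\{d-r+1,\ldots,d\}$.

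The caveat is precisely there: you say that upper-minor-ness ``reduces to'' the sorted-index condition (each sorted row index at most the corresponding sorted column index). The direction you need -- that every minor satisfying this condition is an upper minor, hence positive on $U(\ba{x}{y})_{>0}$ -- is true, but it is nowhere in the paper, and with the paper's definition (a minor is upper if it is non-zero for \emph{some} element of $U(\ba{x}{y})_{\ge 0}$) it demands producing a totally non-negative witness for each row/column pair; in general this is a genuine piece of total positivity theory (path/network arguments, cf.\ \cite{fomin-zelevinsky}), not a formality. In your specific situation it can be discharged cheaply: for $r\le d-j$ and for $r\ge d-i+1$ the submatrix in question is the trailing principal block of a unipotent upper triangular matrix, so the minor is identically $1$; and in the middle range $d-j+1\le r\le d-i$ the matrix $J_{i,j}(1)\in U(\ba{x}{y})_{\ge 0}$ itself is a witness, since its submatrix on those rows and columns is a permutation matrix -- amusingly, the paper's Jacobi matrix reappears as exactly the certificate your argument needs. (Also note that for general $r$ in the middle range the bound on the first sorted row is $i<d-r+1$, coming from $r\le d-i$, not just the instance $d-r+1=j$ you single out.) With that witness supplied and the routine case analysis written out, your proof is complete; your appeal to Proposition \ref{bruhat lemma} for $\overline{V(x,z,y)}\subset V(a,z,b)$ is exactly what the paper does.
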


\begin{proof}
Let $\ba{x}{y}$ be a basis for $(x,y)$ so that 
$V(x,z,y)=V(\ba{x}{y})$ and let $\ba{x}{y}=\{e_i\}$.
Let $J_{ij}(t)$ be the elementary Jacobi matrix 
with respect to $\{e_i\}$,  {\it i.e.} $J_{ij}(t)(e_j)=e_j+te_i$ and $J_{ij}(t)(e_k)=e_k$ if $k\ne j$.
Since
$$y^{d-k}= \braket{e_{k+1},\ldots,e_d},$$
we see that
$$J_{i,j}(t)(y^{d-k}) = \braket
{e_{k+1}, \ldots, e_i, \ldots, e_j+te_i,\ldots,e_n}
   = y^{d-k}={\bf F}_1(P_{i,j}(x,y))^{d-k}$$
for all $k < i$,
$$J_{ij}(t)(y^{d-k}) = \braket{e_{k+1},\ldots,e_d}   = y^{d-k}={\bf F}_1(P_{i,j}(x,y))^{d-k}$$
for all $k\ge j$, and
$$J_{ij}(t)(y^{d-k}) = \braket{e_{k+1},  \ldots, e_j+te_i,\ldots,e_d}$$
for all $i \le k < j$.
Therefore, 
$$\lim_{t\rightarrow \infty} J_{ij}(t)(y^{d-k}) = \braket{e_{k+1},\ldots,e_{j-1},e_i,e_{j+1},\ldots,e_d}={\bf F}_1(P_{i,j}(x,y))^{d-k}.$$
for all $i\le k < j$, so 
$$ \lim_{t\rightarrow \infty} J_{ij}(t)(y) = {\bf F}_1(P_{i,j}(x,y)).$$
Since $J_{ij}(t)\in U(\ba{x}{y})_{\ge 0}$ for all $t>0$ and 
$U(\ba{x}{y})_{\ge 0}U(\ba{x}{y})_{>0}\subset U(\ba{x}{y})_{> 0}$, by Lemma \ref{U inclusion},
$J_{ij}(t)(y)\in V(x,z,y)$ for all $t>0$, so ${\bf F}_1(P_{i,j}(x,y))\in\overline{V(x,z,y)}$.
Lemma \ref{bruhat lemma} implies that $\overline{V(x,z,y)}\subset V(a,z,b)$,
so ${\bf F}_1(P_{i,j}(x,y))\in V(a,z,b)$. 
\end{proof}

\medskip

With the help of an elementary group-theoretic lemma, we may generalize the argument
above to handle all permutations.

\begin{lemma}
\label{permutationstillpositive}
If  $(a,x,z,y,b)$ is a positive  quintuple in $\mathcal F_d$ and $P$ is a permutation of
$\{1,\ldots,d\}$, then
$${\bf F}_1(P(x,y))\in \overline{V(x,z,y)}\subset V(a,z,b).$$
\end{lemma}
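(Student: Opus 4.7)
The plan is to extend the Jacobi-matrix approach of Lemma \ref{transpositionstillpositive} from a single transposition to an arbitrary permutation. Fix a basis $\ba{x}{y} = \{e_i\}$ consistent with $(x,y)$ so that $V(x,z,y) = V(\ba{x}{y})$. The elementary group-theoretic fact I would invoke is the classical statement that every permutation admits a reduced expression in terms of adjacent transpositions: $P = \sigma_{k_1}\sigma_{k_2} \cdots \sigma_{k_m}$ with $\sigma_k = P_{k,k+1}$ and $m = \ell(P)$.

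Given such an expression, I form the family of totally non-negative matrices
\[
u(t_1, \ldots, t_m) \;=\; J_{k_1, k_1+1}(t_1)\, J_{k_2, k_2+1}(t_2) \,\cdots\, J_{k_m, k_m+1}(t_m).
\]
Each elementary Jacobi factor with $t_\ell > 0$ lies in $U(\ba{x}{y})_{\ge 0}$, and this semigroup is closed under multiplication, so $u(t_1, \ldots, t_m) \in U(\ba{x}{y})_{\ge 0}$. By Lemma \ref{U inclusion}, $U(\ba{x}{y})_{\ge 0} = \overline{U(\ba{x}{y})_{>0}}$, and consequently $u(t_1, \ldots, t_m)(y) \in \overline{V(x,z,y)}$ for every choice of positive parameters. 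The key assertion is then that one may let the parameters diverge at well-separated rates (for instance $t_\ell = s^{N_\ell}$ with $N_1 \gg N_2 \gg \cdots \gg N_m$ and $s \to \infty$) so that
\[
\lim_{s \to \infty} u(s^{N_1}, \ldots, s^{N_m})(y) \;=\; {\bf F}_1(P(x,y)).
\]

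The main obstacle is the verification of this cascading limit. The base case $m = 1$ is exactly the computation of Lemma \ref{transpositionstillpositive}, where sending $t \to \infty$ in $J_{k,k+1}(t)$ sends $\braket{e_{k+1}}$ to $\braket{e_k}$ at the appropriate flag step, thereby implementing the simple transposition $\sigma_k$ on the flag. The induction on the length of the reduced word then proceeds by peeling off the outermost factor: the widely separated rates ensure that, for each outer limit, the remaining product has already converged to the flag corresponding to the sub-word $\sigma_{k_2} \cdots \sigma_{k_m}$, so the outer $J_{k_1, k_1+1}(t_1)$ acts on the correct intermediate flag, and its $t_1 \to \infty$ limit realizes $\sigma_{k_1}$ on that flag—yielding ${\bf F}_1(P(x,y))$ after all $m$ limits have been taken. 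Once this is verified, closedness of $\overline{V(x,z,y)}$ yields ${\bf F}_1(P(x,y)) \in \overline{V(x,z,y)}$, and Lemma \ref{bruhat lemma} supplies the final inclusion $\overline{V(x,z,y)} \subset V(a,z,b)$.
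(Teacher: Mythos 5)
Your strategy has the same skeleton as the paper's proof: realize $P$ one transposition at a time by $t\to\infty$ limits of elementary Jacobi matrices, observe that everything stays in $\overline{V(x,z,y)}$ because such matrices lie in $U(\ba{x}{y})_{\ge 0}=\overline{U(\ba{x}{y})_{>0}}$, and finish with Lemma \ref{bruhat lemma}. (The membership claim $u(t_1,\ldots,t_m)(y)\in\overline{V(x,z,y)}$ is indeed fine.) The first genuine gap is at the step you assert rather than prove: that the limit of $J_{k,k+1}(t)$ applied to the intermediate flag ``realizes $\sigma_k$.'' When $J_{k,k+1}(t)$ acts on a rearranged flag ${\bf F}_1(Q(x,y))$, the $t\to\infty$ limit implements the transposition only if $Q^{-1}(k)<Q^{-1}(k+1)$; if the lines $L_k(x,y)$ and $L_{k+1}(x,y)$ enter the flag in the other order, the limit \emph{fixes} the flag and your induction outputs the wrong permutation. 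Ruling this out is exactly where reducedness must be invoked (suffixes of a reduced word are reduced, and $\ell(s_kQ)=\ell(Q)+1$ if and only if $Q^{-1}(k)<Q^{-1}(k+1)$), applied with the letters processed from the innermost (rightmost) factor outward. This swap-versus-fix dichotomy, and the bookkeeping needed to avoid the ``fix'' case, is precisely the content of the paper's Lemma \ref{permutation fact} and of the computation there of $\hat Q$; it is missing from your sketch, and without it the induction step is unjustified.

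The second gap is that the cascading limit is false as you prescribed it. Take $d=3$ and $P=w_0$ with reduced word $\sigma_1\sigma_2\sigma_1$, so $u=J_{12}(t_1)J_{23}(t_2)J_{12}(t_3)$ and ${\bf F}_1(P(x,y))=x$. A direct computation gives $u(e_2)\wedge u(e_3)= e_2\wedge e_3+(t_1+t_3)\,e_1\wedge e_3+t_2t_3\,e_1\wedge e_2$, so if $t_1=s^{N_1}$ grows fastest (your ordering $N_1\gg\cdots\gg N_m$), the plane $u(y)^2$ converges to $\braket{e_1,e_3}$, not to $x^2=\braket{e_1,e_2}$: letting the outermost parameter dominate swamps the inner factors and effectively deletes the innermost letter. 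Your own heuristic (``the remaining product has already converged'') requires the \emph{inner} parameters to be the large ones, and even with the ordering reversed a single-path limit needs a uniformity argument you have not supplied. The clean repair, which is what the paper does, is to abandon the simultaneous limit and take the limits one at a time: each fixed $J_{k,k+1}(t)$ with $t>0$ maps $\overline{V(x,z,y)}$ into itself, so each intermediate limit flag lies in the closed set $\overline{V(x,z,y)}$, and no rate separation is ever needed.
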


\begin{proof}
Let $\ba{x}{y}$ be a basis for $(x,y)$ so that 
$V(x,z,y)=V(\ba{x}{y})$ and let $\ba{x}{y}=\{e_i\}$.
Suppose that $Q$ is a permutation such that
$${\bf F}_1(Q(x,y))\subset \overline{V(x,z,y)} \subseteq V(a,z,b).$$
We first observe, as in the proof of Lemma \ref{transpositionstillpositive}, that if $n>m$, then
$$\lim_{t\to\infty}J_{m,n}(t){\bf F}_1(Q(x,y))={\bf F}_1(\hat Q(x,y))$$
where $\hat Q=Q$ if $Q^{-1}(n)>Q^{-1}(m)$ and
$\hat Q=P_{m,n}Q$ if not.
Since  $J_{mn}(t) \in U(\ba{x}{y})_{\ge 0}$ if $t>0$ and
\hbox{$U(\ba{x}{y})_{\ge 0}U(\ba{x}{y})_{>0}\subset U(\ba{x}{y})_{>0}$},
$$J_{m,n}(t)(V(x,z,y))\subset V(x,z,y)\ ,$$
for all $t>0$, 
which implies that
$$J_{m,n}(t)\left(\overline{V(x,z,y)}\right)\subset \overline{V(x,z,y)}$$
for all $t>0$.
Therefore,
$${\bf F}_1(\hat Q(x,y))\subset \overline{V(x,z,y)} \subseteq V(a,z,b).$$

We use the following elementary combinatorial lemma.

\begin{lemma}
\label{permutation fact}
If $P$ is a permutation of $\{1,\ldots,d\}$, then we may write
\begin{equation*}
	P = P_{i_k,j_k}\cdots  P_{i_1,j_i}\ .
	\end{equation*}
	So that $i_l<j_l$ for all $l$ and moreover
	\begin{equation*}
	Q_{l-1}^{-1}(i_l)<Q_{l-1}^{-1}(j_l)\ ,\end{equation*}
	 where $
	 Q_{l-1}\defeq P_{i_{l-1},j_{l-1}}\cdots  P_{i_1,j_1}$.	   
\end{lemma}

We now complete the proof using Lemma \ref{permutation fact}. 
Let $P=P_{i_k,j_k}\cdots  P_{i_1,j_i}$ as in Lemma \ref{permutation fact}.
Lemma \ref{transpositionstillpositive}
implies that 
$${\bf F}_1(Q_1(x,y))\subset \overline{V(x,z,y)} \subseteq V(a,z,b)$$
and we may iteratively apply the observation above to conclude that
$${\bf F}_1(Q_l(x,y))\subset \overline{V(x,z,y)} \subseteq V(a,z,b)$$
for all $l$, which implies that
$${\bf F}_1(P(x,y))\subset \overline{V(x,z,y)} \subseteq V(a,z,b)$$
which completes the proof of Lemma \ref{permutationstillpositive}.
\end{proof}

\medskip\noindent
{\em Proof of Lemma \ref{permutation fact}.}
We proceed by induction on $d$. So assume our claim hold for permutations of $\{1,\ldots,d-1\}$.

Let $r=P^{-1}(1)$ and, if $r\ne 1$, let
$$P_1=P_{1,r}P_{1,r-1}\cdots P_{1,2}$$
and let $P_1=id$ if $r =1$.
Notice that $P_1$ has the desired form, $P_1^{-1}(1)=r$ and if $m,n\in\{1,\ldots,d\}-\{r\}$ and $m<n$,
then $P_1^{-1}(m)<P_1^{-1}(n)$.
Let $\hat P_2$ be the restriction of $PP_1^{-1}$ to $\{2,\ldots,d\}$.
By our inductive claim, $\hat P_2=\hat P_{i_k,j_k}\cdots  \hat P_{i_1,j_i}$ where $i_l<j_l$ for all $l$ and
if \hbox{$\hat Q_{l-1}\defeq\hat P_{i_{l-1}j_{l-1}}\cdots  \hat P_{i_1j_1}$}, 
then $\hat Q_{l-1}^{-1}(i_l)<\hat Q_{l-1}^{-1}(j_l)$.
One may extend each $\hat P_{i_l,j_l}$ to a transposition $P_{i_1,j_l}$ of $\{1,\ldots,d\}$ by letting $1$ be taken
to itself. We then note that
$$P=(P_{i_k,j_k}\cdots  P_{i_1,j_i})P_{1,r}P_{1,r-1}\cdots P_{1,2}$$
has the desired form. 
\eproof

\noindent
{\bf Remark}
Notice that Lemma \ref{transpositionstillpositive} is enough to prove Theorem \ref{transverse-bases-general1} in
the case that you choose exactly one line from $\{L_i(x,y)\}$ and $d-1$ lines from amongst $\{L_i(a,b)\}$. 
(If we choose $z$ so that $(a,x,z,y,b)$ is an positive  quintuple of flags, Lemma \ref{transpositionstillpositive} implies that
${\bf F}_1(P_{j,d}(x,y)) \in V(a,z,b)$, so $(a,{\bf F}_1(P_{j,d}(x,y)),b)$ is a transverse triple of flags. So,
for any $j$ and $k$,
$a^{k-1}\oplus {\bf F}_1(P_{j,d}(x,y))^{1}\oplus b^{d-k}=\mathbb R^d$, which is enough to establish the special case of
Theorem \ref{transverse-bases-general1}.)
This simple case is enough to prove all the results in section \ref{correlation}. The full statement is only used in
the proof of Lemma \ref{length to trace}, and this use of the general result may be replaced by an application
of Labourie's Property H, see \cite{labourie-anosov}.

\subsection{Transverse bases for quadruples}
\label{transverse-bases-proof}
We now restate and prove Theorem  \ref{transverse-bases-general1}.\\

\noindent{\bf Theorem  \ref{transverse-bases-general1}} 
{\em
Let  $(a,x,y,b)$ be a positive quadruple in $\mathcal F_d$, then any  $d$  lines in
$$\{ L_1(a,b),\ldots,L_d(a,b),L_1(x,y),\ldots,L_d(x,y)\}$$
are in general position.
}

\begin{proof}
If 
$$I\in\mathcal I=\{(i_1,\ldots,i_k)\in\mathbb Z^k \ |\ 1\leq i_1<\cdots < i_k\leq d\}.$$
Let
$$e_I(a,b)=e_{i_1}(a,b)\wedge\cdots\wedge e_{i_k}(a,b).$$
Then our claim is equivalent to  the claim that $e_I(a,b)\wedge e_J(x,y)\ne 0$ if $I,J\in\mathcal I$ and
$|I|+|J|=d$ (where $|(i_1,\ldots,i_k)|=k$).

Let $A$ be  the matrix with coefficients $A^i_j = \braket{e^i(a,b) | e_j(x,y)}$.
If $I, K \in\mathcal I$ and $|I| =|K|$,
then let $A^I_K$ be the submatrix of $A$ given by the intersection of the rows with labels in $I$ and  the columns with
labels in $K$.
 
If $I,J \in\mathcal I$ and  $|I|+|J| = d$, then, since
$$e_{j}(x,y) = \sum_{i=1}^d A^{i}_{j} e_i(a,b),$$ 
we see that
$$e_{I}(a,b)\wedge e_J(x,y) =  \det (A^{D-I}_{J}) e_D(a,b)$$
where $D=(1,2,\ldots,d)$.
So, it suffices to prove that all the minors of $A$ are non-zero. Notice that since our bases are well-defined
up to (non-zero) scalar multiplication of the elements, the fact that the minors are non-zero is independent
of our choice of bases.

We first show that all initial minors are non-zero. A square submatrix $A^K_J$
is called {\em initial} if both $J$ and $K$ are contiguous blocks in $D$ and $J\cup K$ contains $1$, 
{\it i.e.} it is  square submatrix which borders the first column or row. An {\em initial minor} is the determinant of an
initial square submatrix.

If $A^{D-I}_J$ is initial and $J$ contains $1$, then 
$$J=(1,\ldots,l)\  \ {\rm and}\ \  
I = (1,2,\ldots,r,d-s+1, d-s+2,\ldots,d)$$
where $r+s + l=d$.
(Notice that either $r$ or $s$ may be $0$.) Since $(a,b,x)\in\mathcal F_d^{(3)}$,
$$a^r\oplus b^s\oplus x^l=\mathbb R^d,$$
so
$$e_I(a,b)\wedge e_J(x,y)\ne 0$$
which implies that $\det(A^{D-I}_J)\ne 0$.

If $D-I$ contains a $1$ and $J$ does not contain a $1$, then 
\begin{eqnarray*}
I&=& (l+1, l+2,\ldots,d)\,  \\
D-I&=&(1,\ldots,l)\ ,\\
J &=& (j+1,j+2,\ldots,j+l)\ ,	
\end{eqnarray*}
where $j,l\ge 1$ and $j+l\le d$. Let $P$ be any permutation such that 
$${\bf F}_1(P(x,y))^{l}=\braket{e_{j+1}(x,y),\ldots,e_{j+l}(x,y)}.$$ 

Then, by  Lemma \ref{permutationstillpositive}, 
$(a,{\bf F}_1(P(x,y)),b)$ is a transverse triple of flags. It follows that 
$$b^{d-l}\oplus {\bf F}_1(P(x,y))^{l}=\mathbb R^d,$$
and hence that
$$e_I(a,b)\wedge e_J(x,y)\ne 0,$$
so again $\det(A^{D-I}_J)\ne 0$.
Therefore we have shown that all the initial minors of $A$ are non-zero.

We claim that if $\xi_0=\nu_\tau$ is the Veronese embedding with respect to an irreducible representation $\tau_d$ and  
$(a_0,x_0,y_0,b_0)$ is an ordered quadruple in $\xi_0({\bf P}^1(\mathbb R))$,
then one may choose bases $\{e_i(a_0,b_0)\}$ and $\{e_i(x_0,y_0\}\}$ so that all the initial minors of
the associated matrix $A_0$ are positive.
We may assume that $a_0=\xi_0(\infty)$, $x_0=\xi_0(t)$, $y_0=\xi_0(1)$ and $b_0=\xi_0(0)$ where $t>1$. Observe that one can
choose bases $\{e_i(0,\infty)\}$ and $\{e_i(1,t)\}$ for $\mathbb R^2$ so that $M_0=\left(\braket{e^i(0,1) | e_j(1,t)}\right)$
is totally positive.  If we choose the bases
$$\{e_i(a_0,b_0)=e_1(0,\infty)^{d-i}e_2(0,\infty)^{i-1}\}\quad{\rm and}\quad \{e_i(x_0,y_0)=e_1(1,t)^{d-i}e_2(1,t)^{i-1}\}$$
for $\mathbb R^d$,
then $A_0=\tau_d(M_0)$. The claim then follows from the fact
that the the image under $\tau_d$  of  a totally positive matrix in $\pslt$ is totally positive in $\psln$,
see \cite[Prop. 5.7]{fock-goncharov}.

We can now continuously deform  $(a,x,y,b)=(a_1,x_1,y_1,b_1)$, through positive quadruples $(a_t,x_t,y_t,b_t)$,
to a positive quadruple $(a_0,x_0,y_0,b_0)$ in the image of $\xi_0=\nu_\tau$. 
One may then continuously choose bases
$\{e_i(a_t,b_t)\}$ and $\{e_i(x_t,y_t)\}$ beginning at $\{e_i(a_0,b_0)\}$ and $\{e_i(x_0,y_0\}$ and terminating
at bases $\{e_i(a,b)\}$ and $\{e_i(x,y)\}$ which we may assume are the bases used above.  One
gets associated matrices $\{ A_t\}$ interpolating between $A_0$ and $A$. Since the initial minors of $A_t$ are
non-zero for all $t$ and positive for $t=0$, we see that the initial minors of $A$ must be positive.

Gasca and Pena \cite[Thm. 4.1]{gasca-pena} (see also Fomin-Zelevinsky \cite[Thm. 9]{fomin-zelevinsky})
proved that a matrix is totally positive if and only if all its  initial minors are positive.
Therefore, $A$ is totally positive, so all its minors are positive, hence non-zero, which completes the proof.
\end{proof}

%% !TEX root =HitchinSimple.tex
\section{Correlation functions for Hitchin representations}
\label{correlation}

We define correlation functions which offer measures of the transversality of bases
associated to images of collections of elements in $\pi_1(S)$. The results of the previous
section can be used to give conditions guaranteeing that many of these correlation functions are non-zero.
We then observe that, if we restrict to certain 3-generator subgroups of $\pi_1(S)$, then the restriction of
the Hitchin representation function to the subgroup is determined, up to conjugation, by correlation
functions associated to the generators and the eigenvalues of the images of the generators.

If $\{\alpha_1,\ldots,\alpha_n\}$ is a collection of non-trivial elements of $\pi_1(S)$,
$i_j\in \{0,1,\ldots,d\}$ for all $1\le j\le n$, and $\rho\in\Hn$, we define the {\em correlation function} 
\footnote{The name ``correlation function'' does not  bear any physical meaning here and just reflects  the fact that the 
correlation function between eigenvalues of quantum observables is the  trace of products of 
projections on the corresponding eigenspaces.}

$${\bf T}_{i_1,\ldots,i_n}(\alpha_1,\ldots,\alpha_n)(\rho) \defeq
\tr\left(\prod_{j=1}^n \p_{i_j}(\rho(\alpha_j))\right).$$
where we adopt the convention that
$$\p_0(\rho(\alpha))=\rho(\alpha).$$
Notice that if all the indices are non-zero, then ${\bf T}_{i_1,\ldots,i_n}(\alpha_1,\ldots,\alpha_n)(\rho)$ is
well-defined, while if some indices are allowed to be zero, ${\bf T}_{i_1,\ldots,i_n}(\alpha_1,\ldots,\alpha_n)(\rho)$
is only well-defined up to sign. These correlations functions are somewhat more general than the correlation
functions defined in the introduction as we allow terms which are not projection matrices.

\subsection{Nontriviality of correlation functions}
We say that a collection $\{\alpha_1,\ldots,\alpha_n\}$ of non-trivial elements of $\pi_1(S)$
has {\em non-intersecting axes} if whenever $i\ne j$, $(\alpha_i)_+$ and $(\alpha_i)_-$ lie
in the same component of $\partial_\infty\pi_1(S)-\{(\alpha_j)_+,(\alpha_j)-\}$.
Notice that $\{\alpha_1,\ldots,\alpha_n\}$ have non-intersecting axes whenever they are
represented by mutually disjoint  and non-parallel simple closed curves on $S$.

Theorem \ref{transverse-bases-general0} has the following immediate consequence.

\begin{corollary}
\label{productnonzero}
If $\rho\in\Hn$, $\alpha,\beta\in\pi_1(S)-\{1\}$ and $\alpha$ and $\beta$ have non-intersecting
axes,  then any $d$ elements of
$$\{e_1(\rho(\alpha)),\ldots,e_{d}(\rho(\alpha)),e_{1}(\rho(\beta)),\ldots,e_d(\rho(\beta))\}$$
span $\mathbb R^d$. In particular,
$$\braket{e^i(\rho(\alpha)) | e_j(\rho(\beta))} \neq 0.$$ 
\end{corollary}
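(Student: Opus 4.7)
The plan is to unpack the eigenbases in terms of the limit map and then apply Theorem \ref{transverse-bases-general0} directly. First I would observe that by the proximality property in Theorem \ref{hyperconvexity}, for any non-trivial $\gamma\in\pi_1(S)$ we have
$$\xi_\rho^i(\gamma^+)=\braket{e_1(\rho(\gamma)),\ldots,e_i(\rho(\gamma))}\ ,$$
and since $\gamma^-=(\gamma^{-1})^+$ and $\rho(\gamma^{-1})$ has the same eigenvectors in reverse order,
$$\xi_\rho^{d-i+1}(\gamma^-)=\braket{e_d(\rho(\gamma)),\ldots,e_i(\rho(\gamma))}\ .$$
Intersecting these two flag subspaces gives $L_i(\xi_\rho(\gamma^+),\xi_\rho(\gamma^-))=\braket{e_i(\rho(\gamma))}$. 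Thus the $2d$ lines
$$\{L_i(\xi_\rho(\alpha^+),\xi_\rho(\alpha^-))\}_{i=1}^d\cup\{L_i(\xi_\rho(\beta^+),\xi_\rho(\beta^-))\}_{i=1}^d$$
are precisely the eigenlines of $\rho(\alpha)$ and $\rho(\beta)$.

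Next I would translate the hypothesis of non-intersecting axes into a cyclic ordering on $\partial_\infty\pi_1(S)$. By assumption $\beta^+$ and $\beta^-$ lie in the same component of $\partial_\infty\pi_1(S)-\{\alpha^+,\alpha^-\}$, so, up to switching $\beta$ with $\beta^{-1}$, the quadruple $(\alpha^+,\beta^+,\beta^-,\alpha^-)$ is cyclically ordered in $\partial_\infty\pi_1(S)$. Since the limit map $\xi_\rho$ is positive (Theorem \ref{fockgoncharovpositivity}), the quadruple $(\xi_\rho(\alpha^+),\xi_\rho(\beta^+),\xi_\rho(\beta^-),\xi_\rho(\alpha^-))$ is a cyclically ordered quadruple in the limit curve. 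Theorem \ref{transverse-bases-general0} therefore applies and asserts that any $d$ lines from the set above are in general position, which is precisely the first assertion of the corollary.

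For the second assertion, I would argue by contradiction: if $\braket{e^i(\rho(\alpha))\ |\ e_j(\rho(\beta))}=0$, then $e_j(\rho(\beta))$ lies in the hyperplane $H_i=\ker e^i(\rho(\alpha))$, which is precisely the span of $\{e_k(\rho(\alpha))\}_{k\ne i}$. Then the $d$ vectors
$$\{e_k(\rho(\alpha))\}_{k\ne i}\cup\{e_j(\rho(\beta))\}$$
all lie in the $(d-1)$-dimensional subspace $H_i$, contradicting the general position statement just established.

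There is no real obstacle: the proof is essentially a translation between the language of eigenvectors and the language of line decompositions of transverse flags, followed by a direct invocation of Theorem \ref{transverse-bases-general0}. The only mild care needed is to verify that ``non-intersecting axes'' does produce a cyclically ordered quadruple (as opposed to a linked one), which is exactly the content of the disjointness condition.
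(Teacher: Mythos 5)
Your proof is correct and matches the paper's intended argument: the paper simply records this as an immediate consequence of Theorem \ref{transverse-bases-general0}, and your translation of eigenlines into the lines $L_i(\xi_\rho(\gamma^+),\xi_\rho(\gamma^-))$, together with the observation that non-intersecting axes yield a cyclically ordered quadruple (after possibly replacing $\beta$ by $\beta^{-1}$, which does not change the set of eigenlines), is exactly the reasoning left implicit there. The deduction of $\braket{e^i(\rho(\alpha))\mid e_j(\rho(\beta))}\neq 0$ from the spanning statement is also the intended one.
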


One can use Corollary \ref{productnonzero}  to establish
that a variety of correlation functions are non-zero. Notice that the
assumptions of Lemma \ref{tracenonzero-i0} will be satisfied whenever $\alpha$
is represented by a simple curve and $\alpha$ and $\gamma$ are co-prime.

\begin{lemma}
\label{tracenonzero-i0}
If $\rho\in\Hn$ , $\alpha, \gamma\in\pi_1(S)-\{1\}$, 
$\alpha$ and $\gamma\alpha\gamma^{-1}$ have non-intersecting axes, and
$i\in\{1,\ldots,d\}$, then
$${\bf T}_{i,0}(\alpha,\gamma)(\rho)=\tr\left(\p_i(\rho(\alpha))\rho(\gamma)\right)\ne 0.$$
\end{lemma}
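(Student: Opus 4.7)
My plan is to reduce the trace of the rank-one operator $\p_i(\rho(\alpha))\rho(\gamma)$ to a single pairing and then recognize that pairing as one of the non-vanishing quantities guaranteed by Corollary \ref{productnonzero}.

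First I would unfold the definitions. Since $\p_i(\rho(\alpha))$ is the rank-one projector
$$v\mapsto \langle e^i(\rho(\alpha))\mid v\rangle\, e_i(\rho(\alpha)),$$
the composition $\p_i(\rho(\alpha))\rho(\gamma)$ is also rank one with image $\langle e_i(\rho(\alpha))\rangle$, and its trace equals
$$\tr\bigl(\p_i(\rho(\alpha))\rho(\gamma)\bigr) = \bigl\langle e^i(\rho(\alpha))\,\bigm|\,\rho(\gamma)\,e_i(\rho(\alpha))\bigr\rangle.$$

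Next I would identify the vector $\rho(\gamma)e_i(\rho(\alpha))$. A direct computation shows
$$\rho(\gamma\alpha\gamma^{-1})\bigl(\rho(\gamma)e_i(\rho(\alpha))\bigr)=\rho(\gamma)\rho(\alpha)e_i(\rho(\alpha))=\lambda_i(\rho(\alpha))\,\rho(\gamma)e_i(\rho(\alpha)).$$
Because $\rho(\gamma\alpha\gamma^{-1})$ and $\rho(\alpha)$ are conjugate, they have the same ordered list of eigenvalues, so $\rho(\gamma)e_i(\rho(\alpha))$ is a non-zero scalar multiple of $e_i(\rho(\gamma\alpha\gamma^{-1}))$. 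Thus
$$\tr\bigl(\p_i(\rho(\alpha))\rho(\gamma)\bigr)=c\,\bigl\langle e^i(\rho(\alpha))\,\bigm|\,e_i(\rho(\gamma\alpha\gamma^{-1}))\bigr\rangle$$
for some non-zero constant $c$.

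Finally I would apply Corollary \ref{productnonzero} with $\beta=\gamma\alpha\gamma^{-1}$, which is legal exactly because of the standing hypothesis that $\alpha$ and $\gamma\alpha\gamma^{-1}$ have non-intersecting axes. That corollary gives $\langle e^i(\rho(\alpha))\mid e_j(\rho(\gamma\alpha\gamma^{-1}))\rangle\ne 0$ for all $i,j$, and in particular for $j=i$, which yields ${\bf T}_{i,0}(\alpha,\gamma)(\rho)\ne 0$. There is no real obstacle here beyond checking that the eigenvector labelling is preserved under conjugation, which is automatic; the content of the lemma is entirely carried by the transversality result Corollary \ref{productnonzero}.
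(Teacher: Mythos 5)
Your proposal is correct and is essentially the paper's own argument: rewrite the trace as the pairing $\braket{e^i(\rho(\alpha))\mid\rho(\gamma)e_i(\rho(\alpha))}$, note that $\rho(\gamma)e_i(\rho(\alpha))$ is a nonzero multiple of $e_i(\rho(\gamma\alpha\gamma^{-1}))$, and invoke Corollary \ref{productnonzero} for the pair $\alpha$, $\gamma\alpha\gamma^{-1}$. The only difference is that you make the conjugation-of-eigenvectors step and the nonzero scalar explicit, which the paper leaves implicit.
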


\begin{proof}
Since 
$$\tr(\p_i(\rho(\alpha))\rho(\gamma))= \braket{e^i(\rho(\alpha)) | \rho(\gamma)(e_i(\rho(\alpha)))}=\braket{e^i(\rho(\alpha)) | e_i(\rho(\gamma\alpha\gamma^{-1}))},$$
the lemma follows immediately from  Corollary \ref{productnonzero}
\end{proof}

The next result deals with correlation functions which naturally arise when studying
configurations of elements of $\pi_1(S)$ used in the proof of Theorem \ref{lengthrigidity}, see
Figure \ref{4curves}.

\begin{proposition}
\label{tracenonzero}
Suppose that $\rho\in\Hn$ , $\alpha, \beta,\delta\in\pi_1(S)-\{1\}$ have non-intersecting axes, and
$i,j,k\in\{1,\ldots,d\}$. Then
\begin{enumerate}
\item
$${\bf T}_{ij}(\alpha,\beta)(\rho)=\tr(\p_i(\rho(\alpha))\p_j(\rho(\beta)))\ne 0,$$
and
\item
$${\bf T}_{i,j,k}(\alpha,\beta,\delta)(\rho)=\tr(\p_{i}(\rho(\alpha))\p_j(\rho(\beta))\p_{k}(\rho(\delta))) \ne 0 .$$

Moreover, if $\gamma\in\pi_1(S)-\{1\}$ and $\beta$ and $\gamma\delta\gamma^{-1}$ have non-intersecting
axes, then
\item
$${\bf T}_{i,0,j}(\beta,\gamma,\delta)(\rho)=\tr(\p_{i}(\rho(\beta))\rho(\gamma)\p_{j}(\rho(\delta))) \ne 0,$$
and
\item
$${\bf T}_{i,j,0,k}(\alpha,\beta,\gamma,\delta)(\rho) = 
{\bf T}_{j,0,k}(\beta,\gamma,\delta)(\rho)\left(\frac{{\bf T}_{i,j,k}(\alpha,\beta,\delta)(\rho)}{{\bf T}_{j,k}(\beta,\delta)(\rho)}\right)\ne 0.$$

\end{enumerate}
\end{proposition}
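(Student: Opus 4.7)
The plan is to unpack each correlation function using the fact that $\p_i(A)$ is the rank-one operator $v\mapsto \braket{e^i(A)\vert v}\, e_i(A)$. For any cyclic product of such projections interleaved with invertible operators $X_j$, a direct computation gives the scalar cycle
$$\tr\left(\p_{i_1}(A_1)X_1\cdots \p_{i_n}(A_n)X_n\right) \,=\, \prod_{j=1}^n \braket{e^{i_j}(A_j)\,\vert\, X_j\, e_{i_{j+1}}(A_{j+1})}$$
with indices taken cyclically. The proof then reduces to checking that each pairing in the cycle is nonzero. This is exactly what Corollary \ref{productnonzero} provides, combined with the observation that $\rho(\gamma)\, e_k(\rho(\delta))$ is a nonzero scalar multiple of $e_k(\rho(\gamma\delta\gamma^{-1}))$ (since it is an eigenvector of $\rho(\gamma\delta\gamma^{-1})$ with eigenvalue $\lambda_k(\rho(\delta))$).

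For (1), the expansion yields ${\bf T}_{ij}(\alpha,\beta)(\rho)=\braket{e^i(\rho(\alpha))\vert e_j(\rho(\beta))}\braket{e^j(\rho(\beta))\vert e_i(\rho(\alpha))}$, and each factor is nonzero by Corollary \ref{productnonzero} since $\alpha$ and $\beta$ have non-intersecting axes. Part (2) is the analogous three-term cycle and is handled identically using the pairwise non-intersection of axes among $\{\alpha,\beta,\delta\}$. For (3), the expansion gives $\braket{e^i(\rho(\beta))\vert \rho(\gamma)\, e_j(\rho(\delta))}\braket{e^j(\rho(\delta))\vert e_i(\rho(\beta))}$; the second factor is handled directly by Corollary \ref{productnonzero}, while the first, after replacing $\rho(\gamma)\, e_j(\rho(\delta))$ with a proportional $e_j(\rho(\gamma\delta\gamma^{-1}))$, is handled by Corollary \ref{productnonzero} applied to the pair $(\beta,\gamma\delta\gamma^{-1})$, whose non-intersection of axes is the standing hypothesis.

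For (4), the same expansion of ${\bf T}_{i,j,0,k}(\alpha,\beta,\gamma,\delta)(\rho)$ produces a four-factor cycle containing the $\gamma$-mixed pairing $\braket{e^j(\rho(\beta))\vert \rho(\gamma)\, e_k(\rho(\delta))}$, which is also the unique $\gamma$-mixed factor appearing in ${\bf T}_{j,0,k}(\beta,\gamma,\delta)(\rho)$. Isolating this shared pairing and comparing the remaining factors with the cyclic expansions of ${\bf T}_{i,j,k}(\alpha,\beta,\delta)(\rho)$ and ${\bf T}_{j,k}(\beta,\delta)(\rho)$ yields the stated identity as a purely algebraic tautology; non-vanishing then follows from parts (1)--(3).

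I do not anticipate a serious obstacle: the content is essentially a careful accounting of the cyclic pairings produced by the rank-one decomposition, and the only substantive input beyond formal manipulation is the non-vanishing furnished by Corollary \ref{productnonzero}, itself a consequence of the transversality result Theorem \ref{transverse-bases-general0}. The most delicate bookkeeping is ensuring the hypothesis on $\gamma\delta\gamma^{-1}$ in (3) and (4) is invoked precisely when an eigenvector is transported by $\rho(\gamma)$.
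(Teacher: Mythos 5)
Your proposal is correct and follows essentially the same route as the paper: expand each correlation function via the rank-one identity $\p_i(A)v=\braket{e^i(A)\,\vert\, v}\,e_i(A)$ into a cyclic product of scalar pairings, kill each pairing's potential vanishing with Corollary \ref{productnonzero} (transporting $e_k(\rho(\delta))$ by $\rho(\gamma)$ to an eigenvector of $\rho(\gamma\delta\gamma^{-1})$ exactly as the paper does in part (3)), and deduce (4) together with its non-vanishing from (1)--(3). The only cosmetic difference is that for (4) the paper packages the computation through the operator identity $PAQ=\bigl(\tr(PAQ)/\tr(PQ)\bigr)PQ$ for rank-one projections, whereas you verify the same identity directly on the scalar cycles; the content is identical.
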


\begin{proof}
Notice that
$$\tr(\p_i(\rho(\alpha))\p_j(\rho(\beta))) = \braket{e^i(\rho(\alpha)) | e_j(\rho(\beta))} \braket{e^j(\rho(\beta)) | e_i(\rho(\alpha))}$$
for all $i$ and $j$. Both of the terms on the right-hand side are non-zero, by Corollary \ref{productnonzero},
so 
$${\bf T}_{ij}(\alpha,\beta)(\rho)=\tr(\p_i(\rho(\alpha))\p_j(\rho(\beta)))\ne 0.$$
Similarly,
$${\bf T}_{i,j,k}(\alpha,\beta,\delta)(\rho)=
\braket{e^i(\rho(\alpha)) | e_j(\rho(\beta))} \braket{e^j(\rho(\beta)) | e_k(\rho(\delta))} \braket{e^k(\rho(\delta)) | e_i(\rho(\alpha)) }$$
and  Corollary \ref{productnonzero} 
guarantees that each of the terms on the right hand side is non-zero, so (1) and (2) hold.

Since
\begin{eqnarray*}
\tr(\p_{i}(\rho(\beta))\rho(\gamma)\p_{j}(\rho(\delta)))  & = &
 \braket{e^i(\rho(\beta))|\rho(\gamma) (e_j(\rho(\delta)))}\braket{e^j(\rho(\delta)) | e_i(\rho(\beta))} \\
 & =& \braket{e^i(\rho(\beta))|e_j(\rho(\gamma\delta\gamma^{-1}))}\braket{e^j(\rho(\delta)) | e_i(\rho(\beta))}\\
 \end{eqnarray*}
Corollary \ref{productnonzero} again
guarantees that each of the terms on the right hand side is non-zero, so (3) holds.

Recall that if $P,Q,A\in\sln$ and $P$ and $Q$
are projections onto lines, then 
$$PAQ = \frac{\tr(PAQ)}{\tr(PQ)}PQ$$
if $\tr(PQ) \neq 0$. (Suppose that $P$ projects onto the line $\braket{v}$ with kernel the hyperplane $V$ 
and $Q$ project onto the line $\braket{w}$ with kernel the hyperplane $W$, then both $PAQ$ and $PQ$ map onto
the line $\braket{v}$ and have $W$ in their kernel and  are therefore multiples of one another.
The ratio of the traces detects this multiple.)

So, since $\tr(\p_j(\rho(\beta))\p_k(\rho(\delta))\ne 0$, 
$$\p_j(\rho(\beta)) \rho(\gamma) \p_k(\rho(\delta))= 
\left(\frac{\tr(\p_j(\rho(\beta))\rho(\gamma) \p_k(\rho(\delta)))}{\tr(\p_j(\rho(\beta))\p_k(\rho(\delta)))}\right)\p_j(\rho(\beta))\p_k(\rho(\delta)).$$
Therefore,
\begin{eqnarray*}
{\bf T}_{i,j,0,k}(\alpha,\beta,\gamma,\delta)(\rho) &=&
\tr(\p_i(\rho(\alpha)) \p_j(\rho(\beta))\rho(\gamma) \p_k(\rho(\delta)))\\
& = &
\tr\left(\p_i(\rho(\alpha))\left(\frac{\tr(\p_j(\rho(\beta))\rho(\gamma) \p_k(\rho(\delta))}{\tr(\p_j(\rho(\beta))\p_k(\rho(\delta))}\right)\p_j(\rho(\beta))\p_k(\rho(\delta))\right) \\
& = & \tr(\p_j(\rho(\beta)) \rho(\gamma)  \p_k(\rho(\delta))\left(\frac{\tr(\p_i(\rho(\alpha))\p_j(\rho(\beta))\p_k(\rho(\delta))}{\tr(\p_j(\rho(\beta))\p_k(\rho(\delta))}\right)\\
& = & {\bf T}_{j,0,k}(\beta,\gamma,\delta)(\rho)\left(\frac{{\bf T}_{i,j,k}(\alpha,\beta,\delta)(\rho)}{{\bf T}_{j,k}(\beta,\delta)(\rho)}\right).\\
\end{eqnarray*}
Since all the terms on the right hand side have already been proven to be non-zero, the entire expression is
non-zero, which completes the proof of (4).
\end{proof}

\subsection{Correlation functions and eigenvalues rigidity}

We now observe that correlation functions and eigenvalues of images of elements determine
the restriction of a Hitchin representation up to conjugation. Theorem \ref{conjugateontriples} is a special
case of Theorem \ref{conjugateontriples-general}.

\begin{theorem}
\label{conjugateontriples-general}
Suppose that $\rho,\sigma\in\Hn$ and $\alpha, \beta,\delta \in \pi_1(S)-\{1\}$ have non-intersecting axes.
If
\begin{enumerate}
	\item $\lambda_i(\rho(\eta))=\lambda_i(\sigma(\eta))$ for any  $\eta \in \{\alpha,\beta,\delta\}$
	and any $i\in\{1,\ldots,d\}$, and
	\item for all $i,j,k$ in $\{1,\ldots,d\}$
 $$\frac{{\bf T}_{i,j,k}(\alpha,\beta,\delta)}{{\bf T}_{j,k}(\beta,\delta)}(\rho)=
\frac{{\bf T}_{i,j,k}(\alpha,\beta,\delta)}{{\bf T}_{j,k}(\beta,\delta)}(\sigma),$$
\end{enumerate}
then $\rho$ and $\sigma$
are conjugate, in $\pgln$, on the subgroup 
$\braket{\alpha,\beta,\delta}$ of $\pi_1(S)$ generated by $\alpha$, $\beta$ and $\delta$.
\end{theorem}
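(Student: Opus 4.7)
The plan is to build a conjugator in $\pgln$ in two stages—match $\sigma$ to $\rho$ first on $\beta$, then on $\delta$—after which the hypothesis will force automatic agreement on $\alpha$.

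First, since $\rho(\beta)$ and $\sigma(\beta)$ are purely loxodromic with the same eigenvalues, they are conjugate in $\pgln$. Replacing $\sigma$ by a conjugate, assume $\rho(\beta)=\sigma(\beta)$ and fix a common eigenbasis $\{e_j\}$; the projectors $\p_j(\rho(\beta))=\p_j(\sigma(\beta))$ agree. Next I seek $D=\operatorname{diag}(d_j)$ in the centralizer of $\rho(\beta)$—i.e.\ diagonal in $\{e_j\}$—with $D\sigma(\delta)D^{-1}=\rho(\delta)$. Set
$$m^\rho_{ji}=\braket{e^j|e_i(\rho(\delta))},\qquad m^\sigma_{ji}=\braket{e^j|e_i(\sigma(\delta))};$$
every entry is nonzero by Corollary~\ref{productnonzero}. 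The condition $De_i(\sigma(\delta))\parallel e_i(\rho(\delta))$ for each $i$ is equivalent to producing a rank-one factorization $m^\rho_{ji}/m^\sigma_{ji}=d_j/\lambda_i$.

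To extract such a factorization from the correlation-function hypothesis, I unfold
$$\frac{T_{ijk}(\alpha,\beta,\delta)}{T_{jk}(\beta,\delta)}(\rho)=\frac{\braket{e^i(\rho(\alpha))|e_j}\,\braket{e^k(\rho(\delta))|e_i(\rho(\alpha))}}{\braket{e^k(\rho(\delta))|e_j}},$$
and the analogous formula for $\sigma$. Writing $A(i,j)$, $C(k,i)$, $H(k,j)$ for the $\rho/\sigma$ ratios of the three remaining factors (the common factor $\braket{e^j|e_k(\delta)}$ cancels, and is nonzero by Corollary~\ref{productnonzero}), the hypothesis becomes $A(i,j)\,C(k,i)=H(k,j)$ for all $i,j,k$. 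Fixing $i=i_0$ exhibits $H$ as a rank-one product, and then comparing $A(i,j)C(k,i)=A(i_0,j)C(k,i_0)$ for varying $i$ forces $A(i,j)=\lambda(i)\phi(j)$, $C(k,i)=\psi(k)/\lambda(i)$, $H(k,j)=\psi(k)\phi(j)$. Since $\braket{e^k(\delta)|e_j}=\bigl((m)^{-1}\bigr)_{kj}$, the rank-one structure of $H$ is equivalent to $(m^\rho)^{-1}=\operatorname{diag}(\psi)\,(m^\sigma)^{-1}\operatorname{diag}(\phi)$, i.e.\ $m^\rho_{ji}/m^\sigma_{ji}=1/(\phi(j)\psi(i))$—the desired rank-one form with $d_j=1/\phi(j)$ and $\lambda_i=\psi(i)$. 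Conjugating by this $D$ achieves $\rho(\beta)=\sigma(\beta)$ and $\rho(\delta)=\sigma(\delta)$ simultaneously.

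With $\beta$ and $\delta$ matched, the denominators $T_{jk}(\beta,\delta)$ coincide for $\rho$ and $\sigma$, so the numerators $T_{ijk}(\alpha,\beta,\delta)$ coincide as well. Cancelling the common factor $\braket{e^j|e_k(\rho(\delta))}$ yields
$$\braket{e^i(\rho(\alpha))|e_j}\braket{e^k(\rho(\delta))|e_i(\rho(\alpha))}=\braket{e^i(\sigma(\alpha))|e_j}\braket{e^k(\rho(\delta))|e_i(\sigma(\alpha))}$$
for all $i,j,k$. The ratio $\braket{e^i(\rho(\alpha))|e_j}/\braket{e^i(\sigma(\alpha))|e_j}$ is then independent of $j$, so $e^i(\rho(\alpha))=\mu(i)\,e^i(\sigma(\alpha))$; hence the projectors $\p_i(\rho(\alpha))$ and $\p_i(\sigma(\alpha))$ coincide, and combined with the equality of eigenvalues this gives $\rho(\alpha)=\sigma(\alpha)$. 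Thus $\rho$ and $\sigma$ agree on $\langle\alpha,\beta,\delta\rangle$, as required.

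The principal obstacle is the rank-one extraction in the second stage. It rests entirely on the non-vanishing of every inner product $\braket{e^i(\cdot)|e_j(\cdot)}$ appearing in the denominators and ratios, which is exactly what Corollary~\ref{productnonzero}—and ultimately the transverse-bases Theorem~\ref{transverse-bases-general0}—provides under the non-intersecting axes hypothesis. Once that nonvanishing is secured, the implication ``$f(i,j)g(i,k)=h(j,k)\Rightarrow$ rank-one factorization'' is an elementary algebraic manipulation, and all remaining steps reduce to straightforward linear algebra.
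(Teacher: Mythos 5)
Your argument is correct; all the nonvanishing it needs (every pairing $\braket{e^i(\cdot)\,|\,e_j(\cdot)}$ between eigendata of $\alpha,\beta,\delta$) is exactly what Corollary \ref{productnonzero} supplies under the non-intersecting axes hypothesis, the rank-one extraction from $A(i,j)C(k,i)=H(k,j)$ is sound, the passage from the separable form of $H$ to the entrywise ratio of the coordinate matrices via $n=m^{-1}$ is valid, and the final step (ratio independent of $j$, hence equal dual eigenvectors up to scale, hence equal projectors, hence $\rho(\alpha)=\sigma(\alpha)$ using equality of eigenvalues) goes through. You also implicitly use that hypothesis (2) is conjugation-invariant, so it persists after each replacement of $\sigma$; that is fine and worth one sentence.

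Your route differs from the paper's in organization rather than in substance. The paper unfolds the same ratio $\braket{a^i|b_j}\braket{d^k|a_i}/\braket{d^k|b_j}$, but instead of matching $\beta$ first it conjugates $\sigma$ so that the whole eigenframe of $\alpha$ and the single vector $b_1$ agree, and then exploits the scale ambiguity of eigenvectors (normalizing $\braket{a^i|b_1}=1$, $\braket{d^k|b_1}=1$, $\braket{a^1|b_j}=1$) so that specializing the identity at $j=1$ gives $d^k=\hat d^k$ (hence $\rho(\delta)=\sigma(\delta)$) and then at $i=1$ gives $b_j=\hat b_j$ (hence $\rho(\beta)=\sigma(\beta)$); agreement on $\alpha$ is immediate from the frame matching. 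What the paper's normalization buys is that no explicit factorization lemma, no matrix inversion, and no construction of a residual conjugator are needed—the separation of variables is absorbed into the choice of representatives. What your version buys is a clearer structural picture: after matching $\beta$, the remaining freedom is precisely the centralizer of $\rho(\beta)$ (diagonal matrices in the common eigenbasis), and the correlation hypothesis is exactly the statement that the two $\delta$-frames differ by such a diagonal element; this makes transparent why the data in (1) and (2) is exactly enough, at the cost of the extra bookkeeping with $m$, $n=m^{-1}$, and the rank-one factorization.
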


\begin{proof}
We will work in lifts of the restrictions of $\rho$ and $\sigma$ to $\braket{\alpha,\beta,\delta}$ so that
the images of $\alpha$, $\beta$ and $\delta$ all have positive eigenvalues. We will abuse notation
by referring to these lifts by simply $\rho$ and $\sigma$. With this convention,
$\lambda_i(\rho(\eta))=\lambda_i(\sigma(\eta))$ for all $i$ and any $\eta\in\{\alpha,\beta,\delta\}$.
It suffices to prove that these lifts are conjugate in $\ms{GL}_d(\mathbb R)$.

Let $a_i=e_i(\rho(\alpha))$, $a^i=e^i(\rho(\alpha))$, $b_j=e_j(\rho(\beta))$, $b^j=e^j(\rho(\beta))$,
$d_k=e_k(\rho(\delta))$ and \hbox{$d^k=e^k(\rho(\delta))$} for all $i,j,k$. Similarly let $\hat a_i=e_i(\sigma(\alpha))$, 
\hbox{$\hat a^i=e^i(\sigma(\alpha))$}, $\hat b_j=e_j(\sigma(\beta))$, $\hat b^j=e^j(\sigma(\beta))$,
$\hat d_k=e_k(\sigma(\delta))$ and $\hat d^k=e^k(\sigma(\delta))$ for all $i,j,k$.
With this notation,
$$\frac{{\bf T}_{i,j,k}(\alpha,\beta,\delta)(\rho)}{{\bf T}_{j,k}(\beta,\delta)(\rho)}= \frac{\braket{a^i | b_j }\braket{b^j | d_k }\braket{d^k | a_i}}{\braket{b^j | d_k}\braket{d^k | b_j }} =\frac{\braket{a^i | b_j }\braket{d^k | a_i}}{\braket{d^k | b_j }} $$
and
$$\frac{{\bf T}_{i,j,k}(\alpha,\beta,\delta)(\sigma)}{{\bf T}_{j,k}(\beta,\delta)(\sigma)} =\frac{\braket{\hat a^i | \hat b_j }\braket{\hat d^k | \hat a_i}}{\braket{\hat d^k | \hat b_j }} ,$$
so, by assumption,
\begin{equation}
\frac{\braket{a^i | b_j }\braket{d^k | a_i}}{\braket{d^k | b_j }} = \frac{\braket{\hat a^i |\hat b_j }\braket{\hat d^k |\hat a_i}}{\braket{\hat d^k |\hat b_j }}
\label{coordinatesequal}
\end{equation}

We may conjugate $\sigma$  and choose $a_i$, $\hat a_i$, $b_1$ and $\hat b_1$ so that $a_i=\hat a_i$ for all $i$ 
(so $a^i=\hat a^i$ for all $i$),
$b_1=\hat b_1$ and $\braket{a^i | b_1}=1$ for all $i$.
(Notice that this is possible since, by  Corollary \ref{productnonzero}, $b_1$ does not
lie in any of the coordinate hyperplanes of the basis $\{a_i\}$ and similarly $\hat b_1$ does not lie in
any of the coordinate hyperplanes of the basis $\{\hat a_i\}=\{a_i\}$.)
Therefore, since $\lambda_i(\rho(\alpha))=\lambda_i(\sigma(\alpha))$ for all $i$,
we see that $\rho(\alpha)=\sigma(\alpha)$.

Corollary \ref{productnonzero} also assures us that  $\braket{  d^k | b_1 }$  and $\braket{\hat d^k  | \hat b_1 }$ are non-zero, so
we may additionally choose $\{d^k\}$ and $\{\hat d^k\}$ so that $\braket{  d^k | b_1 }=1$  and $\braket{\hat d^k  | \hat b_1 }=1$
for all $k$.
Therefore, taking $j=1$ in Equation (\ref{coordinatesequal}), we see that
$$ \braket{ d^k | a_i}=\braket{ \hat d^k | \hat a_i }=\braket{ \hat d^k | a_i }$$
for all $i$ and $k$.
It follows that  $d^k=\hat d^k$ for all $k$, which implies that $d_k=\hat d_k$ for all $k$.
Again,  since $\lambda_i(\rho(\delta))=\lambda_i(\sigma(\delta))$ for all $i$,  we see that
$\rho(\delta)=\sigma(\delta)$.

Equation (\ref{coordinatesequal}) then reduces to
$$\frac{\braket{a^i | b_j }}{\braket{  d^k | b_j } }= \frac{\braket{\hat a^i |\hat  b_j }}{\braket{\hat  d^k  | \hat b_j }}= \frac{\braket{ a^i |\hat  b_j }}{\braket{ d_k  | \hat b_j}}.$$
We may assume, again applying Corollary \ref{productnonzero}, that $\{b_j\}$ and $\{\hat b_j\}$ have been chosen so that
$$\braket{ a^1 | b_j }=\braket{ a^1 | \hat b_j }=1$$
for all $j$, so, by considering the above equation with $i=1$, we see that
$$\braket{ d^k | b_j}=\braket{ d^k   |\hat b_j }$$
for all $j$ and $k$, which implies that  $b_j=\hat b_j$ for all $j$, and, again since eigenvalues agree,
we may conclude that  $\rho(\beta)=\sigma(\beta)$, which completes the proof.
\end{proof}

%% !TEX root =HitchinSimple.tex
\section{Asymptotic expansion of spectral radii}

In this section we establish a useful asymptotic expansion for the spectral radii of families of matrices
of the form $A^nB$.

\begin{lemma}
\label{specradiusexpansion}
Suppose that $A, B \in \sln$ and that $A$ is real-split and \hbox{2-proximal}.
If $(b^i_j)$ is the matrix of $B$ with respect to $\{e_i(A)\}_{i=1}^d$ 
and $b^1_1$, $b^1_2$, and $ b^2_1$ are non-zero, then
$$\frac{\lambda_1(A^nB)}{\lambda_1(A)^n} = b^1_1 + \frac{b^1_2b^2_1}{b^1_1} \left(\frac{\lambda_2(A)}{\lambda_1(A)}\right)^n+ 
o\left(\left(\frac{\lambda_2(A)}{\lambda_1(A)}\right)^n\right).$$
 \end{lemma}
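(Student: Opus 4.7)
The plan is to work in the eigenbasis $\{e_i(A)\}$ of $A$, factor out the dominant scaling $\lambda_1(A)^n$, and then apply first-order perturbation theory for a simple eigenvalue of the resulting matrix. Set $\mu := \lambda_2(A)/\lambda_1(A)$, so $|\mu| < 1$ by 2-proximality.

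In the basis $\{e_i(A)\}$, $A$ is diagonal with entries $\lambda_i(A)$, so $A^n B$ has $(i,j)$-entry $\lambda_i(A)^n b^i_j$. Define
$$M_n := \lambda_1(A)^{-n} A^n B,$$
whose $(i,j)$-entry is $(\lambda_i(A)/\lambda_1(A))^n b^i_j$. Multiplication by the scalar $\lambda_1(A)^n$ scales every eigenvalue uniformly, preserving both the modulus ordering and the sign structure, so $\lambda_1(A^nB) = \lambda_1(A)^n \lambda_1(M_n)$, and it suffices to expand $\lambda_1(M_n)$ to order $\mu^n$. The first row of $M_n$ is constantly $(b^1_1,\ldots,b^1_d)$; the second row equals $\mu^n(b^2_1,\ldots,b^2_d)$; and, since $|\lambda_i(A)/\lambda_1(A)| < |\mu|$ for $i \geq 3$, each of the remaining rows is $o(\mu^n)$. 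Hence
$$M_n = M_\infty + \mu^n N + R_n, \qquad R_n = o(\mu^n),$$
where $M_\infty$ has first row $(b^1_j)_j$ and zeros elsewhere, and $N$ has second row $(b^2_j)_j$ and zeros elsewhere. A direct computation shows the characteristic polynomial of $M_\infty$ is $(-\lambda)^{d-1}(b^1_1 - \lambda)$, so the hypothesis $b^1_1 \neq 0$ makes $b^1_1$ a simple eigenvalue, with right eigenvector $v = e_1(A)$ and left eigenvector $u$ whose coordinates in the dual basis are $(1, b^1_2/b^1_1,\ldots,b^1_d/b^1_1)$; one checks $uv = 1$.

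Since $b^1_1$ is a simple eigenvalue of $M_\infty$, standard analytic perturbation theory (or the implicit function theorem applied to the characteristic polynomial) produces an analytic branch $M \mapsto \lambda(M)$ near $M_\infty$ with $\lambda(M_\infty) = b^1_1$ and first-order expansion $\lambda(M) = b^1_1 + u(M - M_\infty)v + O(\Vert M-M_\infty\Vert^2)$. Applying this to $M_n$ and computing $uNv$: one has $Nv = Ne_1(A) = b^2_1 e_2(A)$, and pairing with $u$ extracts its second coordinate $b^1_2/b^1_1$, giving $uNv = b^1_2 b^2_1 / b^1_1$. Combined with $u R_n v = o(\mu^n)$, this yields
$$\lambda(M_n) = b^1_1 + \frac{b^1_2 b^2_1}{b^1_1}\mu^n + o(\mu^n),$$
and multiplying by $\lambda_1(A)^n$ gives the claimed asymptotics. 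The main obstacle is the identification $\lambda(M_n) = \lambda_1(M_n)$, i.e.\ ensuring that the perturbed branch remains the eigenvalue of largest modulus. This follows from the spectral gap of $M_\infty$ between $b^1_1 \neq 0$ and the eigenvalue $0$ of multiplicity $d-1$: by continuity of the spectrum, for all sufficiently large $n$ the branch $\lambda(M_n)$ stays near $b^1_1$ while the remaining eigenvalues of $M_n$ cluster near $0$, so $\lambda(M_n)$ is indeed $\lambda_1(M_n)$.
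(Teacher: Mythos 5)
Your proof is correct and follows essentially the same route as the paper: rescale $A^nB$ by $\lambda_1(A)^n$, note that the limiting matrix has the simple nonzero eigenvalue $b^1_1$ (here $b^1_1\neq 0$ is used), apply analytic perturbation theory for that simple eigenvalue, identify the perturbed branch with the maximal-modulus eigenvalue via the spectral gap, and read off the first-order coefficient $b^1_2b^2_1/b^1_1$. The only (harmless) difference is in how that coefficient is computed: you use the left/right-eigenvector pairing $uNv$, whereas the paper introduces an auxiliary analytic function of the ratios $(\lambda_{i+1}/\lambda_1)^n$ and differentiates the characteristic equation of a $2\times 2$ reduction.
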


We begin by showing that the spectral radius is governed by an analytic function.

\begin{lemma}
\label{spec analytic}
Suppose that $A, B \in \sln$ and that $A$ is real-split and proximal.
If $(b^i_j)$ is the matrix of $B$ with respect to $\{e_i(A)\}_{i=1}^d$ and 
$b^1_1$ is non-zero, then
there exists an open neighborhood $V \subseteq \Real^{d-1}$ of the origin and an analytic function 
$f:V \rightarrow \Real$ such that, for all sufficiently large $n$,
$$\frac{\lambda_1(A^nB)}{\lambda_1(A)^n} =f(z_1^n,\ldots,z_{d-1}^n)$$
where $z_i=\frac{\lambda_{i+1}(A)}{\lambda_1(A)}$ for all $i$.

Moreover, there exists an analytic function $X:V\to\mathbb R^d$ such that 
$X(z_1^n,\ldots,z_{d-1}^n)$
is an eigenvector of $A^nB$ with eigenvalue
$\lambda_1(A^nB)$ for all sufficiently large $n$.
\end{lemma}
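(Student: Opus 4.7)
\medskip

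The plan is to exhibit the spectral radius as a simple eigenvalue of an analytic family of matrices that degenerates, at the ``limit'' parameter $w=0$, to a rank-one operator whose unique nonzero eigenvalue is $b^1_1$.

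First I would work in the eigenbasis $\{e_i(A)\}_{i=1}^d$ of $A$. In this basis, $A$ is diagonal with entries $\lambda_1(A), \ldots, \lambda_d(A)$, so
\[
\frac{A^n B}{\lambda_1(A)^n} = D(z_1^n,\ldots,z_{d-1}^n)\,B,
\]
where $D(w_1,\ldots,w_{d-1})$ denotes the diagonal matrix with diagonal entries $1,w_1,\ldots,w_{d-1}$. This motivates defining, on a neighborhood of the origin in $\mathbb{R}^{d-1}$, the analytic matrix-valued map $M(w) = D(w)B$. Its $(i,j)$-entry is $b^i_j$ for $i=1$ and $w_{i-1}b^i_j$ for $i\ge 2$, so it depends linearly (hence analytically) on $w$.

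Second, I would analyze $M(0)$. Every row of $M(0)$ except the first is zero, so $M(0)$ has rank $\le 1$ and
\[
\operatorname{tr}(M(0)) = b^1_1 \ne 0.
\]
Hence $M(0)$ has a single nonzero eigenvalue $b^1_1$, which is simple, together with a $(d-1)$-fold eigenvalue $0$. The associated eigenline for $b^1_1$ is easily written down: writing $R = (b^1_1,\ldots,b^1_d)$ for the first row of $B$, any vector $v$ with $Rv = b^1_1\,v_1 \ne 0$ satisfies $M(0)v = (Rv)\,e_1$, so $e_1$ is the eigenvector, with $e^1$ a left-eigenvector.

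Third, I would invoke analytic perturbation of a simple eigenvalue. Since $b^1_1$ is a simple root of the characteristic polynomial $\det(\lambda I - M(w))$ at $w=0$, the implicit function theorem applied to this polynomial produces, on some neighborhood $V$ of $0\in\mathbb{R}^{d-1}$, an analytic function $f\colon V\to\mathbb{R}$ with $f(0)=b^1_1$ such that $f(w)$ is an eigenvalue of $M(w)$ for all $w\in V$. Standard Kato-style perturbation then gives an analytic eigenvector $X\colon V\to\mathbb{R}^d$ with $M(w)X(w)=f(w)X(w)$ and $X(0)=e_1$; alternatively one can construct $X(w)$ directly by solving the (analytically invertible) linear system obtained from $M(w)X=f(w)X$ after normalizing the first coordinate to $1$.

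Finally, I would specialize to $w^{(n)}=(z_1^n,\ldots,z_{d-1}^n)$. Since $A$ is $2$-proximal, each $|z_i|<1$, so $w^{(n)}\to 0$ and eventually $w^{(n)}\in V$. The eigenvalues of $M(w^{(n)})$ depend continuously on $n$ and tend to $\{b^1_1,0,\ldots,0\}$, so for $n$ large the eigenvalue $f(w^{(n)})$, which is close to $b^1_1$, is strictly larger in modulus than all the others and hence equals the spectral radius. Because $A^n B/\lambda_1(A)^n = M(w^{(n)})$, this yields
\[
\frac{\lambda_1(A^n B)}{\lambda_1(A)^n} = f(z_1^n,\ldots,z_{d-1}^n),
\]
and $X(w^{(n)})$ is the corresponding eigenvector, as required.

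The only delicate point is the analytic perturbation of the simple eigenvalue and eigenvector at $w=0$; this is a classical consequence of the implicit function theorem applied to the characteristic polynomial (the simplicity of the root $b^1_1$ ensures a nonzero partial derivative in $\lambda$), so I do not expect any real obstacle beyond writing this down carefully.
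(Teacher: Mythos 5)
Your proposal is correct and follows essentially the same route as the paper: the same matrix family $M(w)=D(w)B$, the same observation that $M(0)$ is rank one with simple nonzero eigenvalue $b^1_1$ and eigenvector $e_1$, analytic perturbation of that simple eigenvalue (the paper quotes this from Lax rather than re-deriving it via the implicit function theorem), and then specialization at $w^{(n)}=(z_1^n,\ldots,z_{d-1}^n)$, which lies in $V$ for large $n$ since proximality gives $|z_i|<1$. The only cosmetic difference is that the paper shrinks $V$ once so that $f(w)$ is the maximum-modulus eigenvalue throughout, whereas you verify this at the points $w^{(n)}$; both are fine.
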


\begin{proof}
The proof is based on the following elementary fact from linear algebra. A proof in the case that
$U$ is one-dimensional is given explicitly in Lax \cite[Section 9, Theorem 8]{lax-linear-algebra} but the proof
clearly generalizes to our setting.

\begin{lemma}
Suppose that $\{M(u)\}_{u\in U}$ is analytically varying family of \hbox{$d\times d$} matrices,
where $U$ is an open neighborhood of $0$ in $\mathbb R^n$.
If $M(0)$ has a simple real eigenvalue $\lambda_0 \neq 0$ with associated unit eigenvector $X_0$,
then there exists an open sub-neighborhood $V \subseteq U$ of $0$ and analytic functions $f: V \rightarrow \Real,$
and $X: V \rightarrow \Real^d$ such that $f(0) = \lambda_0$, $X(0) = X_0$ and $f(v)$ is a simple eigenvalue of $M(v)$ with eigenvector $X(v)$ for all $v \in V$. 
\label{eigenvalue-analytic}
\end{lemma}

Let $U=\mathbb R^{d-1}$ and, for all $u\in U$, let $D(u)$ be the diagonal matrix, with respect to $\{e_i(A)\}$,
with entries $(1,u_1,\ldots,u_{d-1})$ and let $M(u)= D(u)B$ for all $u\in U$. 
Then $M(0)$ has $b^1_1$ as its only non-zero eigenvalue with  associated unit eigenvector $e_1$. 
So we may apply Lemma \ref{eigenvalue-analytic} with $\lambda_0 = b^1_1$ and $X(0) = e_1$.
Let $V$ be the open neighborhood and $f:V\to \mathbb R$ and $X:V\to \mathbb R^d$ be the analytic
functions provided by that lemma. Further, as $M(0)$ has only one non-zero eigenvalue, we can choose $V$ such that the eigenvalue  $f(u)$ is the maximum modulus eigenvalue of $M(u)$. For sufficiently large $n$,
\hbox{$(z_1^n,\ldots,z_{d-1}^n)\in V$},
and $ \frac{A^nB}{\lambda_1(A)^n}=M(z_1^n,\ldots,z_{d-1}^n)$.
So, for all sufficiently large $n$, 
$f(z_1^n,\ldots,z_{d-1}^n)$ is the  eigenvalue of maximal modulus of $A^nB/\lambda_1(A)^n$
with associated eigenvector 
$X(z_1^n,\ldots,z_{d-1}^n).$
\end{proof}

\noindent
{\em Proof of Lemma \ref{specradiusexpansion}.}
Since $A$ is 2-proximal,
$$|\lambda_1(A)| > |\lambda_2(A)| \geq \lambda_{3}(A)|\ldots \geq \lambda_{d}(A)|.$$
Let $f:V\to \mathbb R$ be the function provided by Lemma \ref{spec analytic}.
If $z_i = \lambda_{i+1}(A)/\lambda_1(A)$, then \hbox{$(z_1^n, \ldots,z_{d-1}^n) \in V$}, so 
$$\frac{\lambda_1(A^nB)}{\lambda_1(A)^n} = f\left(z_1^n, \ldots,z_{d-1}^n\right)$$
for all large enough $n$. Since $f$ is analytic
$$f(u_1,\ldots,u_{d-1}) = f(0) + \sum_{i=1}^{d-1}\frac{\partial f}{\partial u_i}(0) u_i + O(u_i u_j).$$
If
$$g(s) = f(s,0,\ldots,0) = \lambda_1(D(1,s,0,\ldots,0)B) = \lambda_1\left(
\begin{bmatrix}  
b^1_1 & b^1_2 & b^1_3&\ldots &b^1_d \\ 
sb^2_1 & sb^2_2 & sb^2_3&\ldots &sb^2_d \\ 
0 & 0 & 0&\ldots &0 \\ 
\vdots & \vdots & \vdots  &\vdots &\vdots\\ 
0 & 0 & 0&\ldots &0 
\end{bmatrix} \right) = \lambda_1\left(\begin{bmatrix}  b^1_1 & b^1_2 \\ sb^2_1& sb^2_2 \end{bmatrix}\right),$$
then we see, by examining the characteristic equation, that
$$g(s)^2 -(b^1_1+sb^2_2)g(s)+s(b^1_1b^2_2-b^1_2b^2_1) =0$$
Differentiating and applying the fact that $g(0) = f(0) = b^1_1$ yields
$$0 = 2g(0)g'(0)-b^1_1g'(0)-b^2_2g(0) + (b^1_1b^2_2-b^1_2b^2_1)  = b^1_1g'(0) -b^1_2b^2_1,$$
so
\begin{equation*}
\frac{\partial f}{\partial u_1}(0) = g'(0)  = \frac{b^1_2b^2_1}{b^1_1}.
\label{pd1}
\end{equation*}
Since $|z_{i}| < |z_1|$ for all $i \geq 2$,
\begin{eqnarray*}
\frac{\lambda_1(A^nB)}{\lambda_1(A)^n} & = & f\left(z_1^n, \ldots,z_{d-1}^n\right)= f(0) + \sum_{i=1}^{d-1}\frac{\partial f}{\partial u_i}(0) z_i^n + o(z_1^n)\\
 &= & b^1_1 + \frac{b^1_2b^2_1}{b^1_1} z_1^n+ o\left(z_1^n\right).\\
\end{eqnarray*}
\qed

%% !TEX root =HitchinSimple.tex
\section{Simple lengths and traces}
\label{lengthtrace}

We show that two Hitchin representations have the same simple non-separating length spectrum 
if and only if they have the same simple non-separating trace spectrum. Moreover, in either case all
eigenvalues of images of simple non-separating curves agree up to sign.

\begin{theorem}
\label{equivalence}
If $\rho, \in  {\mathcal H}_{d_1}(S), \sigma \in {\mathcal H}_{d_2}(S)$, then
$|\tr(\rho(\alpha))|=| \tr(\sigma(\alpha))|$ for any \hbox{$\alpha\in\pi_1(S)$} represented by a simple non-separating curve
on $S$ if and only if  \hbox{$L_\alpha(\rho) = L_\alpha(\sigma)$}
for any $\alpha\in\pi_1(S)$ represented by a simple non-separating curve on $S$.
In either case,  $d_1 = d_2$, and $\lambda_i(\rho(\alpha))=\lambda_i(\sigma(\alpha))$ for all $i$ and
any $\alpha\in\pi_1(S)$ represented by a simple non-separating curve on $S$.
\end{theorem}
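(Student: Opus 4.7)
My plan is to prove the stronger assertion that matching length spectra on simple non-separating curves already forces every eigenvalue of $\rho(\alpha)$ to equal the corresponding eigenvalue of $\sigma(\alpha)$ (including $d_1=d_2$ as a by-product), and then to deduce the two hypotheses of the theorem from each other using leading-order asymptotics. The engine is the refined spectral expansion of Lemma~\ref{specradiusexpansion} together with the positivity/transversality output of Theorem~\ref{transverse-bases-general0}.

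\textbf{Choice of curves.} Given a simple non-separating $\alpha\in\grf$, one can choose a simple non-separating $\beta$ with $i(\alpha,\beta)=1$ because the genus is at least $2$. Their regular neighborhood is a one-holed torus embedded in $S$, whose simple closed curves are indexed by the primitive slopes in $\mathbb Q\cup\{\infty\}$, so $\alpha^n\beta$ (the slope $(n,1)$) is represented by a simple non-separating curve on $S$ for every integer $n$; its spectral length and trace are thus part of the hypothesized data. Since $\alpha$ and $\beta\alpha\beta^{-1}$ are two distinct conjugates of a simple curve, their geodesic representatives are disjoint, hence their axes in $\bgrf$ do not interlock, and Corollary~\ref{productnonzero} yields that every entry $b^i_j=\braket{e^i(\rho(\alpha))\ |\ \rho(\beta)(e_j(\rho(\alpha)))}$ of $\rho(\beta)$ in the eigenbasis of $\rho(\alpha)$ is non-zero, and similarly for $\sigma$.

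\textbf{From length to all eigenvalues.} Applying Lemma~\ref{specradiusexpansion} with $A=\rho(\alpha)$, $B=\rho(\beta)$ and taking logarithms gives
\[
L_{\alpha^n\beta}(\rho)= nL_\alpha(\rho)+\log|b^1_1|+\frac{b^1_2 b^2_1}{(b^1_1)^2}\Bigl(\tfrac{\lambda_2(\rho(\alpha))}{\lambda_1(\rho(\alpha))}\Bigr)^{\!n} +o\Bigl(\bigl(\tfrac{\lambda_2(\rho(\alpha))}{\lambda_1(\rho(\alpha))}\bigr)^{n}\Bigr),
\]
and similarly for $\sigma$. Reading off in turn the leading linear growth, the constant term, and the exponential subleading rate yields $\lambda_1(\rho(\alpha))=\lambda_1(\sigma(\alpha))$, $|b^1_1(\rho)|=|b^1_1(\sigma)|$, and $\lambda_2(\rho(\alpha))=\lambda_2(\sigma(\alpha))$. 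To reach all remaining $\lambda_i$, I iterate the argument on the exterior powers $\Lambda^k\rho$: the operator $\Lambda^k\rho(\alpha)$ is again $2$-proximal, with top eigenvalue $\lambda_1\cdots\lambda_k$ and second eigenvalue $\lambda_1\cdots\lambda_{k-1}\lambda_{k+1}$, so once the first $k$ eigenvalues are known, Lemma~\ref{specradiusexpansion} applied to $\Lambda^k\rho$ extracts $\lambda_{k+1}(\rho(\alpha))$. The requisite $2\times 2$ minors of $\Lambda^k\rho(\beta)$ in the eigenbasis of $\Lambda^k\rho(\alpha)$ are non-zero by Theorem~\ref{transverse-bases-general0} applied to the cyclically ordered limit quadruple $\alpha^\pm,(\beta\alpha\beta^{-1})^\pm$. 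Since this procedure produces exactly $d-1$ distinct eigenvalue ratios for each of $\rho$ and $\sigma$, one obtains $d_1=d_2$ as a by-product.

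\textbf{Equivalence of the two hypotheses and main obstacle.} Equality of all eigenvalues immediately implies equality of the absolute traces on chosen lifts. For the converse, the leading-order identity
\[
\tr(\rho(\alpha^n\beta))=b^1_1\,\lambda_1(\rho(\alpha))^n\bigl(1+o(1)\bigr)
\]
shows that $|\tr(\rho(\alpha^n\beta))|=|\tr(\sigma(\alpha^n\beta))|$ for every $n$ forces $\lambda_1(\rho(\alpha))=\lambda_1(\sigma(\alpha))$, i.e.\ $L_\alpha(\rho)=L_\alpha(\sigma)$, after which the previous paragraph completes the chain. The main obstacle is precisely this inductive passage from $\lambda_1,\lambda_2$ to $\lambda_3,\dots,\lambda_d$: Lemma~\ref{specradiusexpansion} in its stated form delivers only the first two eigenvalues, and transporting it to exterior powers (or, equivalently, pushing the analytic function $f$ of Lemma~\ref{spec analytic} to higher Taylor order, with mixed partials $\partial f/\partial u_i(0)=b^1_{i+1}b^{i+1}_1/b^1_1$) requires, at each stage, the full strength of Theorem~\ref{transverse-bases-general0} to certify that the minors being peeled off are non-zero.
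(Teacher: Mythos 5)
Your proposal follows essentially the same route as the paper's own proof: your two directions are precisely the paper's Lemma \ref{trace to length} (compare leading terms of $\tr(\rho(\alpha^n\beta))=\sum_i\lambda_i(\rho(\alpha))^n\tr(\p_i(\rho(\alpha))\rho(\beta))$) and Lemma \ref{length to trace} (apply the expansion of Lemma \ref{specradiusexpansion} to $\rho(\alpha^n\beta)$ and to its exterior powers, with non-vanishing of the coefficients $b^1_1,b^1_2,b^2_1$ and their exterior-power analogues supplied by Theorem \ref{transverse-bases-general0} via Corollary \ref{productnonzero}). Two points of your sketch need tightening to be complete. First, in the inductive exterior-power step the input $|\lambda_1(E^k(\rho)(\alpha^n\beta))|=|\lambda_1(E^k(\sigma)(\alpha^n\beta))|$ needed to run Lemma \ref{specradiusexpansion} requires that the first $k$ eigenvalues of $\rho$ and $\sigma$ already agree on the curves $\alpha^n\beta$ themselves, so the induction hypothesis must be carried along all simple non-separating curves in $\braket{\alpha,\beta}$ simultaneously (as the paper formulates it), not just for the fixed $\alpha$. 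Second, $d_1=d_2$ does not come for free from ``counting'' the eigenvalue ratios produced: if $d_1<d_2$ the procedure simply terminates after $d_1-1$ steps for $\rho$ with no contradiction yet. The paper closes this with a determinant argument: $\prod_{i\le d_1}\lambda_i(\rho(\alpha))=1$ and $\lambda_i(\sigma(\alpha))\le\lambda_{d_1}(\sigma(\alpha))=\lambda_{d_1}(\rho(\alpha))<1$ for $i>d_1$, contradicting $\prod_{i\le d_2}\lambda_i(\sigma(\alpha))=1$; you need this (or an equivalent) supplementary step to obtain the dimension equality claimed in the statement.
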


Theorem \ref{equivalence} follows immediately from Lemma \ref{trace to length}, which shows that one can detect the length
of a curve from the traces of a related family of curves, and Lemma \ref{length to trace}, which obtains information
about traces and eigenvalues from information about length.

\begin{lemma}
\label{trace to length}
Suppose that $\alpha$ and $\beta$ are represented by  simple based loops on $S$ which intersect 
only at the basepoint  and have geometric intersection one.
If $\rho \in  {\mathcal H}_{d_1}(S) ,\sigma \in  {\mathcal H}_{d_2}(S)$ and
$|\tr(\rho(\alpha^n\beta))|=|\tr(\sigma(\alpha^n\beta))|$ for all $n$, then $d_1 = d_2$ and $L_\alpha(\rho) = L_\alpha(\sigma)$.
Moreover, $\lambda_i(\rho(\alpha))=\lambda_i(\sigma(\alpha))$ for all $i$.
\end{lemma}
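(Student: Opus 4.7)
The plan is to expand $\tr(\rho(\alpha^n\beta))$ as an exponential sum in $n$ whose bases are precisely the eigenvalues of $\rho(\alpha)$, and then to read off those eigenvalues from its asymptotics.

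I would fix lifts $\tilde\rho$, $\tilde\sigma$ for which $\tilde\rho(\alpha)$ and $\tilde\sigma(\alpha)$ have positive eigenvalues (these exist by the discussion following Theorem \ref{hyperconvexity}), and use the spectral decomposition $\tilde\rho(\alpha) = \sum_i \lambda_i(\rho(\alpha))\,\p_i(\tilde\rho(\alpha))$ to write
$$T_\rho(n) \defeq \tr\bigl(\tilde\rho(\alpha)^n\tilde\rho(\beta)\bigr) = \sum_{i=1}^{d_1} c_i^\rho\,\lambda_i(\rho(\alpha))^n, \qquad c_i^\rho \defeq \tr\bigl(\p_i(\tilde\rho(\alpha))\tilde\rho(\beta)\bigr),$$
and analogously $T_\sigma(n) = \sum_j c_j^\sigma\,\lambda_j(\sigma(\alpha))^n$. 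The hypothesis then reads $|T_\rho(n)| = |T_\sigma(n)|$ for every $n$.

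The first substantive step would be to check that every $c_i^\rho$ (and every $c_j^\sigma$) is nonzero. Since $\alpha$ is simple, its axis and the axis of the conjugate $\beta\alpha\beta^{-1}$ are two distinct lifts of the same simple closed geodesic on $S$, so they cannot cross in the universal cover; equivalently, their endpoints in $\bgrf$ are non-interlocked. Lemma \ref{tracenonzero-i0}, applied with $\gamma = \beta$, then yields $c_i^\rho \neq 0$ for all $i$, and likewise for $\sigma$.

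Squaring the hypothesis gives $T_\rho(n)^2 = T_\sigma(n)^2$ for every $n$, an identity between two finite exponential sums with positive real bases. I would then invoke uniqueness of Dirichlet-type sums (extract the dominant base via $n^{-1}\log|\cdot|$, subtract the leading term, iterate) to conclude that the multisets of (base, coefficient) pairs agree. The largest bases on each side are $\lambda_1(\rho(\alpha))^2$ and $\lambda_1(\sigma(\alpha))^2$, so these must be equal, giving $L_\alpha(\rho) = L_\alpha(\sigma)$. With the leading eigenvalue pinned, both $T_\rho$ and $T_\sigma$ have eventually constant sign (those of $c_1^\rho$ and $c_1^\sigma$), so $T_\rho(n) = \epsilon\, T_\sigma(n)$ for a fixed $\epsilon\in\{\pm 1\}$ and all large $n$. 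Applying the same uniqueness argument to this identity of exponential sums, now with nonzero coefficients and distinct positive bases within each side, forces $d_1 = d_2$ and $\lambda_i(\rho(\alpha)) = \lambda_i(\sigma(\alpha))$ for every $i$; the matching is canonical because eigenvalues are listed in decreasing order.

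The main obstacle is the nonvanishing of the coefficients $c_i^\rho$: this is the only genuinely non-formal input and rests on the Lusztig--Fock--Goncharov positivity packaged in Theorem \ref{transverse-bases-general0} via Lemma \ref{tracenonzero-i0}. Once nonvanishing is in hand, the remainder is a routine application of uniqueness of finite exponential sums.
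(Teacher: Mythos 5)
Your proposal is correct and follows essentially the same route as the paper: expand $\tr(\rho(\alpha^n\beta))$ as an exponential sum in the eigenvalues of $\rho(\alpha)$, use Lemma \ref{tracenonzero-i0} (with $\gamma=\beta$, justified by simplicity of $\alpha$) to see the coefficients are non-zero, and compare asymptotics as $n\to\infty$. The only differences are cosmetic: you handle the sign ambiguity by squaring and then noting eventual sign constancy, and you obtain $d_1=d_2$ directly from uniqueness of exponential sums, whereas the paper passes to a constant-sign subsequence and deduces $d_1=d_2$ from the product of the eigenvalues being $1$.
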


\begin{proof}
We assume that $d_1 \leq d_2$. 
It suffices to prove our lemma for lifts of the restriction of $\rho$ and $\sigma$ to $\braket{\alpha,\beta}$ 
so that the all the eigenvalues of the images of $\alpha$ are positive.
We will abuse notation by calling these lifts $\rho$ and $\sigma$.

Since $\tr(\rho(\alpha^n\beta)) =\epsilon(n)\tr(\sigma(\alpha^n\beta))$ for all $n$, where $\epsilon(n) \in\{\pm 1\}$,
we may expand to see that
 $$\sum_{i=1}^{d_1} \lambda_i^n(\rho(\alpha))\tr(\p_i(\rho(\alpha))\rho(\beta)) = 
 \epsilon(n)\sum_{i=1}^{d_2} \lambda_i^n(\sigma(\alpha))\tr(\p_i(\sigma(\alpha))\sigma(\beta))$$
for all $n$. 
Lemma \ref{tracenonzero-i0} implies that $\tr(\p_i(\rho(\alpha))\rho(\beta))$ and
$\tr(\p_i(\sigma(\alpha))\sigma(\beta))$ are non-zero for all $i$.
There exists an infinite subsequence $\{n_k\}$  of integers, so that $\epsilon(n_k) = \epsilon$
is constant.  Passing to limits as $n \rightarrow \infty$, and comparing the leading terms  in descending order,
we see that
$\lambda_i(\rho(\alpha))=\lambda_i(\sigma(\alpha))$ if $1 \leq i \leq d_1$. 
In particular, $L_\alpha(\rho)=L_\alpha(\sigma)$. If $d_1 < d_2$, then 
$$\Pi_{i=1}^{d_1} \lambda_i(\rho(\alpha))=\Pi_{i=1}^{d_2} \lambda_i(\sigma(\alpha)) =  1$$
which is impossible, since $\lambda_i(\rho(\alpha))=\lambda_i(\sigma(\alpha))$ if $1 \leq i \leq d_1$ and
$$\lambda_i(\sigma(\alpha))\le\lambda_{d_1}(\sigma(\alpha))=\lambda_{d_1}(\rho(\alpha))<1$$
if $d_1<i\le d_2$.
Therefore $d_1 = d_2$. 
%Furthermore, 
%$\tr(\rho(\beta)\p_i(\rho(\alpha)))=\epsilon\tr(\sigma(\beta)\p_i(\sigma(\alpha)))$) for all $i$.
\end{proof}

\begin{lemma}
\label{length to trace}
Suppose that $\gamma$ and $\delta$ are represented by  simple based loops on $S$ which intersect 
only at the basepoint  and have geometric intersection  one.
If $\rho \in  {\mathcal H}_{d_1}(S), \sigma \in  {\mathcal H}_{d_2}(S)$ and
$L_{\beta}(\rho)=L_{\beta}(\sigma)$ whenever $\beta\in\braket{\gamma,\delta}$ is represented by
a simple non-separating based loop, then $d_1 = d_2$,  
$|\tr(\rho(\alpha))|=|\tr(\sigma(\alpha))|$ and
$\lambda_i(\rho(\alpha))=\lambda_i(\sigma(\alpha))$ for all $i$ 
whenever $\alpha\in\braket{\gamma,\delta}$ is represented by
a simple non-separating based loop.
\end{lemma}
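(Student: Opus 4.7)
The strategy mirrors that of Lemma \ref{trace to length}, but uses the spectral-radius expansion of Lemma \ref{specradiusexpansion} in place of the direct trace expansion. Since that lemma extracts only the top two eigenvalues of a $2$-proximal matrix, I will cascade it through the exterior powers $\Lambda^{k}\rho$ to recover all of $\lambda_{1},\dots,\lambda_{d}$. After lifting $\rho$ and $\sigma$ so that images of simple non-separating curves have positive eigenvalues, the identity $L_{\alpha}(\rho)=L_{\alpha}(\sigma)$ immediately gives $\lambda_{1}(\rho(\alpha))=\lambda_{1}(\sigma(\alpha))$ for every simple non-separating $\alpha\in\langle\gamma,\delta\rangle$.

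I then prove by induction on $k$ that $\lambda_{i}(\rho(\alpha))=\lambda_{i}(\sigma(\alpha))$ for all $i\le k$ and all simple non-separating $\alpha\in\langle\gamma,\delta\rangle$. For the inductive step, fix such an $\alpha$ and choose $\beta\in\langle\gamma,\delta\rangle$ represented by a simple loop meeting $\alpha$ only at the basepoint and with geometric intersection one (any primitive curve on the handle generated by $\gamma,\delta$ admits such a companion), so that $\alpha^{n}\beta$ is simple non-separating for every $n$. I apply Lemma \ref{specradiusexpansion} to $A=\Lambda^{k}\rho(\alpha)$ and $B=\Lambda^{k}\rho(\beta)$, noting that $A$ is $2$-proximal by pure loxodromicity (Theorem \ref{hyperconvexity}) and that
\[
\lambda_{1}(A)=\prod_{i=1}^{k}\lambda_{i}(\rho(\alpha)),\qquad \frac{\lambda_{2}(A)}{\lambda_{1}(A)}=\frac{\lambda_{k+1}(\rho(\alpha))}{\lambda_{k}(\rho(\alpha))}<1,\qquad \lambda_{1}(A^{n}B)=\prod_{i=1}^{k}\lambda_{i}(\rho(\alpha^{n}\beta)).
\]
The inductive hypothesis applied to the simple non-separating curves $\alpha^{n}\beta$ makes $\lambda_{1}(\Lambda^{k}\rho(\alpha^{n}\beta))=\lambda_{1}(\Lambda^{k}\sigma(\alpha^{n}\beta))$ for all $n$. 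Dividing by $\lambda_{1}(\Lambda^{k}\rho(\alpha))^{n}=\lambda_{1}(\Lambda^{k}\sigma(\alpha))^{n}$ and equating the two asymptotic expansions, the constant terms give $b^{1}_{1}(\rho)=b^{1}_{1}(\sigma)$, and the next exponential rate (provided the coefficient $b^{1}_{2}b^{2}_{1}/b^{1}_{1}$ is non-zero on each side) forces $\lambda_{k+1}(\rho(\alpha))/\lambda_{k}(\rho(\alpha))=\lambda_{k+1}(\sigma(\alpha))/\lambda_{k}(\sigma(\alpha))$, hence $\lambda_{k+1}(\rho(\alpha))=\lambda_{k+1}(\sigma(\alpha))$.

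The main obstacle is verifying that the entries $b^{1}_{1}, b^{1}_{2}, b^{2}_{1}$ are non-zero, and this is precisely where Theorem \ref{transverse-bases-general0} is used in its full strength. Unpacking the exterior algebra, each of these is, up to a non-zero scalar, a $k\times k$ minor of the matrix $\bigl(\braket{e^{i}(\rho(\alpha))\,|\,e_{j}(\rho(\beta\alpha\beta^{-1}))}\bigr)$; its non-vanishing is equivalent to a specific selection of $k$ eigenlines of $\rho(\beta\alpha\beta^{-1})$ together with a complementary choice of $d-k$ eigenlines of $\rho(\alpha)$ spanning $\mathbb{R}^{d}$. Because $\alpha$ and $\beta\alpha\beta^{-1}$ are distinct lifts of the same simple closed curve on $S$, their axes in the universal cover are disjoint, so the four fixed points $\alpha^{\pm},(\beta\alpha\beta^{-1})^{\pm}$ appear in cyclic order on $\partial_{\infty}\pi_{1}(S)$. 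Theorem \ref{transverse-bases-general0} then asserts that any $d$ among the $2d$ eigenlines associated with the two axes are in general position, which yields non-vanishing of all three coefficients simultaneously.

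Once all eigenvalues agree, dimension equality $d_{1}=d_{2}$ follows by the product-of-eigenvalues argument already used in Lemma \ref{trace to length}: if $d_{1}<d_{2}$, then $\prod_{i=1}^{d_{1}}\lambda_{i}(\rho(\alpha))=1=\prod_{i=1}^{d_{2}}\lambda_{i}(\sigma(\alpha))$, so $\prod_{i>d_{1}}\lambda_{i}(\sigma(\alpha))=1$, contradicting $\lambda_{i}(\sigma(\alpha))\le\lambda_{d_{1}}(\sigma(\alpha))=\lambda_{d_{1}}(\rho(\alpha))<1$ for $i>d_{1}$. Finally, the trace equality $|\tr(\rho(\alpha))|=|\tr(\sigma(\alpha))|$ is immediate from $\tr(\rho(\alpha))=\sum_{i}\lambda_{i}(\rho(\alpha))=\sum_{i}\lambda_{i}(\sigma(\alpha))=\tr(\sigma(\alpha))$ computed in the positive-eigenvalue lifts.
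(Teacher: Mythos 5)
Your proposal is correct and follows essentially the same route as the paper: an induction on the eigenvalue index carried out through the exterior powers $E^k$, with Lemma \ref{specradiusexpansion} applied to $E^k\rho(\alpha)^nE^k\rho(\beta)$ and the non-vanishing of the coefficients $b^1_1,b^1_2,b^2_1$ reduced, exactly as in the paper's Lemma \ref{nonzero coefficients} and its exterior-power analogue, to the spanning of $k$ eigenlines of $\rho(\beta\alpha\beta^{-1})$ and $d-k$ eigenlines of $\rho(\alpha)$ guaranteed by Theorem \ref{transverse-bases-general0}. The dimension and trace conclusions are obtained by the same product-of-eigenvalues argument as in Lemma \ref{trace to length}, so no substantive difference from the paper's proof.
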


\begin{proof} 
We assume that $d_1 \leq d_2$.  If $\alpha\in\braket{\gamma,\delta}$ is represented by a simple, non-separating based loop, then there exists
$\beta\in\braket{\gamma,\delta}$  so that $\beta$ is represented by a simple based loop which intersects
$\alpha$ only at the basepoint and $\alpha$ and $\beta$ have geometric intersection  one, so $\alpha^n\beta$ is simple
and non-separating for all $n$.
It again suffices to prove our lemma for lifts of the restriction of $\rho$ and $\sigma$ to $\braket{\alpha,\beta}$ 
so that the all the eigenvalues of the images of $\alpha$ are positive.

Let $A = \rho(\alpha)$,  $B = \rho(\beta)$, $\hat A = \sigma(\alpha)$,  and $\hat B = \sigma(\beta)$.
Let $\lambda_i=\lambda_i(A)$ and $\hat\lambda_i=\lambda_i(\hat A)$.
Let $e_i=e_i(A)$ and $\hat e_i=e_i(\hat A)$ and let $(b^i_j)$ be the matrix of $B$ with respect to $\{e_i\}_{i=1}^{d_1}$
and $(\hat b^i_j)$ be the matrix of $\hat B$ with respect to $\{\hat e_i\}_{i=1}^{d_2}$.
Let $\Omega = e_1\w e_2\w \ldots \w e_{d_1} \neq 0$ be the volume form associated to the basis $\{e_1\}_{i=1}^{d_1}$ for
$\mathbb R^{d_1}$.

We begin by showing that $\lambda_2=\hat\lambda_2$.
Notice that $A$ and $A^nB$  are real-split and 2-proximal for all $n$.
We need the result of the following lemma to be
able to apply Lemma \ref{specradiusexpansion}.

\begin{lemma}
\label{nonzero coefficients}
Suppose that $\alpha$ and $\beta$ are represented by  simple based loops on $S$ which intersect 
only at the basepoint  and have geometric intersection  one. If $\rho\in\Hn$ and $B=(b_i^j)$ is a
matrix representing $\rho(\beta)$ in the basis $\{e_i(\rho(\alpha)\}$, then 
$b^1_1$, $b^1_2$, and $b^2_1$ are all non-zero.
\end{lemma}

\begin{proof}
Notice that $B(e_1)\w (e_{2}\w \ldots  \w e_d) = b^1_1\Omega$. So, if $b^1_1=0$, then 
$B(e_1)$, which is a non-trivial multiple of $e_1(\rho(\beta\alpha\beta^{-1}))$,
lies in the hyperplane spanned by
$\{e_2,\ldots,e_d\}=\{e_2(\rho(\alpha)),\ldots,e_d(\rho(\alpha)\}$,
which contradicts Corollary \ref{productnonzero} (and also hyperconvexity).
Notice that the fixed points of $\beta\alpha\beta^{-1}$ must lie in the same component of $\xi_\rho(S^1)-\{\alpha^+,\alpha^-\}$,
since $\alpha$ is simple.
Therefore, $b_1^1\ne 0$.

Similarly, $B(e_1)\w(e_{1}\w e_{3}\w \ldots  \w e_d) = -b^2_1\Omega$. 
So, if $b^2_1 = 0$, then $e_1(\rho(\beta\alpha\beta^{-1}))$,
lies in the hyperplane spanned by
$\{e_1(\rho(\alpha)),e_3(\rho(\alpha)),\ldots,e_d(\rho(\alpha))\}$,
which again contradicts Corollary \ref{productnonzero}.
Therefore, $b^2_1 \neq 0$.

Moreover,
$B(e_2)\w(e_2\w e_{3}\w \ldots  \w e_d) = b^1_2 \Omega$. 
So, if $b^1_2 = 0$, then $e_2(\rho(\beta\alpha\beta^{-1}))$,
lies in the hyperplane spanned by
$\{e_1(\rho(\alpha)),e_3(\rho(\alpha),\ldots,e_d(\rho(\alpha)\}$,
which again contradicts Corollary \ref{productnonzero}.
Thus, $b^1_2\neq 0$.
\end{proof}

By assumption $|\lambda_1(A^nB)| = |\lambda_1(\hat A^n\hat B)|$ for all $n$. 
Lemma \ref{specradiusexpansion} then implies that
$$\left| b^1_1 + \frac{b^1_2b^2_1}{b^1_1} \left(\frac{\lambda_2}{\lambda_1}\right)^n+ o\left(\left(\frac{\lambda_2}{\lambda_1}\right)^n\right) \right|= \left|\hat b^1_1 + \frac{\hat b^1_2\hat b^2_1}{\hat b^1_1} \left(\frac{\hat\lambda_2}{\hat\lambda_1}\right)^n+o\left(\left(\frac{\hat\lambda_2}{\hat\lambda_1}\right)^n\right)\right|,$$
so
$|b^1_1| = |\hat b^1_1|.$
Comparing  the second order terms, we see that
$$\frac{\lambda_2}{\lambda_1}= \frac{\hat\lambda_2}{\hat\lambda_1}.$$
Since, by assumption, $\lambda_1=\hat\lambda_1$, we see that $\lambda_2=\hat\lambda_2$.

We now assume  that for some $k=2,\ldots, d_1-1$,
$\lambda_i(\rho(\beta))=\lambda_i(\sigma(\beta))$ for all $i\le k$ whenever $\beta\in\braket{\gamma,\delta}$ is represented
by a simple, non-separating based loop.
We will prove that this implies that 
$\lambda_i(\rho(\alpha))=\lambda_i(\sigma(\alpha))$ for all $i\le k+1$ whenever $\alpha\in\braket{\gamma,\delta}$ is represented
by a simple, non-separating based loop.
Applying this iteratively  will allow us to complete the proof.

Let $E^{k}(\rho)$ be the $k^{th}$-exterior product representation. 
If $\alpha\in\braket{\gamma,\delta}$ is represented by a simple non-separating based loop, we again choose
$\beta\in\braket{\gamma,\delta}$  so that $\beta$ is represented by a simple based loop which intersects
$\alpha$ only at the basepoint and $\alpha$ and $\beta$ have geometric intersection one. We adapt the notations
and conventions from the second paragraph of the proof.

Let $C= E^{k}(\rho)(\alpha)$, $D=E^k(\rho)(\beta)$, $\hat C=E^k(\sigma)(\alpha)$ and
$\hat D=E^k(\sigma)(\beta)$. Notice that $C$ and $C^nD$ are real-split and 2-proximal for all $n$.
If $c_i=e_i(C)$, then we may assume that each $c_i$ is a $k$-fold wedge product of
distinct $e_j$. In particular, we may take $c_1=e_1\w e_2\w \ldots \w e_k$ and
$c_2=e_1\w e_2\w \ldots \w e_{k-1}\w e_{k+1}$. Notice that  
$\lambda_1(C)=\lambda_1\cdots\lambda_k$ and $\lambda_2(C)=\lambda_1\cdots\lambda_{k-1}\lambda_{k+1}$.
Let $(d^i_j)$ be the matrix for $D$ in the basis $\{c_i\}$. We define $\hat c_i$ and $(\hat d^i_j)$ completely
analogously.

Notice that $D(e_1\w e_2\w \ldots \w e_k)\w (e_{k+1}\w \ldots  \w e_{d_1}) = d^1_1\Omega$. So, if $d^1_1=0$, then 
$$B(\xi_\rho^k(\alpha_+))\oplus\xi_\rho^{n-k}(\alpha_-)  = \xi_\rho^k(\beta(\alpha_+)) \oplus \xi_\rho^{n-k}(\alpha_-) \neq \Real^{d_1}.
$$
which would contradict the hyperconvexity of $\xi_\rho$. Therefore, $d_1^1\ne 0$.
 
Furthermore, $D(e_1\w e_2\w \ldots \w e_k)\w(e_{k}\w e_{k+2}\w \ldots  \w e_{d_1}) = -d^2_1\Omega$. 
So, if $d^2_1 = 0$, then
$$\left\{L_1(\rho(\beta\alpha\beta^{-1})),\ldots,L_k(\rho(\beta\alpha\beta^{-1})),L_k(\rho(\alpha)),L_{k+2}(\rho(\alpha)),\ldots,
L_{d_1}(\rho(\alpha))\right\}$$
does not span $\mathbb R^{d_1}$, which contradicts Corollary \ref{productnonzero}.
Therefore, $d^2_1 \neq 0$

Similarly, $D(e_1\w e_2\w \ldots \w e_{k-1} \w e_{k+1})\w(e_{k+1}\w e_{k+2}\w \ldots  \w e_{d_1}) = d^1_2 \Omega$. 
So, if $d^1_2 = 0$, then
$$\left\{L_1(\rho(\beta\alpha\beta^{-1})),\ldots,L_{k-1}(\rho(\beta\alpha\beta^{-1})),L_{k+1}(\rho(\beta\alpha\beta^{-1})),L_{k+1}(\rho(\alpha)),\ldots,
L_{d_1}(\rho(\alpha))\right\}$$
does not span $\mathbb R^{d_1}$, which contradicts Corollary \ref{productnonzero}.
Thus $d^1_2\neq 0$.

Analogous arguments imply that $\hat d^1_1$, $\hat d_1^2$ and $\hat d_2^1$ are all non-zero. Moreover,
by our iterative assumption
$$|\lambda_1(C^nD)|=|\lambda_1(A^nB)\cdots\lambda_k(A^nB)|=
|\lambda_1(\hat A^n\hat B)\cdots\lambda_k(\hat A^n\hat B)|=| \lambda_1(\hat C^n\hat D)|$$
for all $n$. We may
again apply Lemma \ref{specradiusexpansion} to conclude that
$$\frac{\lambda_{k+1}}{\lambda_k} = \left|\frac{\lambda_2(C)}{\lambda_1(C)}\right|= \left| \frac{\lambda_2(\hat C)}{\lambda_1(\hat C)}\right|= 
\frac{\hat\lambda_{k+1}}{\hat\lambda_k}.$$
Since, by our inductive assumption, $\lambda_k=\hat\lambda_k$, we conclude that
$\lambda_{k+1}=\hat\lambda_{k+1}$.\
Therefore, after iteratively applying our argument, we conclude that 
$\lambda_i(\rho(\alpha))=\lambda_i(\sigma(\alpha))$ for all $1\leq i \leq d_1$. 
As in the proof of Lemma \ref{trace to length} it follows that $d_1=d_2$.
%gain we note that if $d_1 < d_2$, then 
%$$\Pi_{i=d_1+1}^{d_2} \lambda_i(\sigma(\alpha)) = 1$$
%giving us our contradiction. 
Therefore, $|\tr(\rho(\alpha))|=|\tr(\sigma(\alpha))|$.
\end{proof}

%\input{proj_lengthtrace-resubmit}

%% !TEX root =HitchinSimple.tex
\section{Simple length rigidity}

We are now ready to establish our main results on simple length and simple trace rigidity.
We begin by studying  
configurations of curves in the form pictured in Figure \ref{4curves}.

\begin{figure}[htbp] %  figure placement: here, top, bottom, or page
   \centering
   \includegraphics[width=4.5in]{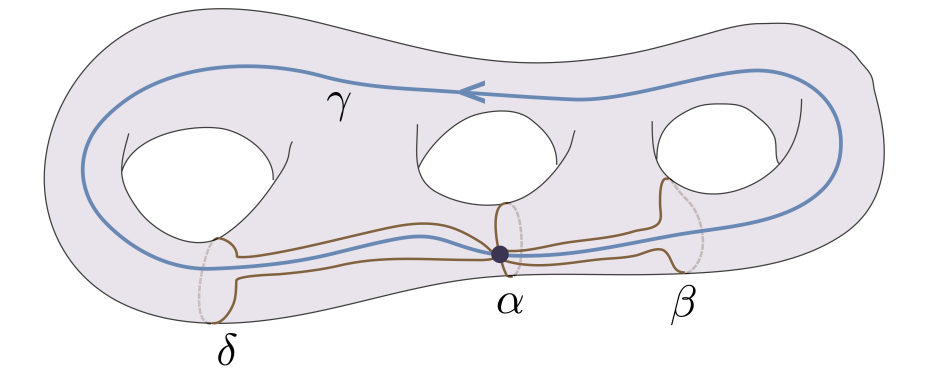} 
   \caption{Curves $\alpha, \beta, \gamma, \delta$}
   \label{4curves}
\end{figure}

\begin{theorem}
\label{conjugateontriples-trace}
Suppose that $F$ is an essential, connected subsurface of $S$, and that
$\alpha, \beta,\delta \in \pi_1(F)\subset S$ are 
represented by based simple loops in F which
intersect only at the basepoint,  and are freely homotopic to a collection of 
mutually disjoint and non-parallel, non-separating closed curves
in $F$ which do not bound a pair of pants in $F$.
If $\rho,\sigma\in\mathcal H_d(S)$ and  $|\tr(\rho(\eta))|=|\tr(\sigma(\eta))|$ whenever
$\eta\in\pi_1(S)$ is represented by a simple closed curve in $F$, 
then $\rho$ and $\sigma$
are conjugate, in $\pgln$, on the subgroup $<\alpha,\beta,\delta>$ of $\pi_1(S)$.
\end{theorem}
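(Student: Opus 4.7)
The plan is to reduce to Theorem \ref{conjugateontriples-general}, which requires two hypotheses: (1) equality of eigenvalues of $\rho(\eta)$ and $\sigma(\eta)$ for $\eta \in \{\alpha,\beta,\delta\}$, and (2) equality of the ratios ${\bf T}_{i,j,k}(\alpha,\beta,\delta)/{\bf T}_{j,k}(\beta,\delta)$. Hypothesis (1) follows immediately from Theorem \ref{equivalence} applied to each of $\alpha$, $\beta$, $\delta$, which are simple non-separating curves in $F$ hence in $S$ (and which also forces $d_1=d_2=d$). The content of the theorem is therefore concentrated in hypothesis (2).

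The key tool for extracting the correlation ratios from trace data is Proposition \ref{tracenonzero}(4), which rewrites
$$\frac{{\bf T}_{i,j,k}(\alpha,\beta,\delta)(\rho)}{{\bf T}_{j,k}(\beta,\delta)(\rho)} = \frac{{\bf T}_{i,j,0,k}(\alpha,\beta,\gamma,\delta)(\rho)}{{\bf T}_{j,0,k}(\beta,\gamma,\delta)(\rho)}$$
for any auxiliary $\gamma$ such that $\beta$ and $\gamma\delta\gamma^{-1}$ have non-intersecting axes, both numerator and denominator being non-zero by Proposition \ref{tracenonzero}. I would choose $\gamma \in \pi_1(F)$ represented by a based simple loop, meeting $\alpha,\beta,\delta$ only at the basepoint, with the topological property that for all sufficiently large $a,b,c \ge 1$ the concatenations $\alpha^a \beta^b \gamma \delta^c$ and $\beta^b \gamma \delta^c$ are freely homotopic to simple non-separating closed curves in $F$. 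The topological hypotheses --- that $F$ is essential and connected, and that $\alpha, \beta, \delta$ are disjoint, non-parallel, non-separating, and do not cobound a pair of pants in $F$ --- are designed precisely to guarantee the existence of such a $\gamma$; see Figure \ref{4curves} for the intended picture.

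Given such a $\gamma$, the spectral decomposition $\rho(\alpha)^a = \sum_i \lambda_i(\rho(\alpha))^a \p_i(\rho(\alpha))$ (and similarly for $\beta,\delta$) yields
$$\tr\rho(\alpha^a \beta^b \gamma \delta^c) = \sum_{i,j,k} \lambda_i(\rho(\alpha))^a \lambda_j(\rho(\beta))^b \lambda_k(\rho(\delta))^c\, {\bf T}_{i,j,0,k}(\alpha,\beta,\gamma,\delta)(\rho),$$
and analogously for $\sigma$ and for $\tr \rho(\beta^b \gamma \delta^c)$. Since the hypothesis gives $|\tr \rho(\cdot)| = |\tr \sigma(\cdot)|$ on these simple non-separating words, and since the eigenvalues of $\alpha, \beta, \delta$ already agree, passing to a subsequence of $(a,b,c)$ along which the sign stabilizes (as in Lemma \ref{trace to length}) and peeling off the coefficients of the multi-exponential expansion in descending order of their exponential rates --- while using Proposition \ref{tracenonzero}(4) to ensure none of the ${\bf T}_{i,j,0,k}$ vanish --- forces equality of the ${\bf T}_{i,j,0,k}$ coefficients up to a uniform sign, and the parallel argument applied to $\beta^b \gamma \delta^c$ yields equality of the ${\bf T}_{j,0,k}$ coefficients up to a uniform sign. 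In the ratio these signs cancel, producing hypothesis (2) of Theorem \ref{conjugateontriples-general}.

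The main obstacle is the topological construction of $\gamma$: one must verify that the hypotheses on $F$ ensure there is a simple loop $\gamma$ making every word $\alpha^a\beta^b\gamma\delta^c$ and $\beta^b\gamma\delta^c$ simple and non-separating in $F$ for large exponents, which is the reason the no-pair-of-pants hypothesis appears. A secondary, more technical obstacle is handling the sign ambiguity coming from $|\tr|$: one must align the stabilizing subsequences for the two asymptotic comparisons so that the uniform signs in numerator and denominator coincide and cancel, which ultimately rests on the fact that the dominant exponential factors $\lambda_1^a\lambda_1^b\lambda_1^c$ agree between $\rho$ and $\sigma$ because hypothesis (1) has already been established.
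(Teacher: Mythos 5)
Your overall strategy (reduce to Theorem \ref{conjugateontriples-general} by extracting the ratios ${\bf T}_{i,j,k}/{\bf T}_{j,k}$ from the asymptotics of $\tr\rho(\alpha^a\beta^b\gamma\delta^c)$ and $\tr\rho(\beta^b\gamma\delta^c)$, using Proposition \ref{tracenonzero}(4)) is exactly the paper's, but the step you label ``the main obstacle'' is a genuine gap, and as you have phrased it the needed claim is false. You assert that the hypotheses on $F$ guarantee a simple loop $\gamma$ for which $\alpha^a\beta^b\gamma\delta^c$ and $\beta^b\gamma\delta^c$ are simple and non-separating for all large exponents; the natural (and essentially the only) way to arrange this is for $\gamma$ to meet each of $\alpha,\beta,\delta$ with geometric intersection one, and such a $\gamma$ need not exist for the \emph{original} triple. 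For instance, if the disjoint representatives of $\alpha,\beta,\delta$ together separate $F$ into two pieces of positive genus, then $[\alpha]+[\beta]+[\delta]=0$ in $H_1(S;\mathbb Z/2)$, so every closed curve has even total mod-2 intersection with the triple and cannot cross each curve exactly once; this configuration satisfies all the stated hypotheses (disjoint, non-parallel, non-separating, not bounding a pair of pants). This is precisely why the paper does not work with $(\alpha,\beta,\delta)$ directly: Lemma \ref{good configuration} is a case-by-case topological argument that \emph{replaces} the triple by new based loops $\hat\alpha,\hat\beta,\hat\delta$ (e.g.\ substituting a loop in the class of $\beta\delta^{\pm1}$ or $\alpha\delta^{\pm 1}$) generating the same subgroup $\braket{\alpha,\beta,\delta}$, together with a $\hat\gamma$ crossing each once. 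Your proposal is missing this reduction, and without it the asymptotic argument has no family of simple words to feed into the trace hypothesis.

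Two smaller points. First, the sign issue is not resolved by ``aligning the stabilizing subsequences'': the stabilized signs for the two families $\alpha^a\beta^b\gamma\delta^c$ and $\beta^b\gamma\delta^c$ are the signs of ${\bf T}_{1,1,0,1}(\rho)/{\bf T}_{1,1,0,1}(\sigma)$ and ${\bf T}_{1,0,1}(\rho)/{\bf T}_{1,0,1}(\sigma)$ respectively, and nothing in the asymptotics forces these to agree, so a priori you only get the ratio equality up to sign. The paper removes the ambiguity by joining $\rho$ to $\sigma$ by a path in $\Hn$ and using that the ratio ${\bf T}_{i,j,k}/{\bf T}_{j,k}$ is continuous and nowhere zero (Proposition \ref{tracenonzero}); you need this (or an equivalent) argument. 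Second, quoting Theorem \ref{equivalence} for hypothesis (1) is slightly off, since that theorem assumes trace agreement on all simple non-separating curves of $S$ while you only have it on curves in $F$; the fix is to apply Lemma \ref{trace to length} inside $F$ (as the paper does, via the pairs $(\hat\alpha,\hat\gamma)$, $(\hat\beta,\hat\gamma)$, $(\hat\delta,\hat\gamma)$ in Proposition \ref{pqr result}), which is routine but should be said.
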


\begin{proof}
We first show that we can replace $\alpha$, $\beta$ and $\delta$ with based loops  in $F$,
configured as
in Figure \ref{4curves}, which generate the same subgroup of $\pi_1(S)$. We
then show that if $\alpha$, $\beta$, $\gamma$ and $\delta$ have the form in Figure \ref{4curves}, then
$\rho$ and $\sigma$ are conjugate on  $\braket{\alpha,\beta,\delta}$.

\begin{lemma} 
\label{good configuration}
Suppose that $F$ is an essential, connected subsurface of $S$, and that
$\alpha, \beta,\delta \in \pi_1(F)\subset S$ are 
represented by based simple loops in $F$ which
intersect only at the basepoint,  and 
are freely homotopic to a collection of  mutually disjoint and non-parallel, non-separating closed curves
in $F$ which do not bound a pair of pants in $F$. Then there exist based loops $\hat\alpha$, $\hat\beta$,
$\hat\gamma$ and $\hat\delta$ in $F$ which intersect only at the basepoint so  that $\hat\alpha$, $\hat\beta$ and $\hat\delta$
are freely homotopic to a collection of mutually disjoint and non-parallel, non-separating closed curves,
each has geometric intersection one with $\hat\gamma$
and
$$\braket{\hat\alpha,\hat\beta,\hat\delta}=\braket{\alpha,\beta,\delta}.$$
\end{lemma}

\begin{proof}
We first assume one of the curves, say $\beta$, has the property that the other two curves lie on opposite sides of
$\beta$, i.e. there exists a regular neighborhood $N$ of $\beta$, so that $\alpha$ intersects only one component of
$N-\beta$ and $\delta$ only intersects the other (see Figure \ref{config1}). 

\begin{figure}[htbp] %  figure placement: here, top, bottom, or page
   \centering
   \includegraphics[width=1.5in]{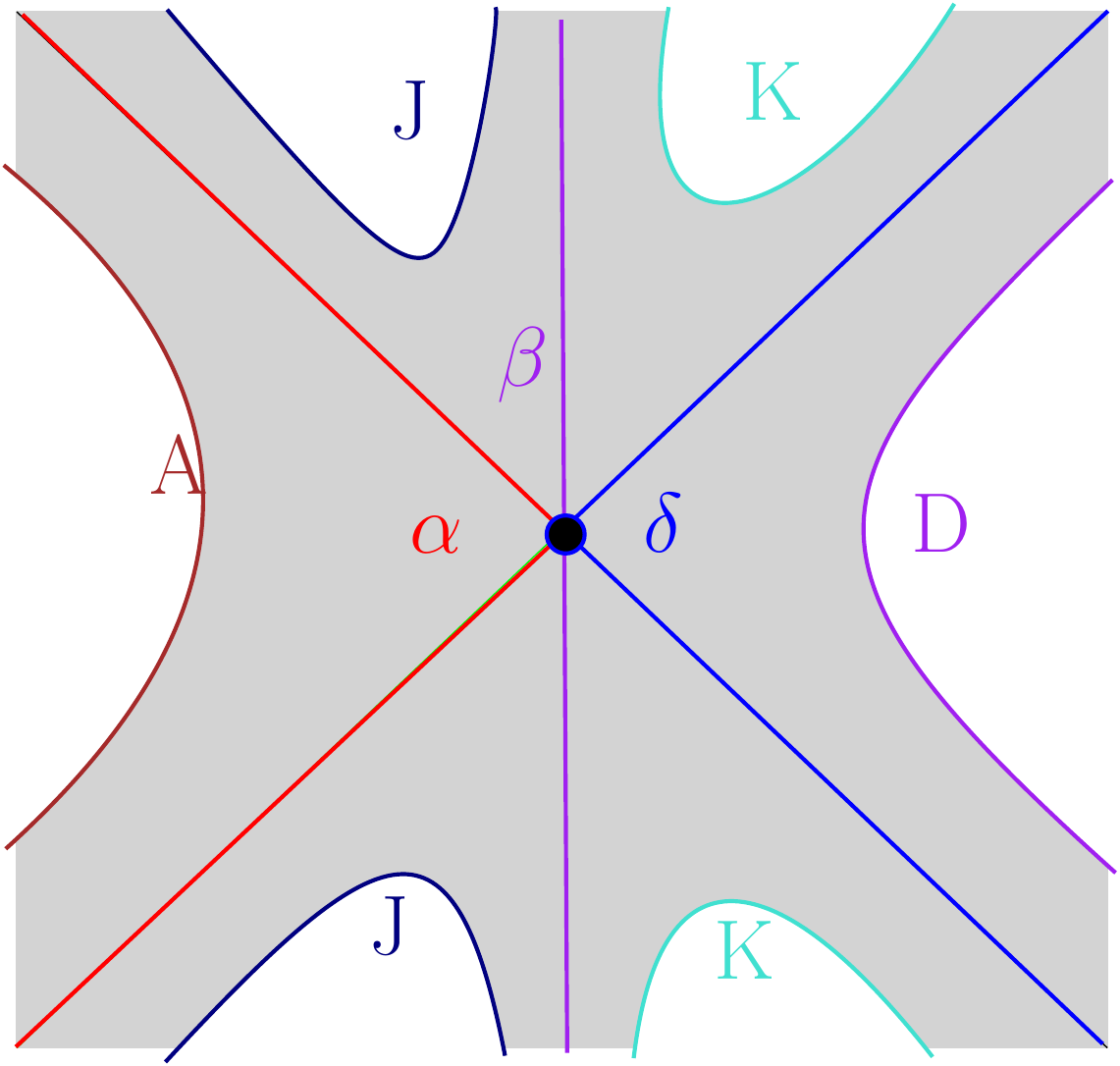} 
   \caption{A regular neighborhood of $\alpha\cup\beta\cup\delta$ when $\beta$ locally separates $\alpha$ and $\delta$}
   \label{config1}
\end{figure}

Let $F_1$ be a regular neighborhood of $T=\alpha\cup\beta\cup\delta.$ Then $F_1$ is a four-holed sphere and
each component of $F_1-T$ is an annulus.
We label the boundary components $A$, $D$, $J$ and $K$, where $A$ is parallel to $\alpha$, $D$ is parallel to $\delta$,
$J$ is parallel to the based loop $\beta\alpha^{\epsilon_1}$ and $K$ is parallel to the based loop $\beta\delta^{\epsilon_2}$
for some $\epsilon_1,\epsilon_2\in\{\pm 1\}$.

If $A$ and $D$ lie in the boundary of the same component of $F-F_1$,
then one may extend an arc in $F-F_1$ joining $A$ to $D$ to a closed curve $\hat\gamma$ which
intersects $T$ only at the basepoint and intersects each of $\alpha$, $\beta$ and $\delta$ with geometric
intersection one. In this case, we simply take $\hat\alpha=\alpha$, $\hat\beta=\beta$ and $\hat\delta=\delta$.
We assume from now on that $A$ and $D$ do not lie in the same boundary component of $F-F_1$.

Since $\alpha$ is non-separating, $A$ must lie in the boundary of a component $G$ of $F-F_1$ which
also has either $J$ or $K$ in its boundary. If the boundary of $G$ contains $J$ but not $K$, then
$\beta$ would separate $F$ which would contradict our assumptions, so the boundary of $G$ must contain $K$.
(Recall that by assumption, the boundary of $G$ cannot contain $D$.)

We may then extend an arc in $G$
joining $A$ to $K$ to a closed curve $\hat\gamma$ which intersects $T$ only at the basepoint and has geometric
intersection one with $\alpha$, $\beta$ and $K$. Moreover, we may choose a based loop $\hat\delta$ in
the (based) homotopy class of $\beta\delta^{\epsilon_2}$ which intersects $\alpha$, $\beta$ and $\hat\gamma$ only at the basepoint.
In this case, let $\hat\alpha=\alpha$ and $\beta=\hat\beta$. $A$, $\beta$ and $K$ are simple, disjoint non-separating
curves freely homotopic to $\hat\alpha$, $\hat\beta$ and $\hat\delta$. If $K$ is parallel to $A$,
then disjoint representative of $\alpha$, $\beta$ and $\delta$ would bound a pair of pants, which is disallowed.
Moreover, since $K$ is homotopic to $\beta\delta^{\epsilon_2}$ and $\beta$ and $\delta$ are non-parallel simple
closed curves, $K$ cannot be parallel to $\beta$ or $\delta$.
Since $A$ and $\beta$ are non-parallel, by assumption,
$A$, $\beta$ and $K$ are  mutually non-parallel as required.

We may now assume that if $\nu\in\{\alpha,\beta,\delta\}$, then there is a regular neighborhood of $\nu$, so that
the other two based loops only intersect one component of the regular neighborhood. Let $F_1$ be a regular
neighborhood of $T$. Again, $F_1$ is a four-holed sphere and each component of $F_1-T$ is an annulus.
We label the components of the boundary of $F_1$ by $A$, $B$, $D$ and $E$, where $A$ is parallel to $\alpha$,
$B$ is parallel to $\beta$, and $D$ is parallel to $\delta$ (see Figure \ref{config2}).
\begin{figure}[htbp] %  figure placement: here, top, bottom, or page
   \centering
   \includegraphics[width=1.5in]{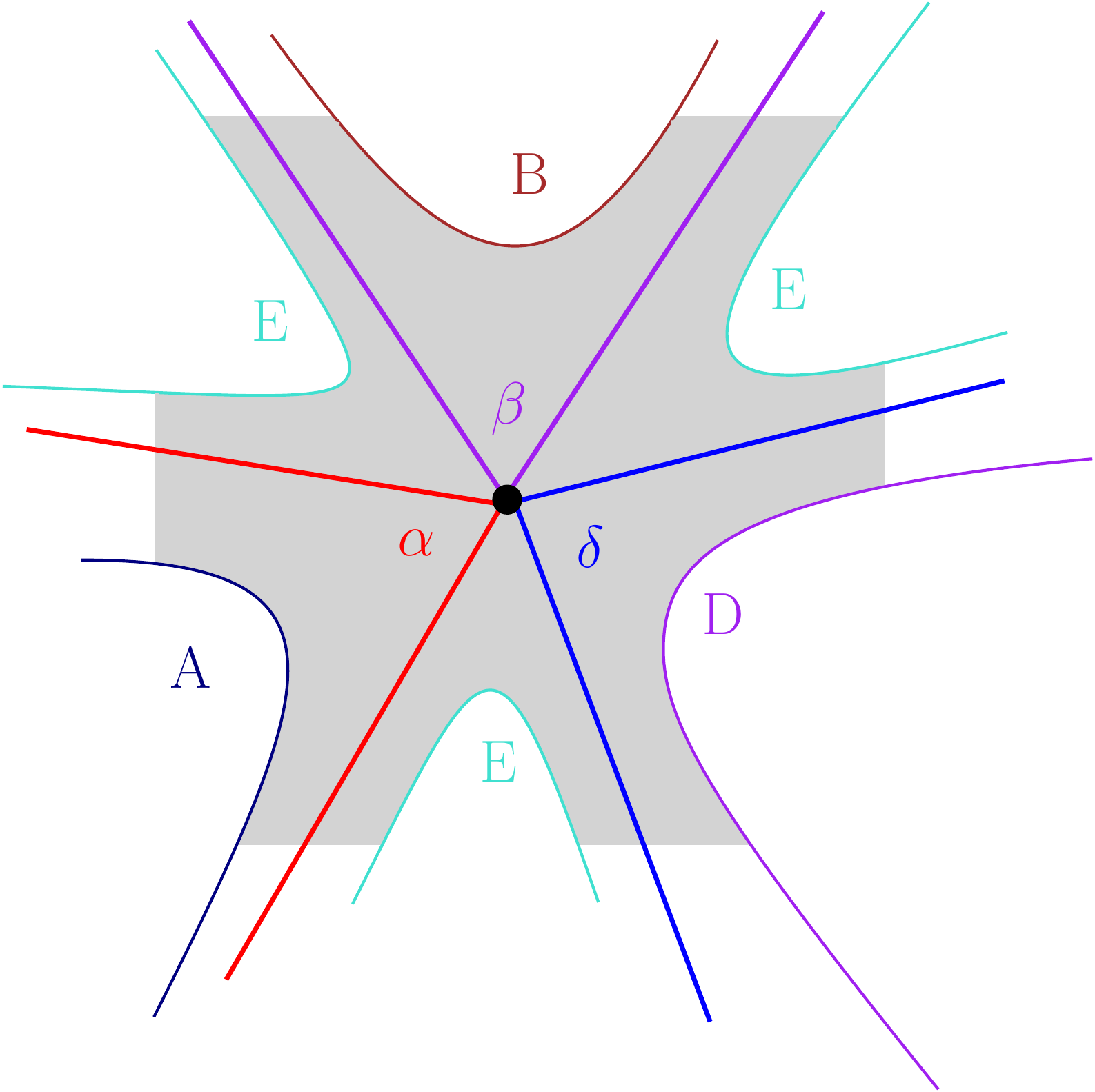} 
   \caption{A regular neighborhood of $\alpha\cup\beta\cup\delta$ when no curve locally separates}
   \label{config2}
\end{figure}
Since $\alpha$ is non-separating in $F$, there exists a component $G$ of $F-F_1$ whose boundary contains $A$
and at least one other component of the boundary of $F_1$. If the boundary of $G$ contains $B$,
then one may extend an arc in $G$ joining $A$ to $B$ to a curve $\hat\gamma$ which intersects $T$ only at the basepoint
and has geometric intersection one with $\alpha$ and $\beta$ and geometric intersection zero with $\delta$.
Let $\hat\delta$ be a simple based loop in $F_1$ in the (based) homotopy class of $\alpha\delta^\epsilon$ for some $\epsilon\in\{\pm1\}$
which intersects $\hat\gamma$ and $T$ only at the basepoint. Since $\hat\delta$ has algebraic intersection $\pm 1$ with
$\hat\gamma$, it must have geometric intersection one with $\hat\gamma$. Let $\hat\alpha=\alpha$ and $\hat\beta=\beta$,
then $\hat\alpha$, $\hat\beta$ and $\hat\delta$ are  freely homotopic to the collection $\{A,B,\hat\delta\}$ of mutually
disjoint, non-separating curves. Notice that $A$ and $B$ are non-parallel by our original assumption, while if
$\hat\delta$ is parallel to $A$ or $B$, then our original collection of curves would be freely homotopic to
the boundary of  a pair of pants, contradicting our original assumption. 
Therefore, $A$, $B$ and $\hat\delta$
are non-parallel as required.

If the boundary of $G$, contains $D$, then we may perform the same procedure reversing the roles of $\beta$
and $\delta$. Therefore, we may assume that the boundary of $G$ contains both $A$ and $E$, but not $B$ or $D$.
Since $\beta$ is non-separating and $B$ is not in the boundary of $G$, there must be another component $H$ of
$F-F_1$ which has both $B$ and $D$ in its boundary. We then simply repeat the procedure above to construct a curve
$\hat \gamma$ which intersects $T$ only at the basepoint which has geometric intersection one with $\beta$ and $\delta$
and geometric intersection zero with $\alpha$. We then let $\hat\alpha$ be a simple based loop in $F_1$ intersecting $\hat\gamma$
only at the basepoint, in the based homotopy class of $\beta\alpha^\epsilon$ for some $\epsilon\in\{\pm1\}$, which
has geometric intersection one with $\hat\gamma$. Letting $\hat\beta=\beta$ and $\hat\delta=\delta$, we may
complete the proof as in the previous paragraph.
\end{proof}

Notice that we may always re-order  the
curves produced by Lemma \ref{good configuration} so that $\hat\alpha^p\hat\beta^q\hat\gamma\hat\delta^r$ is represented
by a simple non-separating curve in $F$ for all $p,q,r\in\mathbb Z$.
Moreover, our assumptions imply that $\hat\alpha$, $\hat\beta$ and $\hat\delta$ have non-intersecting axes
and that $\hat\beta$ and $\hat\gamma\hat\beta\hat\gamma^{-1}$ have non-intersecting axes. 
Theorem \ref{conjugateontriples-trace} will then follow from the following result. 

\begin{proposition}
\label{pqr result}
Suppose that $\alpha, \beta,\gamma, \delta \in \pi_1(S)-\{1\}$, $\alpha$, $\beta$ and $\delta$ have non-intersecting axes and
that $\beta$ and $\gamma\beta\gamma^{-1}$ have non-intersecting axes.
If $\rho,\sigma\in\mathcal H_d(S)$ and
$|\tr(\rho(\alpha^p\beta^q\gamma\delta^r))|=|\tr(\sigma(\alpha^p\beta^q\gamma\delta^r))|$
for all $p,q,r\in\mathbb Z$, then
$\rho$ and $\sigma$
are conjugate, in $\pgln$, on the subgroup $<\alpha,\beta,\delta>$ of $\pi_1(S)$.
\end{proposition}

\begin{proof}
We may apply Lemma \ref{trace to length} to the pairs $(\alpha,\gamma)$, $(\beta,\gamma)$ and $(\delta,\gamma)$
to conclude that 
$\lambda_i(\rho(\eta))=\lambda_i(\sigma(\eta))$ for all $i$ and any $\eta\in \{\alpha,\beta,\delta\}$.
(Notice, for example, that for the pair $(\alpha,\gamma)$ our assumptions imply that 
$|\tr(\rho(\alpha^n\gamma))|=|\tr(\sigma(\alpha^n\gamma))|$ for all $n$, so the assumptions of Lemma
\ref{trace to length} are satisfied.)

Combining the expansions
$$\rho(\alpha)=\sum_{i=1}^d\lambda_i(\rho(\alpha)) \p_i(\rho(\alpha))\ \ {\rm and} \ \ \sigma(\alpha)=\sum_{i=1}^d\lambda_i(\sigma(\alpha)) \p_i(\sigma(\alpha))$$
with our assumption that $|\tr(\rho(\alpha^p\beta^q\gamma\delta^r))|=|\tr(\sigma(\alpha^p\beta^q\gamma\delta^r))|$
for all $p,q,r\in\mathbb Z$, we see that 
$$ \sum_{i=1}^d \lambda_i^{p}(\rho(\alpha)) \tr\left(\p_i(\rho(\alpha))\rho(\beta^q\gamma\delta^r)\right)
= \pm \sum_{i=1}^d \lambda_i^{p}(\sigma(\alpha)) \tr\left(\p_i(\sigma(\alpha))\sigma(\beta^q\gamma\delta^r)\right)$$
for all $p,q,r\in\N$.
Since $\rho(\alpha)$ and $\sigma(\alpha)$ are purely loxodromic and $\lambda_i(\rho(\alpha))=\lambda_i(\sigma(\alpha))$ for
all $i$,
we may fix $q$ and $r$, let $p$ tend to $+\infty$ and consider terms of the same order to conclude that
\begin{equation}
\label{keyexpansion}
\tr\left(\p_i(\rho(\alpha))\rho(\beta^q\gamma\delta^r)\right)
=\pm \tr\left(\p_i(\sigma(\alpha))\sigma(\beta^q\gamma\delta^r)\right)
\end{equation}
for all $i\in\{1,\ldots,d\}$ and all $q,r\in\N$.
Similarly, we expand Equation (\ref{keyexpansion}) to see that, for all $i$,
$$ \sum_{j=1}^d \lambda_j^{q}(\rho(\beta)) \tr\left(\p_i(\rho(\alpha))\p_j(\rho(\beta))\rho(\gamma\delta^r)\right)
= \pm\sum_{j=1}^d \lambda_j^{q}(\sigma(\beta)) \tr\left(\p_i(\sigma(\alpha))\p_j(\sigma(\beta))\sigma(\gamma\delta^r)\right)$$
and consider terms of the same order  as $q\to +\infty$ to conclude that
$$
\tr\left(\p_i(\rho(\alpha))\p_j(\rho(\beta))\rho(\gamma\delta^r)\right)
=\pm \tr\left(\p_i(\sigma(\alpha))\p_i(\sigma(\beta))\sigma(\gamma\delta^r)\right)$$
for all $i,j\in\{1,\ldots,d\}$ and $r\in\N$.
Expanding this last equation and letting $r$ tend to $+\infty$, we finally conclude that
$$\tr(\p_{i}(\rho(\alpha))\p_{j}(\rho(\beta))\rho(\gamma)\p_{k}(\rho(\delta)))
=\pm\tr(\p_{i}(\sigma(\alpha))\p_{j}(\sigma(\beta))\sigma(\gamma)\p_{k}(\sigma(\delta)))$$
for all $i,j,k\in\{1,\ldots,d\}$, i.e.
\begin{equation}
\label{key expansion2}
{\bf T}_{i,j,0,k}(\alpha,\beta,\gamma,\delta)(\rho)=\pm{\bf T}_{i,j,0,k}(\alpha,\beta,\gamma,\delta)(\sigma)
\end{equation}
for all $i,j,k\in\{1,\ldots,d\}.$

We similarly expand the equation 
$$\tr(\rho(\beta^q\gamma\delta^r))=\pm \tr(\sigma(\beta^q\gamma\delta^r))$$
to see that
\begin{equation}
\label{key expansion3}
{\bf T}_{j,0,k}(\beta,\gamma,\delta)(\rho)=\pm {\bf T}_{j,0,k}(\beta,\gamma,\delta)(\sigma)
\end{equation}
for all $j$ and $k$.

Recall, from  part (4) of Proposition \ref{tracenonzero}, that
$${\bf T}_{i,j,0,k}(\alpha,\beta,\gamma,\delta)(\rho) = 
{\bf T}_{j,0,k}(\beta,\gamma,\delta)(\rho)\left(\frac{{\bf T}_{i,j,k}(\alpha,\beta,\delta)(\rho)}{{\bf T}_{j,k}(\beta,\delta)(\rho)}\right)\ne 0$$
for all $\rho\in\Hn$ and $i,j,k\in\{1,\ldots,d\}$, so we may conclude from Equations (\ref{key expansion2}) and 
(\ref{key expansion3})
that
$$\frac{{\bf T}_{i,j,k}(\alpha,\beta,\delta)(\rho)}{{\bf T}_{j,k}(\beta,\delta)(\rho)}=\pm
\frac{{\bf T}_{i,j,k}(\alpha,\beta,\delta)(\sigma)}{{\bf T}_{j,k}(\beta,\delta)(\sigma)}$$
for all $i,j,k\in\{1,\ldots,d\}$.

We may join $\rho$ to $\sigma$ by a path $\{\rho_t\}$ of Hitchin representations. So, since
$\frac{{\bf T}_{i,j,k}(\alpha,\beta,\delta)(\rho_t)}{{\bf T}_{j,k}(\beta,\delta)(\rho_t)}$ 
is non-zero for all $t$, again by Proposition \ref{tracenonzero}, and varies continuously, it follows that 
$$\frac{{\bf T}_{i,j,k}(\alpha,\beta,\delta)(\rho)}{{\bf T}_{j,k}(\beta,\delta)(\rho)}=
\frac{{\bf T}_{i,j,k}(\alpha,\beta,\delta)(\sigma)}{{\bf T}_{j,k}(\beta,\delta)(\sigma)}$$
for all $i,j,k\in\{1,\ldots,d\}$.
Therefore, since we have already seen that $\lambda_i(\rho(\eta))=\lambda_i(\sigma(\eta))$ for all $i$
if $\eta\in\{\alpha,\beta,\gamma\}$,
Theorem \ref{conjugateontriples-general} implies that
$\rho$ and $\sigma$
are conjugate, in $\pgln$, on the subgroup $<\alpha,\beta,\delta>$ of $\pi_1(S)$.
\end{proof}
\end{proof}

We are now ready to establish that  the restriction of the marked trace spectrum  to
the simple non-separating curves determines a Hitchin representation.

\begin{theorem}
Let $S$ be a closed orientable surface of genus $g\ge 3$.
If \hbox{$\rho \in {\mathcal H}_{d_1}(S)$}, $\sigma\in\mathcal H_{d_2}(S)$ and $|\tr(\rho(\alpha))| = |\tr(\sigma(\alpha))|$ 
whenever \hbox{$\alpha \in \pi_1(S)$}  is represented by a simple non-separating curve,
then $d_1=d_2$ and $\rho = \sigma$.
\label{fulltracerigidity}
\end{theorem}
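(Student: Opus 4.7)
The plan is to combine Theorem~\ref{equivalence} with Theorem~\ref{conjugateontriples-trace} and then propagate the resulting conjugacies across $\pi_1(S)$ using that simple non-separating curves generate the fundamental group. First I would apply Theorem~\ref{equivalence} to conclude immediately that $d_1=d_2=:d$, and moreover that $\lambda_i(\rho(\eta))=\lambda_i(\sigma(\eta))$ for every $i$ and every $\eta$ represented by a simple non-separating curve.

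Next I would fix a ``base'' admissible triple. Since $g\ge 3$, one may choose based simple loops $\alpha,\beta,\delta\in\pi_1(S)$ meeting only at the basepoint whose free homotopy classes are disjoint, non-parallel, non-separating and do not bound a pair of pants in $S$. Theorem~\ref{conjugateontriples-trace} then produces $g_0\in\pgln$ with $g_0\rho g_0^{-1}=\sigma$ on the subgroup $H:=\langle\alpha,\beta,\delta\rangle$. After replacing $\sigma$ by $g_0^{-1}\sigma g_0$ we may assume that $\rho|_H=\sigma|_H$. Since simple non-separating curves generate $\pi_1(S)$, it remains only to show that $\rho(\eta)=\sigma(\eta)$ for each such curve $\eta$.

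Given such $\eta$, the plan is to produce two further elements $\alpha',\beta'\in H$, each represented by a simple loop, such that $(\eta,\alpha',\beta')$ is an admissible triple for Theorem~\ref{conjugateontriples-trace} inside some essential subsurface. Applying that theorem yields $g_1\in\pgln$ conjugating $\rho$ to $\sigma$ on $\langle\eta,\alpha',\beta'\rangle$. Since $\alpha',\beta'\in H$ and $\rho|_H=\sigma|_H$, the conjugator $g_1$ must centralize the pair $(\rho(\alpha'),\rho(\beta'))$ in $\pgln$. Provided this pair has trivial common centralizer in $\pgln$---which can be arranged because the attracting/repelling flags of images of disjoint simple closed curves are in general position by Theorem~\ref{transverse-bases-general0}, so any element of $\pgln$ preserving all of them must fix every coordinate line of the associated decomposition of $\mathbb R^d$---one concludes $g_1=\mathrm{Id}$ and hence $\rho(\eta)=\sigma(\eta)$.

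The main obstacle is the final topological/combinatorial construction: for each simple non-separating $\eta$, locating two simple based loops inside $H$ which form an admissible triple with $\eta$ and whose images under $\rho$ jointly have trivial centralizer. This is where $g\ge 3$ is genuinely needed, so that the complement of $\eta$ carries enough topology to host the required disjoint representatives of elements of $H$. A more flexible variant (which I would use as a fallback if realizing $\alpha',\beta'$ inside $H$ proves awkward) is not to fix $H$ once and for all, but rather to cover $\pi_1(S)$ by a chain of overlapping admissible triples, apply Theorem~\ref{conjugateontriples-trace} to each, and show by induction along the chain that the successive conjugators can all be taken equal by comparing them on pairs of images with trivial centralizer in the overlaps. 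Either way, the key non-trivial input beyond the earlier theorems is purely topological: constructing enough admissible triples in a surface of genus at least $3$.
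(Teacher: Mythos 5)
Your outline reproduces the paper's architecture: Theorem \ref{equivalence} gives $d_1=d_2$ and matching eigenvalues on simple non-separating curves, Theorem \ref{conjugateontriples-trace} gives conjugacy on admissible triples, and the propagation step rests on the observation that an element of $\pgln$ commuting with the images of two disjoint (non-parallel) simple curves must be trivial, because the $2d$ eigenlines are in general position by Theorem \ref{transverse-bases-general0}/Corollary \ref{productnonzero}. That last observation is exactly the paper's Lemma \ref{conjugacytrivial}, so the conceptual ingredients are all present and correct.

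The gap is the step you yourself flag, and it is genuine: in your primary plan you fix one subgroup $H=\braket{\alpha,\beta,\delta}$ and then, for \emph{every} simple non-separating $\eta$, ask for two elements $\alpha',\beta'\in H$ represented by simple loops forming an admissible triple with $\eta$. There is no reason such a pair exists: every element of $H$ that is represented by a simple closed curve is carried (up to free homotopy) by the four-holed sphere which is a regular neighborhood of $\alpha\cup\beta\cup\delta$, and for $\eta$ chosen to intersect essentially every simple closed curve carried by that subsurface there is no candidate disjoint from $\eta$, let alone two which are non-parallel and do not bound a pair of pants together with $\eta$. The repair is not to treat all simple non-separating curves, but only a generating set chosen \emph{together with} the triples --- your ``fallback,'' which you do not carry out and which is where the actual work lies. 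The paper does precisely this: with the standard generators $\alpha_1,\beta_1,\ldots,\alpha_g,\beta_g$ it first arranges $\rho=\sigma$ on $\braket{\alpha_1,\alpha_2,\alpha_3}$, then for each generator $\eta\notin\{\alpha_1,\alpha_2,\beta_1,\beta_2\}$ applies Theorem \ref{conjugateontriples-trace} to $\{\alpha_1,\alpha_2,\eta\}$ and kills the conjugator by Lemma \ref{conjugacytrivial}, and finally handles $\beta_1$ and $\beta_2$ via the triples $\{\alpha_2,\alpha_3,\beta_1\}$ and $\{\alpha_1,\alpha_3,\beta_2\}$ (this is one place where $g\ge 3$ is used, to have $\alpha_3$ disjoint from both). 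Supplying that explicit choice of overlapping triples is the missing content in your proposal; with it, your argument becomes the paper's proof.
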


\begin{proof}
Notice that  Theorem \ref{equivalence} immediately implies that $d_1=d_2$, so we may assume that
$d=d_1=d_2$.
Consider the standard generating set 
$$\mathcal S=\{ \alpha_1, \beta_1,\ldots,\alpha_g,\beta_g\}$$ 
for $\pi_1(S)$ so that $\prod_{i=1}^g [\alpha_i,\beta_i]=1$, each generator is represented by a based loop,
and any two such based loops intersect only at the basepoint.
 
Notice that the generators are freely homotopic to simple, non-separating closed curves so that the representative
of $\alpha_i$ is disjoint from the representative of every other generator except $\beta_i$ and that
the representative of $\beta_i$ is disjoint from the representative of every other generator except $\alpha_i$.
Moreover,   no three of the representatives which are disjoint bound
a pair of pants.  Therefore,
Theorem \ref{conjugateontriples-trace} implies that  we may assume 
that  $\rho$ and  $\sigma$ agree on $<\alpha_1,\alpha_2,\alpha_3>$.

If $\eta\in \mathcal S-\{\alpha_1,\alpha_2,\beta_1,\beta_2\}$, then Theorem \ref{conjugateontriples-trace}
implies  that there exists \hbox{$C\in\pgln$} so that $\rho$ and 
$C\sigma C^{-1}$ agree on $<\alpha_1,\alpha_2,\eta>$. Since $\rho$ and $\sigma$ agree on $\alpha_1$ and
$\alpha_2$, the following lemma, which we memorialize 
for repeated use later in the paper,  assures that $C=I$, so $\rho(\eta)=\sigma(\eta)$.

\begin{lemma}
\label{conjugacytrivial}
Suppose that $S$ is a closed surface of genus at least two,
$\rho:\pi_1(S)\to\psln$ and $\sigma:\pi_1(S)\to\psln$ are Hitchin representations, 
and there exists a subgroup $H$ of $\pi_1(S)$ and $C\in\psln$ so that $\rho|_H=C\sigma|_HC^{-1}$.
If there exists $\nu_1,\nu_2\in H$ with non-intersecting axes, so that $\rho(\nu_1)=\sigma(\nu_1)$
and $\rho(\nu_2)=\sigma(\nu_2)$, then $C=I$, so $\rho|_H=\sigma|_H$.
\end{lemma}

\begin{proof}
Since $\rho$ and $\sigma$ agree on $\nu_1$ and $\nu_2$,
$C$ must commute with $\rho(\nu_1)$ and $\rho(\nu_2)$. Thus $C$ is diagonalizable over $\Real$ with respect to  both 
$\{e_i(\rho(\nu_1))\}$ and $\{e_i(\rho(\nu_2)\}$.
If $C \ne I$, then $\mathbb R^d$ admits a non-trivial decomposition into eigenspaces of $C$ with distinct eigenvalues.
Any such eigenspace $W$ is spanned by a sub-collection of $\{e_i(\rho(\nu_1))\}$ and by a sub-collection of
$\{e_j(\rho(\nu_2))\}$.
In particular, some $e_i(\rho(\nu_1))$ is in the subspace spanned by a subcollection of $\{e_j(\rho(\nu_2))\}$. 
Since $\nu_1$ and $\nu_2$ have non-intersecting axes, this contradicts Corollary \ref{productnonzero}. 
Therefore, $C= I$.
\end{proof}

In order to prove that $\rho(\beta_1)=\sigma(\beta_1)$, we similarly apply Theorem \ref{conjugateontriples-trace}
and Lemma \ref{conjugacytrivial}
to the elements $\alpha_2$, $\alpha_3$ and $\beta_1$, while
to prove that $\rho(\beta_2)=\sigma(\beta_2)$ we consider the elements $\alpha_1$, $\alpha_3$ and $\beta_2$.
Since we have established that $\rho$ and $\sigma$ agree on every element in the generating set $\mathcal S$,
we conclude that $\rho=\sigma$.
\end{proof}

Marked simple length rigidity, Theorem \ref{lengthrigidity}, is an immediate consequence of 
Theorems \ref{fulltracerigidity} and \ref{equivalence}. 

We  may further use the Noetherian property of polynomial rings to prove the final statement in Theorem \ref{tracerigidity},
which asserts that Hitchin representations of the same dimension
are determined by the traces of a finite set of simple non-separating curves.

\begin{proof}[Proof of Theorem \ref{tracerigidity}]
We consider the affine algebraic variety 
$$V(S) = \hom(\pi_1(S),\sln)\times \hom(\pi_1(S),\sln).$$
Let $\{\gamma_i\}_{i=1}^\infty \subset \pi_1(S)$ be an ordering of the collection of (conjugacy classes of)
elements  of $\pi_1(S)$ which are represented by simple, non-separating curves, and define, for each $n$,
$$V_n(S) = \left\{ (\rho,\sigma) \in V(S)\ | \ \tr(\rho(\gamma_i))=  \tr(\sigma(\gamma_i))\ {\rm if}\ \ i \leq n\right\}$$
and let
$$V_\infty =\bigcap_{n=1}^\infty V_n .$$
Then each $V_n(S)$ is a subvariety of $V(S)$ and by the Noetherian property of polynomial rings, 
there exists  $N$ so that $V_N = V_\infty$.
We define $\mathcal L_d(S) = \{\gamma_i\}_{i=1}^N$. 

There exists a component $\xHn$  of  $\hom(\pi_1(S),\sln)$ consisting of lifts of Hitchin representations
so that $\Hn$ is identified with the quotient of $\xHn$ by $\sln$, see Hitchin \cite{hitchin}.
Since traces of elements in images of (lifts of) Hitchin representations are non-zero, for all $\gamma\in\pi_1(S)$,
$\tr(\nu(\gamma))$ is either positive  for all $\nu\in\xHn$ or  negative for all $\nu\in\xHn$, for all $\gamma\in\pi_1(S)$.
Therefore, if the marked trace spectra of $\rho,\sigma \in  \mathcal H_d(S)$ 
agree on $\mathcal L_d(S)$, they admit lifts $\tilde\rho$ and $\tilde\sigma$ in $\xHn$ so that $(\tilde\rho,\tilde\sigma)\in V_N$. 
Since $V_N=V_\infty$, the marked trace spectra of $\rho$ and $\sigma$ agree on all simple, non-separating curves.
Therefore, by Theorem \ref{fulltracerigidity}, $\rho=\sigma\in\Hn$.
\end{proof}

\noindent{\bf Remark:} The set $\mc L_d(S)$ contains at least $\dim(\Hn)=-\chi(S)(d^2-1)$ curves,
but our methods do not provide any upper bound on the size of $\mc L_d(S)$.

%% !TEX root =HitchinSimple.tex
\section{Isometries of intersection}\label{sec:isom}

In this section, we investigate isometries of the intersection function which is used to construct
the pressure metric on the Hitchin component. Our main tool will be Bonahon's theory of geodesic
currents and his reinterpretation of Thurston's compactification of Teichm\"uller space in this
language, see Bonahon \cite{bonahon}.

\subsection{Intersection and the pressure metric}
\label{intersection}
Given $\rho\in\Hn$, let
$$R_T(\rho) = \left\{ [\gamma]\ \in [\pi_1(S)]\ |\ L_\gamma(\rho) \leq T\right\}$$
be the set of conjugacy classes of elements of 
$\pi_1(S)$ whose images have length at most $T$. One may then define the {\em entropy} 
$$h(\rho) = \lim_{T\rightarrow \infty} \frac{\log(\#R_T(\rho))}{T}.$$
Given $\rho, \sigma\in\Hn$,  their {\em intersection} is given by
$$\II(\rho,\sigma) = \lim_{T\rightarrow \infty} \frac{1}{\#R_T(\rho)} \sum_{[\gamma] \in R_T(\rho)} \frac{L_\gamma(\sigma)}{L_\gamma(\rho)}.$$
and their {\em renormalized intersection} is given by
$$\JJ(\rho,\sigma) = \frac{h(\sigma)}{h(\rho)}I(\rho,\sigma).$$
One may show that all the quantities above give rise to analytic functions.

\begin{theorem}{\rm (Bridgeman--Canary--Labourie--Sambarino \cite[Thm. 1.3]{BCLS})}
If $S$ is a closed surface of genus greater than 1,
the entropy $h$, 
the intersection $\II $,  and renormalized intersection  $\JJ$ are analytic functions on $\Hn$, 
 $\Hn\times\Hn$ and  $\Hn\times\Hn$ respectively.
\end{theorem}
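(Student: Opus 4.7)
The plan is to apply thermodynamic formalism to the geodesic flow on $T^1(S)$, leveraging the Anosov property of Hitchin representations established by Labourie \cite{labourie-anosov}. First, I would encode the geodesic flow as a H\"older suspension over a subshift of finite type via a Bowen--Ratner--Series coding. To each Hitchin representation $\rho$ one associates a positive H\"older continuous potential $\varphi_\rho$ on the base shift whose period over the periodic orbit corresponding to a conjugacy class $\gamma$ equals the spectral length $L_\gamma(\rho)$; this potential can be constructed from the Cartan projection along the $\rho$-equivariant limit map $\xi_\rho$ of Theorem \ref{hyperconvexity}.

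The key analytic input is that the assignment $\rho \mapsto \varphi_\rho$, viewed as a map from $\Hn$ into the Banach space of H\"older potentials on the shift, is real analytic. This rests on the analytic dependence of $\xi_\rho$ on $\rho$ (a consequence of the structural stability of projective Anosov representations) together with the smoothness of the Cartan decomposition. Given this, Ruelle's analytic perturbation theory for transfer operators shows that the topological pressure $P$ is a real analytic functional on its domain and that the corresponding equilibrium states $\mu_\rho$ vary real analytically when paired against H\"older observables.

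The entropy $h(\rho)$ is characterized as the unique $s$ satisfying $P(-s\varphi_\rho)=0$. Since
\[
\frac{d}{ds}P(-s\varphi_\rho)\Big|_{s=h(\rho)} = -\int \varphi_\rho\, d\mu_{h(\rho)} < 0,
\]
the implicit function theorem yields analyticity of $h$ on $\Hn$. For the intersection I would write
\[
\II(\rho,\sigma) = \frac{\int \varphi_\sigma\, d\mu_\rho}{\int \varphi_\rho\, d\mu_\rho},
\]
which is a ratio of analytic functions of $(\rho,\sigma)$ with strictly positive denominator, hence analytic; and $\JJ(\rho,\sigma) = (h(\sigma)/h(\rho))\,\II(\rho,\sigma)$ is then analytic since $h > 0$ throughout $\Hn$.

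The main obstacle will be verifying the analytic dependence of $\varphi_\rho$ on $\rho$ in a way that is compatible with a \emph{fixed} symbolic coding across a neighborhood of a given Hitchin representation. A clean way to circumvent this is to work with a coding adapted to a Fuchsian basepoint and transport everything through the equivariant orbit equivalence supplied by structural stability of Anosov representations, so that only the potential, and not the underlying shift, moves with $\rho$; once this reduction is in place, the remaining analyticity statements reduce to standard facts from Ruelle's thermodynamic formalism.
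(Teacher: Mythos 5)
This statement is quoted in the paper from Bridgeman--Canary--Labourie--Sambarino \cite{BCLS} and is not proved here; the paper's ``proof'' is the citation. Your outline follows essentially the same route as the cited proof --- a fixed symbolic coding of the geodesic flow of a base hyperbolic structure, an analytically varying H\"older potential $\varphi_\rho$ with periods $L_\gamma(\rho)$, Ruelle's analytic perturbation theory for pressure and equilibrium states, the implicit function theorem for $h$, and the expression of $\II$ as a ratio of integrals against the equilibrium state --- with the one genuinely hard step (analyticity of $\rho\mapsto\varphi_\rho$ into a fixed H\"older space, via analytic variation of the limit maps) correctly identified but deferred, exactly where \cite{BCLS} concentrate their effort.
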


Let $\JJ_\rho:\Hn\to \mathbb R$ be defined by $\JJ_\rho(\sigma) = \JJ(\rho,\sigma)$. The analytic function $\JJ_\rho$
has a minimum at $\rho$ (see \cite[Thm. 1.1]{BCLS})
and hence its Hessian gives rise to an non-negative quadratic form on $T_\rho(\Hn)$,
called the {\em pressure metric}.  Bridgeman, Canary, Labourie
and Sambarino proved that the resulting quadratic form is positive definite.
A result of Wolpert \cite{wolpert} implies that the restriction of the pressure
metric to the Fuchsian locus is a multiple of the classical Weil-Petersson metric.
(See \cite{BLS-survey} for a survey of this theory.)

\begin{theorem}{\rm (Bridgeman--Canary--Labourie--Sambarino \cite[Cor. 1.6]{BCLS})}
If $S$ is a closed surface of genus greater than 1,
the pressure metric is a mapping class group invariant, analytic, Riemannian metric on $\Hn$
whose restriction to the Fuchsian locus is a multiple of
the Weil-Petersson metric.
\end{theorem}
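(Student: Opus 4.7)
The plan is to leverage the thermodynamic-formalism framework that already underlies the preceding analyticity statement. I would use Labourie's Anosov property for Hitchin representations together with Sambarino's analytic reparametrization of the geodesic flow on $T^1 S$ to encode the marked length spectrum $\{L_\gamma(\rho)\}$ as the period spectrum of a Hölder potential $f_\rho$ varying analytically in $\rho$. Then $h(\rho)$, $\II(\rho,\sigma)$ and $\JJ(\rho,\sigma)$ can be read off from the topological pressure of appropriate linear combinations of $f_\rho$ and $f_\sigma$, and the analyticity claimed in the statement follows from that of pressure (this is already packaged in the preceding theorem). Because $\JJ_\rho$ is analytic and achieves its minimum $1$ at $\sigma=\rho$, its Hessian — the pressure form $\mathbf P$ — is an analytic non-negative symmetric bilinear form on $T_\rho\Hn$, giving analyticity and non-negativity at once.

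Mapping class group invariance is essentially formal: the intersection depends only on conjugacy-invariant data (the marked length spectrum), and an outer automorphism of $\pi_1(S)$ simply permutes conjugacy classes, preserving every set $R_T$ and hence $h$, $\II$, $\JJ$, and their Hessians. For the Fuchsian locus, on that locus the reparametrizing cocycle degenerates (the spectrum comes directly from hyperbolic translation lengths on the corresponding Fuchsian representation in $\pslt$), so the pressure Hessian reduces to the classical thermodynamic pairing on the geodesic flow of $S$. McMullen's pressure-theoretic computation \cite{mcmullen-pressure}, combined with Wolpert's formula \cite{wolpert}, then identifies this form with a positive multiple of the Weil–Petersson metric.

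The main obstacle, and the place where real work is needed, is non-degeneracy. A standard thermodynamic identity identifies the Hessian of pressure at the equilibrium state with the asymptotic variance of the cocycle obtained by first variation of the potential. Concretely, for $v\in T_\rho\Hn$, $\mathbf P(v,v)$ is proportional to the variance of the derivative cocycle $\partial_v f_\rho$, and this variance vanishes only when that cocycle is Livšic-cohomologous to a constant. By Livšic's theorem, this cohomological triviality forces $\partial_v L_\gamma(\rho)=0$ for every closed orbit $\gamma$, so $v$ preserves the full marked spectral-radius spectrum infinitesimally. Applying the spectral-radius rigidity theorem of \cite{BCLS} (which asserts that Hitchin representations are determined up to conjugacy by their spectral radii on all elements), one concludes that $v$ is tangent to the conjugation orbit and hence zero in $T_\rho\Hn$. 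This rigidity step is the crux: it is where one must combine the precise analytic Bowen–Ruelle formalism for the pressure with a genuine rigidity statement about Hitchin representations; analyticity, mapping class invariance and the Weil–Petersson identification all reduce to relatively formal consequences of the framework once non-degeneracy is in place.
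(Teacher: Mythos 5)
This statement is not proved in the paper at all: it is quoted verbatim from Bridgeman--Canary--Labourie--Sambarino \cite[Cor. 1.6]{BCLS} as background for Section \ref{sec:isom}, so there is no internal proof to compare against, and your proposal has to be judged against the argument actually given in \cite{BCLS}. At the level of framework you have reconstructed that argument correctly: the pressure form is the Hessian of $\JJ_\rho$ at its minimum, analyticity comes from analyticity of entropy and (renormalized) intersection via thermodynamic formalism, mapping class group invariance is formal because everything is built from the marked length spectrum, and the restriction to the Fuchsian locus is identified with a multiple of the Weil--Petersson metric via Wolpert \cite{wolpert} and McMullen \cite{mcmullen-pressure}.

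The genuine gap is in your crux step, the non-degeneracy. Livšic's theorem does give you that $\mathbf P(v,v)=0$ forces the vanishing of the first variation of all periods (strictly, of $h(\rho)L_\gamma(\rho)$, after which one must separately argue that ${\rm D}h(v)=0$, a point you elide), i.e.\ ${\rm D}L_\gamma(v)=0$ for every $\gamma$. But from there you invoke the \emph{global} rigidity theorem of \cite{BCLS} (``Hitchin representations are determined up to conjugacy by the spectral radii of all elements'') to conclude $v=0$. That inference is invalid: injectivity of the marked length spectrum map does not imply injectivity of its differential (the map $x\mapsto x^3$ is injective with vanishing derivative at $0$). What is actually needed, and what \cite{BCLS} proves separately as Proposition 10.3, is the \emph{infinitesimal} statement that the differentials of the length functions span the cotangent space of $\Hn$; this is obtained by first upgrading ${\rm D}L_\gamma(v)=0$ to ${\rm D}\tr_\gamma(v)=0$ via an analytic-perturbation/asymptotic argument, and then using that trace functions furnish local analytic coordinates near irreducible representations. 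Indeed, the present paper's Section \ref{infinitesmal} (Proposition \ref{cotangent}) exists precisely because the global rigidity statement cannot simply be differentiated; your proof would need that infinitesimal rigidity input to close.
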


Recall that a diffeomorphism $f:\Hn\to\Hn$ is said to be an {\em isometry of intersection} if
$\II(f(\rho),f(\sigma)) = \II(\rho,\sigma)$ for all $\rho,\sigma \in {\mathcal H}_d(S)$. Let ${\rm Isom}_\II(\Hn)$
denote the group of isometries of $\II$. Notice that, by construction, the extended mapping class
group ${\rm Mod}(S)$ is a subgroup of ${\rm Isom}_\II(\Hn)$. (The extended mapping class group ${\rm Mod}(S)$
can be identified with the group ${\rm Out}(\pi_1(S))$ of outer automorphisms of $\pi_1(S)$ and acts
naturally on $\Hn$ by pre-composition.)

The entire discussion of intersection, renormalized intersection and the pressure metric restricts
to $\ms H(S,\ms G)$ when $\ms{G}$ is $\ms{PSp}(2d,\mathbb R),$ \hbox{$\ms{PSO}(d,d+1)$},
or $\ms{G}_{2,0}$.

\subsection{Basic properties}
 
We first show that isometries of intersection preserve entropy and hence preserve 
renormalized intersection, so are isometries of the pressure metric.
 
\begin{proposition}
\label{isometriesareisometries}
If $S$ is a closed orientable surface of genus greater than 1,
$\ms{G}$ is $\psln$, $\ms{PSp}(2d,\mathbb R),$ $\ms{PSO}(d,d+1)$,
or $\ms{G}_{2,0}$
and \hbox{$f:\mathcal H(S,\ms G)\to\mathcal H(S,\ms G)$} is an isometry of 
intersection $\II$, then \hbox{$h(\rho)=h(f(\rho))$} for all \hbox{$\rho\in \mc H(S,\ms G)$}. Therefore, 
\hbox{$\JJ(f(\rho),f(\sigma))=\JJ(\rho,\sigma)$} for
all $\rho,\sigma\in\mc H(S,\ms G)$, and $f$ is an isometry of $\mc H(S,\ms G)$ with respect to the pressure metric.
\end{proposition}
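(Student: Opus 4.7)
The plan is to read off the entropy from the first-order behaviour of $\II$ along the diagonal, and then to use a uniform upper bound on $h$ to eliminate a free scaling factor. Since $\JJ_{\rho}(\sigma)=(h(\sigma)/h(\rho))\,\II(\rho,\sigma)$ attains its global minimum $1$ at $\sigma=\rho$, taking logarithms and differentiating at $\sigma=\rho$ in a direction $v\in T_{\rho}\mc H(S,\ms G)$ gives
$$0 \;=\; d(\log h)_{\rho}(v) \;+\; d_{\sigma}\II(\rho,\cdot)\big|_{\sigma=\rho}(v).$$
Thus the $1$-jet of $\II$ along the diagonal both determines and is determined by $d(\log h)$.

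Differentiating the assumed identity $\II(f(\rho),f(\sigma))=\II(\rho,\sigma)$ in $\sigma$ at $\sigma=\rho$ in direction $v$, and using the formula above both at $\rho$ and at $f(\rho)$, yields
$$d(\log h)_{\rho}(v) \;=\; d(\log h)_{f(\rho)}\bigl(df_{\rho}(v)\bigr) \;=\; d(\log h\circ f)_{\rho}(v).$$
Hence the closed $1$-form $d(\log h-\log h\circ f)$ vanishes identically on the connected manifold $\mc H(S,\ms G)$, so $h=K\cdot(h\circ f)$ for some positive constant $K$.

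To pin down $K=1$ I would invoke the uniform upper bound on entropy for Hitchin representations: for $\mc H_d(S)$ the Potrie--Sambarino inequality $h\leq 2/(d-1)$, together with its analogues in the symplectic, orthogonal and $\ms G_2$ cases, ensures that $h$ attains a finite positive supremum $M$. Since $f$ is a bijection, $h$ and $h\circ f$ have the same supremum $M$, so the relation $h=K(h\circ f)$ forces $K=1$; equivalently $h\circ f=h$. Substituting this back into the definition of $\JJ$ gives $\JJ(f(\rho),f(\sigma))=\JJ(\rho,\sigma)$, and since the pressure metric at $\rho$ is by definition the Hessian of $\JJ_{\rho}$ at its critical point $\rho$, the identity $\JJ_{f(\rho)}\circ f=\JJ_{\rho}$ implies that $df_{\rho}$ is a linear isometry between the pressure norms at $\rho$ and $f(\rho)$.

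The main obstacle is the last step: the passage from the proportionality $h=K(h\circ f)$ to the equality $K=1$ is not intrinsic to $\II$, and it is this point where a substantive external input (the Potrie--Sambarino style entropy bound) must be brought in; by contrast, the extraction of $d\log h$ from the first-order jet of $\II$ is a direct formal consequence of the minimum property of $\JJ_{\rho}$.
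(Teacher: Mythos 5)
Your proof is correct and follows essentially the same route as the paper: extract ${\rm D}\II_\rho(v)=-{\rm D}(\log h)_\rho(v)$ from the fact that $\JJ_\rho$ is minimized at $\rho$ (using $\II(\rho,\rho)=1$), differentiate the invariance of $\II$ to get ${\rm D}\log h={\rm D}\log(h\circ f)$ on the connected component, hence $h=K\,(h\circ f)$, and then force $K=1$ before passing to $\JJ$ and its Hessian. The paper eliminates the constant with the same observation that $h$ is a bounded positive function (stated without invoking the Potrie--Sambarino bound), so your supremum-plus-bijectivity argument is simply a more explicit version of the paper's final step.
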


\begin{proof}
Suppose that $\rho \in {\mathcal H}(S,\ms G)$, $v \in \ms T_\rho({\mathcal H}(S,\ms G))$ and
$v=\frac{d}{dt}\rho_t =\dt{\rho}_0$ for a smooth path $\{\rho_t\}_{t\in (-1,1)}$ in $\Hn$.
Then,
$$\II_\rho(\rho_t) = \II(\rho,\rho_t) = \II(f(\rho),f(\rho(t)) = \II_{f(\rho)}(f(\rho_t)),$$
so
$${\rm D}\II_\rho(v) = {\rm D}\II_{f(\rho)}({\rm D}f_\rho(v)).$$
Since $\JJ_\rho$ has a minimum at $\rho$, ${\rm D}\JJ_\rho(v)=0$, so
$${\rm D}\JJ_\rho(v)=\frac{{\rm D}h_\rho(v)}{h(\rho)}\II_\rho(\rho)+\frac{h(\rho)}{h(\rho)}{\rm D}\II_\rho(v)=
\frac{{\rm D}h_\rho(v)}{h(\rho)}+{\rm D}\II_\rho(v)=0$$
which implies that
$${\rm D}\II_\rho(v)=-\frac{{\rm D}h_\rho(v)}{h(\rho)} = -{\rm D}(\log{h})(v).$$
Thus, for all $v \in T_\rho({\mathcal H}(S,\ms G))$
$${\rm D}(\log{h})(v) ={\rm D} (\log(h\circ f))(v),$$
so
$(h\circ f)/h$ is constant, since ${\mathcal H}(S,\ms G)$ is a connected manifold. 
However, since $h$ is a bounded positive function, it must
be the case that $h\circ f=h$.

It follows, by the definition of renormalized intersection, that $f$ preserves renormalized intersection.
Since the pressure metric is obtained by considering the Hessian of  renormalized intersection,
$f$ is also an isometry of $\mc H(S,\ms G)$ with respect to the pressure metric.
\end{proof}

Potrie and Sambarino \cite{potrie-sambarino} proved that the entropy function achieves its maximum exactly on
the Fuchsian locus, so we have the following immediate corollary.

\begin{corollary}
\label{fuchsianpreserved}
If $S$ is a closed orientable surface of genus greater than 1,
$\ms{G}$ is $\psln$, $\ms{PSp}(2d,\mathbb R),$ $\ms{PSO}(d,d+1)$,
or $\ms{G}_{2,0}$ and $f:\mathcal H(S,\ms G)\to\mathcal H(S,\ms G)$ is an isometry of 
intersection $\II$, then $f$ preserves the Fuchsian locus.
\end{corollary}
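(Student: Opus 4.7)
The plan is to combine the two ingredients that the text has just assembled: Proposition \ref{isometriesareisometries}, which says that an intersection isometry $f$ preserves the entropy function $h$, and the Potrie--Sambarino characterization of the Fuchsian locus as the level set where $h$ attains its maximum. Concretely, I will first note that for every $\rho \in \mc H(S,\ms G)$ one has $h(f(\rho)) = h(\rho)$, so $f$ carries each level set of $h$ to itself.

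Next I will invoke the Potrie--Sambarino theorem, quoted just above the corollary, to identify the Fuchsian locus of $\mc H(S,\ms G)$ with $h^{-1}(h_{\max})$, where $h_{\max} = \sup_{\rho \in \mc H(S,\ms G)} h(\rho)$. Combining this with the first step, if $\rho$ lies in the Fuchsian locus then $h(\rho) = h_{\max}$, hence $h(f(\rho)) = h_{\max}$, hence $f(\rho)$ also lies in the Fuchsian locus. Applying the same argument to $f^{-1}$ shows that $f$ actually maps the Fuchsian locus bijectively onto itself.

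The only point meriting a brief remark is that the Potrie--Sambarino statement is used for each of the groups in the list, not only for $\psln$. This is automatic because for $\rho \in \mc H(S,\ms G) \subset \mc H_d(S)$ (under the standard embeddings) the entropy $h(\rho)$ defined via spectral radii of $\rho(\gamma)$ is intrinsic to the representation and is unchanged when computed in the ambient Hitchin component; the maximum value on $\mc H(S,\ms G)$ is therefore attained exactly at those $\rho$ that are simultaneously Fuchsian in $\mc H_d(S)$ and contained in $\mc H(S,\ms G)$, which is by definition the Fuchsian locus of $\mc H(S,\ms G)$. There is no significant obstacle in the argument; the work has already been done in Proposition \ref{isometriesareisometries} and in the cited entropy rigidity result.
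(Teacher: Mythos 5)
Your argument is correct and is exactly the paper's: combine Proposition \ref{isometriesareisometries} (an intersection isometry preserves entropy) with the Potrie--Sambarino result that entropy attains its maximum precisely on the Fuchsian locus, so the Fuchsian locus, being the maximal level set of $h$, is preserved. Your extra remark about the self-dual groups is a harmless elaboration of what the paper leaves implicit; nothing further is needed.
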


\subsection{Geodesic currents}
We identify $S$ with a fixed hyperbolic surface $\hh/\Gamma$, which in turn identifies
$\pi_1(S)$ with $\Gamma$ and $\partial_\infty\pi_1(S)$ with $\partial_\infty \hh$. One can identify
the space  $G(\hh)$ of unoriented geodesics in $\Ht$ with \hbox{$(\partial_\infty \hh\times\partial_\infty \hh-\Delta)/\mathbb Z_2$},
where $\Delta$ is the diagonal in $\partial_\infty \hh\times\partial_\infty \hh$ and $\mathbb Z_2$ acts by interchanging coordinates.
A {\em geodesic current} on $S$ is  a $\Gamma$-invariant Borel measure on $G(\hh)$ and ${\mathcal C}(S)$ is the space
of geodesic currents on $S$, endowed with the weak$^*$ topology. 

If $\alpha$ is a closed geodesic on $S$, one obtains a geodesic current $\delta_\alpha$
by taking the sum of the Dirac measures on the pre-images of $\alpha$. 
The set of currents
which are scalar multiples of closed geodesics is dense in $\mathcal C(S)$, see Bonahon \cite[Proposition 2]{bonahon}.
If $\rho\in \mathcal T(S)=\mathcal H_2(S)$ has associated limit map $\xi_\rho:\partial\pi_1(S)\to \partial \mathbb H_2$,
one defines the {\em Liouville measure} of $\rho$ by
$$m_\rho ([a,b]\times[c,d]) = \left|\log\frac{\big(\xi_\rho(a)-\xi_\rho(c)\big)\big(\xi_\rho(b)-\xi_\rho(d)\big)}
{\big(\xi_\rho(a)-\xi_\rho(d)\big)\big(\xi_\rho(b)-\xi_\rho(c)\big)}\right|.$$

\begin{theorem}{\rm (Bonahon \cite[Propositions 3, 14, 15]{bonahon})}
Let $S$ be a closed oriented surface of genus $g \geq 2$ and $\rho\in\mathcal T(S)=\mathcal H_2(S)$.
Then there exist continuous functions $\ell_\rho:{\mathcal C}(S) \rightarrow \Real$ and 
$i:{\mathcal C}(S)\times {\mathcal C}(S) \rightarrow \Real$ which are linear on rays
such that if $\alpha$ and $\beta$ are closed geodesics, then
$$i(m_\rho, \delta_\alpha) = \ell_\rho(\alpha), \quad i(m_\rho,m_\rho) = \pi^2|\chi(S)|,$$
and $i(\alpha,\beta)$ is the geometric intersection between $\alpha$ and $\beta$.
\label{bonahon-currents}\end{theorem}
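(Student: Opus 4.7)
The plan is to construct the intersection pairing $i:\mathcal{C}(S)\times\mathcal{C}(S)\to\mathbb{R}$ tautologically from the geometry of pairs of crossing geodesics, and then recover the length function by specialization, $\ell_\rho(\mu):=i(m_\rho,\mu)$. The key object is the open subset $\mathcal{DG}\subset G(\hh)\times G(\hh)$ consisting of pairs $(g_1,g_2)$ of complete geodesics crossing transversely, equivalently pairs whose four endpoints link on $\partial_\infty\hh$. The diagonal $\Gamma$-action on $\mathcal{DG}$ is free and properly discontinuous because assigning to each transverse pair its unique intersection point gives a $\Gamma$-equivariant continuous map $\mathcal{DG}\to\hh$; moreover, $\mathcal{DG}/\Gamma$ fibers over the compact unit tangent bundle of $S$ by recording the intersection point and the two crossing directions, so it admits a relatively compact Borel fundamental domain $\mathcal{F}$. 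I would then set
$$i(\mu,\nu):=(\mu\otimes\nu)(\mathcal{F}),$$
which is manifestly bilinear, symmetric, non-negative, and independent of the choice of $\mathcal{F}$ by $\Gamma$-invariance of $\mu\otimes\nu$.

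Next I would verify the identities on special currents. For closed geodesics $\alpha,\beta$, the product $\delta_\alpha\otimes\delta_\beta$ is the sum of point masses on transverse pairs of lifts $(\tilde\alpha_i,\tilde\beta_j)$, and a $\Gamma$-fundamental domain of such pairs is in natural bijection with the transverse crossings of $\alpha$ and $\beta$ on $S$, yielding $i(\delta_\alpha,\delta_\beta)=i_{\mathrm{geom}}(\alpha,\beta)$. For $i(m_\rho,\delta_\alpha)$, I would disintegrate the Liouville measure along crossings with a fixed lift $\tilde\alpha$ using the classical local formula $dm_\rho=\sin\theta\,d\theta\,ds$ parametrized by arclength $s$ along $\tilde\alpha$ and crossing angle $\theta\in(0,\pi)$; integration over a fundamental interval of length $\ell_\rho(\alpha)$ and over $\theta\in(0,\pi)$ returns $\ell_\rho(\alpha)$ once the normalization of $m_\rho$ is fixed. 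The self-intersection identity $i(m_\rho,m_\rho)=\pi^2|\chi(S)|$ is a Santal\'o-type integral on $T^1S\times T^1S$ reducing to $\mathrm{Area}(S)$ times a universal constant, which by Gauss--Bonnet $\mathrm{Area}(S)=2\pi|\chi(S)|$ yields the claimed value.

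The main obstacle is continuity: the indicator $\mathbf{1}_{\mathcal{F}}$ is not continuous, so weak-$\ast$ convergence $\mu_n\to\mu$, $\nu_n\to\nu$ does not automatically give $i(\mu_n,\nu_n)\to i(\mu,\nu)$. The essential step is showing that $\mu\otimes\nu$ gives zero mass to $\partial\mathcal{F}$, which modulo $\Gamma$ amounts to the ``degenerate'' set of asymptotic or tangent pairs of geodesics in $\mathcal{DG}$. Bonahon's structural lemma that geodesic currents on a closed surface assign zero measure to the diagonal and to sets of pairs sharing an endpoint handles precisely this issue. Once this is established, continuity and linearity on rays of $i$ follow, and the continuity of $\ell_\rho$ is then an immediate consequence. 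Density of scalar multiples of closed geodesics in $\mathcal{C}(S)$ then propagates the geometric interpretations to all currents.
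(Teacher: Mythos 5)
The paper contains no proof of this statement: it is quoted verbatim from Bonahon, so the only meaningful comparison is with Bonahon's original argument, and your construction does follow it in outline --- $i(\mu,\nu)$ is the mass that $\mu\otimes\nu$ deposits on the space $\mathcal{DG}$ of transversally crossing pairs of geodesics modulo the diagonal $\Gamma$-action, the value $i(m_\rho,\delta_\alpha)=\ell_\rho(\alpha)$ comes from the local expression of the Liouville measure in the coordinates (arclength along a lift of $\alpha$, crossing angle), and $i(m_\rho,m_\rho)$ reduces via Gauss--Bonnet to a universal constant times the area. Those computations are fine as sketched, modulo pinning down the constant (the cross-ratio normalization of $m_\rho$ forces the local form $\frac{1}{2}\sin\theta\,\mathrm{d}\theta\,\mathrm{d}s$, which is what produces exactly $\ell_\rho(\alpha)$ and $\pi^2|\chi(S)|$).

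The genuine gap is in the continuity step, which is the real content of Bonahon's Proposition 3. First, a technical slip: $\mathcal{DG}/\Gamma$ does not fiber over $T^1S$ with compact fibers (the fiber is the open set of directions transverse to a given one), and it is not compact --- it has ends where the crossing angle tends to $0$. One can choose a fundamental domain $\mathcal F$ whose closure in $G(\mathbb{H}^2)\times G(\mathbb{H}^2)$ is compact (pairs crossing inside a compact fundamental polygon), but that closure unavoidably meets the degenerate locus, in fact the diagonal. Second, and more seriously, the hypothesis you want, $(\mu\otimes\nu)(\partial\mathcal F)=0$, is neither supplied by the invoked structural lemma nor true in general: $\partial\mathcal F$ contains the gluing faces interior to $\mathcal{DG}$, which purely atomic products such as $\delta_\alpha\otimes\delta_\beta$ can charge, and it contains a piece of the diagonal of $G\times G$, which $\delta_\alpha\otimes\delta_\alpha$ (or any pair of currents with a common atom) certainly charges. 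Even for a good limiting pair, a Portmanteau argument based only on null sets of the limit measure cannot rule out mass of $\mu_n\otimes\nu_n$ concentrating at nearly tangent crossings (the ends of $\mathcal{DG}/\Gamma$) and being created or destroyed in the limit. Concretely, continuity contains the nontrivial assertion that if $\frac{1}{\ell_n}\delta_{\alpha_n}\to\lambda$ with $\lambda$ a measured lamination, then $\frac{1}{\ell_n^2}\,i(\alpha_n,\alpha_n)\to 0$, i.e.\ the self-crossings of $\alpha_n$, which occur at angles going to $0$, are asymptotically negligible; no property of $\lambda\otimes\lambda$ alone yields this. What is missing from your sketch is precisely this uniform-in-$n$ estimate on the mass of small-angle crossing pairs (equivalently, a no-escape-of-mass argument near the frontier of $\mathcal{DG}$), which is the heart of Bonahon's proof; the rest of your outline is the standard skeleton of his construction.
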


Moreover, Bonahon defines an embedding 
$$Q: {\mathcal T}(S) \rightarrow {\mathcal PC}(S)$$
of Teichm\"uller space 
into the space of projective classes of geodesic currents given by $Q(\rho) = [m_\rho]$. Bonahon shows
that the closure of $Q(\mathcal T(S))$ is homeomorphic to a closed ball of dimension $6g-6$, and the boundary
of $Q(\mathcal T(S))$ is the space $\mathcal{PML}(S)$
of projective classes of measured laminations. (Recall that a {\em measured lamination}
may be defined to be a geodesic current of self-intersection  0.) In particular, the geodesic current
associated to  any simple closed curve lies in the boundary of $Q(\mathcal T(S))$. Moreover, Bonahon \cite[Theorem 18]{bonahon}
shows that this compactification of Teichm\"uller space agrees with Thurston's compactification.

\subsection{Length functions for Hitchin representations}

If $\rho\in \Hn$, then there is a H\"older function 
\hbox{$f_\rho:T^1S \rightarrow \Real_+$} such that if $\alpha$ is a closed oriented geodesic on
$S=\hh/\Gamma$, then
$$\int_{\alpha} f_\rho\  dt =  L_\alpha(\rho)$$
where $dt$ is the Lebesgue measure along $\alpha\subset T^1(S)$, see \cite[Prop. 4.1]{BCLS}
or Sambarino \cite[Sec. 5]{sambarino-quantitative}.
Given $\mu\in\mathcal C(S)$, one may define a $\Gamma$-invariant measure $\tilde\mu$ on $\ms T^1\hh$
which has the local form $\mu\times dt$ where 
$dt$ is Lebesgue measure along the flow lines of $\ms T^1\hh$ (which are oriented geodesics in $\hh$),
so $\tilde \mu$ descends to a measure $\hat\mu$ on $T^1(S)$.
One may then define a length function 
$\ell_\rho:{\mathcal C}(S) \rightarrow \Real$ by letting 
$$\ell_\rho(\mu)=\int_{T^1(S)}f_\rho \ d\hat\mu.$$
Notice that if $\alpha$ is a simple closed geodesic on $S$, then
$$\ell_\rho(\delta_\alpha)=L^H_\alpha(\rho)=L_\alpha(\rho)+L_{\alpha^{-1}}(\rho)$$
since $\hat\delta_\alpha$ is Dirac measure support on the closed orbits of geodesics
associated to $\alpha$ and $\alpha^{-1}$.
Moreover, by the definition of the weak$^*$ topology,
$\ell_\rho$ is  clearly continuous, since $T^1S$ is compact.

Recall that (see Bowen \cite{bowen-equilibrium} or Margulis \cite{margulis-thesis}) if $\sigma\in\mathcal T(S)=\mathcal H_2(S)$ then the Liouville current satisfies
$$\frac{m_\sigma}{\ell_\sigma(m_\sigma)} = \lim_{T\rightarrow \infty} \frac{1}{\#R_T(\sigma)} \sum_{R_T(\sigma)} \frac{\delta_\alpha}{\ell_\sigma(\delta_\alpha)}
=  \lim_{T\rightarrow \infty} \frac{1}{\#R_T(\sigma)} \sum_{R_T(\sigma)} \frac{\delta_\alpha}{2L_\alpha(\sigma)}
.$$

Since $\tau_d$ multiplies the logarithm of the spectral radius by  $d-1$, 
 if $\rho\in\Hn$, then \begin{eqnarray*}
\frac{\ell_\rho(m_\sigma)}{\ell_\sigma(m_\sigma)} & = &
\lim_{T\rightarrow \infty} \frac{1}{\#R_T(\sigma)} \sum_{R_T(\sigma)} \frac{L^H_\alpha(\rho)}{ 2L_\alpha(\sigma)}\\
& = &
\left(d-1\right)\ \lim_{T\rightarrow \infty} \frac{1}{\#R_{(d-1)T}(\tau_d\circ\sigma)} \sum_{R_{(d-1)T}(\tau_d\circ\sigma)} \frac{L_\alpha(\rho)}{L_\alpha(\tau_d\circ\sigma)}\\ & = & \left(d-1\right)\ \II(\tau_d\circ\sigma,\rho).\\
\end{eqnarray*}
Here we use the fact that, since $\sigma\in\mathcal T(S)$,
$L_\alpha(\sigma)=L_{\alpha^{-1}}(\sigma)$, so
$$\frac{L^H_\alpha(\rho)}{ 2L_\alpha(\sigma)}+\frac{L^H_{\alpha^{-1}}(\rho)}{ 2L_{\alpha^{-1}}(\sigma)}=
\frac{L_\alpha(\rho)}{ L_\alpha(\sigma)}+\frac{L_{\alpha^{-1}}(\rho)}{ L_{\alpha^{-1}}(\sigma)}$$
for all $\alpha\in\pi_1(S)$.

\subsection{Isometries of intersection and the simple Hilbert length spectrum}

We next observe that any isometry of  intersection preserves the simple marked Hilbert
length spectrum.

\begin{proposition}
\label{isometries and Hilbert length}
If $S$ is a closed surface of genus $g\ge 2$, $\ms{G}=\psln$, $\ms{PSp}(2d,\mathbb R),$ $\ms{PSO}(d,d+1)$,
or $\ms{G}_{2,0}$
and $f:\mathcal H(S,\ms{G})\to\mathcal H(S,\ms{G})$ is an isometry of  intersection,
then there exists an element $\phi$ of
the extended mapping class group so that if $\rho\in\mc H(S,\ms G)$, then $\rho$ and $f\circ\phi(\rho)$ have
the same simple marked Hilbert length spectrum.
\end{proposition}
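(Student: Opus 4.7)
The plan is to use Masur--Wolf rigidity of the Weil--Petersson metric to absorb $f|_{\mc T(S)}$ into an extended mapping class, and then propagate the resulting equality between intersections with $\rho$ and with its image out from Fuchsian Liouville currents to all simple closed curves via Bonahon's identification of the boundary of the Liouville embedding with $\mc{PML}(S)$.

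First I will invoke Proposition \ref{isometriesareisometries} to upgrade $f$ to an isometry of the renormalized intersection $\JJ$, and hence of the pressure metric on $\mc H(S,\ms G)$. Corollary \ref{fuchsianpreserved} ensures that $f$ preserves the Fuchsian locus, which is identified with $\mc T(S)$. Since the pressure metric restricts to a non-zero constant multiple of the Weil--Petersson metric on this locus, $f|_{\mc T(S)}$ is a Weil--Petersson isometry, and Masur--Wolf \cite{masur-wolf} then supplies an extended mapping class $\phi$ with $f|_{\mc T(S)}=\phi^{-1}|_{\mc T(S)}$. Setting $g=f\circ\phi$, I get an isometry of $\II$ which fixes every $d$-Fuchsian representation, and the task reduces to proving that $L^H_\alpha(\rho)=L^H_\alpha(g(\rho))$ for every simple closed curve $\alpha$ and every $\rho\in\mc H(S,\ms G)$.

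For each $\sigma\in\mc T(S)$, the identity
$$\ell_\rho(m_\sigma)\;=\;(d-1)\,\ell_\sigma(m_\sigma)\,\II(\tau_d\circ\sigma,\rho)$$
derived above, combined with the equality $\II(\tau_d\circ\sigma,\rho)=\II(\tau_d\circ\sigma,g(\rho))$ (which follows from $g$ fixing $\tau_d\circ\sigma$ and preserving $\II$), yields $\ell_\rho(m_\sigma)=\ell_{g(\rho)}(m_\sigma)$ for every Fuchsian $\sigma$.

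Finally I will use Bonahon's identification of the closure of the Liouville embedding with Thurston's compactification to find, for any simple closed curve $\alpha$, Fuchsian representations $\sigma_n\in\mc T(S)$ and scalars $t_n>0$ with $t_n m_{\sigma_n}\to\delta_\alpha$ in $\mc C(S)$. Since $\ell_\rho$ and $\ell_{g(\rho)}$ are continuous on $\mc C(S)$ and linear on rays, and satisfy $\ell_\eta(\delta_\alpha)=L^H_\alpha(\eta)$, passing to the limit in $t_n\ell_\rho(m_{\sigma_n})=t_n\ell_{g(\rho)}(m_{\sigma_n})$ gives $L^H_\alpha(\rho)=L^H_\alpha(g(\rho))$, as required. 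The main point requiring care will be checking that the pressure metric restricts to a non-trivial multiple of Weil--Petersson on the Fuchsian locus in each of the four cases of $\ms G$, so that Masur--Wolf applies uniformly; the rest is a clean current-theoretic continuity argument.
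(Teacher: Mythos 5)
Your proposal is correct and follows essentially the same route as the paper: Corollary \ref{fuchsianpreserved} plus the fact that the pressure metric restricts to a multiple of Weil--Petersson on the Fuchsian locus, Masur--Wolf to absorb $f$ into a mapping class, and then Bonahon's compactification together with the identity $\ell_\rho(m_\sigma)=(d-1)\ell_\sigma(m_\sigma)\II(\tau_d\circ\sigma,\rho)$ to pass from Liouville currents to $\delta_\alpha$. The only cosmetic difference is that you take the limit of the equality $\ell_\rho(t_n m_{\sigma_n})=\ell_{g(\rho)}(t_n m_{\sigma_n})$ directly rather than normalizing via $\ell_{\sigma_n}(m_{\sigma_n})=\pi^2|\chi(S)|$ as the paper does, and the point you flag about the four groups $\ms G$ is already covered since the whole intersection/pressure-metric discussion restricts to $\mc H(S,\ms G)\subset\Hn$.
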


\begin{proof}
Recall, from Corollary \ref{fuchsianpreserved}, that $f$ preserves the Fuchsian locus.
Since any isometry of $\mathcal T(S)$ with the Weil-Petersson metric agrees with an element of the extended 
mapping class group, by a result of Masur-Wolf \cite{masur-wolf}, and  the restriction of the pressure
metric to the Fuchsian locus is a multiple of the Weil-Petersson metric,  
the restriction of $f$ to the Fuchsian locus agrees with the action of an element $\phi$ of the extended
mapping class group. We can thus consider $\hat f=f\circ\phi^{-1}$, which is an isometry of the intersection function
that fixes the Fuchsian locus.

If $\alpha\in\pi_1(S)$ is represented by a simple curve, we may choose a sequence $\{\sigma_n\}$ in $\mathcal T(S)$
such that $\{Q(\sigma_n)\}$ converges to $[\delta_\alpha]\in\mathcal{PC}(S)$,
so there exists a sequence $\{c_n\}$ of real numbers so that $\lim c_n=+\infty$ and
$$\lim \frac{m_{\sigma_n}}{c_n}=\delta_\alpha.$$
Therefore, if $\rho\in \mathcal H(S,\ms{G})\subset\Hn$, then
$$L^H_\alpha(\rho)=\ell_\rho(\delta_\alpha)=\lim\ell_\rho\left(\frac{m_{\sigma_n}}{c_n}\right)=
\lim \left(\frac{(d-1)\ell_{\sigma_n}(m_{\sigma_n})}{c_n}\II(\tau_d\circ\sigma_n,\rho)\right).$$
By Theorem \ref{bonahon-currents}, as $\sigma_n \in \mathcal T(S)$, then $\ell_{\sigma_n}(m_{\sigma_n}) = i(m_{\sigma_n}, m_{\sigma_n}) = \pi^2|\chi(S)|$.
If $\rho\in\mathcal H(S,\ms{G})$ and $\alpha\in\pi_1(S)$, then
since $\II(\tau_d\circ\sigma_n,\rho)=\II(\tau_d\circ\sigma_n,\hat f(\rho))$ for all $n$,
$L^H_\alpha(\rho)=L^H_\alpha(\hat f(\rho))$. Therefore, $\rho $ and $\hat f(\rho)$ have the same simple marked
Hilbert length spectrum.
\end{proof}

Recall that if $\rho$ lies in $\mathcal H(S,\ms{G})$ and $\ms{G}$ is
$\ms{PSp}(2d,\mathbb R)$, $\ms{PSO}(d,d+1)$ or $\ms{G}_{2,0}$, then
$L^H_\alpha(\rho)=2L_\alpha(\rho)$ for all $\alpha\in\pi_1(S)$.  
Therefore, we may combine Theorem
\ref{lengthrigidity} and Proposition \ref{isometries and Hilbert length} to obtain:

\begin{corollary}
\label{self dual isometries}
If $S$ is a closed surface of genus $g\ge 3$, then any isometry of 
the intersection $\II$ on $\mathcal H(S,\ms{PSp}(2d,\mathbb R))$,
$\mathcal H(S,\ms{PSO}(d,d+1))$, or $\mathcal H(S,\ms{G}_{2,0})$ agrees
with an element of the extended mapping class group.
\end{corollary}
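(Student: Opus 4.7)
The plan is to reduce the claim directly to Simple Marked Length Rigidity by feeding its hypotheses through the machinery already developed in the preceding propositions. Given an intersection isometry $f\colon \mc H(S,\ms G)\to\mc H(S,\ms G)$ with $\ms G$ one of $\ms{PSp}(2d,\mathbb R)$, $\ms{PSO}(d,d+1)$, or $\ms G_{2,0}$, I would first invoke Proposition \ref{isometries and Hilbert length} to produce an element $\phi$ of the extended mapping class group such that the composition $\hat f \defeq f\circ\phi^{-1}$ preserves the simple marked Hilbert length spectrum pointwise: for every $\rho\in\mc H(S,\ms G)$ and every simple non-separating $\alpha\in\pi_1(S)$, one has $L^H_\alpha(\hat f(\rho)) = L^H_\alpha(\rho)$.

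The second step is to convert Hilbert lengths to ordinary lengths. Since $\mc H(S,\ms G)$ lies inside the self dual Hitchin component (by the identifications $\mc{SH}_{2d}(S)=\mc H(S,\ms{PSp}(2d,\mathbb R))$ and $\mc{SH}_{2d+1}(S)=\mc H(S,\ms{PSO}(d,d+1))$, together with the self duality of $\ms G_{2,0}$-representations), the identity $L^H_\alpha(\rho)=2L_\alpha(\rho)$ from Section \ref{sec:Hitchin} applies. Here one must verify that $\hat f(\rho)$ still lies in $\mc H(S,\ms G)$, so that the self duality formula is available on both sides of the equation $L^H_\alpha(\hat f(\rho))=L^H_\alpha(\rho)$. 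This is immediate: $\phi$ acts on $\mc H(S,\ms G)$ by precomposition (self duality is preserved under precomposition with an automorphism of $\pi_1(S)$), and $f$ is a self map of $\mc H(S,\ms G)$ by hypothesis. Dividing the Hilbert length identity by $2$ yields $L_\alpha(\hat f(\rho))=L_\alpha(\rho)$ for every simple non-separating $\alpha$.

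Finally, since $g\ge 3$, Theorem \ref{lengthrigidity} (Simple Marked Length Rigidity) applies to the pair $\rho,\hat f(\rho)\in\Hn$ and forces $\hat f(\rho)=\rho$ for every $\rho\in\mc H(S,\ms G)$. Hence $\hat f$ is the identity on $\mc H(S,\ms G)$, so $f=\phi$ belongs to the extended mapping class group. The reverse inclusion is built into the setup, as the intersection function $\II$ is manifestly mapping-class-group invariant (it is defined from length data that depends only on free homotopy classes).

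I do not foresee a genuine obstacle: the entire argument is an assembly of results already in hand. The only point that deserves a moment's care is the verification that $\hat f$ preserves the subcomponent $\mc H(S,\ms G)$, which both legitimises $\hat f$ as a self map and licenses the use of the self duality identity $L^H=2L$ on the image side; beyond this, Proposition \ref{isometries and Hilbert length} and Theorem \ref{lengthrigidity} do all of the heavy lifting.
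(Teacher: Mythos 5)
Your proposal is correct and follows the paper's own route exactly: Proposition \ref{isometries and Hilbert length} gives a mapping class $\phi$ with equal simple marked Hilbert length spectra, the self duality identity $L^H_\alpha=2L_\alpha$ converts this to equality of simple marked length spectra, and Theorem \ref{lengthrigidity} (genus $\ge 3$) forces $f\circ\phi^{-1}$ to be the identity on $\mc H(S,\ms G)$. Your extra remark that $\hat f$ preserves the subcomponent, so the self duality formula applies on both sides, is a point the paper leaves implicit but is the same argument.
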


Notice that Corollary \ref{self dual isometries} is a generalization of Theorem \ref{selfdualisometries}
which was stated in the introduction.

%% !TEX root =HitchinSimple.tex
 
\section{Hilbert Length Rigidity}
\label{SL3}

Proposition \ref{isometries and Hilbert length} suggests the following potential 
generalization of our main simple length rigidity result.

\medskip\noindent
{\bf Conjecture:} {\em If $\rho,\sigma\in\Hn$ have the same marked simple Hilbert length
spectrum then they either agree or differ by the contragredient involution.}

\medskip

We establish this conjecture when $d=3$. 

\begin{theorem} 
\label{hilbertrigidity}
If $S$ is a closed orientable surface of genus greater than 2,
\hbox{$\rho, \sigma \in  {\mathcal H}_3(S)$}  and $L^H_{\alpha}(\rho) = L^H_{\alpha}(\sigma)$ 
for any $\alpha \in \pi_1(S)$  which is represented by a simple non-separating curve,
then $\rho = \sigma$ or $\rho = \sigma^*$.
\end{theorem}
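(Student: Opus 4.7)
The plan is to mirror the proof of Theorem \ref{fulltracerigidity}, adapted to the Hilbert length, with an extra step to handle the $\rho\leftrightarrow\rho^*$ ambiguity intrinsic to the Hilbert length (since $L^H_\alpha(\sigma)=L^H_\alpha(\sigma^*)$). The first goal is to extract eigenvalue information from $L^H$, the second to propagate it via Theorem \ref{lengthrigidity}.

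First, I would establish an asymptotic expansion for $L^H_{\alpha^n\beta}(\rho)$ when $\alpha,\beta$ are simple curves with geometric intersection one (so $\alpha^n\beta$ is simple non\nobreakdash-separating for every $n$). Setting $A=\rho(\alpha)$, $B=\rho(\beta)$, Lemma \ref{specradiusexpansion} gives the expansion of $\lambda_1(A^nB)$ in terms of the matrix $(b^i_j)$ of $B$ in the basis $\{e_i(A)\}$; applying the same lemma to $(A^{-1},B^{-1})$ and using $\lambda_3(A^nB)=\lambda_1(A^{-n}B^{-1})^{-1}$ gives the expansion of $\log\lambda_3(A^nB)$. Subtracting, writing $\mu_\rho=\lambda_2/\lambda_1$, $\nu_\rho=\lambda_3/\lambda_2$ for $\rho(\alpha)$, and letting $(c^i_j)$ denote the matrix of $B^{-1}$ in the reversed eigenbasis $\{e_3(A),e_2(A),e_1(A)\}$, yields
\[
L^H_{\alpha^n\beta}(\rho)=nL^H_\alpha(\rho)+\log|b^1_1 c^1_1|+\frac{b^1_2b^2_1}{(b^1_1)^2}\mu_\rho^n+\frac{c^1_2c^2_1}{(c^1_1)^2}\nu_\rho^n+o\bigl(\max(\mu_\rho,\nu_\rho)^n\bigr).
\]
The non-vanishing of $b^1_1,b^1_2,b^2_1$ is exactly Lemma \ref{nonzero coefficients}; the analogous non-vanishing for $c^i_j$ follows in the same way from Corollary \ref{productnonzero} applied to $e_i(A^{-1})=e_{4-i}(A)$.

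Second, matching the expansion above against the hypothesis $L^H_{\alpha^n\beta}(\rho)=L^H_{\alpha^n\beta}(\sigma)$ order by order in $n$: the linear term recovers $L^H_\alpha(\rho)=L^H_\alpha(\sigma)$; the constant term gives $|b^1_1c^1_1|=|\hat b^1_1\hat c^1_1|$; the subleading exponentials force $\{\mu_\rho,\nu_\rho\}=\{\mu_\sigma,\nu_\sigma\}$ as unordered pairs (using the non-vanishing of the coefficients). Combined with the determinant constraint $\lambda_1\lambda_2\lambda_3=1$, this forces, for every simple non-separating $\alpha$, either
\begin{enumerate}
\item[\textbf{(A)}] $\lambda_i(\sigma(\alpha))=\lambda_i(\rho(\alpha))$ for all $i$, or
\item[\textbf{(B)}] $\lambda_i(\sigma(\alpha))=\lambda_i(\rho^*(\alpha))=1/\lambda_{4-i}(\rho(\alpha))$ for all $i$.
\end{enumerate}
(The two cases coincide exactly when $\lambda_2(\rho(\alpha))=1$.)

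The main obstacle is the third step: showing that the dichotomy (A)/(B) is \emph{uniform} over all simple non-separating curves. Here I would exploit the finer information of the constant term $\log|b^1_1c^1_1|$, together with the Hilbert lengths of the enriched family $\alpha^p\beta^q\gamma\delta^r$ associated to a triple of disjoint non-parallel simple non-separating curves as furnished by Lemma \ref{good configuration}. Running the multi-parameter asymptotic in the spirit of Proposition \ref{pqr result}, one can show that a mixed configuration (case (A) for some curve, case (B) for another sharing a common geometric-intersection-one companion) is incompatible with the coefficient identities, because swapping $\rho(\eta)\to \rho^*(\eta)$ on a single generator breaks the relative position of eigenlines forced by the other generators (in the vein of Lemma \ref{conjugacytrivial}). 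Alternatively, a continuity argument using the connectedness of the space of simple non-separating curves under Dehn twists and the semi-algebraic nature of the sign function $\epsilon(\alpha)\in\{A,B\}$ could produce the same conclusion.

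Once uniformity is achieved, replacing $\sigma$ by $\sigma^*$ if necessary (which preserves the simple non-separating Hilbert length spectrum), we may assume case (A) holds for every simple non-separating $\alpha$, so that $L_\alpha(\rho)=L_\alpha(\sigma)$ for all such $\alpha$. Theorem \ref{lengthrigidity} then concludes $\rho=\sigma$, and in the exceptional case we get $\rho=\sigma^*$. The step I expect to be the heart of the proof is the consistency argument in Step 3, since Steps 1, 2 and 4 are essentially direct translations of machinery already developed in the paper.
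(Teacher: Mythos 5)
Your steps 1, 2 and 4 follow the same outline as the paper's Lemma \ref{eigenvalues equal or}, but already there your order-by-order matching is incomplete: you only invoke \emph{non-vanishing} of the subleading coefficients, whereas the two exponentials $(\lambda_2/\lambda_1)^n$ and $(\lambda_3/\lambda_2)^n$ have the same rate exactly when $\lambda_2(\rho(\alpha))=1$, and then their coefficients could cancel if they had opposite signs, destroying the comparison of second-order terms. The paper closes this by proving that for $d=3$ the relevant coefficients are \emph{positive} (Lemma \ref{positive coefficients}, via evaluation on the Fuchsian locus and connectedness of $\mathcal H_3(S)$); that positivity, not mere non-vanishing, is what lets one read off that there exist $\epsilon_1,\epsilon_2$ with $\tfrac{\lambda_2}{\lambda_1}(\rho(\alpha^{\epsilon_1}))=\tfrac{\lambda_2}{\lambda_1}(\sigma(\alpha^{\epsilon_2}))$. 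The degenerate case can also be patched by hand, but as written this step has a hole.

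The genuine gap is your step 3, which you correctly identify as the heart and then leave at the level of two sketches, neither of which works as stated. The ``continuity under Dehn twists'' idea has no footing: the set of simple non-separating curves is discrete, and the sign $\epsilon(\alpha)\in\{A,B\}$ is genuinely ambiguous on any curve with $\lambda_2(\rho(\alpha))=1$ (cases (A) and (B) coincide there), so there is no locally constant sign to propagate. The multi-parameter asymptotics for $L^H$ of $\alpha^p\beta^q\gamma\delta^r$ do not separate $\rho$ from $\rho^*$ without further input, because each curve in the family carries its own (A)/(B) ambiguity. The paper's resolution is different and concrete: it proves a propagation statement (Lemma \ref{tracesagree}) saying that if some simple non-separating $\alpha$ is \emph{strictly} in case (A), i.e.\ $\tr(\rho(\alpha))=\tr(\sigma(\alpha))\neq\tr(\sigma^*(\alpha))$, then every simple non-separating based loop $\beta$ meeting $\alpha$ once satisfies $\tr(\rho(\beta))=\tr(\sigma(\beta))$ --- the key being that if instead $\tr(\rho(\alpha^n\beta))=\tr(\sigma^*(\alpha^n\beta))$ for all large $n$, comparison of leading terms forces $\lambda_2(\rho(\alpha))=1$, contradicting strictness. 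It then does \emph{not} deduce a global uniform dichotomy abstractly and quote Theorem \ref{lengthrigidity}; rather, in the mixed case it applies Lemma \ref{tracesagree} to the family $\eta^p\alpha_2^q\gamma\alpha_1^r$ in a generating configuration with $\alpha=\alpha_1$, feeds the resulting honest trace equalities into Proposition \ref{pqr result} and Lemma \ref{conjugacytrivial} exactly as in the proof of Theorem \ref{fulltracerigidity} (plus a homeomorphism exchanging $\alpha_i$ and $\beta_i$ to recover $\beta_1,\beta_2$), and concludes $\rho=\sigma$ directly. Without a substitute for Lemma \ref{tracesagree} and this second pass through the correlation-function machinery, your step 3, and hence your proof, is not complete.
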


The classification of the isometries of  intersection on $\mathcal H_3(S)$, 
Theorem \ref{isometriesofintersectionnumber}, is an immediate consequence of 
Theorem \ref{hilbertrigidity} and Proposition \ref{isometries and Hilbert length}.

\begin{proof}
Notice that $\ms{PSL}_3(\mathbb R)=\ms{SL}_3(\mathbb R)$ and that 
if \hbox{$\gamma\in\pi_1(S)$}, then all the eigenvalues of $\rho(\gamma)$ are  positive, since
eigenvalues vary continuously over $\mathcal H_3(S)$ and are positive on the Fuchsian locus.
In particular, if $L^H_\alpha(\rho)=L^H_\alpha(\sigma)$, then
$$\frac{\lambda_1(\rho(\alpha))}{\lambda_3(\rho(\alpha))} = \frac{\lambda_1(\sigma(\alpha))}{\lambda_3(\sigma(\alpha))}>1.$$

We first show that for individual elements the traces and eigenvalues either agree or are consistent
with the contragredient involution.

\begin{lemma} 
\label{eigenvalues equal or}
If $\alpha$ and $\beta$ are represented by simple, non-separating based loops on $S$ which intersect only at
the basepoint and have geometric intersection one,
and $L^H_{\alpha^n\beta}(\rho) = L^H_{\alpha^n\beta}(\sigma)$ for all $n$, then either
\begin{enumerate}
\item 
$\lambda_i(\rho(\alpha)) = \lambda_i(\sigma(\alpha))$  for all $i$, so $\tr(\rho(\alpha))=\tr(\sigma(\alpha))$, or
\item 
$\lambda_i(\rho(\alpha)) = \lambda_i(\sigma(\alpha^{-1}))=\lambda_i(\sigma^*(\alpha))$  for all $i$,
so $\tr(\rho(\alpha))=\tr(\sigma^*(\alpha))$.
\end{enumerate} 
\end{lemma}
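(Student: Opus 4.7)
The plan is to derive, via Lemma \ref{specradiusexpansion}, an asymptotic expansion for $L^H_{\alpha^n\beta}(\rho)$ as $n\to\infty$ whose first two orders pin down the eigenvalues of $\rho(\alpha)$ up to the involution $\alpha\mapsto\alpha^{-1}$. After lifting $\rho|_{\braket{\alpha,\beta}}$ and $\sigma|_{\braket{\alpha,\beta}}$ to $\mathsf{SL}_3(\mathbb R)$ so that all eigenvalues are positive, write the eigenvalues of $\rho(\alpha)$ as $\lambda_1>\lambda_2>\lambda_3>0$ with $\lambda_1\lambda_2\lambda_3=1$. Since $\lambda_3(\rho(\alpha^n\beta))=1/\lambda_1(\rho(\alpha)^{-n}\rho(\beta)^{-1})$, I would apply Lemma \ref{specradiusexpansion} once to $(A,B)=(\rho(\alpha),\rho(\beta))$ and once to $(A,B)=(\rho(\alpha)^{-1},\rho(\beta)^{-1})$, the needed non-vanishing of the $(1,1)$, $(1,2)$, $(2,1)$ entries of $B$ in the basis $\{e_i(A)\}$ being a direct consequence of Lemma \ref{nonzero coefficients} applied to $(\alpha,\beta)$ and to $(\alpha^{-1},\beta^{-1})$ (both pairs consist of simple based loops meeting only at the basepoint with geometric intersection one).

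Taking logarithms and subtracting yields
$$L^H_{\alpha^n\beta}(\rho)=nL^H_\alpha(\rho)+C(\rho)+u_1(\rho)\,z_1(\rho)^n+\tilde u_1(\rho)\,\tilde z_1(\rho)^n+o\!\left(\max(z_1(\rho),\tilde z_1(\rho))^n\right),$$
where $z_1=\lambda_2/\lambda_1\in(0,1)$, $\tilde z_1=\lambda_3/\lambda_2\in(0,1)$, $C(\rho)=\log b^1_1+\log\tilde b^1_1$, and both $u_1,\tilde u_1$ are nonzero by Lemma \ref{nonzero coefficients}. Equating the expansions for $\rho$ and $\sigma$ and matching the $O(n)$ term gives $L^H_\alpha(\rho)=L^H_\alpha(\sigma)$, so $\lambda_1/\lambda_3$ is already the same for $\rho(\alpha)$ and $\sigma(\alpha)$. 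The remaining information comes from the leading exponential correction: since $z_1/\tilde z_1=\lambda_2^3$, the dominant rate is $z_1$ when $\lambda_2>1$ and $\tilde z_1$ when $\lambda_2<1$, so matching the dominant exponential term between $\rho$ and $\sigma$ forces either $z_1(\rho)=z_1(\sigma)$ or $z_1(\rho)=\tilde z_1(\sigma)$ (or a symmetric pairing).

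In the two parallel cases ($\lambda_2(\rho(\alpha))$ and $\lambda_2(\sigma(\alpha))$ on the same side of $1$), the matched equation combined with $L^H_\alpha(\rho)=L^H_\alpha(\sigma)$ and $\lambda_1\lambda_2\lambda_3=1$ is an elementary $3\times 3$ system whose unique solution gives $\lambda_i(\rho(\alpha))=\lambda_i(\sigma(\alpha))$ for all $i$, which is option~(1). In the crossed cases the matched equation reads $\lambda_2(\rho)/\lambda_1(\rho)=\lambda_3(\sigma)/\lambda_2(\sigma)$, and the same algebra yields $\lambda_i(\rho(\alpha))=1/\lambda_{4-i}(\sigma(\alpha))=\lambda_i(\sigma(\alpha^{-1}))=\lambda_i(\sigma^*(\alpha))$, which is option~(2). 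The main obstacle is the edge case $\lambda_2(\rho(\alpha))=1$ (equivalently $z_1=\tilde z_1$), in which the two leading corrections merge and only the combined coefficient $u_1+\tilde u_1$ is visible at the dominant rate. However, $\lambda_2(\rho(\alpha))=1$ forces $\rho(\alpha)$ to have eigenvalues $\{\lambda,1,1/\lambda\}$, so options (1) and (2) coincide as statements about eigenvalues of $\rho(\alpha)$; confronting the rate-matching equation with the $\det=1$ constraint on $\sigma$ forces $\lambda_2(\sigma(\alpha))=1$ as well (if instead $\lambda_2(\sigma(\alpha))>1$, one derives $\lambda_2(\sigma(\alpha))^3=1$, a contradiction, and similarly in the other direction), after which the $L^H$ equality uniquely determines the remaining eigenvalue.
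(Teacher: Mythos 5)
Your overall strategy is the paper's: two applications of Lemma \ref{specradiusexpansion}, one for the top eigenvalue and one for the bottom (you apply it to the pair $(\rho(\alpha)^{-1},\rho(\beta)^{-1})$ where the paper applies it to the contragredient $\rho^*$ -- these are equivalent, and in both cases the needed nonvanishing of the relevant matrix entries comes from Lemma \ref{nonzero coefficients}), followed by matching the $O(n)$ term, the constant term, and the exponentially decaying corrections. In the generic situation, where $\lambda_2(\rho(\alpha))\neq 1$ and $\lambda_2(\sigma(\alpha))\neq 1$, nonvanishing of the coefficients is indeed enough to force the dominant decay rates to agree, and your parallel/crossed algebra correctly yields conclusions (1) and (2).

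The gap is exactly at the merged case you flag. When $\lambda_2(\rho(\alpha))=1$ the two corrections coalesce and the only visible coefficient is $u_1+\tilde u_1$; since Lemma \ref{nonzero coefficients} gives no sign information, you cannot exclude $u_1+\tilde u_1=0$, in which case the $\rho$-side residual is merely $o(z_1^n)$ and there is no ``rate-matching equation'' to confront with the determinant constraint -- your derivation of $\lambda_2(\sigma(\alpha))^3=1$ has nothing to start from (and the symmetric problem occurs when $\sigma$ is the merged side). This is precisely what the paper's Lemma \ref{positive coefficients} is for: it shows $b^1_1>0$ and $b^1_2b^2_1>0$ for every representation in $\mathcal H_3(S)$ (by a computation on the Fuchsian locus together with connectedness of the Hitchin component), so all four correction coefficients are strictly positive and cancellation is impossible, making the second-order matching valid in every case. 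Your argument could alternatively be closed without positivity by observing that a merged side decays at exactly the geometric-mean rate $\sqrt{\lambda_3/\lambda_1}$, while a non-merged side has a nonvanishing dominant term at a rate strictly larger than $\sqrt{\hat\lambda_3/\hat\lambda_1}=\sqrt{\lambda_3/\lambda_1}$ (using the already-established equality of Hilbert lengths of $\alpha$), which is incompatible with equality of the residuals, so either both sides are merged (and then the $L^H$ equality determines all eigenvalues) or neither is; but some such argument must be supplied -- as written, the edge case is not closed. A very minor further point: writing $C(\rho)=\log b^1_1+\log\tilde b^1_1$ presumes $b^1_1,\tilde b^1_1>0$, which does hold here because all eigenvalues of elements in the image of a representation in $\mathcal H_3(S)$ are positive, but it deserves a word.
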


\begin{proof}
As in the proof of Lemma \ref{length to trace}, let
$A = \rho(\alpha)$, $B = \rho(\beta)$ and $A^nB= \rho(\alpha^n\beta)$ and $\lambda_i(n)=\lambda_i(A^nB)$. 
Similarly, let $\hat A = \sigma(\alpha)$, $\hat B = \sigma(\beta)$ and $\hat A^n\hat B = \sigma(\alpha^n\beta)$
and let $\hat\lambda_i(n)=\lambda_i(\hat A^n\hat B)$. 
If $(b^i_j)$ is the matrix of $B$ with respect to the basis $\{e_i(A)\}$, 
then, $b^1_1$, $b^1_2$, and $ b^2_1$ are all non-zero
by Lemma \ref{nonzero coefficients}, so
Lemma \ref{specradiusexpansion} implies that
$$\frac{\lambda_1(n)}{\lambda_1^n } = b^1_1 + \frac{b^1_2b^2_1}{b^1_1} \left(\frac{\lambda_2}{\lambda_1}\right)^n+ o\left(\left(\frac{\lambda_2}{\lambda_1}\right)^n\right)$$
where $\lambda_i=\lambda_i(A)$.
Similarly, applying Lemma \ref{specradiusexpansion} to $\rho^*$ and 
noting that $\lambda^{-1}_i(\rho^*(\gamma))=\lambda_{4-i}(\rho(\gamma))$ for all $\gamma\in\pi_1(S)$, gives that 
$$\frac{\lambda_3^n}{\lambda_3(n) } = d^1_1 + \frac{d^1_2d^2_1}{d^1_1} \left(\frac{\lambda_3}{\lambda_2}\right)^n+ o\left(\left(\frac{\lambda_3}{\lambda_2}\right)^n\right)$$
where  $(d^i_j)$ is the matrix of $(B^{-1})^T$ in the basis $\{e_i((A^{-1})^T)\}$.

Taking the product of the previous two equations gives
\begin{equation}
\begin{split}
\left(\frac{\lambda_1(n)}{\lambda_3(n)}\right) \ \left(\frac{\lambda_3}{\lambda_1}\right)^n   = & b^1_1d^1_1 + \frac{d^1_1b^1_2b^2_1}{b^1_1} \left(\frac{\lambda_2}{\lambda_1}\right)^ n +  \frac{b^1_1d^1_2d^2_1}{d^1_1} \left(\frac{\lambda_3}{\lambda_2}\right)^n \\
  &\quad \quad \quad \quad  + o\left(\left(\frac{\lambda_3}{\lambda_2}\right)^n\right) + o\left(\left(\frac{\lambda_2}{\lambda_1}\right)^n\right) .\\
\end{split}
\label{hilbert-taylor}
\end{equation}
One obtains an analogous equality for $\sigma$, and since the left hand sides are equal by assumption,
we see that
\begin{align}
\label{hilbert-taylor-equal}
 & b^1_1d^1_1 + \frac{d^1_1b^1_2b^2_1}{b^1_1} \left(\frac{\lambda_2}{\lambda_1}\right)^ n +  \frac{b^1_1d^1_2d^2_1}{d^1_1} \left(\frac{\lambda_3}{\lambda_2}\right)^n+ o\left(\left(\frac{\lambda_3}{\lambda_2}\right)^n\right) + o\left(\left(\frac{\lambda_2}{\lambda_1}\right)^n\right) \nonumber \\ 
 = & \hat b^1_1\hat d^1_1 + \frac{\hat d^1_1\hat b^1_2\hat b^2_1}{\hat b^1_1} \left(\frac{\hat\lambda_2}{\hat\lambda_1}\right)^ n +  \frac{\hat b^1_1\hat d^1_2\hat  d^2_1}{\hat d^1_1} \left(\frac{\hat\lambda_3}{\hat\lambda_2}\right)^n+ o\left(\left(\frac{\hat\lambda_3}{\hat\lambda_2}\right)^n\right) + o\left(\left(\frac{\hat\lambda_2}{\hat\lambda_1}\right)^n\right)
 \end{align}
where $\hat \lambda_i=\lambda_i(\hat A)$ and  $(\hat b^i_j)$ and $(\hat d^i_j)$ are the matrix representatives of $\hat B$  
and $(\hat B^{-1})^T$ with respect to the bases $\{ e_i(\hat A)\}$ and $\{e_i((A^{-1})^T)\}$ respectively.
Since $\lim\frac{\lambda_{i+1}^n}{\lambda_{i}^n}= 0$ and $\lim\frac{\hat\lambda_{i+1}^n}{\hat\lambda_{i}^n}=0$ 
for $i=1,2$, we see that $b^1_1 d_1^1 = \hat b^1_1 \hat d_1^1 .$

Lemma \ref{nonzero coefficients} implies that all the coefficients in Equation (\ref{hilbert-taylor-equal}) are non-zero.
We further show that they are all positive.

\begin{lemma}
\label{positive coefficients}
Suppose that $\alpha$ and $\beta$ are represented by  simple based loops on $S$ which intersect 
only at the basepoint  and have geometric intersection  one. If $\rho\in\mc H_3(S)$ and $B=(b_i^j)$ is a
matrix representing $\rho(\beta)$ in the basis $\{e_i(\rho(\alpha)\}$, then 
$b^1_1$ and $b^1_2b^2_1$ are positive.
\end{lemma}

\begin{proof}
We may normalize $\rho$ so that $\{e_i(\rho(\alpha)\}$ is the standard basis for $\mathbb R^3$.
The coefficients  $b^1_1$, $b^1_2$ and $b^2_1$ give non-zero
functions on $\mathcal H_3(S)$, so have well-defined signs. 
If $\sigma_0=\tau_3\circ\rho_0$ lies in the Fuchsian locus,
then we may assume that
\begin{eqnarray*}
\sigma_0(\alpha)&=& \tau_3\left( \begin{bmatrix}
    \lambda      &0 \\
    0 & \lambda^{-1} \\
\end{bmatrix} \right)  = \begin{bmatrix}
    \lambda^2      &0&0 \\
    0& 1 & 0 \\
    0& 0 & \lambda^{-2} \\
\end{bmatrix} \cr
 \qquad  \sigma_0(\beta) &=& \tau_3\left( \begin{bmatrix}
    a      &b  \\
    c & d \\
\end{bmatrix} \right) =  \begin{bmatrix}
   a^2    &ab&b^2 \\
    2ac& ad+bc &2bd  \\
    c^2& cd & d^2 \\
\end{bmatrix} 
\end{eqnarray*}
Since $\alpha$ and $\beta$ intersect essentially, the fixed points $z_1$ and $z_2$ of  $z\to \frac{az+b}{cz+d}$ lie on opposite sides of $0$ in $\widehat{\mathbb R}=\partial_\infty \hh$.
Since $z_1$ and $z_2$ are the roots of $cz^2 +(d-a)z+b = 0$, we see that $\frac{b}{c} = -z_1z_2 > 0$, so $bc > 0$. 
Therefore, $b^1_1(\sigma_0) = a^2 > 0$ and $b^2_1b^1_2(\sigma_0) = 2a^2bc > 0$. 
It follows that $b^1_1$ and $b^2_1b^1_2$ are positive on all of $\mathcal H_3(S)$.
\end{proof}

Notice that
$\frac{\lambda_3}{\lambda_2}=\frac{\lambda_2}{\lambda_1}(\rho(\alpha^{-1}))$ and
$\frac{\hat\lambda_3}{\hat\lambda_2}=\frac{\lambda_2}{\lambda_1}(\sigma(\alpha^{-1}))$.
Then, by considering the second order terms in Equation \ref{hilbert-taylor-equal}, we see that
there exists $\epsilon_1,\epsilon_2\in\{\pm 1\}$ such that 
$$
\frac{\lambda_2}{\lambda_1}(\rho(\alpha^{\epsilon_1}))=\frac{\lambda_2}{\lambda_1}(\sigma(\alpha^{\epsilon_2})).
$$
Since we have assumed that
$$
\frac{\lambda_3}{\lambda_1}(\rho(\alpha^{\epsilon_1}))=L_{\alpha^{\epsilon_1}}^H(\rho)=
L_{\alpha^{\epsilon_1}}^H(\sigma)=L_{\alpha^{\epsilon_2}}^H(\sigma)=
\frac{\lambda_3}{\lambda_1}(\sigma(\alpha^{\epsilon_2}))
$$
and 
$$(\lambda_1\lambda_2\lambda_3)(\rho(\alpha^{\epsilon_1}))=(\lambda_1\lambda_2\lambda_3)(\sigma(\alpha^{\epsilon_2}))=1,$$
we see that 
$\left(\lambda_1(\rho(\alpha^{\epsilon_1})\right)^3=\left(\lambda_1(\sigma(\alpha^{\epsilon_2})\right)^3$,
so $\lambda_1(\rho(\alpha^{\epsilon_1}))=\lambda_1(\sigma(\alpha^{\epsilon_2}))$, 
hence $\lambda_i(\rho(\alpha^{\epsilon_1}))=\lambda_i(\sigma(\alpha^{\epsilon_2}))$ for all $i$. 
If $\epsilon_1=\epsilon_2$, then we are in case (1), while if $\epsilon_1=-\epsilon_2$ we are in case (2).

\end{proof}

We next show that if $\tr(\rho(\alpha))=\tr(\sigma(\alpha))$ and $\tr(\rho(\alpha))\ne\tr(\sigma^*(\alpha))$,
then we may control the traces of images of simple based loops having geometric intersection one with
$\alpha$.

\begin{lemma}
\label{tracesagree}
Suppose that $S$ is a closed orientable surface of genus greater than 1,
\hbox{$\rho, \sigma \in  {\mathcal H}_3(S)$}  and $L^H_{\gamma}(\rho) = L^H_{\gamma}(\sigma)$ 
for any $\gamma \in \pi_1(S)$  which is represented by a simple, non-separating curve.
If $\alpha\in\pi_1(S)$ is represented by a simple, non-separating based loop,
$$\tr(\rho(\alpha))=\tr(\sigma(\alpha))\quad {\rm and}\quad \tr(\rho(\alpha))\ne\tr(\sigma^*(\alpha))$$
and $\beta\in\pi_1(S)$ is represented by a simple non-separating based loop intersecting $\alpha$ only at the basepoint
and having geometric intersection one with $\alpha$, then
$\tr(\rho(\beta)) = \tr(\sigma(\beta))$.
 \end{lemma}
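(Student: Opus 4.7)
The plan is to argue by contradiction, exploiting both the pair $(\rho,\sigma)$ and its contragredient companion $(\rho,\sigma^*)$, which by construction shares the same Hilbert length hypotheses since $L^H_\gamma(\sigma)=L^H_\gamma(\sigma^*)$ for every $\gamma\in\pi_1(S)$.

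First, I would apply Lemma \ref{eigenvalues equal or} with the roles of $\alpha$ and $\beta$ interchanged, using the family $\{\beta^n\alpha\}_{n\in\mathbb{N}}$ of simple non-separating based loops. This yields two alternatives: either $\lambda_i(\rho(\beta))=\lambda_i(\sigma(\beta))$ for all $i$, in which case $\tr(\rho(\beta))=\tr(\sigma(\beta))$ as required, or else $\lambda_i(\rho(\beta))=\lambda_i(\sigma(\beta^{-1}))$ for all $i$, so that $\tr(\rho(\beta))=\tr(\sigma^*(\beta))$. I will suppose for contradiction that the second alternative holds.

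Next, I would re-run Lemma \ref{eigenvalues equal or} with $(\rho,\sigma^*)$ in place of $(\rho,\sigma)$, legitimate because $L^H_\gamma(\sigma)=L^H_\gamma(\sigma^*)$ so all the standing hypotheses carry over. The standing hypothesis $\tr(\rho(\alpha))\ne\tr(\sigma^*(\alpha))$ forbids Case 1 for $\alpha$ here, forcing Case 2 and therefore $\lambda_i(\rho(\alpha))=\lambda_i(\sigma(\alpha))$ (recovering our given Case 1 for $\alpha$ with respect to $\sigma$), whereas the contradiction hypothesis on $\beta$ flips to Case 1 for $\beta$ with respect to $\sigma^*$, giving $\lambda_i(\rho(\beta))=\lambda_i(\sigma^*(\beta))$. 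Thus the pair $(\rho,\sigma)$ is in Case 1 for $\alpha$ and Case 2 for $\beta$, while the mirror pair $(\rho,\sigma^*)$ is in Case 2 for $\alpha$ and Case 1 for $\beta$. I would then run the refined asymptotic expansion of Equation (\ref{hilbert-taylor-equal}) twice: once for the family $\alpha^n\beta$ with the pair $(\rho,\sigma)$, using Case 1 for $\alpha$ to match the constant term and the coefficients of $(\lambda_2/\lambda_1)^n$ and $(\lambda_3/\lambda_2)^n$ term by term; and once for the family $\beta^n\alpha$ with the pair $(\rho,\sigma^*)$, using Case 1 for $\beta$ there to match the analogous series.

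Each such matching produces three scalar identities among the projective invariants $b^1_1=\tr(\p_1(\rho(\alpha))\rho(\beta))$, $d^1_1=\tr(\p_3(\rho(\alpha))\rho(\beta^{-1}))$, and their higher-order quadratic combinations $b^1_2 b^2_1$ and $d^1_2 d^2_1$, along with the analogous quantities in the basis of eigenvectors of $\rho(\beta)$; all of these are nonzero by Corollary \ref{productnonzero} and Proposition \ref{tracenonzero}. The hard part will be the final combinatorial step: combining the two mirror systems of identities with the behaviour of these invariants under the contragredient involution---namely that the pair $(b^1_1, d^1_1)$ is interchanged, as is the pair $(b^1_2 b^2_1,\, d^1_2 d^2_1)$, while the eigenvalue ratios $\lambda_2/\lambda_1$ and $\lambda_3/\lambda_2$ of $\rho(\alpha)$ also swap---to extract a single scalar identity on the eigenvalues of $\rho(\alpha)$ that forces $\lambda_2(\rho(\alpha))=1$. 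This in turn entails $\tr(\rho(\alpha))=\tr(\rho(\alpha)^{-1})=\tr(\rho^*(\alpha))=\tr(\sigma^*(\alpha))$, contradicting the standing hypothesis and completing the proof.
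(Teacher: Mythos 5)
Your reduction to the dichotomy of Lemma \ref{eigenvalues equal or} (for $\beta$, and again for the pair $(\rho,\sigma^*)$, which is legitimate since $L^H_\gamma(\sigma)=L^H_\gamma(\sigma^*)$) is fine, and your Case 1 branch does give $\tr(\rho(\beta))=\tr(\sigma(\beta))$. The problem is the contradiction branch: the step you label ``the hard part'' is not a technical verification you have postponed, it is the entire content of the proof, and it cannot be carried out from the ingredients you list. All the identities you propose to extract from Equation (\ref{hilbert-taylor-equal}) --- for $\alpha^n\beta$ with $(\rho,\sigma)$ and for $\beta^n\alpha$ with $(\rho,\sigma^*)$ --- compare off-diagonal coefficient data ($b^1_1d^1_1$, $b^1_2b^2_1$, $d^1_2d^2_1$ and their hatted analogues) attached to exponential rates that are \emph{already equal on both sides} by your Cases 1; no eigenvalue information about $\rho(\alpha)$ survives the matching, so no identity forcing $\lambda_2(\rho(\alpha))=1$ can emerge. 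Concretely, take $\sigma=\rho$ where $\rho$ is obtained from a Fuchsian point by a bulge deformation along $\beta$: then $\lambda_2(\rho(\beta))=1$, so your hypotheses (a) Case 1 for $\alpha$, (b) Case 2 for $\beta$, i.e.\ $\lambda_i(\rho(\beta))=\lambda_i(\sigma^*(\beta))$, (c) equality of all simple Hilbert lengths, and (d) $\tr(\rho(\alpha))\ne\tr(\sigma^*(\alpha))$ all hold, every expansion identity holds trivially, and yet $\lambda_2(\rho(\alpha))\ne 1$. So no argument using only those data can reach your desired conclusion; the route is unsound, not merely incomplete. (A minor further issue: when $\lambda_2/\lambda_1=\lambda_3/\lambda_2$ the two second-order terms coalesce and you do not even get three separate identities.)

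The paper's proof works at the level of traces rather than Hilbert lengths, which is exactly what your scheme is missing. For each $n$ the curve $\alpha^n\beta$ is simple and non-separating, so Lemma \ref{eigenvalues equal or} gives $\tr(\rho(\alpha^n\beta))=\tr(\sigma(\alpha^n\beta))$ or $\tr(\rho(\alpha^n\beta))=\tr(\sigma^*(\alpha^n\beta))$. If the first alternative holds for infinitely many $n$, one expands $\tr(\rho(\alpha^n\beta))=\sum_i\lambda_i^n\,\tr\big(\p_i(\rho(\alpha))\rho(\beta)\big)$ (here the \emph{diagonal} coefficients appear, and they sum to $\tr(\rho(\beta))$ --- these never enter your argument), uses $\lambda_i(\rho(\alpha))=\lambda_i(\sigma(\alpha))$ and the nonvanishing of the coefficients to match term by term, and concludes $\tr(\rho(\beta))=\tr(\sigma(\beta))$. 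If instead the second alternative holds for all large $n$, the two sides have leading growth rates $\lambda_1^n$ and $\lambda_3^{-n}$ with nonzero leading coefficients, forcing $\lambda_1=\lambda_3^{-1}$, hence $\lambda_2=1$ and $\tr(\rho(\alpha))=\tr(\sigma^*(\alpha))$, contradicting the hypothesis. It is this mismatch of growth rates in a trace identity that produces $\lambda_2(\rho(\alpha))=1$; your Hilbert-length bookkeeping can never produce it, because in your setup the rates on the two sides agree by construction.
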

 
\begin{proof}
We adopt the notation of Lemma \ref{eigenvalues equal or}, and notice
that  Lemma \ref{eigenvalues equal or} implies that
that $\lambda_i=\lambda_i(\rho(\alpha))=\lambda_i(\sigma(\alpha))=\hat\lambda_i$ for all $i$.

If there is an infinite sequence $\{n_k\}$ of positive numbers such that $\tr(\rho(\alpha^{n_k}\beta))=\tr(\sigma(\alpha^{n_k}\beta))$,
then,
$$\lambda_1^{n_k} b^1_1 + \lambda^{n_k}_2 b^2_2 + \lambda^{n_k}_3 b^3_3 = \lambda_1^{n_k} \hat b^1_1 + \lambda^{n_k}_2 \hat b^2_2 + \hat\lambda^{n_k}_3 \hat b^3_3$$
for all $n_k$. So, by considering the leading terms, we see that $b_1^1=\hat b_1^1$. 
Considering the remaining terms, we conclude that $b_2^2=\hat b_2^2$ and $b_3^3=\hat b_3^3$, so
$\tr (\rho(\beta))=\tr(\sigma(\beta))$. 

If not,  then, by Lemma \ref{eigenvalues equal or},
$\tr(\rho(\alpha^n\beta))=\tr(\sigma^*(\alpha^n\beta))$ for all sufficiently large $n$, so
$$\lambda_1^{n} b^1_1 + \lambda^{n}_2 b^2_2 + \lambda^{n}_3 b^3_3 = \lambda_3^{-n} \hat d^1_1 + \lambda^{-n}_2 \hat d^2_2 + \lambda^{-n}_1 \hat d^3_3$$ 
for all sufficiently large $n$. Since $b^1_1\ne 0$ and $\hat d_1^1\ne 0$, we conclude, by considering leading terms, that
$\lambda_1 = \lambda_3^{-1}$,  so $\lambda_2 = 1$. However, this implies that 
$\lambda_i(\rho(\alpha)) = \lambda_i(\sigma^*(\alpha^{-1}))$ for all $i$, so $\tr(\rho(\alpha))=\tr(\sigma^*(\alpha))$,
which contradicts our assumptions.
\end{proof}

If $\tr(\rho(\alpha))=\tr(\sigma(\alpha))$ for any $\alpha$ represented by a simple non-separating curve,
then Theorem \ref{tracerigidity} implies that $\rho=\sigma$. Similarly, if $\tr(\rho(\alpha))=\tr(\sigma^*(\alpha))$ 
for any $\alpha$ represented by a simple non-separating curve,
then Theorem \ref{tracerigidity} implies that $\rho=\sigma^*$. Therefore,
we may assume that there exists a simple non-separating based loop $\alpha$ so that 
$\tr(\rho(\alpha)) = \tr(\sigma(\alpha))$ and  $\tr(\rho(\alpha)) \ne \tr(\sigma^*(\alpha))$.

Let $\beta$ be a simple, non-separating based loop intersecting $\alpha$ only at the basepoint which
has geometric intersection one with $\beta$. Since $\tr(\rho(\alpha)) \ne \tr(\sigma^*(\alpha))$
and $\tr(\rho(\beta))$ and $\tr(\sigma(\beta))$ are non-zero,
there exists $n$ so that  $\tr(\rho(\alpha^n\beta)) \ne \tr(\sigma^*(\alpha^n\beta))$. Moreover,
Lemma \ref{tracesagree} implies that $\tr(\rho(\alpha^n\beta)) = \tr(\sigma(\alpha^n\beta))$.
Extend $\alpha,\alpha^n\beta$ to a standard set of generators $\mathcal S=\{\alpha_1,\beta_1,\ldots,\alpha_g,\beta_g\}$
so that $\alpha=\alpha_1$ and $\alpha^n\beta=\beta_1$.

The remainder of the proof now mimics the proof of Theorem \ref{tracerigidity}. 
Notice that for the standard generators, if $j>i>1$, then $\alpha_i\alpha_j$ and $\alpha_i\beta_j^{-1}$ can,
and for the remainder of the proof will be, represented
by simple non-separating based loops which intersect $\alpha_1$ and $\alpha_i$ only at the basepoint,
with geometric intersection zero.
There exists a based loop $\gamma$ which intersects each curve in the collection
$\{\alpha_1,\alpha_2,\alpha_2\alpha_3,\ldots,\alpha_2\alpha_g,\alpha_2\beta^{-1}_3,\ldots,\alpha_2\beta_g^{-1}\}$ 
only at the basepoint and with geometric intersection one, see Figure \ref{generators}.
Moreover, if $\eta$ is either $\alpha_2\alpha_i$ or $\alpha_2\beta_i^{-1}$, with $i\ge 3$, then every curve of
the form $\eta^p\alpha_2^q\gamma\alpha_1^r$ is freely homotopic to a simple based loop, in the based
homotopy class of $\alpha_1^r\eta^p\alpha_2^q\gamma$,
which has geometric intersection one with $\alpha_1$ and intersects $\alpha_1$ only at the basepoint.
It then follows
from Lemma \ref{tracesagree}  that 
$$\tr(\rho(\eta^p\alpha_2^q\gamma\alpha_1^r))=\tr(\sigma(\eta^p\alpha_2^q\gamma\alpha_1^r))$$
for all $p,q,r\in\mathbb Z$. 
Proposition \ref{pqr result} then implies that  $\rho$ and $\sigma$
are conjugate on $\braket{\eta,\alpha_2,\alpha_1}$.
In particular, we
may assume that $\rho$ and $\sigma$ agree on $\braket{\alpha_1,\alpha_2,\alpha_3}=\braket{\alpha_2\alpha_3,\alpha_2,\alpha_1}$.
\begin{figure}[htbp] %  figure placement: here, top, bottom, or page
   \centering
   \includegraphics[width=3in]{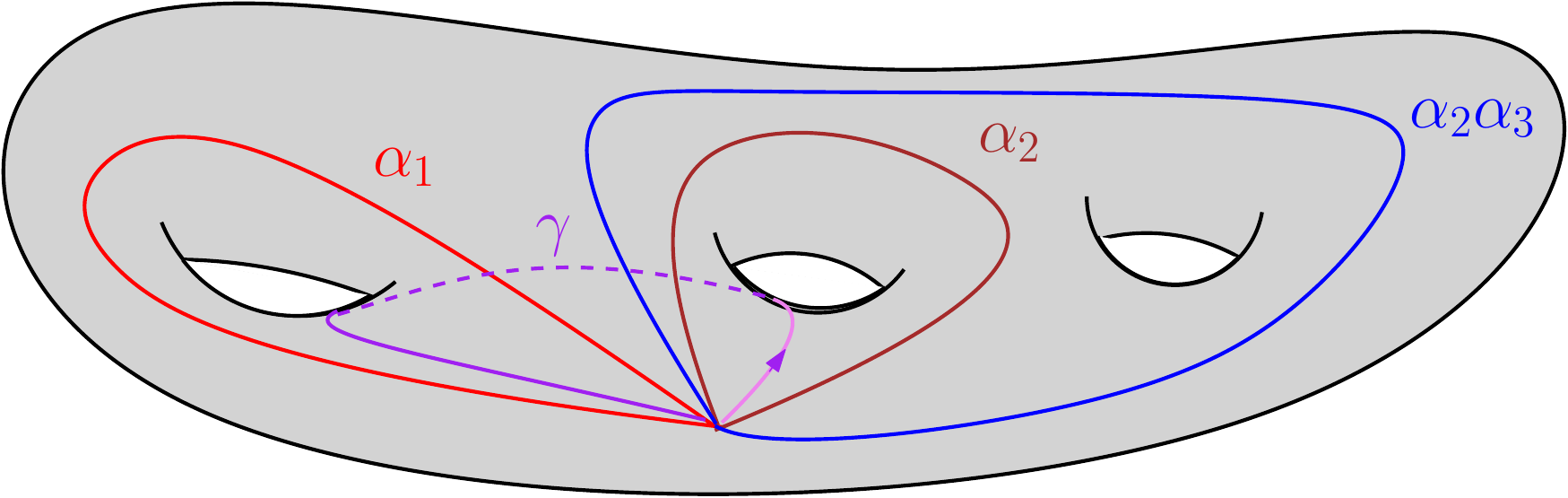} 
   \caption{The curves $\alpha_1$, $\alpha_2$, $\alpha_2\alpha_3$ and $\gamma$ on a surface of genus 3}
   \label{generators}
\end{figure}
If $\eta=\alpha_2\alpha_i$, with $i\ge 4$, then, since $\rho$ and $\sigma$ agree on
$\braket{\alpha_1,\alpha_2,\alpha_3}$ and are conjugate on $\braket{\eta,\alpha_2,\alpha_1}$, Lemma
\ref{conjugacytrivial} implies that they agree on $\eta$ and hence on $\alpha_2^{-1}\eta=\alpha_i$.
Similarly, if $\eta=\alpha_2\beta_i^{-1}$, with $i\ge 3$, we can use Lemma \ref{conjugacytrivial}
to show that $\rho$ and $\eta$ agree on $\eta$ and hence on $\beta_i$.

It remains to check that $\rho$ and $\sigma$ agree on $\beta_1$ and $\beta_2$. Recall that there exists a
homeomorphism $h:S\to S$ so that $h\circ \alpha_i=\beta_i$ and $h\circ \beta_i=\alpha_i$. Then
$\hat\rho=\rho\circ h_*$ and $\hat\sigma=\sigma\circ h_*$ are Hitchin representations. The above
argument shows that $\hat\rho$ and $\hat\sigma$ are conjugate on $\braket{\alpha_1,\alpha_2,\alpha_3,\beta_3}$,
which implies that $\rho$ and $\sigma$ are conjugate on $\braket{\beta_1,\beta_2,\beta_3,\alpha_3}$. Since
$\rho$ and $\sigma$ agree on $\alpha_3$ and on $\beta_3\alpha_3\beta_3^{-1}$ (which have
non-intersecting axes), Lemma \ref{conjugacytrivial} implies that $\rho$
and $\sigma$ agree on $\beta_1$ and $\beta_2$, which completes the proof.
\end{proof}

%% !TEX root =HitchinSimple.tex
\section{Infinitesmal Simple Length Rigidity}
\label{infinitesmal}

In this section, we prove that the differentials of simple length functions generate the cotangent space of a Hitchin component.
In earlier work \cite[Prop. 10.3]{BCLS} we showed that the differentials of all length functions
generate the cotangent space, and that result played a key role in the proof that the pressure metric
on the Hitchin component is non-degenerate.

\begin{proposition}
Suppose that $S$ is a closed orientable surface of genus greater than 2 and $\rho \in \Hn$.
If $v \in \ms T_\rho({\mathcal H}_d(S))$ and ${\rm D}L_\alpha(v) = 0$ for every simple non-separating curve $\alpha$,
then $v = 0$.

Moreover, if ${\rm D}\tr_\alpha(v) = 0$ for every simple non-separating curve $\alpha$,
then $v = 0$.
\label{cotangent}
\end{proposition}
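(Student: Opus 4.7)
The plan is to follow the proof of Theorem \ref{lengthrigidity} one derivative at a time, differentiating every asymptotic expansion along a smooth path $\{\rho_t\}\subset\mathcal H_d(S)$ with $\rho_0=\rho$ and $\dot\rho_0=v$. The first step is to promote infinitesimal length (or trace) rigidity to infinitesimal eigenvalue rigidity: I claim that $D\log\lambda_i(v)(\alpha)=0$ for every $i$ and every simple non-separating $\alpha\in\pi_1(S)$. Picking $\beta$ with $i(\alpha,\beta)=1$ so that every $\alpha^n\beta$ is simple non-separating, and applying Lemma \ref{specradiusexpansion} to $A_t=\rho_t(\alpha)$, $B_t=\rho_t(\beta)$, the logarithm of the expansion reads
\begin{equation*}
L_{\alpha^n\beta}(\rho_t)=n L_\alpha(\rho_t)+\log b^1_1(t)+\frac{b^1_2(t)b^2_1(t)}{b^1_1(t)^2}\,z(t)^n+o(z(t)^n),
\end{equation*}
with $z(t)=\lambda_2(\rho_t(\alpha))/\lambda_1(\rho_t(\alpha))$. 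Differentiating at $t=0$ and using $DL_{\alpha^n\beta}(v)=DL_\alpha(v)=0$, a comparison of $n$-dependence first kills the constant term, and then the dominant remaining term, of order $n\,z(0)^{n-1}$; since its coefficient $b^1_2(0)b^2_1(0)/b^1_1(0)^2$ is nonzero by Lemma \ref{nonzero coefficients}, this forces $z'(0)=0$, hence $D\log\lambda_2(v)(\alpha)=0$. An induction on $k$ using the exterior power representations $E^k(\rho_t)$, exactly as in the proof of Lemma \ref{length to trace}, propagates this to all $i$. The analogous expansion of $\tr(\rho_t(\alpha^n\beta))=\sum_i\lambda_i(\rho_t(\alpha))^n\tr(\p_i(\rho_t(\alpha))\rho_t(\beta))$, whose coefficients are nonzero by Lemma \ref{tracenonzero-i0}, shows that the trace hypothesis also implies $D\log\lambda_i(v)(\alpha)=0$, so the two versions of the proposition reduce to the same eigenvalue data.

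The second step is an infinitesimal analogue of Proposition \ref{pqr result}. Fix a four-curve configuration $(\alpha,\beta,\gamma,\delta)$ as in Figure \ref{4curves}, so that every $\alpha^p\beta^q\gamma\delta^r$ is simple non-separating. Expanding
\begin{equation*}
\tr(\rho_t(\alpha^p\beta^q\gamma\delta^r))=\sum_{i,j,k}\lambda_i(\rho_t(\alpha))^p\lambda_j(\rho_t(\beta))^q\lambda_k(\rho_t(\delta))^r\,{\bf T}_{i,j,0,k}(\alpha,\beta,\gamma,\delta)(\rho_t),
\end{equation*}
differentiating at $t=0$, and letting $p,q,r\to+\infty$ in turn while using $D\log\lambda_i(v)(\eta)=0$ for $\eta\in\{\alpha,\beta,\delta\}$, successive comparison of orders peels off the eigenvalue monomials and yields $D{\bf T}_{i,j,0,k}(\alpha,\beta,\gamma,\delta)(v)=0$ for all $(i,j,k)$; the parallel expansion of $\tr(\rho_t(\beta^q\gamma\delta^r))$ gives $D{\bf T}_{j,0,k}(\beta,\gamma,\delta)(v)=0$. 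Since part (4) of Proposition \ref{tracenonzero} identifies
\begin{equation*}
\frac{{\bf T}_{i,j,k}(\alpha,\beta,\delta)}{{\bf T}_{j,k}(\beta,\delta)}=\frac{{\bf T}_{i,j,0,k}(\alpha,\beta,\gamma,\delta)}{{\bf T}_{j,0,k}(\beta,\gamma,\delta)},
\end{equation*}
with all four correlation functions nowhere zero on $\mathcal H_d(S)$ by Proposition \ref{tracenonzero}, the quotient rule gives vanishing of the derivative at $v$ of each correlation ratio on the left.

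The third step is an infinitesimal version of Theorem \ref{conjugateontriples-general}: under the vanishing obtained so far, the restriction of $v$ to $H=\langle\alpha,\beta,\delta\rangle$ is a coboundary, i.e.\ there is an $X\in\mathfrak{sl}_d(\mathbb R)$ with $\dot\rho_0(\eta)=X\rho(\eta)-\rho(\eta)X$ for every $\eta\in H$. The proof of Theorem \ref{conjugateontriples-general} is algorithmic: after lifting to $\ms{SL}_d(\mathbb R)$ as in Section \ref{sec:Hitchin}, one builds a conjugator by successively normalising the eigenbases $\{a_i\},\{b_j\},\{d_k\}$ of $\rho(\alpha),\rho(\beta),\rho(\delta)$ using the correlation data, all denominators appearing in these normalisations being nonzero by Corollary \ref{productnonzero}. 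I will run this construction smoothly in $t$ starting from $\rho_0=\rho$, producing a smooth family $C_t\in\ms{SL}_d(\mathbb R)$ with $C_0=I$ such that $C_t\rho_t C_t^{-1}$ matches $\rho$ on $\alpha$, then on $\delta$, and finally on $\beta$ \emph{to first order in $t$}, since the equalities derived in the proof of Theorem \ref{conjugateontriples-general} become first-order equalities under our vanishing hypotheses on the eigenvalue and correlation ratio data. Differentiating at $t=0$ then gives $\dot\rho_0(\eta)=[\dot C_0,\rho(\eta)]$ on $H$, which is exactly a coboundary with $X=-\dot C_0$.

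The final step glues these local trivialisations along a standard symplectic generating set $\mathcal S=\{\alpha_1,\beta_1,\ldots,\alpha_g,\beta_g\}$, following the proof of Theorem \ref{fulltracerigidity}. After subtracting the coboundary produced by Step 3 on $\langle\alpha_1,\alpha_2,\alpha_3\rangle$, I may assume $\dot\rho_0$ vanishes on $\alpha_1,\alpha_2,\alpha_3$. For each remaining generator $\eta$, Step 3 applied to a triple $(\alpha_i,\alpha_j,\eta)$ with $\{i,j\}\subset\{1,2,3\}$ chosen as in the proof of Theorem \ref{fulltracerigidity} produces an $X_\eta\in\mathfrak{sl}_d(\mathbb R)$ implementing the trivialisation on $\langle\alpha_i,\alpha_j,\eta\rangle$; since $\dot\rho_0$ already vanishes on $\alpha_i,\alpha_j$, $X_\eta$ commutes with $\rho(\alpha_i)$ and $\rho(\alpha_j)$. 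The infinitesimal form of Lemma \ref{conjugacytrivial}, which still goes through because Corollary \ref{productnonzero} forces the common centraliser in $\mathfrak{sl}_d(\mathbb R)$ of two Hitchin images of elements with non-intersecting axes to reduce to the scalars (hence to $0$ inside $\mathfrak{sl}_d$), gives $X_\eta=0$, so $\dot\rho_0(\eta)=0$. Running this over every generator yields $v=0$ in $T_\rho\mathcal H_d(S)$. The main technical point I expect to require care is Step 3, where the finite algebraic construction in the proof of Theorem \ref{conjugateontriples-general} must be turned into a smoothly varying one and the error in each normalisation must be seen to be genuinely second-order; the remaining steps are direct infinitesimal reformulations of arguments already present in the paper.
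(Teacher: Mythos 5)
Your proposal is correct and follows essentially the same route as the paper's own proof: your Step 1 is Lemma \ref{trace derivative}, your Steps 2--3 together constitute Lemma \ref{trivial derivative on triples} (the paper records the coboundary as a path $C_t$ with $C_0=I$ and $\frac{{\rm d}}{{\rm d}t}\big|_{t=0}(C_t\rho_t(\eta)C_t^{-1})=0$, obtained by normalising the eigenbases $a_i(t)$, $b_j(t)$, $d_k(t)$ along the path exactly as you describe), and your Step 4 is the paper's concluding gluing argument, where the vanishing of $\dt{C}_0$ follows from Corollary \ref{productnonzero} just as you argue. The one point to make explicit in Step 1 is that the differentiation in $t$ is legitimised by the two-variable analytic eigenvalue function of Lemma \ref{spec analytic} (via Lemma \ref{eigenvalue-analytic}), as in the paper, rather than by term-by-term differentiation of the fixed-$t$ asymptotic expansion of Lemma \ref{specradiusexpansion}.
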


\begin{proof}
We recall that there exists a component $\xHn$ of ${\rm Hom}(\pi_1(S),\sln)$ which is
an analytic manifold, so
that the projection map $\pi:\xHn\to \Hn$ is real analytic and is obtained by
quotienting out by the action of $\sln$ by conjugation, see Hitchin \cite{hitchin}.
Any smooth path in $\Hn$ 
lifts to a smooth  path in $\xHn$. The real-valued functions 
$\widetilde\tr_\alpha$ and $\widetilde\lambda_{i,\alpha}$ on $\xHn$ given by
$\widetilde{\tr_\alpha}(\tilde\rho)=\tr(\tilde\rho(\alpha))$ and
$\widetilde{\lambda_{i,\alpha}}(\tilde\rho)=\lambda_i(\tilde\rho(\alpha))$ are analytic and $\sln$-invariant,
so descend to real analytic functions $\tr_\alpha$ and $\lambda_{i,\alpha}$ on $\Hn$.
(Notice that if we chose a different component of ${\rm Hom}(\pi_1(S),\sln)$ as $\xHn$, then
$\tr_\alpha$ and $\lambda_{i,\alpha}$ could differ up to sign.)

The proof of Proposition \ref{cotangent} has the same basic structure as the proof of our simple length rigidity result.
We first establish an infinitesimal version of Theorem \ref{equivalence}. 

\begin{lemma}
\label{trace derivative}
If $S$ is a closed orientable surface of genus more than 1, $\rho \in {\mathcal H}_d(S)$ and
$v \in \ms T{\mathcal H}_d(S)$ then ${\rm D}L_\alpha(v) = 0$ for every simple non-separating curve $\alpha$
if and only if  ${\rm D}\tr_\alpha(v) = 0$ for every simple non-separating curve $\alpha$. In both cases 
${\rm D}\lambda_{i,\alpha}(v) = 0$ for all $i$.
\end{lemma}

\begin{proof}
Let $\{\rho_t\}_{t\in (-1,1)}$ be an analytic path in $\xHn$ such that if
$\dt{\rho}_0=\frac{{\rm d}}{{\rm d}t}\big|_{t=0}\rho_t$ then $d\pi(\dt{\rho}_0)=v$. 

First assume that ${\rm D}L_\alpha(v) = 0$ for every simple non-separating curve $\alpha$.
Choose a simple based loop $\beta$ 
which intersects $\alpha$ only at the basepoint and has geometric intersection one with $\alpha$.
Let $A(t)=\rho_t(\alpha)$, $B(t)=\rho_t(\beta) $ and $\lambda_i(t)=\lambda_{i,\alpha}(\rho_t)$.
Let $\lambda(n,t)=|\lambda_1(A(t)^nB(t))|$ and notice that our assumptions imply that
$$\dt{\lambda}(n,0)=\frac{{\rm d}}{{\rm d}t}\Big|_{t=0}\lambda(n,t)=0$$ 
for all $n$.
Let $(b^i_j(t))$ be the matrix representative of $B(t)$ in the basis $\{e_i(A(t))\}$ and notice that we may choose
$\{e_i(A(t))\}$ to vary analytically, so that the coefficients $(b^i_j(t))$ vary analytically.

If $v\in\mathbb R^{d-1}$,  let $D(v)\in\sln$ be chosen so that its matrix is diagonal with respect to the basis $\{e_i(A(t))\}$
with diagonal entries $(1,v_1,\ldots,v_{d-1})$, 
then $M(v,t)=D(v)B(t)$ depends analytically on $v$ and $t$.
Notice that $M(\vec 0,0)$ has a  simple eigenvalue $b^1_1(0)$ with eigenvector $e_1$. 
By Lemma \ref{eigenvalue-analytic}  
there exists an open neighborhood $V$ of the origin in $\mathbb R^{d-1}\times \mathbb R$ and an
an analytic function $F:V\to \mathbb R$ so that
$$\lambda_1(M(v,t)) = F(v,t).$$
Since
$$\frac{A(t)^nB(t)}{\lambda_1(t)^n}  = M\left(\left(\frac{\lambda_2(t)}{\lambda_1(t)}\right)^n,\ldots,\left(\frac{\lambda_d(t)}{\lambda_1(t)}\right)^n,t\right)$$
and
$$\left(\left(\frac{\lambda_2(t)}{\lambda_1(t)}\right)^n,\ldots,\left(\frac{\lambda_d(t)}{\lambda_1(t)}\right)^n,t\right) \in V,$$
for all sufficiently large $n$ and $t$ sufficiently close to 0,
$$\frac{\lambda(n,t)}{\lambda_1(t)^n} = \frac{\lambda_1(A^n(t)B(t))}{\lambda_1(t)^n} = \lambda_1\left(\frac{A^n(t)B(t)}{\lambda_1(t)^n}\right) =   F\left(\left(\frac{\lambda_2(t)}{\lambda_1(t)}\right)^n,\ldots,\left(\frac{\lambda_d(t)}{\lambda_1(t)}\right)^n, t\right).$$
Letting $u_i(t) = \frac{\lambda_{i+1}(t)}{\lambda_1(t)}$, we see that
$$
\lambda(n,t) =   \lambda_1(t)^nF\left(u_1(t)^n,\ldots,u_{d-1}(t)^n, t\right) .
$$
Since $\dt\lambda_1(0) = 0$ and $\dt\lambda(n,0)=0$,
$$\frac{{\rm d}}{{\rm d}t}\Big|_{t=0} F\left(u_1(t)^n,\ldots,u_{d-1}(t)^n, t\right)  = 0$$
for all large enough $n$.
Therefore,
\begin{equation}
\label{partialderivs}
\frac{\partial F}{\partial t}(u_1(0)^n,\ldots,u_{d-1}(0)^n,0)+\sum_{i=1}^{d-1}\frac{\partial F}{\partial v_i}(u_1(0)^n,\ldots,u_{d-1}(0)^n,0) n u_i^{n-1}(0)\dt u_i(0) =0,
\end{equation}
for all large enough $n$, so
$$ \frac{\partial F}{\partial t}(0,\ldots,0,0) = 0.$$
Moreover, since $\frac{\partial F}{\partial t}$ is analytic,
$$\frac{\partial F}{\partial t}(u_1(0)^n,\ldots,u_{d-1}(0))^n,0)=\sum_{i=1}^{d-1}\left(\frac{\partial^2 F}{\partial v_i\partial t}(\vec 0,0)u_i(0)^n+o(u_i(0)^n)\right)$$
so, since $1>|u_1(0)|>|u_i(0)|>0$ for all $i\ge 2$,
$$ \lim_{n\rightarrow \infty}\frac{1}{nu_1(0)^{n-1}}\frac{\partial F}{\partial t}(u_1(0)^n,\ldots,u_{d-1}(0))^n,0)=
\lim_{n\rightarrow \infty}\sum_{i=1}^{d-1}\frac{u_i(0)^n}{nu_1(0)^{n-1}}\left(\frac{\partial^2 F}{\partial v_i\partial t}(0,\ldots,0,0)\right)=0$$
Equation (\ref{partialderivs}) then implies that
$$\lim_{n\rightarrow \infty}\frac{1}{nu_1(0)^{n-1}}\left(\sum_{i=1}^{d-1}\frac{\partial F}{\partial v_i}(u_1(0)^n,\ldots,u_{d-1}(0))^n,0) n u_i^{n-1}(0)\dt u_i(0)\right)=
 \frac{\partial F}{\partial v_1}(0,\ldots,0,0)\dt u_1(0) =0$$
As in the proof of Lemma \ref{specradiusexpansion},  we calculate that
$$\frac{\partial F}{\partial v_1}(0,\ldots,0, 0) = \frac{{\rm d}}{{\rm d}s}\Big|_{s=0}F(s,0,\ldots,0) = \frac{{\rm d}}{{\rm d}s}\Big|_{s=0} \lambda_1\left(D(1,s,0,\ldots,0)B(0)\right) =  \frac{{\rm d}}{{\rm d}s}\Big|_{s=0}
\lambda_1\left(\begin{bmatrix}  b^1_1(0) & b^1_2(0) \\ s b^2_1(0)& s b^2_2(0) \end{bmatrix}\right),$$
so
$$ \frac{\partial F}{\partial v_1}(\vec 0,0) = \frac{b^1_2(0)b^2_1(0)}{b^1_1(0)}.$$
Lemma \ref{nonzero coefficients} implies that $b_1^1(0)$, $b_2^1(0)$ and $b_1^2(0)$ are
non-zero, so \hbox{$\frac{\partial F}{\partial v_1}(0,\ldots,0,0) \neq 0$}. 
Therefore, $\dt{u}_1(0) = 0$ and,
since $\dt{\lambda}_1(0) = 0$, we have
$$0 = \dt{u}_1(0) = \frac{{\rm d}}{{\rm d}t}\Bigg|_{t=0}\left( \frac{\lambda_2(t)}{\lambda_1(t)}\right) = \frac{\dt{\lambda}_2(0)\lambda_1(0) - \dt{\lambda}_1(0)\lambda_2(0)}{\lambda_1(0)^2}= \frac{\dt{\lambda}_2(0)}{\lambda_1(0)},$$
so $\dt{\lambda}_2(0) = 0$.

We may iteratively consider the 1-parameter families of  representations given by $\{E^k(\rho_t)\}$
and apply the same analysis to conclude that 
$\dt{\lambda}_{i,\alpha}(0) = 0 $ for all $i$, and thus that $D\tr_\alpha(v) = 0$.

Now assume  that $D\tr_\alpha(v) = 0$ for every $\alpha\in\pi_1(S)$ represented by a simple non-separating curve.
Given a simple, non-separating curve $\alpha$ represented by a simple based loop, we again 
choose a simple based loop $\beta$ which intersects $\alpha$ only at the basepoint and has geometric
intersection one with $\alpha$.
Notice that
$$Tr(\rho_t(\alpha^n\beta)) = \sum_{i=1}^d \lambda_i^n(\rho_t(\alpha))\tr(\p_i(\rho_t(\alpha))\rho_t(\beta)) = 
\sum_{i=1}^d h_i(t)\lambda_i^n(t).$$
where $h_i(t) = \tr(\p_i(\rho_t(\alpha))\rho_t(\beta))\neq 0$ for all $t$.
Differentiating, and noting that $D\tr_{\alpha^n\beta}(v)=0$ for all $n$,  we see that
$$0 = \sum_{i=1}^d   \dt{h}_i(0)\lambda_i^n(0) + n h_i(0)\dt{\lambda}_i(0)\lambda_i(0)^{n-1}$$
for all $n$.
Since $h_i(0) \neq 0$ and $\lambda_i(0)\ne 0$, it must be that
$\dt{h}_1(0)=0$ and 
$\dt{\lambda}_1(0) = 0$, so $DL_\alpha(v) = 0$.
 \end{proof}

We next generalize the proof of Theorem \ref{conjugateontriples-trace} to obtain a criterion
guaranteeing that
$v$ is infinitesmally trivial on its restriction to  certain $3$-generator subgroups.

\begin{lemma}
\label{trivial derivative on triples}
Suppose that  $\rho \in {\mathcal H}_d(S)$, $v \in \ms T_\rho({\mathcal H}_d(S))$ and ${\rm D}L_\eta(v) = 0$ for 
every simple non-separating curve $\eta$ on $S$.
If $\alpha, \beta,\delta \in \pi_1(S)$ are represented by simple based loops which intersect only at the basepoint,
and are freely homotopic to  a collection of mutually disjoint and non-parallel, non-separating closed curves
which do not bound a pair of pants in $S$, and $\{\rho_t\}$ is a path in $\xHn$ so that ${\rm D}\pi(\dt{\rho}_0)=v$, then
there exists a path $\{C_t\}$ in $\sln$, so that  $C_0=I$ and if
$\eta\in\braket{\alpha,\beta, \delta}$, then 
$$\frac{{\rm d}}{{\rm d}t}\Big|_{t=0}(C_t\rho_t(\eta)C_t^{-1})=0\in\mathfrak{sl}(n,\mathbb R).$$
\end{lemma}

\begin{proof}
Lemma \ref{good configuration} guarantees that there exist based loops $\hat\alpha$, $\hat\beta$, $\gamma$ and $\hat\delta$
as in Figure \ref{4curves}, which intersect only at the basepoint, so that $\hat\alpha$, $\hat\beta$ and $\hat\delta$ are
freely homotopic to a collection of mutually disjoint, non-parallel, non-separating curves and $\gamma$ has
geometric intersection one with each such that
$$\braket{\alpha,\beta, \delta}=\braket{\hat\alpha,\hat\beta,\hat\delta}.$$
We may thus assume that $\alpha$, $\beta$ and $\delta$ already have this form. 

We may also,
by possibly re-ordering  $\alpha$, $\beta$ and $\delta$, assume that
$\alpha^p\beta^q\gamma\delta^r$ is represented by a simple non-separating curve for all $p,q,r\in\mathbb Z$.
We next generalize the proof of Proposition \ref{pqr result} to show that 
\hbox{${\rm D}\left(\frac{{\bf T}_{i,j,k}(\alpha,\beta,\delta)}{{\bf T}_{j,k}(\beta,\delta)}\right)(v)=0$}
for all $i$, $j$ and $k$.

Recall that 
$$\tr (\rho(\alpha^p\beta^q\gamma\delta^r)) = \sum_{i=1}^d \lambda_{i,\alpha}(\rho)^p \tr\left( \p_i(\rho(\alpha))\rho(\beta^q\gamma\delta^r)\right).$$
Differentiating and noting that, by Lemma \ref{trace derivative},
${\rm D}\tr_{\alpha^p\beta^q\gamma\delta^r}(v)=0$ for all $p$, $q$ and $r$ and  ${\rm D}\lambda_{i,\alpha}(v) = 0$ for all $i$, 
one sees that
$$ \sum_{i=1}^d \lambda_{i,\alpha}(\rho)^p {\rm D}{\bf T}_{i,0}(\alpha,\beta^q\gamma\delta^r)(v) = 0$$
for all $p$.
By examining terms of different orders and taking limits, we see that
$${\rm D}{\bf T}_{i,0}(\alpha,\beta^q\gamma\delta^r)(v) = 0$$
for all $i$, $q$ and $r$.
Repeating, as in the proof of Proposition \ref{pqr result}, we find that
$${\rm D}{\bf T}_{i,j,0,k}(\alpha,\beta,\gamma,\delta)(v) = 0$$
for all $i$, $j$, and $k$. 
Similarly, by considering $\beta^q\gamma\delta^r$, we see that  
$${\rm D}{\bf T}_{j,0,k}(\beta,\gamma,\delta)(v) = 0$$
for all $j$ and $k$.

Recall, from  part (4) of Proposition \ref{tracenonzero}, that
$${\bf T}_{i,j,0,k}(\alpha,\beta,\gamma,\delta)(\rho) = 
{\bf T}_{j,0,k}(\beta,\gamma,\delta)(\rho)\left(\frac{{\bf T}_{i,j,k}(\alpha,\beta,\delta)(\rho)}{{\bf T}_{j,k}(\beta,\delta)(\rho)}\right)\ne 0$$
for all $i$, $j$ and $k$.
Since we have established that the two leftmost terms in this expression are non-zero and have derivative $0$ in the direction $v$,
we conclude that
$${\rm D}\left(\frac{{\bf T}_{i,j,k}(\alpha,\beta,\delta)}{{\bf T}_{j,k}(\beta,\delta)}\right)(v)=0$$
for all $i$, $j$ and $k$.

Let $a_i(t)=e_i(\rho_t(\alpha))$, $a^i(t)=e^i(\rho_t(\alpha))$, $b_j(t)=e_j(\rho_t(\beta))$, $b^j(t)=e^j(\rho_t(\beta))$,
$d_k(t)=e_k(\rho_t(\delta))$ and $d^k(t)=e^k(\rho_t(\delta))$ for all $i,j,k$.
We will assume throughout, by replacing  $\{\rho_t\}$ by $\{C_t\rho_t C_t^{-1}\}$ where $\{C_t\}$ is a path in $\sln$ so
that $C_0=I$,
that $a_i(t)$ are constant as functions of $t$ for all $i$, $b_1(t)$ is constant as a function of $t$, and by scaling
the bases, that $\braket{a^i (t)| b_1(t)}=1$ for all $i$ and $t$, $\braket{a^1(t) | b_j(t)}=1$ for all $j$ and $t$, 
and $\braket{d^k(t) | b_1(t) }=1$   for all $k$ and $t$.
Since $a_i(t)$ is constant and $\frac{{\rm d}}{{\rm d}t}\big|_{t=0}\lambda_{i,\alpha}(\rho_t)=0$, by Lemma \ref{trace derivative},
$\frac{{\rm d}}{{\rm d}t}\big|_{t=0} \rho_t(\alpha)=0$.

Recall, from Proposition \ref{tracenonzero}, that
\begin{equation}
\label{useful}
\frac{{\bf T}_{i,j,k}(\alpha,\beta,\delta)(\rho_t)}{{\bf T}_{j,k}(\beta,\delta)(\rho_t)}=
 \frac{\braket{a^i(t) | b_j(t) }\braket{d^k(t) | a_i(t)}}{\braket{d^k(t) | b_j(t) }}.
\end{equation}
By considering  Equation (\ref{useful}) when $j=1$, we see that
$$\frac{{\bf T}_{i,1,k}(\alpha,\beta,\delta)(\rho_t)}{{\bf T}_{1,k}(\beta,\delta)(\rho_t)} = \braket{d^k(t)| a_i(t)},$$
so, since the left-hand side has derivative 0 at 0 and $a_i(t)$ is constant for all $i$,
$$\frac{{\rm d}}{{\rm d}t}\Big|_{t=0}\big(\braket{d^k(t)|a_i(t)}\big)= \braket{\dt{d^k}(0) | a_i(0)}=0$$
for all $i$ and $k$.
Therefore, $\dt{d^k}(0) = 0 $ for all $k$, so $\dt{d}_k=0$ for all $k$.
Since we also know, from Lemma \ref{trace derivative}, that  $\frac{{\rm d}}{{\rm d}t}\big|_{t=0}\lambda_{i,\delta}(\rho_t)=0$ for all $t$,
it follows that $\frac{{\rm d}}{{\rm d}t}\big|_{t=0} \rho_t(\delta)=0$.

Considering Equation (\ref{useful}) when $i =1$, one obtains
$$\frac{{\bf T}_{1,j,k}(\alpha,\beta,\delta)(\rho_t)}{{\bf T}_{j,k}(\beta,\delta)(\rho_t)}= 
\frac{\braket{a^1(t) | b_j(t) }\braket{d^k(t) | a_1(t)}}{\braket{d^k(t) | b_j(t) }}  =  \frac{\braket{d^k(t) | a_1(t)}}{\braket{d^k(t) | b_j(t) }}.$$
Since the derivative of the left hand side is 0 at 0,  $a_1(t)$ is constant, and $\dt{d^k}(0) = 0 $ for all $k$, we see that 
$$ \frac{\braket{d^k(0) | a_1(0)}}{\braket{d^k(0)| b_j(0)}^2}\braket{d^k(0) | \dt{b}_j(0)}=0,$$
so $\braket{d^k | \dt{b}_j(0)} = 0$  for all $j$ and $k$, so $\dt{b}_j(0) =0$ for all $j$. We may then argue, just as before, that
$\frac{{\rm d}}{{\rm d}t}\big|_{t=0} \rho_t(\beta)=0$.  Therefore, $\frac{{\rm d}}{{\rm d}t}\big|_{t=0} \rho_t(\eta)=0$
for all $\eta\in\braket{\alpha,\beta, \delta}$.
\end{proof}

We are now ready to complete the proof of
Proposition \ref{cotangent}.
Let \hbox{$\mathcal S=\{\alpha_1,\beta_1,\ldots,\alpha_g,\beta_g\}$} be a standard generating set for $\pi_1(S)$.
By Lemma \ref{trivial derivative on triples},
we may choose an analytic family $\{\rho_t\}$  in $ {\rm Hom}(\pi_1(S),\psln)$ 
so that $d\pi(\dt{\rho}_0)=v$ and $\frac{{\rm d}}{{\rm d}t}\big|_{t=0} \rho_t(\gamma)=0$
for all $\eta\in \braket{\alpha_1, \alpha_2, \alpha_3}$.  

For any $\delta \in \mathcal{S}-\{\alpha_1,\alpha_2,\alpha_3,\beta_1,\beta_2\}$, 
we may apply
Lemma \ref{trivial derivative on triples} to the triple $\{\alpha_1,\alpha_2,\eta\}$ to show that
there exists a family $\{C_t\}$ in $\psln$ so that $C_0=I$ and 
$\frac{{\rm d}}{{\rm d}t}\big|_{t=0} (C_t\rho_t(\gamma)C_t^{-1})=0$ for all $\gamma \in\braket{\alpha_1,\alpha_2,\delta}$.
In particular,
$$\dt{C}_0\rho_0(\alpha_i) C_0^{-1} -C_0\rho_0(\alpha_i)\dt{C}_0 + C_0 \left(\frac{{\rm d}}{{\rm d}t}\Big|_{t=0}\rho_t(\alpha_i)\right)C_0^{-1} =
\dt{C}_0\rho_0(\alpha_i) - \rho_0(\alpha_i)\dt{C}_0=0,$$
so $[\dt{C}_0,\rho_0(\alpha_i)] = 0$ 
for $i=1,2$, Thus, $\dt{C}_0$ is diagonalizable over $\Real$ with respect to  both
$\{e_i(\rho_0(\alpha_1))\}$ and $\{e_i(\rho_0(\alpha_2)\}$.

If $\dt{C}_0 \ne 0$, then $\mathbb R^d$ admits a non-trivial decomposition into eigenspaces of $\dt{C}_0$
with distinct eigenvalues.
Any such eigenspace $W$ is spanned by a sub-collection of $\{e_i(\rho_0(\alpha_1))\}$ and by a sub-collection of
$\{e_j(\rho_0(\alpha_2))\}$.
In particular, some $e_i(\rho_0(\alpha_1))$ is in the sub-space spanned by a subcollection of $\{e_j(\rho_0(\alpha_2))\}$. 
Since $\alpha_1$ and $\alpha_2$ are disjoint curves, this contradicts Theorem \ref{transverse-bases-general0}. 
Therefore, $\dt{C}_0=0$.
 
Since $\dt{C}_0 = 0$ and 
$\frac{{\rm d}}{{\rm d}t}\big|_{t=0}(C_t \rho_t(\delta) C_t^{-1}) = 0$, we  calculate that
$$\dt{C}_0\rho_0(\delta) C^{-1}_0 -C_0\rho_0(\delta)\dt{C}_0 + C_0\left(\frac{{\rm d}}{{\rm d}t}\Big|_{t=0}\rho_t(\delta)\right) C_0^{-1} =
\frac{{\rm d}}{{\rm d}t}\Big|_{t=0}\rho_t(\delta)=0.$$
By considering the subgroups $\braket{\alpha_2,\alpha_3, \beta_1}$ and $\braket{\alpha_1, \alpha_3, \beta_2}$, 
we similarly show that  
$$\frac{{\rm d}}{{\rm d}t}\Big|_{t=0}\rho_t(\beta_1)=0\qquad {\rm and} \qquad \frac{{\rm d}}{{\rm d}t}\Big|_{t=0}\rho_t(\beta_2)=0$$
Since $\frac{{\rm d}}{{\rm d}t}\big|_{t=0}\rho_t(\eta)=0$ for all $\eta\in\mathcal{S}$,
$$\dt{\rho}_0 =0\in \ms T\xHn.$$
Therefore, $v={\rm D}\pi \left(\dt{\rho}_0\right)=0$ as claimed.
\end{proof}

%% !TEX root =HitchinSimple.tex
\section{Hitchin representations for surfaces with boundary}
\label{positivereps}

In this section, we observe that our main simple length rigidity result extends to Hitchin representations
of most compact surfaces with boundary.

If $S$ is a compact surface with boundary, we say that a representation $\rho:\pi_1(S)\to \psln$ is a
{\em Hitchin representation} if 
$\rho$ is the restriction of a Hitchin representation $\hat\rho$ of $\pi_1(DS)$ into $\psln$, where $DS$ is
the double of $S$. Labourie and McShane
\cite[Section 9]{labourie-mcshane} show that this is equivalent to assuming that $\rho$ is deformable
to the composition of a convex cocompact Fuchsian uniformization of $S$ and the irreducible representation
through representations so that the image of every peripheral element is purely loxodromic.
(Recall that a non-trivial element of $\pi_1(S)$ is peripheral if it is represented by a curve in $\partial S$.)
Fock and Goncharov \cite{fock-goncharov} refer to such representations as positive representations.

\begin{theorem}
Suppose that $S$ is a compact, orientable surface  of genus $g > 0$ with $p>0$ 
boundary components, and $(g,p)$ is not $(1,1)$  or $(1,2)$.
If $\rho$ and $\sigma$ are two Hitchin representations of $\pi_1(S)$ of dimension $d$
and $L_\rho(\alpha)=L_\sigma(\alpha)$ for any $\alpha$ represented by a simple non-separating curve on 
$S$, then $\rho$ and $\sigma$ are conjugate in $\pgln$.
\end{theorem}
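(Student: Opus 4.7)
The plan is to follow closely the proof of Theorem \ref{fulltracerigidity}, taking advantage of the fact that its main technical input, Theorem \ref{conjugateontriples-trace}, is already phrased in terms of an essential subsurface $F\subset S$ and so applies without modification when $S$ itself has boundary. The excluded pairs $(g,p)=(1,1),(1,2)$ are precisely those for which $S$ does not carry three pairwise disjoint non-parallel non-separating simple closed curves, which is the minimal topological ingredient that the closed-surface argument relies on.

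First I would establish eigenvalue rigidity. The proofs of Lemmas \ref{trace to length} and \ref{length to trace}, and consequently of Theorem \ref{equivalence}, go through verbatim: they use only the existence of a pair of simple based loops with geometric intersection one (available because $g\geq 1$) together with the Labourie--Fock--Goncharov positivity of Hitchin limit maps, which holds equally for Hitchin representations of surfaces with boundary. Thus the hypothesis yields $\lambda_i(\rho(\alpha))=\lambda_i(\sigma(\alpha))$ for every $i$ and every $\alpha\in\pi_1(S)$ freely homotopic to a simple non-separating curve, and in particular $|\tr(\rho(\alpha))|=|\tr(\sigma(\alpha))|$ on such classes.

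I would then fix an \emph{anchor triple} $(\alpha_1,\alpha_2,\alpha_3)$ of disjoint non-parallel non-separating simple closed curves on $S$ that do not cobound a pair of pants, whose existence is ensured by the hypothesis on $(g,p)$. Applying Theorem \ref{conjugateontriples-trace} with $F=S$ and replacing $\sigma$ by a suitable conjugate, I may assume $\rho$ and $\sigma$ coincide on $\langle\alpha_1,\alpha_2,\alpha_3\rangle$. The free group $\pi_1(S)$ has rank $2g+p-1$, so I must extend this equality to finitely many additional generators. I would therefore choose a free generating set $\mathcal S$ for $\pi_1(S)$ containing the anchors, with each remaining generator in $\mathcal S$ chosen to be simple non-separating: the handle generators $\alpha_i,\beta_i$ not among the anchors are already of this form, and each boundary-parallel generator $c_j$ would be replaced by the product $\alpha_1 c_j$, which admits a simple non-separating representative since its class in $H_1(S)$ is non-zero and from which $c_j=\alpha_1^{-1}(\alpha_1 c_j)$ is recovered as soon as $\rho$ and $\sigma$ agree on $\alpha_1$ and on $\alpha_1 c_j$. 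For each non-anchor $\eta\in\mathcal S$ I would select two anchors $\alpha_r,\alpha_s$ (and possibly an essential subsurface of $S$) so that $(\alpha_r,\alpha_s,\eta)$ satisfies the hypotheses of Theorem \ref{conjugateontriples-trace}; the theorem then produces $C\in\pgln$ with $C\sigma(\cdot)C^{-1}=\rho(\cdot)$ on $\langle\alpha_r,\alpha_s,\eta\rangle$, and Lemma \ref{conjugacytrivial} applied to the two anchors (on which $\rho$ and $\sigma$ already agree and whose axes do not intersect) forces $C=I$, hence $\rho(\eta)=\sigma(\eta)$. Iterating over $\mathcal S$ yields $\rho=\sigma$.

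The main obstacle will be the combinatorial topology of simple curves underpinning the third step: for every admissible pair $(g,p)$ one must verify that each generator of the modified set $\mathcal S$ admits a simple non-separating representative which, together with two anchors and possibly an auxiliary essential subsurface, meets the disjointness, non-parallelism and pair-of-pants conditions of Theorem \ref{conjugateontriples-trace}. The case $(g,p)=(1,3)$ is especially tight: the maximum number of pairwise disjoint non-parallel non-separating simple closed curves on such a surface is exactly three, so the anchor triple is essentially unique up to isotopy and any fourth generator completing $\mathcal S$ necessarily intersects the anchors. This case will likely demand an alternative extraction of information about the fourth generator, for instance via an asymptotic expansion of the traces of words of the form $\alpha_1^p\eta\alpha_2^q$ (which are simple non-separating for suitable configurations) mirroring the asymptotic analysis of Proposition \ref{pqr result}.
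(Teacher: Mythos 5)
Your overall scheme -- fix an anchor triple of disjoint, non-parallel, non-separating curves not bounding a pair of pants, apply Theorem \ref{conjugateontriples-trace} to normalize $\rho$ and $\sigma$ on the anchor subgroup, and then propagate to the remaining generators by combining Theorem \ref{conjugateontriples-trace} with Lemma \ref{conjugacytrivial} -- is exactly the paper's strategy, and it does carry the argument when $g\ge 3$ (one minor remark: the paper does not claim Lemmas \ref{trace to length} and \ref{length to trace} ``verbatim'' for bordered surfaces; since a Hitchin representation of $S$ is by definition the restriction of a Hitchin representation of the double $DS$, it simply applies the closed-surface results to $\hat\rho,\hat\sigma$, using that simple non-separating curves in $S$ remain simple and non-separating in $DS$).

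The genuine gap is the propagation step for \emph{all} of the low-genus cases, not just $(1,3)$. Whenever $g\le 2$ there is a generator dual to a handle (the paper's $\beta_1$) for which no pair of disjoint, already-determined anchor curves exists: for $g=1$, any element with nonzero algebraic intersection number with $\alpha_1$ must cross $\alpha_1$, and the paper's $\delta_i$ all cross $\beta_1$ as well, so $\beta_1$ meets every available anchor for every $p\ge 3$; for $g=2,\ p\ge 2$, the only generators disjoint from $\beta_1$ are $\alpha_2$ and $\beta_2$, which intersect each other, so no admissible triple containing $\beta_1$ and two determined curves exists; and $(2,1)$ requires auxiliary curves from the start. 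Your proposed repair -- an expansion of $\tr\big(\rho(\alpha_1^p\eta\alpha_2^q)\big)$ -- is not justified: such words are in general not represented by simple curves, and the expansion machinery you would invoke (Proposition \ref{pqr result}) requires the non-intersecting-axes hypotheses that fail precisely because $\eta$ crosses the anchors. The paper resolves these cases with specific devices you would need to supply: for $g=1,\ p\ge 3$, auxiliary curves $\hat\delta_i$ disjoint from $\beta_1$ satisfying $\alpha_1\delta_i=\hat\delta_i\beta_1$, so that Theorem \ref{conjugateontriples-trace} gives conjugacy on $\braket{\beta_1,\hat\delta_1,\ldots,\hat\delta_{p-1}}$ and the conjugator is killed by applying Lemma \ref{conjugacytrivial} (in $\pi_1(DS)$) to the elements $u_i=\alpha_1\delta_i=\hat\delta_i\beta_1$ of that subgroup, which are already determined because they lie in $\braket{\alpha_1,\delta_1,\ldots,\delta_{p-1}}$; for $g=2,\ p\ge 2$, precomposition with the homeomorphism exchanging the $\alpha_i$ and $\beta_i$ to recover $\beta_1$; and for $(2,1)$, the auxiliary curves $\hat\alpha_i,\hat\beta_i$ of Figure \ref{g2p1}. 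Without arguments of this kind, your proof covers only the cases $g\ge 3$, which is the part of the theorem where the closed-surface proof generalizes with no new ideas.
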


Notice that our techniques don't apply to punctured spheres, since they contain no simple non-separating curves.
In the remaining excluded cases, there are no configurations of three non-parallel simple non-separating
closed curves which do not bound a pair of pants.

\begin{proof}
We choose a generating set 
$$\mathcal S=\{\alpha_1,\beta_1,\ldots\alpha_g,\beta_g,\delta_1,\ldots,\delta_{p-1}\}$$
represented by simple, non-separating based loops which intersect only at the basepoint
so that $\{\alpha_1,\beta_1,\ldots,\alpha_g,\beta_g\}$ is a standard generating set for the
surface of genus $g$ obtained by capping each boundary component of $S$ with a disk,
each $\delta_i$  has geometric intersection one with $\beta_1$ and zero with every  other generator,
as in  Figure \ref{surfacewithboundary}.
Notice that any collection of 3 based loops in $\mathcal S$ which have geometric intersection zero with
each other are freely homotopic to a mutually disjoint, non-parallel collection of simple closed curves
which do not bound a pair of pants.

\begin{figure}[htbp] %  figure placement: here, top, bottom, or page
   \centering
   \includegraphics[width=2.5in]{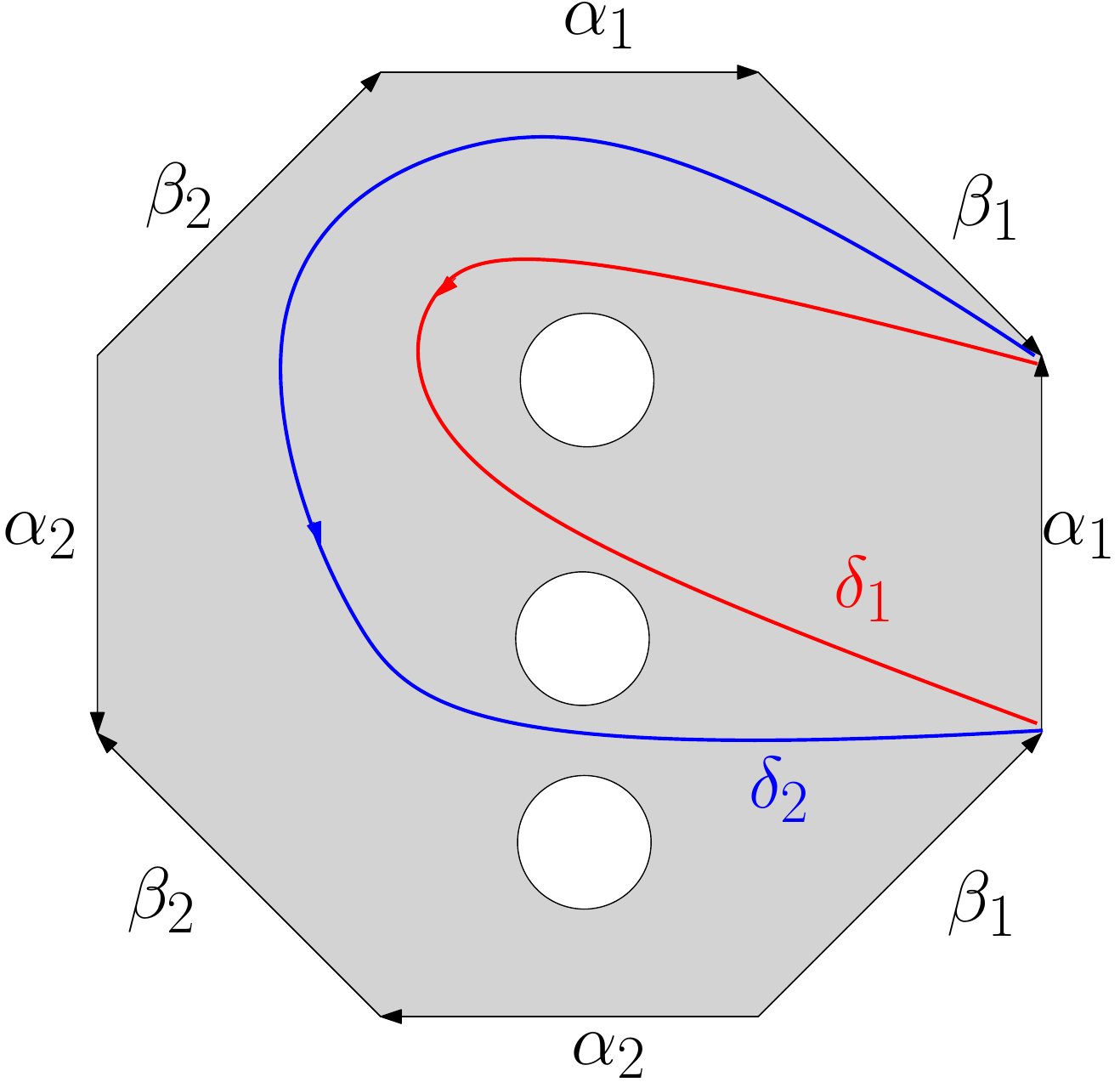} 
   \caption{Our generators on a surface with genus 2 and 3 boundary components}
   \label{surfacewithboundary}
\end{figure}

Throughout the proof we identify $S$ with a subsurface of $DS$ and apply our earlier results
to the representations $\hat\rho$ and $\hat\sigma$ of $\pi_1(DS)$.
Lemma \ref{length to trace} implies that if $\eta\in \pi_1(S)$ is represented
by a simple non-separating curve on $S$, then $|\tr(\rho(\eta))|=|\tr(\sigma(\eta))|$ and
$\lambda_i(\rho(\eta))=\lambda_i(\sigma(\eta))$ for all $i$.

If $g\ge 3$, the proof of Theorem \ref{tracerigidity} generalizes rather immediately. We first apply 
Theorem \ref{conjugateontriples-trace} to $\hat\rho$ and $\hat\sigma$, to see that we may assume,
after conjugation in $\pgln$,
that $\rho$ and $\delta$ agree on $\braket{\alpha_1,\alpha_2,\alpha_3}$.  If 
$\eta\in \mathcal{S}-\{\alpha_1,\alpha_2,\beta_1,\beta_2\}$, 
we may again apply Theorem \ref{conjugateontriples-trace} to show that $\rho$ and
$\sigma$ are conjugate on $\braket{\alpha_1,\alpha_2,\eta}$. Since $\hat\rho$ and $\hat\sigma$ agree on
$\alpha_1$ and $\alpha_2$, Lemma \ref{conjugacytrivial} implies
that $\rho$ and $\sigma$ agree on $\braket{\alpha_1,\alpha_2,\eta}$.
We then consider the triples $\{\alpha_2,\alpha_3,\beta_1\}$ and $\{\alpha_1,\alpha_3,\beta_2\}$ to show
that $\rho$ and $\sigma$ agree on $\beta_1$ and $\beta_2$, and hence that $\rho=\sigma$.

If $g=2$ and $p\ge 2$, we again use Theorem \ref{conjugateontriples-trace} to show that we may conjugate
$\rho$ and $\sigma$ so that they agree on $\braket{\alpha_1,\alpha_2,\delta_1}$. 
If $i\ge 2$, we may again apply Theorem \ref{conjugateontriples-trace} to show
that $\rho$ and $\sigma$  are conjugate on $\braket{\alpha_1,\alpha_2,\delta_i}$
and then Lemma \ref{conjugacytrivial} to show that $\rho$ and $\sigma$ agree on
$\braket{\alpha_1,\alpha_2,\delta_i}$. We  consider the triple 
$\{\alpha_1,\delta_1,\beta_2\}$ to show that $\rho$ and $\sigma$ agree on $\beta_2$.
Therefore, $\rho$ and $\sigma$ agree on $\mathcal S-\{\beta_1\}$. Recall that there exists a
homeomorphism $h:S\to S$ such that $h\circ\alpha_i=\beta_i$ and $h\circ\beta_i=\alpha_i$.
The above argument implies that the Hitchin representations $\rho\circ h_*$ and $\sigma\circ h_*$
are conjugate on $\braket{\alpha_1,\alpha_2,\beta_2}$ and hence that $\rho$ and $\sigma$ are
conjugate on $\braket{\beta_1,\beta_2,\alpha_2}$. Since $\rho$ and $\sigma$ agree on $\beta_2$ and
$\alpha_2$, Lemma \ref{conjugacytrivial} implies that they agree on $\beta_1$. So, we conclude
that $\rho=\sigma$.

\begin{comment}
Let $\nu$ be a non-separating based loop, as in Figure \ref{surfacewithboundary},
which surrounds the first hole and intersects $\beta_2$ with
geometric intersection one and intersects $\alpha_1$, $\beta_1$ and $\alpha_2$, only at the basepoint, 
with geometric intersection zero. 
Since each of $\{\beta_1,\alpha_2,\nu\}$ and $\{\alpha_1,\alpha_2,\nu\}$ is freely homotopic to a collection
of mutually disjoint, non-separating, non-parallel curves on $S$ which do not bound a pair of pants,
Theorem \ref{conjugateontriples-trace} implies that $\rho$ and $\sigma$ are conjugate
on $\braket{\beta_1,\alpha_2,\nu}$ and on $\braket{\alpha_1,\alpha_2,\nu}$,
so, applying Lemma \ref{conjugacytrivial}, they are conjugate on $\braket{\beta_1,\alpha_1,\alpha_2,\nu}$. 
Since they agree on $\alpha_1$ and $\alpha_2$,  Lemma \ref{conjugacytrivial} implies that they agree on $\beta_1$, 
and hence on all of $\mathcal S$.
\end{comment}

If $g=1$ and $p\ge 3$, then $\mathcal{S}=\{\alpha_1,\beta_1,\delta_1,\ldots,\delta_{p-1}\}$.
We first apply Theorem \ref{conjugateontriples-trace} to show that we may conjugate
$\rho$ and $\sigma$ so that they agree on $\braket{\alpha_1,\delta_1,\delta_2}$. 
If $i\ge 3$, we may consider the triple $\{\alpha_1,\delta_1,\delta_i\}$ to see that
$\rho$ and $\sigma$ agree on $\delta_i$. It remains to check that $\rho$ and $\sigma$ agree on $\beta_1$.

\begin{figure}[htbp] %  figure placement: here, top, bottom, or page
   \centering
   \includegraphics[width=2in]{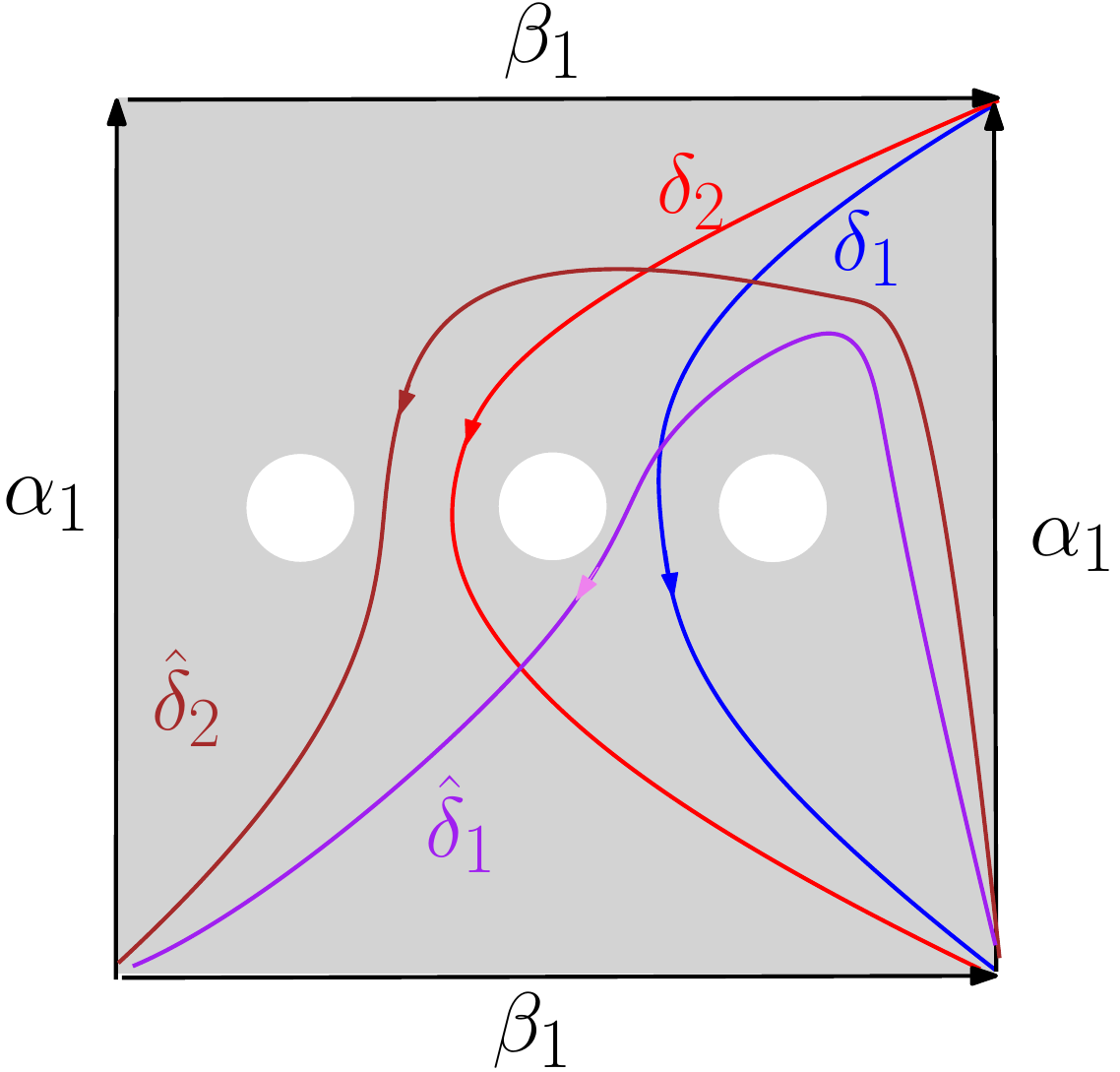} 
   \caption{Curves on a surface of type $(1,p)$ for $p \geq 3$}
   \label{g1pn}
\end{figure}

Let $\hat\delta_i$ be as in Figure \ref{g1pn}, 
so that if  $\mathcal S'=\{\alpha_1,\beta_1,\hat\delta_1,\ldots,\hat\delta_{p-1}\}$, then the based loops
in $\mathcal S'$ intersect only at the basepoint and each
$\hat\delta_i$ has geometric intersection one with $\alpha_1$ and has geometric intersection zero
with every other element of $\mathcal S'$. Notice that $\alpha_1\delta_i=\hat\delta_i\beta_1$ and let $u_i=\alpha_1\delta_i$.
Then, $\rho$ and $\sigma$ agree on the
subgroup  $\braket{\alpha_1,u_1,\ldots,u_{p-1}}$. We may apply the same argument as above to show
that $\rho$ and $\sigma$ are conjugate on  $\braket{\beta_1,\hat\delta_1,\ldots,\hat\delta_{p-1}}$.
Since this subgroup contains $u_1$ and $u_2$, $\rho$ and $\sigma$ agree on $u_1$ and $u_2$, and
$u_1$ and $u_2$ have non-intersecting axes in $\pi_1(DS)$, Lemma \ref{conjugacytrivial}, applied
to $\hat\rho$ and $\hat\sigma$, implies that $\rho$ and $\sigma$ agree on $\braket{\beta_1,\hat\delta_1,\ldots,\hat\delta_{p-1}}$
and hence on $\beta_1$, so $\rho=\sigma$.

\begin{figure}[htbp] %  figure placement: here, top, bottom, or page
   \centering
   \includegraphics[width=2.5in]{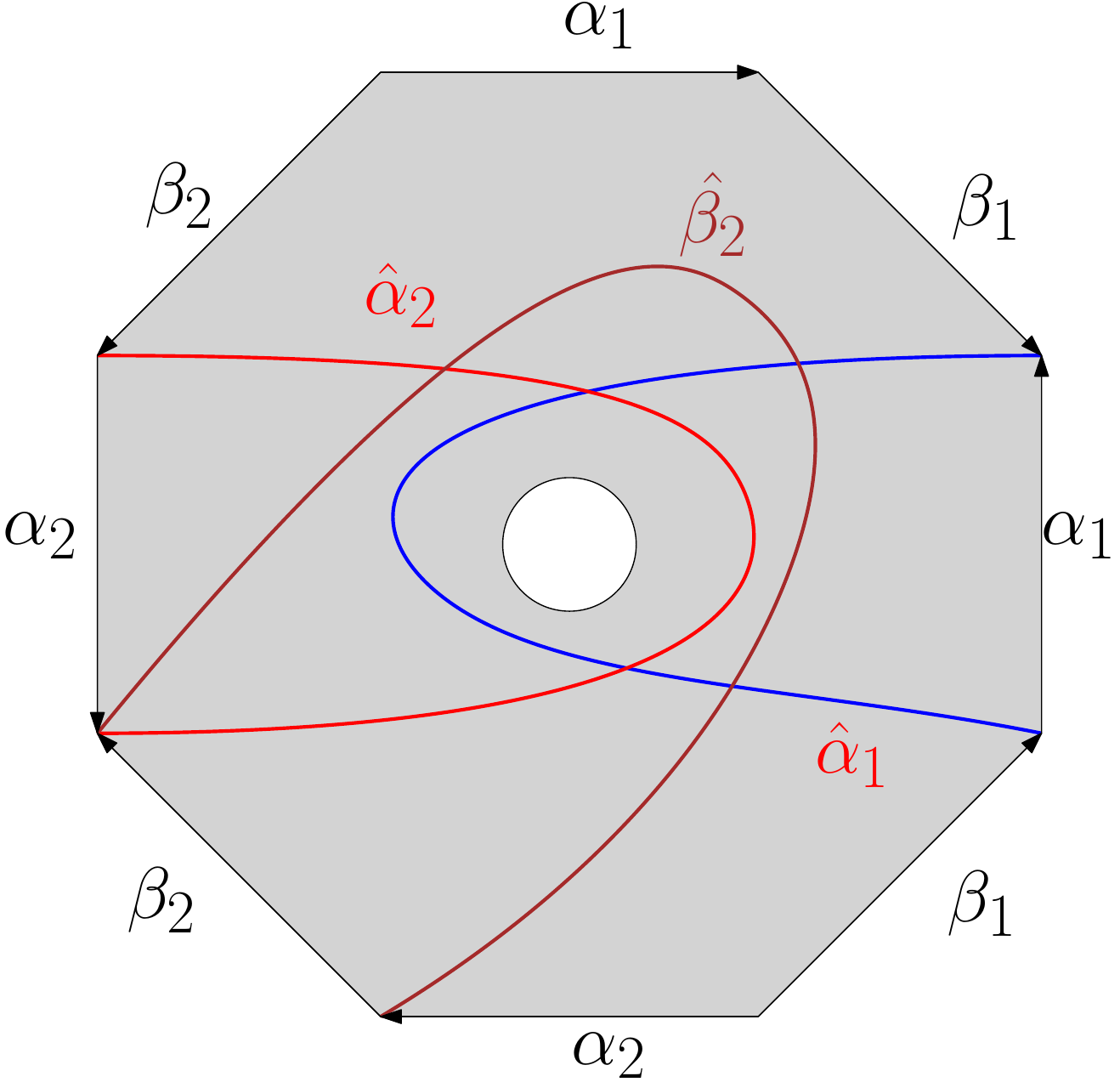} 
   \caption{Genus 2 with 1 puncture}
   \label{g2p1}
\end{figure}

If $g=2$ and $p=1$, then $\mathcal S = \{\alpha_1,\beta_1,\alpha_2,\beta_2\}$. 
We will consider the based loops $\hat\alpha_i$ and $\hat\beta_i$ as in Figure \ref{g2p1}. 
As the based loops $\{\alpha_1,\alpha_2,\hat\alpha_1\}$ are freely homotopic to a mutually disjoint, non-parallel collection
of simple, non-separating curves which do not bound a pair of pants,
Theorem \ref{conjugateontriples-trace} implies that we may assume that $\rho$ and $\sigma$ 
agree on $\braket{\alpha_1,\alpha_2,\hat\alpha_1}$.
Similarly, the representations are conjugate on $\braket{\alpha_1,\alpha_2,\hat\alpha_2}$,
and since they already agree on $\braket{\alpha_1,\alpha_2,\hat\alpha_1}$ and $\alpha_1$ and $\alpha_2$
have non-intersecting axes, Lemma \ref{conjugacytrivial} implies that they 
agree on $\braket{\alpha_1,\alpha_2,\hat\alpha_1,\hat\alpha_2}$. 
Next, by considering the triples $\{\alpha_1,\beta_2,\hat\alpha_1\}$ and $\{\alpha_1,\beta_2,\hat\beta_2\}$,
we see that $\rho$ and $\sigma$ are conjugate on 
$\braket{\alpha_1,\beta_2,\hat\alpha_1,\hat\beta_2}$.  Since $\rho$ and $\sigma$ agree on 
$\alpha_1$ and $\hat\alpha_1$, they agree on $\braket{\alpha_1,\beta_2,\hat\alpha_1,\hat\beta_2}$. 
By similarly considering the triples $\{\alpha_2,\beta_1,\hat\alpha_2\}$ and $\{\alpha_2,\beta_1,\hat\beta_1\}$,
we show that $\rho$ and $\sigma$ agree on $\beta_1$. Since we have shown that, after an initial
conjugation, $\rho$ and $\sigma$ agree on each generator, we have completed the proof in
the case that $(g,p)=(2,1)$.
\end{proof}

We similarly obtain the analogue of our Simple Trace Rigidity Theorem in this setting.

\begin{theorem}
Suppose that $S$ is a compact, orientable surface of genus $g>0$ with $p>0$ boundary components
and $(g,p)$  is not $(1,1)$  or $(1,2)$. Then, for all $d\ge 2$, there exists a finite
collection $\mathcal L_d(S)$ of elements of $\pi_1(S)$ which are represented by simple non-separating curves,
such that if $\rho$ and $\sigma$ are two Hitchin representations of $\pi_1(S)$ of dimension $d$
and $|\tr(\rho(\eta))|=|\tr(\sigma(\eta))|$ for any $\eta\in\mathcal L_d(S)$,
then $\rho$ and $\sigma$ are conjugate in $\pgln$.
\end{theorem}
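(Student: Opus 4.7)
The plan is to mirror the proof of Theorem \ref{tracerigidity} in the closed case, substituting the simple length rigidity theorem for boundary surfaces (the immediately preceding result) for its closed analogue Theorem \ref{lengthrigidity}. The argument has two ingredients: an infinite version of simple trace rigidity, followed by a Noetherian reduction to finitely many curves.

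For the infinite version, I would first prove that if $\rho$ and $\sigma$ are Hitchin representations of $\pi_1(S)$ such that $|\tr(\rho(\alpha))|=|\tr(\sigma(\alpha))|$ for every $\alpha$ represented by a simple non-separating curve on $S$, then in fact $L_\alpha(\rho)=L_\alpha(\sigma)$ for every such $\alpha$. The proof of Lemma \ref{trace to length} is entirely local: given a simple non-separating based loop $\alpha$, one chooses a simple based loop $\beta$ intersecting $\alpha$ only at the basepoint with geometric intersection one, and then extracts $L_\alpha$ (and indeed all eigenvalues of $\rho(\alpha)$) from the asymptotic expansion of $|\tr(\rho(\alpha^n\beta))|$. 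Such a $\beta$ exists in any $S$ excluded only by the hypothesis $(g,p)\ne (1,1)$ in an essential way (a once-punctured torus still admits such a $\beta$; the hypothesis is inherited from the preceding length-rigidity theorem). Combining this with the just-established simple length rigidity for surfaces with boundary, we conclude that $\rho$ and $\sigma$ are conjugate in $\pgln$ whenever traces agree on every simple non-separating curve.

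For the Noetherian reduction, I would copy the argument from the proof of Theorem \ref{tracerigidity} verbatim. Order the conjugacy classes of simple non-separating elements as $\{\gamma_i\}_{i=1}^\infty$, consider the affine variety
$$V(S)=\hom(\pi_1(S),\sln)\times\hom(\pi_1(S),\sln),$$
and the descending chain of subvarieties $V_n(S)=\{(\tilde\rho,\tilde\sigma): \tr(\tilde\rho(\gamma_i))=\tr(\tilde\sigma(\gamma_i))\text{ for } i\le n\}$. By Noetherianity there exists $N$ with $V_N=\bigcap_n V_n$; set $\mathcal L_d(S)=\{\gamma_1,\dots,\gamma_N\}$. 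Given Hitchin representations $\rho,\sigma$ with $|\tr(\rho(\gamma))|=|\tr(\sigma(\gamma))|$ for every $\gamma\in\mathcal L_d(S)$, one lifts them to $\tilde\rho,\tilde\sigma\in\hom(\pi_1(S),\sln)$ with matching signs on each $\gamma_i\in\mathcal L_d(S)$, so $(\tilde\rho,\tilde\sigma)\in V_N=V_\infty$; the constancy of sign across the Hitchin locus (which holds because traces on the Fuchsian locus are positive) then propagates the agreement to all simple non-separating curves, and the infinite version completes the proof.

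The main obstacle will be the lifting step: in the closed case, the paper invokes Hitchin's observation that each Hitchin component lifts to a component of $\hom^{\mathrm{red}}(\pi_1(S),\sln)/\sln$ on which $\tr(\tilde\rho(\gamma))$ has a well-defined sign for each $\gamma$. For surfaces with boundary, $\pi_1(S)$ is free, so lifting of a single representation from $\psln$ to $\sln$ is unobstructed, but one must choose the lifts compatibly with signs of traces along the path in the Hitchin component from a Fuchsian representation. I would handle this by lifting a $d$-Fuchsian representation using positive eigenvalues on the generators, transporting the lift analytically along a path in the Hitchin component, and noting that $\tr(\tilde\rho(\gamma))$ never vanishes for $\gamma$ represented by a simple non-separating curve (since $\tilde\rho(\gamma)$ is purely loxodromic, by the doubling construction and Theorem \ref{hyperconvexity}), so the sign of $\tr(\tilde\rho(\gamma))$ is constant on the connected lifted component. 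Once this is in place, the Noetherian argument proceeds as in the closed case.
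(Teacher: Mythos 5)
Your proposal is correct and follows essentially the route the paper intends (it states this theorem as the "similarly obtained" analogue of Theorem \ref{tracerigidity}): reduce to trace agreement on all simple non-separating curves via the asymptotic expansion of $|\tr(\rho(\alpha^n\beta))|$ together with the boundary-surface length rigidity theorem, then run the Noetherian chain argument in $\hom(\pi_1(S),\sln)\times\hom(\pi_1(S),\sln)$. Your treatment of the lifting issue for the free group $\pi_1(S)$ — choosing lifts in the connected component of the preimage of the (connected) space of Hitchin representations containing a lifted Fuchsian point, where non-vanishing of traces forces constant signs — is exactly the substitute needed for Hitchin's lifted component $\xHn$ in the closed case.
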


\end{document}